\newtheorem{thm}{Theorem}[section]
\newtheorem{lemma}[thm]{Lemma}
\newtheorem{cor}[thm]{Corollary}
\newtheorem{prop}[thm]{Proposition}
\newtheorem{defn}[thm]{Definition}
\newtheorem{example}[thm]{Example}
\newtheorem{remark}[thm]{Remark}
\newtheorem{conjecture}[thm]{Conjecture}
\numberwithin{equation}{section}
\newcommand{\AI}{A_\infty}
\newcommand{\Hom}{{\rm Hom}}
\newcommand{\Z}{\mathbb{Z}}
\newcommand{\R}{\mathbb{R}}
\newcommand{\C}{\mathbb{C}}
\newcommand{\WF}{\mathcal{WF}}
\begin{document}
\title{Floer theory for the variation operator of an isolated singularity}
\author[Bae]{Hanwool Bae}
\address{Center for Quantum Structures in Modules and Spaces, Seoul National University, Seoul 08826, Republic of Korea}
\email{hanwoolb@gmail.com}
\author[Cho]{Cheol-Hyun Cho}
\address{Department of Mathematical Sciences, Research Institute in Mathematics\\ Seoul National University\\ Seoul \\ Republic of Korea}
\email{chocheol@snu.ac.kr}
\author[Choa]{Dongwook Choa}
\address{Korea Institute for Advanced Studies \\ Dongdaemoon-gu\\Seoul 02455\\Republic of Korea }
\address{Current address: Chungbuk National University \\ Seowon-gu\\Cheongju 28644\\Republic of Korea }
\email{dwchoa@chungbuk.ac.kr}
\author[Jeong]{Wonbo Jeong}
\address{Department of Mathematical Sciences, Research Institute in Mathematics\\ Seoul National University\\ Gwanak-gu\\Seoul \\ Republic of Korea}
\email{wonbo.jeong@gmail.com}

\begin{abstract}

The variation operator in singularity theory maps relative homology cycles to compact cycles in the Milnor fiber using the monodromy.
We construct its symplectic analogue for an isolated singularity.  
We define the monodromy Lagrangian Floer cohomology, which provides categorifications of the standard theorems on the variation operator and the Seifert form.
The key ingredients are a special class $\Gamma$ in the symplectic cohomology of the inverse of the monodromy and its closed-open images.
For isolated plane curve singularities whose A'Campo divide has depth zero, we find an exceptional collection consisting of non-compact Lagrangians in the Milnor fiber corresponding to a distinguished collection of vanishing cycles under the variation operator.
\end{abstract}

\maketitle
\tableofcontents

\section{Introduction}

The purpose of this paper is to construct a Floer-theoretic operator $\mathcal {V}$ and a cohomology $HF^*_\rho$ that are analogues of the variation operator and the Seifert pairing associated with an isolated singularity $f$. 

Let $f:(\C^n,0) \to (\C,0)$ be a germ of an isolated hypersurface singularity.
The Milnor fiber $M$ of $f$ is a smooth manifold, given by the intersection $f^{-1}(\delta) \cap \mathbb B_\epsilon(0)$ for sufficiently small real numbers $\delta$ and $\epsilon$.  The geometric monodromy $\rho$ of the singularity $f$ is a diffeomorphism $\rho:(M,\partial M) \to (M,\partial M)$ that is the identity on $\partial M$ and is obtained by moving counter-clockwise along the fiber over the circle of radius $\delta$ in the base $\C$.  

The variation operator
\begin{equation}\label{eq:var}
\mathrm{var}:H_{n-1}(M,\partial M) \to H_{n-1}(M)
\end{equation}
records the effect of monodromy on the middle-dimensional homology group.
Namely, it takes a cycle relative to the boundary and sends it to the difference between its monodromy image and itself:
$$ \mathrm{var} (l )  :=   \rho_*(l) - l  \in H_{n-1}(M).$$
The variation operator is known to be an isomorphism, and it is a fundamental tool of singularity theory.
For example, the Seifert pairing
$$S: H_{n-1}(M;\Z) \otimes H_{n-1}(M;\Z) \to \Z$$ 
which is a non-degenerate bilinear form defined as the linking number of cycles, may be understood in terms of the Poincare-Lefschetz duality of $H_{n-1}(M,\partial M)$ and  $H_{n-1}(M)$ via the variation operator.
 We refer readers to the book \cite{AGV2} for more details.

We first construct a symplectic analogue $\mathcal V$, called {\em variation}, as an operator on Lagrangians. 
We also construct a cohomology theory denoted by $HF^*_\rho(L_1, L_2)$, called {\em monodromy Lagrangian Floer cohomology}, which categorifies classical theorems involving the Seifert form, the intersection pairing, and the variation operator.

Before we explain our results, let us briefly give some historical background on symplectic study of singularities.
Arnold \cite{Ar} observed that for a complex Morse singularity, its monodromy is a generalized Dehn twist, which can be defined as a symplectomorphism.
Seidel systematically developed a symplectic version of Picard Lefschetz theory of a singularity \cite{SVM}, \cite{Sei01}, \cite{Seilong}, \cite{S08}, and it has been one of the most influential constructions in symplectic topology, mirror symmetry, and related fields.
 
Given a singularity $f$, one may perturb it to a complex Morse function $\tilde{f}$. 
Then by choosing a generic regular value and a collection of embedded paths from the chosen generic value to critical values, symplectic parallel transports define a distinguished collection of Lagrangian vanishing cycles.
The directed Fukaya category of such vanishing cycles, or rather its derived version, is called {\em Fukaya-Seidel} category of $f$. 
It is independent of Morsifications, choices of regular value and embedded paths, and various choices made in the construction. 
This category turned out to be a mirror of the derived category of coherent sheaves on a Fano manifold or a graded matrix factorization category of Landau-Ginzburg model.
See Futaki-Ueda\cite{FU09}, Ebeling-Takahashi \cite{ET}, Lekili-Ueda\cite{LU20}, Harbermann-Smith\cite{HS19}, Polishchuk-Varolgunes \cite{PoUm}, and the references therein for computation of the Fukaya-Seidel category and associated mirror correspondences.

There are also recent constructions of Fukaya-Seidel category of Lefschetz fibrations by Abouzaid-Smith \cite{ASm}, Seidel \cite{Se18}, Sylvan \cite{Sy19I}, Ganatra-Pardon-Shende \cite{GPS20}. In these constructions, non-compact Lagrangians in
the total space of the Lefschetz fibration (which are compact when restricted to each fiber) are taken as objects of the partially wrapped version of Fukaya category. For example, instead of a Lagrangian vanishing cycle of the fiber, its Lefschetz thimble
as a Lagrangian in the total space is considered.

Let us go back to the variation operator \eqref{eq:var} and look at its domain. 
It is reasonable to consider Lagrangian submanifolds corresponding to homology classes in $H_{n-1}(M,\partial M)$.
Therefore, a natural starting point is the {\em wrapped Fukaya category} $\mathcal{WF}(M)$, which has been developed by Abouzaid and Seidel \cite{AS} to deal with such a situation. 
Recall that the Milnor fiber $M$ has the canonical structure of a Liouville manifold with a contact boundary.
Roughly, objects of $\mathcal{WF}(M)$ are Lagrangian submanifolds $L$ with possibly a Legendrian boundary $\partial L \subset \partial M$.
We take a completion of $M$ by attaching an infinite cone $[1,\infty) \times \partial M$, extend $L$ by attaching $[1,\infty) \times \partial L$ and then regard the wrapped Floer cohomology $HW^*(L_1, L_2)$ as morphism spaces between objects. 
It can be viewed as the Lagrangian Floer cohomology whose Hamiltonian perturbation term is regulated to be quadratic on $[1,\infty)$ coordinate so that its flow is proportional to the Reeb flow on $\partial M$.

The first task is to extend the variation homomorphism into the context of Floer theory. 
The naive difference $\rho(L) - L$ ( = $\rho(L) \oplus L[1]$ ) is not a suitable candidate because its wrapped Floer cohomology is usually infinite-dimensional.
In contrast, the image of the variation operator is a cycle in $H_{n-1}(M)$. 
Instead of the naive difference, we would like to find a morphism from $L$ to $\rho(L)$ whose mapping cone represents a proper object. 
This leads us to the study of the Floer cohomology of $\rho$ itself. 

The monodromy $\rho:M \to M$ of an isolated singularity $f$ can be taken as a symplectic automorphism.
One approach is to use the Milnor fibration $f/|f|: S^{2n-1}(\delta)\setminus f^{-1}(0) \to S^1$.
Caubel, N{\'e}methi and Popescu-Pampu constructed the Milnor open book which gives rise to a pair
of a Liouville manifold and a compactly supported exact symplectomorphism, called abstract contact open book (see \cite{giroux03g}, \cite{CNPP}, \cite{Mc19}). 

On the other hand,  A'Campo \cite{AC75} computed the Lefschetz numbers of the iterates of the topological monodromy map.
In fact, A'Campo showed that monodromy has very special dynamics via the study about embedded resolutions of singularities. For example,
the Lefschetz number of a monodromy map of an isolated singularity is always zero, and there exists a topological representative of the monodromy, whose fixed point set is  (a collar of)  the boundary of the Milnor fiber. 

Recently, Fernandez de Bobabilla and  Pe{\l}ka \cite{BP} have constructed the so-called {\em symplectic monodromy at radius zero}, which gives
a Liouville manifold $M$ together with an exact symplectomorphism $\rho: M \to M$ which has the same dynamics as the A'Campo's topological monodromy map. Moreover, using a small Hamiltonian flow near the boundary, one can choose such an exact symplectomorphism $\rho$ so that its fixed points are exactly the boundary of the Milnor fiber. 

We use this representative $\rho$ and investigate its Hamiltonian Floer theory.
In Section \ref{sec:symplectic cohomology}, we define the {\em symplectic cohomology of $\phi$}, denoted by $SH^*(\phi)$, for an exact symplectomorphism $\phi$ following Uljarevic \cite{Ulj}.
Roughly, it is a Hamiltonian Floer cohomology whose generators are twisted Hamiltonian orbits.
This is an extension of fixed point Floer cohomology $HF^*(\phi,\pm)$ defined by Seidel \cite{Sei01}, where the $\pm$ sign refers to a small Hamiltonian perturbation at the boundary $\partial M$ using the Reeb flow along positive/negative direction.
Our construction uses a quadratic Hamiltonian at infinity. It can be shown that our construction is canonically quasi-isomorphic to that of Uljarevic \cite{Ulj}, which uses linear Hamiltonians and takes the direct limit as the slopes of these Hamiltonians go to infinity.

In Section \ref{sec:Gamma}, we define the monodromy class $\Gamma$.
\begin{defn}[see Definition \ref{defn:gamma}]
There exists a cohomology class $\Gamma \in SH^0(\rho^{-1})$, called the {\em monodromy class},
representing the fundamental class of a $\partial M$-family of fixed points of  $\rho^{-1}$.
\end{defn}
We observe that the set of fixed points of $\rho^{-1}$ is given by $\partial M$, after a perturbation in the positive Reeb direction near the cylindrical end. 
We remark that the presence of $\Gamma$ is closely related to the vanishing of $HF^*(\rho,+)$ and  $HF^*(\rho^{-1},-) $. 
See \cite{SeDehn} for the $A_1$-case, \cite{Gau} for plane curve singularities, and \cite{Mc19}, \cite{BP} for general isolated singularities.  

After introducing $\Gamma$, we use structural maps in Floer theory to analyze its action on Lagrangian Floer theory.
In Section \ref{sec:symplectic var}, we consider the image of $\Gamma$ under the twisted version of the closed-open map,
\begin{equation*}
\mathcal{CO}_L^\rho (\Gamma) \in  CW^*\left(L, \rho(L)\right)
\end{equation*}
and define an analogue of the classical variation operator \eqref{eq:var} in Lagrangian Floer theory.

\begin{defn}(See Definition \ref{defn: variation} and \ref{defn: variation functor})
\label{defn:v}
We define the {\em variation} of $L$ to be the twisted complex
\begin{equation*}
\mathcal V(L):= \left(L[1] \xrightarrow{\mathcal{CO}_L^\rho(\Gamma)} \rho(L)\right) \in \mathcal{WF}(M).
\end{equation*}
Moreover, the {\em variation operator} extends to an endofunctor on $\mathcal{WF}(M)$ defined by 
	\[\mathcal{V}  = \mathrm{Cone}(\mathrm{Id} \xrightarrow {\mathcal {CO}^\rho(\Gamma)}\rho_\sharp): \WF(M) \to \WF(M).\]
\end{defn}
When $L$ has cylindrical ends, we also define an analogous cochain $\Gamma_L \in CW^*(L, \rho(L))$, which turns out to be the lowest action value term of $\mathcal{CO}_L^\rho(\Gamma)$. 
Moreover, we prove the following property of $\mathcal V(L)$:

\begin{figure}[h]
\begin{subfigure}[t]{0.45\textwidth}
\includegraphics[scale=1]{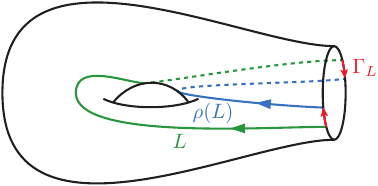}
\centering
\caption{$\Gamma_{L}$ in a curve case}
\end{subfigure}
\begin{subfigure}[t]{0.45\textwidth}
\includegraphics[scale=1]{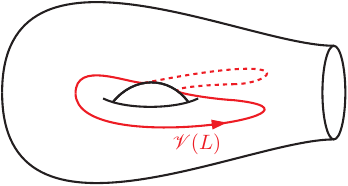}
\centering
\caption{Representative $V_L$ of $\mathcal V(L)$}
\end{subfigure}
\centering
\caption{ }
\label{fig:A2ex}
\end{figure}

\begin{thm}(See Corollary \ref{cor:VLisproper})
\label{thm:1}
For every object $L$ of $\mathcal{WF}(M)$, $\mathcal V(L)$ is a proper object, i.e., for any $L'$, both $HW^*\left(\mathcal V(L), L'\right)$ and $HW^*\left(L', \mathcal V(L)\right)$ are finite-dimensional. 
\end{thm}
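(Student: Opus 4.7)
The plan is to realize $\mathcal{V}(L)$ by a compact exact Lagrangian $V_L \subset M$, after which the properness follows from the standard finiteness of wrapped Floer cohomology between a compact Lagrangian and a cylindrical one in a Liouville domain. Concretely, I expect that the geometric representative $V_L$ pictured in Figure \ref{fig:A2ex}(b) is obtained from $L$ and $\rho(L)$ by Lagrangian surgery along the short Reeb chords supporting $\Gamma_L$.

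First I would analyze the cochain $\mathcal{CO}_L^\rho(\Gamma) \in CW^*(L,\rho(L))$ on the cylindrical end. Since $\Gamma \in SH^*(\rho^{-1})$ represents the fundamental class of the $\partial M$-family of fixed points of $\rho^{-1}$, after the small positive Reeb perturbation built into its definition, the closed-open image is concentrated near $\partial L$. Because $\rho$ is the identity on a collar of $\partial M$ up to this Reeb push, $\rho(L)$ and $L$ coincide near infinity up to a small transverse shift, and the lowest-action summand $\Gamma_L$ of $\mathcal{CO}_L^\rho(\Gamma)$ consists of the short chords in Figure \ref{fig:A2ex}(a), playing the role of an ``identity intertwiner at infinity.''

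Second I would interpret the mapping cone geometrically. By the standard cone-equals-surgery principle in exact Fukaya categories, the cone on a transverse Floer generator between two exact Lagrangians is quasi-isomorphic to the Lagrangian surgery at that intersection; performing this surgery simultaneously at every chord supporting $\Gamma_L$ glues the two cylindrical ends of $L[1]$ and $\rho(L)$ into a tube which can be retracted into the interior of $M$, yielding a compact exact Lagrangian $V_L$. One then verifies that the higher-action components of $\mathcal{CO}_L^\rho(\Gamma) - \Gamma_L$ only deform $V_L$ by a compactly supported Hamiltonian isotopy, giving $\mathcal{V}(L) \simeq V_L$ in $\mathcal{WF}(M)$. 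Once $V_L$ is a compact exact Lagrangian, both $HW^*(V_L, L')$ and $HW^*(L', V_L)$ are finite dimensional for any cylindrical $L'$ by a routine action bound, since all generators lie in the compact region containing $V_L$.

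The main obstacle is the second step: identifying the cone on the full cochain $\mathcal{CO}_L^\rho(\Gamma)$, not merely on its lowest-action summand $\Gamma_L$, with a smooth compact Lagrangian. The cochain is only defined up to Floer coboundary, and its action filtration must be controlled precisely enough that higher-action corrections do not obstruct the surgery identification. A cleaner alternative would be a direct action-filtration argument showing that right-multiplication by $\mathcal{CO}_L^\rho(\Gamma)$ becomes an isomorphism on the large-action parts of wrapped Floer cohomology, which would bound the cone's cohomology by a low-action tail and bypass the surgery model entirely; whichever of the two routes proves easier to execute is the one I would pursue.
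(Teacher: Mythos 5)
Your primary route does not go through in the stated generality, and the ``cleaner alternative'' you flag at the end is essentially the paper's actual proof. Let me address both.

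The gap in the surgery route is the step where you claim that ``the higher-action components of $\mathcal{CO}_L^\rho(\Gamma)-\Gamma_L$ only deform $V_L$ by a compactly supported Hamiltonian isotopy.'' For a general object $L$ of $\mathcal{WF}(M)$ there is no control over the interior intersections of $L$ with $\rho(L)$; these contribute genuine additional summands to $\mathcal{CO}_L^\rho(\Gamma)$ (see Corollary \ref{cor: Gamma and GammaL}), and the cone on the full cocycle is not in general quasi-isomorphic to the cone on the boundary piece $\Gamma_L$ alone. In fact, without the hypothesis $L\cap\rho(L)=\emptyset$ away from $\partial M$, $\Gamma_L$ itself is not a closed morphism in $CW^*(L,\rho(L))$, so the surgery-equals-cone principle cannot even be applied to it in isolation. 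The paper is careful here: the compact representative $V_L\simeq\mathcal V(L)$ is established only under extra hypotheses (connected boundary, no interior intersections, an infinite-dimensionality condition), as Theorem \ref{thm:compactrepresentative} and Proposition \ref{prop: variation dim 2}, and even then requires a delicate action estimate on the cobordism $K$ rather than the informal ``retract into the interior'' picture.

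What the paper does instead for properness in full generality is exactly the action-filtration argument you sketch as an alternative. Theorem \ref{thm: HF is finite} filters $CW^*(\rho(L_1),L_2)$ and $CW^*(L_1,L_2)$ by action, observes that $\cap\Gamma$ is lower-triangular for this filtration, and uses a local PSS/continuation computation (built on Theorem \ref{thm: PSS CO diagram} and Proposition \ref{prop:dichotomy}) to show that the induced map on the negative-action quotient $CW^*_{<0}(\rho(L_1),L_2)\to CW^*_{<0}(L_1,L_2)$ has identity diagonal and hence is a quasi-isomorphism. The cone is therefore quasi-isomorphic to the cone of the restriction to $CW^*_{\geq 0}$, which is spanned by finitely many interior chords. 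Properness of $\mathcal V(L)$ then follows from Proposition \ref{prop:HF and V}, which identifies $HF^*_\rho(L_1,L_2)$ with $HW^*(\mathcal V(L_1)[-1],L_2)$ and with $HW^*(\rho(L_1),\mathcal V(L_2))$. So your instinct about the alternative is right, but note that the paper works with the twisted cap action $\cap\Gamma$ rather than literal $m_2$-multiplication by $\mathcal{CO}_L^\rho(\Gamma)$; the two are identified only up to the homotopy constructed in Proposition \ref{prop:HF and V}, and the cap-action formulation is what makes the lower-triangular structure transparent.
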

It is natural to ask whether there is a compact closed Lagrangian representative for $\mathcal V(L)$.
One can construct a Lagrangian $L\sharp_{\partial L}\rho(L)$ by gluing $L$ and $\rho(L)$ along $\partial L$ via flow surgery \cite{MW18}. 
We consider its compact component $V_L$. See Figure \ref{fig:A2ex} for an illustration of the $n=2$ case.
\begin{thm}(See Proposition \ref{prop: variation dim 2} and Theorem \ref{thm:compactrepresentative})
	Assume that 
	\begin{enumerate} 
		\item $\partial L$ is connected for $n\neq 2$, 
		\item $L\cap \rho(L)$ is empty away from the boundary, and
		\item $HW^*(L, L)$ or $HW^*\left(L, \rho(L)\right)$ is infinite dimensional. 
	\end{enumerate}
Then $V_L$ is quasi-isomorphic to $\mathcal V(L)$ inside $\mathcal{WF}(M)$.
\end{thm}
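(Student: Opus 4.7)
The plan is to construct a Lagrangian surgery exact triangle in $\mathcal{WF}(M)$ that has $V_L$ as the third vertex and $\mathcal{CO}_L^\rho(\Gamma)$ as the connecting morphism.

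First, since $\rho$ is the identity on a collar of $\partial M$, the Lagrangians $L$ and $\rho(L)$ coincide on a neighborhood of their common boundary $\partial L$. Perturbing $\rho$ in the positive Reeb direction near $\partial M$, as in the set-up of $\Gamma$ in Section \ref{sec:Gamma}, pushes $\partial \rho(L)$ off $\partial L$. Under the hypothesis that $L \cap \rho(L)$ is empty away from $\partial L$, together with the connectedness of $\partial L$ in the cases $n \neq 2$, this perturbation creates a finite set of transverse intersection points near $\partial M$; these generate the cochain $\Gamma_L \in CW^*(L, \rho(L))$ by definition. Mak--Wu flow surgery (\cite{MW18}) at these intersections realizes a smooth Lagrangian whose compact component is $V_L$ and whose non-compact component(s) splice the cylindrical ends of $L$ and $\rho(L)$.

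Second, I would apply the wrapped version of the Biran--Cornea surgery exact triangle at these intersection points to obtain a distinguished triangle
\[
L \xrightarrow{\Gamma_L} \rho(L) \to V_L \to L[1]
\]
in $\mathcal{WF}(M)$, provided one verifies that the non-compact surgery components do not contribute to the third vertex. Establishing the latter uses the hypothesis that $HW^*(L, L)$ or $HW^*(L, \rho(L))$ is infinite-dimensional, which rules out a degenerate case where the non-compact residue could be absorbed into a compact piece; combined with Theorem \ref{thm:1} on the properness of $\mathcal V(L)$, it forces the non-compact residue to be Hamiltonian-isotopic to $L$ or Hamiltonian-displaceable in a way that cancels cleanly in the triangle. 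Third, I would identify $\Gamma_L$ with $\mathcal{CO}_L^\rho(\Gamma)$ up to quasi-isomorphism of cones. By construction, $\Gamma_L$ is the lowest-action summand of $\mathcal{CO}_L^\rho(\Gamma)$; the remaining summands are supported on higher-action generators arising from non-constant Reeb chords, so an action-filtration argument gives
\[
V_L \;\simeq\; \Cone\bigl(L \xrightarrow{\Gamma_L} \rho(L)\bigr) \;\simeq\; \Cone\bigl(L \xrightarrow{\mathcal{CO}_L^\rho(\Gamma)} \rho(L)\bigr) \;=\; \mathcal V(L).
\]

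The main obstacle I anticipate is the second step: controlling the non-compact residue of the surgery and ensuring it does not interfere with the surgery exact triangle for $V_L$. This is where the connectedness of $\partial L$ for $n \neq 2$ and the infinite-dimensionality hypothesis on wrapped Floer cohomology are essential, and it demands a precise geometric understanding of the flow surgery near the cylindrical ends of $L$ and $\rho(L)$. A secondary obstacle is the chain-level identification of the twisted closed-open image $\mathcal{CO}_L^\rho(\Gamma)$ with the surgery-induced morphism $\Gamma_L$, which calls for a careful action--energy comparison of the relevant moduli of pseudoholomorphic curves.
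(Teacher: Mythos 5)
Your overall framework—a Lagrangian cobordism/surgery exact triangle in the sense of Biran--Cornea (extended to Liouville settings)—is the same one the paper uses, so the skeleton is sound. However, there are several genuine gaps in the details that would not survive scrutiny.

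\textbf{(i) The connecting morphism in the surgery triangle is not automatically $\Gamma_L$.} The cobordism $K \subset M \times \C$ furnishes an exact triangle with morphism $\mathcal F_K \in CW^*(\widetilde L, \widetilde{\rho(L)})$, defined by counting pseudo-holomorphic sections over the corner of the cobordism. Identifying $\mathcal F_K$ with (a nonzero multiple of) $\Gamma_L$ is the real work of the proof: the paper performs a careful action-energy estimate showing that any Hamiltonian chord contributing to $\mathcal F_K$ must have positive action, hence must be a generator coming from the compact intersection $\widetilde L \cap \widetilde{\rho(L)} \cong \partial L$; a degree count then forces it to be the fundamental class, so $\mathcal F_K = c\cdot\Gamma_L$ for some scalar $c$. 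You simply wrote down $L \xrightarrow{\Gamma_L} \rho(L) \to V_L$ without establishing this, and the action--energy comparison you flag as a ``secondary obstacle'' is in fact central.

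\textbf{(ii) Your handling of the non-compact residue is incorrect.} The surgered Lagrangian $\widetilde L \sharp_{\partial L} \widetilde{\rho(L)}$ has two components: the compact piece $V_L$ and a second piece entirely contained in a collar neighborhood of $\partial M$. The paper disposes of the latter with a direct observation: a Lagrangian contained in the collar is quasi-isomorphic to zero in $\mathcal{WF}(M)$. Your speculation that it is ``Hamiltonian-isotopic to $L$ or Hamiltonian-displaceable in a way that cancels cleanly'' is not what happens and is not supported by the hypotheses.

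\textbf{(iii) The infinite-dimensionality hypothesis is used elsewhere.} You deploy condition (3) to control the non-compact residue, but its actual role is to show the scalar $c$ above is nonzero: $\widetilde L[1] \oplus \widetilde{\rho(L)}$ is not proper by (3), while the surgery output is proper (it is $V_L$ up to a null object), so the cone cannot split and $c \neq 0$. Only then does $\mathrm{Cone}(c\,\Gamma_L) \simeq \mathrm{Cone}(\Gamma_L)$ follow.

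\textbf{(iv) Your final step is unnecessary and, as stated, faulty.} You propose an action-filtration argument to conclude $\mathrm{Cone}(\Gamma_L) \simeq \mathrm{Cone}(\mathcal{CO}_L^\rho(\Gamma))$ even when the two morphisms differ by higher-action terms; but cones of unequal morphisms are generally not quasi-isomorphic, and a low-to-high action filtration does not by itself furnish such an equivalence. The correct point is that hypothesis (2), $L \cap \rho(L) = \emptyset$ away from the boundary, forces an \emph{exact} equality $\mathcal{CO}_L^\rho(\Gamma) = \Gamma_L$ (Corollary \ref{cor: Gamma and GammaL}), so there are no higher-action corrections to worry about.
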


In Section \ref{sec:HFrho}, we define the following.

\begin{defn}
The  {\em monodromy Lagrangian Floer cochain complex} $CF^*_\rho(L_1, L_2)$  is  defined  as a cone of 
\begin{equation}\label{eq:qvar1}
	\cap \Gamma : CW^*(\rho(L_1),L_2) \to CW^*(L_1,L_2),
\end{equation}
where $\cap$ denotes the twisted quantum cap action. 
We denote its cohomology group by $HF_\rho^*(L_1,L_2)$.
\end{defn}
The twisted quantum cap action is defined by considering pseudo-holomorphic discs with two boundary marked points and
one interior marked point, where the interior point lies on the geodesic connecting the two boundary points. We consider an insertion of $\Gamma$ at the interior marked point.

We prove that monodromy Lagrangian Floer cohomology categorifies the Seifert form. 

\begin{thm}[See Theorem \ref{thm: HF is finite}]
\label{thm:2}
Let $L_1,L_2$ be objects in wrapped Fukaya category of $M$.
Then we have
\begin{equation*}
\chi \big(HF^*_\rho(L_1,L_2) \big) = (-1)^nS(v_2,v_1)
\end{equation*}
where $v_i$ are the homology class of $\mathcal{V}(L_i)$ in $H_{n-1}(M)$, and $S(-,-)$ is the Seifert pairing on $H_{n-1}(M)$.
\end{thm}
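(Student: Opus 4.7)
The plan is to replace the cone $HF^*_\rho(L_1, L_2)$ with the wrapped Floer cohomology of the proper object $\mathcal{V}(L_1)$ from Definition \ref{defn:v}, and then to identify its Euler characteristic with the Seifert pairing using classical topology.

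First I would prove a quasi-isomorphism between $HF^*_\rho(L_1, L_2)$ and $HW^*(\mathcal{V}(L_1), L_2)$, up to an overall degree shift. The key ingredient is the chain-level identity
\begin{equation*}
(-)\cap \Gamma \;\simeq\; \mu^2_{\mathcal{WF}(M)}\bigl(-,\, \mathcal{CO}^\rho_{L_1}(\Gamma)\bigr)
\colon CW^*(\rho(L_1), L_2) \longrightarrow CW^*(L_1, L_2),
\end{equation*}
up to chain homotopy. Both sides count $\rho$-twisted pseudo-holomorphic discs with an interior puncture asymptotic to $\Gamma$ and two boundary punctures on $L_1, L_2$; they differ only by whether the interior puncture is constrained to the geodesic joining the two boundary punctures (left-hand side) or sits on a separate disc glued to a $\mu^2$-disc at a boundary node (right-hand side). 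A one-parameter family of moduli spaces in which the interior puncture is moved toward the boundary yields the desired homotopy. Combined with the presentation $\mathcal{V}(L_1) = \mathrm{Cone}\bigl(L_1 \xrightarrow{\mathcal{CO}^\rho_{L_1}(\Gamma)} \rho(L_1)\bigr)$ and the induced long exact sequence on wrapped Floer cohomology, this gives the quasi-isomorphism. Since $\mathcal{V}(L_1)$ is a proper object by Theorem \ref{thm:1}, it follows that $HF^*_\rho(L_1, L_2)$ is finite-dimensional, so that $\chi$ is well-defined.

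Next, I would compute the Euler characteristic topologically. The class of $\mathcal{V}(L_1)$ in $H_{n-1}(M)$ is $v_1 = \rho_*[L_1] - [L_1] = \mathrm{var}([L_1])$, and general principles for wrapped Floer cohomology between a proper object and a non-compact Lagrangian identify $\chi\bigl(HW^*(\mathcal{V}(L_1), L_2)\bigr)$, up to a sign depending on $n$, with the Poincar\'e-Lefschetz intersection pairing $\bigl\langle v_1, [L_2]\bigr\rangle$. When the compact model $V_{L_1}$ from Theorem \ref{thm:compactrepresentative} exists, this becomes a transparent signed count of transverse intersections of $V_{L_1}$ and $L_2$ in the interior of $M$. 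The classical formula $S(a, b) = \bigl\langle \mathrm{var}^{-1}(a), b\bigr\rangle$ for the Seifert form \cite{AGV2}, together with $v_2 = \mathrm{var}([L_2])$, gives $S(v_2, v_1) = \bigl\langle [L_2], v_1\bigr\rangle$. Combining these with the degree shift from the previous step and the skew-symmetry $(-1)^{n-1}$ of the Poincar\'e-Lefschetz pairing on middle-dimensional classes yields $\chi(HF^*_\rho(L_1, L_2)) = (-1)^n S(v_2, v_1)$.

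The hardest part is the moduli-theoretic identity $(-)\cap \Gamma \simeq \mu^2(-, \mathcal{CO}^\rho_{L_1}(\Gamma))$: this is a $\rho$-twisted version of the standard closed-open / quantum-cap compatibility, which requires a careful setup of moduli spaces of punctured discs in the mapping torus of $\rho$ and a degeneration argument moving the interior puncture to the boundary. A secondary but delicate task is the precise tracking of Maslov grading signs, cone conventions, and the skew-symmetry of Poincar\'e-Lefschetz duality to land exactly on $(-1)^n$.
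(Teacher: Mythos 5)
Your first step — replacing $(-)\cap\Gamma$ by $\mu^2\bigl(-,\mathcal{CO}^\rho_{L_1}(\Gamma)\bigr)$ via a one-parameter family moving the interior puncture to the boundary — is exactly the paper's Proposition \ref{prop:HF and V}, and that part of the plan is sound. The trouble comes after.

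There is a genuine circularity. You then invoke Theorem \ref{thm:1} (``$\mathcal{V}(L_1)$ is a proper object'') to conclude that $HF^*_\rho(L_1,L_2)$ is finite-dimensional. But in the paper Theorem \ref{thm:1} is Corollary \ref{cor:VLisproper}, and its proof reads verbatim: ``It follows from Proposition \ref{prop:HF and V} combined with Theorem \ref{thm: HF is finite}.'' That is, the properness of $\mathcal{V}(L)$ is \emph{deduced from} the theorem you are trying to prove. You cannot use it as an input. Finite-dimensionality has to be established directly, which is precisely what the paper's argument does: it splits each $CW^*$ by the action filtration into a subcomplex $CW^*_{\geq 0}$ generated by interior intersection points and a quotient $CW^*_{<0}$ generated by wrapped generators, and proves that $\cap\Gamma: CW^*_{<0}(\check\rho(L_1),L_2)\to CW^*_{<0}(L_1,L_2)$ is a quasi-isomorphism (lower-triangular with identity diagonal, using the local analysis near $\partial M$ and Theorem \ref{thm: PSS CO diagram}). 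The cone then reduces to the cone on the finite-dimensional interior parts.

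The second half is also not as automatic as ``general principles.'' The object $\mathcal{V}(L_1)$ is a two-term twisted complex on non-compact Lagrangians, and $HW^*(L_1,L_2)$, $HW^*(\rho(L_1),L_2)$ are typically infinite-dimensional; the Euler characteristic of the cone cannot be naively split into a difference of two Euler characteristics. The paper avoids this by the same filtration: once the wrapped generators are shown to cancel, the surviving Euler characteristic is exactly the signed count of interior intersection points, $[L_1]\bullet[L_2] - [\rho(L_1)]\bullet[L_2]$, from which the Seifert-form identity follows using equation \eqref{eqn: S by var} and the $(-1)^{n-1}$ skew-symmetry of the middle-dimensional pairing. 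Appealing to a compact representative $V_{L_1}$ from Theorem \ref{thm:compactrepresentative} only works under the extra hypotheses listed there (connected Legendrian boundary, empty interior overlap, an infinite-dimensionality condition), so it cannot serve as a proof in the generality of the statement.

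In short: your reduction to $HW^*(\mathcal{V}(L_1),L_2)$ is correct and is in the paper, but quoting $\mathcal{V}(L_1)$'s properness is circular, and a direct finiteness/Euler-characteristic argument along the action filtration (as in the paper's proof of Theorem \ref{thm: HF is finite}) is required to close the loop.
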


Moreover, we show that the following classical theorems regarding relations between variation operator, Seifert form, and intersection pairing are categorified.
\begin{thm} [Arnold, Gussein-Zade, Varchenko \cite{AGV2}]
Let $(-\bullet -)$ and $S(-,-)$ denote the intersection pairing and the Seifert pairing on $H_{n-1}(M)$, respectively. For $v_1, v_2 \in H_{n-1}(M)$, the following hold:
\begin{enumerate}
\item  $S(v_1, v_2) = \mathrm{var}^{-1}(v_1) \bullet v_2$.
\item  $ \mathrm{var}^{-1}(v_1)  \bullet v_2 + v_1 \bullet \rho ( \mathrm{var}^{-1}(v_2))=0$.
\item  $ v_1 \bullet v_2 = - S(v_1, v_2) +(-1)^{n}S(v_2,v_1)$.
\end{enumerate}
\end{thm}

\begin{thm}[See Theorem \ref{prop:HF and V} and \ref{prop:Serre dual}]
\label{thm:var}
For every pair of objects $L_i$, $i=1,2$, we have the following.
\begin{enumerate}
\item $HF_\rho^*(L_1,L_2)  \cong HW^*(\mathcal{V}(L_1)[-1], L_2) \cong HW^{*}(\rho(L_1),\mathcal{V}(L_2))$. 
\item There is a canonical duality isomorphism 
	\[HF_\rho^*(L_1,L_2) \cong HF_\rho^{n-2-*}(L_2, \rho(L_1))^\vee.\]
\item There is a long exact sequence of cohomologies
	\[
	\begin{tikzcd}[column sep=small]
	\cdots \arrow[r] & HF_\rho^{n-1-*}(L_2,L_1)^\vee \arrow[r] & 
	HW^*\left(\mathcal V(L_1), \mathcal V(L_2)\right) \arrow[r] & HF_\rho^*(L_1,L_2) \arrow[r] &
	HF_\rho^{n-2-*}(L_2,L_1)^\vee \arrow[r] & \cdots.
	\end{tikzcd}
	\]
\end{enumerate}
\end{thm}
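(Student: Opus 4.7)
The plan is to deduce all three items from two core inputs: the defining exact triangles of $\mathcal V(L_i)$ and of the cone $CF^*_\rho(L_1,L_2)$, together with the identification of pre- or post-composition with the cocycle $\mathcal{CO}^\rho_{L}(\Gamma)$ with the twisted cap action $\cap\Gamma$ on wrapped Floer groups.

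For part (1), start from the defining triangle
\[
L_1[1]\ \xrightarrow{\mathcal{CO}^\rho_{L_1}(\Gamma)}\ \rho(L_1)\ \longrightarrow\ \mathcal V(L_1)\ \longrightarrow\ L_1[2]
\]
in $\WF(M)$ and apply $CW^*(-,L_2)$; this realizes $CW^*(\mathcal V(L_1)[-1],L_2)$ as the cone of the map $CW^*(\rho(L_1),L_2)\to CW^*(L_1,L_2)$ given by precomposition with $\mathcal{CO}^\rho_{L_1}(\Gamma)$. By the standard $A_\infty$-operadic compatibility between the closed-open map and the module structure on wrapped Floer cochains, this precomposition agrees on cohomology with the cap action $\cap\Gamma$ appearing in the definition of $CF^*_\rho$. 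Identifying the two cones yields the first isomorphism. The second isomorphism is parallel: apply $CW^*(\rho(L_1),-)$ to the defining triangle of $\mathcal V(L_2)$, use the $\rho$-equivariance $HW^*(\rho(L_1),\rho(L_2))\cong HW^*(L_1,L_2)$, and identify the connecting map as $\cap\Gamma$ acting on the other slot.

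For part (2), combine part (1) with Poincar\'e duality for the proper object $\mathcal V(L_1)$, which is proper by Theorem \ref{thm:1}:
\[
HW^*(\mathcal V(L_1),K)\ \cong\ HW^{(n-1)-*}(K,\mathcal V(L_1))^\vee\qquad\text{for any }K\in\WF(M).
\]
Taking $K=L_2$, rewrite the left side via part (1) as $HF^*_\rho(L_1,L_2)$, and rewrite the right side by first invoking $\rho$-equivariance $HW^*(L_2,\mathcal V(L_1))\cong HW^*(\rho(L_2),\mathcal V(\rho(L_1)))$ (using $\rho(\mathcal V(L_1))\cong\mathcal V(\rho(L_1))$ from the definition of $\mathcal V$) and then applying the second isomorphism of part (1) to obtain $HF^*_\rho(L_2,\rho(L_1))$. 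Tracking the degree shifts from $[-1]$ and the duality exponent $n-1$ produces the stated $n-2-*$.

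For part (3), apply $HW^*(\mathcal V(L_1),-)$ to the defining triangle of $\mathcal V(L_2)$. The first slot gives $HF^*_\rho(L_1,L_2)$ by part (1); the middle slot gives $HW^{(n-1)-*}(\rho(L_2),\mathcal V(L_1))^\vee\cong HF^{(n-1)-*}_\rho(L_2,L_1)^\vee$ by Poincar\'e duality composed with part (1). Rotating the resulting long exact sequence, with one degree shift absorbed via the $[1]$ appearing in the cone of $\cap\Gamma$, produces exactly the stated form.

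The main technical obstacle is the identification of composition with $\mathcal{CO}^\rho_{L}(\Gamma)$ and the cap action $\cap\Gamma$ in the monodromy-twisted setting. In the untwisted wrapped category this is a standard $A_\infty$-operadic identity, but here the cap action is defined via a disc with an interior marked point constrained to a geodesic joining the two boundary marks, while the twisted closed-open map inserts $\Gamma\in SH^*(\rho^{-1})$ at an interior puncture of a disc with $\rho$-moving Lagrangian labels. Establishing the equality requires a moduli-space homotopy that degenerates the interior marked point onto the boundary and is compatible with the $\rho$-twist on both the Lagrangian labels and the Floer datum. Secondary bookkeeping of Koszul signs and the formal shifts $[1]$, $[2]$ that dictate the precise numerical degree $n-2-*$ is the other place where care is required.
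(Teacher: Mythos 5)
Your overall skeleton is right —- realizing $CF^*_\rho$ as a twisted complex of $\cap\Gamma$, deducing part (1) by identifying it with the cone of precomposition with $\mathcal{CO}^\rho_L(\Gamma)$, and then obtaining parts (2) and (3) from properness of $\mathcal V(L)$ together with the Calabi--Yau pairing and an exact triangle. But two points deserve scrutiny.

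First, for part (1), you defer the equality of $m_2(\mathcal{CO}^\rho_{L_1}(\Gamma),-)$ with $\cap\Gamma$ to ``standard $A_\infty$-operadic compatibility,'' then in your final paragraph correctly flag it as ``the main technical obstacle.'' You are right to be uneasy: this identification \emph{is} the actual content of part (1), and it is not off the shelf in the $\rho$-twisted setting. The paper constructs an explicit chain homotopy $K$ from a one-parameter family of decorated discs with interior puncture at $-ri$, $r\in[0,1)$, sliding the $\Gamma$-insertion from the geodesic configuration of $\cap\Gamma$ to the boundary degeneration giving $m_2(\mathcal{CO}^\rho_{L_1}(\Gamma),-)$; the branch cut must track along throughout. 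A complete argument must exhibit this moduli family (and verify compactness and the $\rho$-compatibility of the boundary conditions), not merely cite an untwisted operadic identity.

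Second, your route through part (2) is more circuitous than the paper's and smuggles in two unproved auxiliary claims: $\rho$-equivariance of wrapped Floer cohomology to go from $HW^*(L_2,\mathcal V(L_1))$ to $HW^*(\rho(L_2),\rho(\mathcal V(L_1)))$, and the compatibility $\rho(\mathcal V(L_1))\cong\mathcal V(\rho(L_1))$, which amounts to the equality $\rho_*\bigl(\mathcal{CO}^\rho_{L_1}(\Gamma)\bigr)=\mathcal{CO}^\rho_{\rho(L_1)}(\Gamma)$ of closed morphisms $\rho(L_1)\to\rho^2(L_1)$. Neither is hard to believe, but neither is established in the paper and both would need justification. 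The paper sidesteps this entirely by starting from the \emph{second} isomorphism of part (1), $HF^*_\rho(L_1,L_2)\cong HW^*(\rho(L_1),\mathcal V(L_2))$, dualizing directly to $HW^{n-1-*}(\mathcal V(L_2),\rho(L_1))^\vee$, shifting to $HW^{n-2-*}(\mathcal V(L_2)[-1],\rho(L_1))^\vee$, and reading this back through the \emph{first} isomorphism of part (1) as $HF^{n-2-*}_\rho(L_2,\rho(L_1))^\vee$. No equivariance claims enter. You should either adopt that route or add the two missing lemmas.

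Part (3) in your proposal applies $HW^*(\mathcal V(L_1),-)$ to the defining triangle of $\mathcal V(L_2)$; the paper instead uses $CW^*(-,\mathcal V(L_2))$ on the triangle of $\mathcal V(L_1)$. These are essentially the same long exact sequence viewed from the other side and both are correct, modulo the degree bookkeeping that you acknowledge needs care.
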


Theorem \ref{thm:var} follows from the fact that $\mathcal{V}$ is defined via the closed-open map of $\Gamma$.
The essence of the proof is the standard cobordism of moving interior $\Gamma$-insertion toward the either boundary of the Floer strip.   
Observe that the statement (2) is an analogue of Seidel's observation that "winding once around the base" is the Serre functor for the Fukaya-Seidel category up to shift. 

\begin{remark}
We remark that from the construction of Fukaya-Seidel category, the directed Floer cohomology groups
between distinguished collection of Lagrangian vanishing cycles naturally provide a categorification of the Seifert form on this distinguished basis, which naturally extends to the Grothendieck group of the Fukaya-Seidel category 
(cf. \cite{Va23}).
\end{remark}

In Section \ref{sec:distinguished}, we investigate another property of $HF^*_\rho$  to effectively compare it with the Fukaya-Seidel category. 
First note that we do not use any Morsification of the singularity $\rho$ to define $HF^*_\rho$ (and thus we do not
choose vanishing paths). Moreover, our objects are not ordered, nor is the definition of our cohomology directed.
Nevertheless, we still expect that our cohomology is related to the derived Fukaya-Seidel category; in this relation, a choice of Morsification and a set of vanishing paths would correspond to the choice of an exceptional collection in terms of $HF^*_\rho$.
\begin{defn}\label{defn:excep}
An ordered collection of Lagrangians $(L_1,\cdots,L_m)$ which satisfies the following properties is called an {\em exceptional} collection:
\begin{enumerate}
\item $HF^*_\rho(L_i,L_i) = \mathbb K \cdot e_i [0]$ and 
\item $HF^*_\rho(L_i,L_j) = 0$ if $i>j$. 
\end{enumerate}
 \end{defn}
 
Given an exceptional collection, we can relate their monodromy Lagrangian Floer cohomology to the standard Floer cohomology of variation images using Theorem \ref{thm:var} (3).
\begin{thm}[See Corollary \ref{cor:HF and FS}]
\label{thm:distinguished}
For an exceptional collection $(L_1,\cdots,L_m)$, we have
	\[
	HF_\rho^*(L_i, L_j) \cong \left\{
	\begin{array}{ll}
	0 & (i>j),\\
	\mathbb K \cdot e_i[0] & (i=j),\\
	HW^*(\mathcal{V}(L_i), \mathcal{V}(L_j))& (i<j).\\
	\end{array}
	\right.
	 \]
\end{thm}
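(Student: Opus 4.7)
The plan is to observe that the first two cases, $i>j$ and $i=j$, are immediate from the defining properties of an exceptional collection (Definition \ref{defn:excep}), so the essential content of the corollary lies in establishing the isomorphism $HF_\rho^*(L_i, L_j) \cong HW^*(\mathcal V(L_i), \mathcal V(L_j))$ when $i<j$.

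To prove this, I would feed the pair $(L_1, L_2) = (L_i, L_j)$ into the Serre-type long exact sequence of Theorem \ref{thm:var}(3):
\[
\begin{tikzcd}[column sep=small]
\cdots \arrow[r] & HF_\rho^{n-1-*}(L_j,L_i)^\vee \arrow[r] & HW^*\bigl(\mathcal V(L_i), \mathcal V(L_j)\bigr) \arrow[r] & HF_\rho^*(L_i,L_j) \arrow[r] & HF_\rho^{n-2-*}(L_j,L_i)^\vee \arrow[r] & \cdots.
\end{tikzcd}
\]
Since $i<j$, property (2) of the exceptional collection forces $HF_\rho^*(L_j, L_i) = 0$ in every degree. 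Therefore both flanking terms $HF_\rho^{n-1-*}(L_j,L_i)^\vee$ and $HF_\rho^{n-2-*}(L_j,L_i)^\vee$ vanish, and the connecting map in the long exact sequence collapses the middle to an isomorphism
\[
HW^*\bigl(\mathcal V(L_i), \mathcal V(L_j)\bigr) \xrightarrow{\ \cong\ } HF_\rho^*(L_i, L_j),
\]
which is exactly the third case of the claim.

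There is essentially no obstacle here beyond careful bookkeeping: the entire argument is a direct exploitation of the Serre duality long exact sequence coupled with the directedness condition $HF_\rho^*(L_j, L_i) = 0$ built into the notion of an exceptional collection. All the real work is contained in the proof of Theorem \ref{thm:var}(3) from the preceding section, so this corollary is genuinely a two-line consequence of the main structural theorems. If there is any subtlety worth noting, it is verifying that the shifts and degree conventions in Theorem \ref{thm:var}(3) are compatible with the stated isomorphism, i.e. that no degree shift is lost when the two flanking terms vanish; this is immediate but worth writing out explicitly. The cases $i>j$ and $i=j$ then require no argument at all, being restatements of Definition \ref{defn:excep}(1) and (2).
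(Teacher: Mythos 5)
Your proposal is correct and takes the same route as the paper: the paper's own proof of Corollary \ref{cor:HF and FS} cites exactly the long exact sequence from Proposition \ref{prop:Serre dual}(2) (which is Theorem \ref{thm:var}(3)) together with the directedness vanishing of the exceptional collection, and you have spelled out why the two flanking dual terms vanish and hence the middle map is an isomorphism. The degree-convention subtlety you flag is indeed the only place to be careful, and your reading of it is correct.
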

This formula is interesting for two reasons. 
First, the formula shows that $HF^*_\rho$ is not symmetric in its arguments, as seen in Theorem \ref{thm:var}-(1). This is different from the usual Floer cohomology $HF^*$, which is symmetric due to the Poincar\'e duality isomorphism
\[HF^*(V_i, V_j) \cong HF^{n-1-*}(V_j, V_i)^\vee.\] 
Moreover, this formula reminds us of the very definition of the Fukaya-Seidel category.
The morphisms between distinguished collection of vanishing cycles are defined to be zero in an opposite ordering, which reflects the fact that $\hom^*_{FS}(V_i, V_j)$ is a Floer theory between thimbles of $V_i$ rather than $V_i$ themselves. 
Even though the definition of $HF^*_\rho$ is not directed, for a special choice of objects, $HF^*_{\rho}$ on this collection can be directed and behaves like Fukaya-Seidel category.
If a vanishing result is known in one direction, then the monodromy Floer cohomology in the opposite direction is determined by the usual Floer cohomology via the variation $\mathcal V$.

The final question we ask is therefore the following inverse problem. 
Given a singularity $f$, a choice of Morsification $\tilde{f}$ and vanishing paths determines a distinguished collection of vanishing cycles denoted as $\{V_i \}_{i=1}^\mu$. 
Is there an exceptional collection of non-compact Lagrangians $\{K_i\}_{i=1}^\mu$ with respect to $HF^*_\rho$ such that $\mathcal{V}(K_i) \cong V_i$?

We begin with an easier question of topological representability.
We are looking for Lagrangian representatives $\{K_i\}$ such that each homology class $[K_i]$ goes to $[V_i]$ under 
the classical variation operator \eqref{eq:var}.
We first find a convenient notion of an adapted family, which solves the topological question in terms of intersection numbers.

\begin{defn}
An ordered family $(K_1,\cdots,K_\mu)$ is said to be {\em adapted} to $(V_1,\cdots, V_\mu)$ if it satisfies the following (signed) intersection conditions. 

\begin{enumerate}
\item For any $j > i$, we require that $K_j \bullet V_i = - ( -1)^\frac{n(n-1)}{2}V_j \bullet V_i$.
\item For any $j < i$, we require that $K_j \cap V_i = \emptyset$.
\item We require that $K_j$ intersects $V_j$ at a single point positively for each $j$.
\end{enumerate}
Note that each $K_i$ may have multiple connected components
\end{defn}

Using the Picard-Lefschetz formula, we then prove the following.
\begin{prop}[See Proposition \ref{lem:adpatedvar}]
If $(K_1,\cdots, K_\mu)$ is adapted to $(V_1,\cdots, V_\mu)$, then  we have the topological relation:
$$\mathrm{var} ([K_i]) = ( -1)^\frac{n(n-1)}{2}[V_i] \in H_{n-1}(M), \;\;\; \forall i.$$ 
\end{prop}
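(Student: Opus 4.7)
The plan is to compute $\rho_*[K_i]$ directly by applying the classical Picard-Lefschetz decomposition of $\rho_*$ as an ordered product of Dehn twists and then tracking the three adapted intersection conditions through that product.

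First I would write the monodromy on middle (relative or absolute) homology as $\rho_* = \tau_{V_1} \circ \tau_{V_2} \circ \cdots \circ \tau_{V_\mu}$ (with $\tau_{V_\mu}$ applied first), where each Dehn twist $\tau_{V_j}$ is supported in a neighborhood of $V_j$ disjoint from $\partial M$ and so descends to the same transvection
\[
\tau_{V_j}(\alpha) = \alpha + \eta\,(\alpha \bullet V_j)\,V_j, \qquad \eta := (-1)^{n(n-1)/2},
\]
on both $H_{n-1}(M)$ and $H_{n-1}(M,\partial M)$, via the relative/absolute intersection pairing. The precise ordering and sign will be chosen to be compatible with the conventions built into the adapted conditions.

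Next I would run a downward induction on $j$ on the iterates $\alpha_j := \tau_{V_j}(\alpha_{j+1})$, starting from $\alpha_{\mu+1} = [K_i]$. The three adapted conditions match the three stages of the induction exactly. For $j > i$, condition (2) (applied with $K$-index $i$ and $V$-index $j > i$) gives $[K_i] \bullet V_j = 0$, so each such $\tau_{V_j}$ acts trivially, and $\alpha_{i+1} = [K_i]$. At $j = i$, condition (3) gives $[K_i] \bullet V_i = 1$, so $\alpha_i = [K_i] + \eta [V_i]$. For $j < i$, condition (1) rewritten as $[K_i] \bullet V_j = -\eta\,(V_i \bullet V_j)$ exactly cancels the contribution of the new $\eta[V_i]$ term when pairing against $V_j$, so each remaining $\tau_{V_j}$ acts trivially and the class stabilizes at $[K_i] + \eta[V_i]$.

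Assembling the three stages yields $\rho_*[K_i] - [K_i] = \eta [V_i]$, which is exactly $\mathrm{var}([K_i]) = (-1)^{n(n-1)/2}[V_i]$. The main obstacle is bookkeeping rather than geometry: the adapted conditions treat the $K$-index and $V$-index asymmetrically (specifying $K_j \bullet V_i$ only for $j > i$), so one must be careful about role assignments, and simultaneously reconcile these with the sign and ordering conventions in the Picard-Lefschetz expansion of $\rho$. Once those conventions are pinned down, the induction itself is entirely routine.
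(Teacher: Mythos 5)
Your argument is the same as the paper's: write $\rho_* = \tau_{V_1}\cdots\tau_{V_\mu}$ (with $\tau_{V_\mu}$ applied first), note that condition (2) kills the twists with index $>i$, condition (3) makes $\tau_{V_i}$ contribute $\eta[V_i]$, and condition (1) is exactly the cancellation that makes the twists with index $<i$ act trivially. The bookkeeping of the sign $\eta=(-1)^{n(n-1)/2}$ and the asymmetric role of the indices in the adapted conditions is handled correctly, matching the paper's proof of Proposition~\ref{lem:adpatedvar}.
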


In Section \ref{sec:exceptionalcollection}, we will explain how our constructions work for plane curve singularities.
We show that for a plane curve singularity whose A'Campo divide has depth 0, we can find an exceptional collection consisting of non-compact connected Lagrangians as follows. 

 {\emph{A'Campo divide}} \cite{Areal}, \cite{Agene}  is a novel construction that enables one to visualize 
 a distinguished collection of vanishing cycles as well as their intersections in the Milnor fiber of (a perturbation of) a real plane curve singularity.  It produces a planar graph, called the $A\Gamma$ {\em diagram}, whose vertices correspond to vanishing cycles and edges correspond to intersections between them. 
 
 We define the notion of a depth of an $A\Gamma$ diagram (and thus a divide), by counting the minimal number of edges to reach vertices from outside (see Definition \ref{defn:depth} for the precise definition). Hence, any vertex of a depth 0 $A\Gamma$ diagram always meets the non-compact region of the complement of an $A\Gamma$ diagram in the plane. For example, ADE curve singularities have divides of depth 0.
 
 \begin{figure}[h]
\begin{subfigure}[t]{0.43\textwidth}
\includegraphics[scale=1]{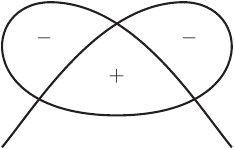}
\centering
\caption{divide of $E_{6}$ singularity}
\label{fig:E6divide}
\end{subfigure}
\begin{subfigure}[t]{0.43\textwidth}
\raisebox{3ex}{\includegraphics[scale=1]{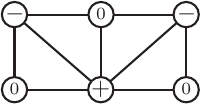}}
\centering
\caption{$A\Gamma$ diagram of $(A)$}
\label{fig:AGE6}
\end{subfigure}
\centering
\caption{Examples of divide and $A\Gamma$ diagram}
\label{fig:E6D}
\end{figure}

 \begin{thm}[See Theorem \ref{thm:ecadapted}]
 \label{thm:ec}
Let $f$ be an isolated plane curve singularity having an A'Campo divide of depth 0.
Denote by $(V_1, \ldots, V_\mu)$ the distinguished collection of Lagrangian vanishing cycles in $M$ from the given divide.
Then there exists an exceptional collection (with respect to $HF^*_\rho$) of non-compact connected Lagrangians $(K_1,\cdots, K_\mu)$  adapted to $(V_1,\cdots, V_\mu)$ satisfying
$$\mathcal{V}(K_i) \simeq V_i[1] \in \mathcal{WF}(M).$$ 
\end{thm}

Let us illustrate this in the case of $E_6$-singularity $f(x,y) = x^3+y^4$.
\begin{example}
For the case of $E_6$ singularity, there is a well-known divide given in Figure \ref{fig:E6D} (see \cite{AGV2} Chapter 4).
The Milnor fiber and its vanishing cycles are drawn in Figure \ref{fig:E6} (A).
The proof of Theorem \ref{thm:ec} is constructive and provides the ordered family of non-compact Lagrangians 
$$(K_1^-, K_2^-, K_1^0, K_2^0,K_3^0,K_1^+)$$ which is drawn in Figure \ref{fig:E6}(B). 

One can explicitly check that this ordered set is adapted and hence gives an exceptional collection.
Note from Theorem \ref{thm:distinguished} that a diagram of $HF_\rho^*$ for this family of non-compact Lagrangians is exactly the same as that of $\hom_{FS}^{*}$ of the corresponding vanishing cycles, which is an $A\Gamma$ diagram with each edge replaced by arrows toward increasing type (see diagram \ref{diagram:E6}). 
\begin{figure}[h]
\begin{subfigure}[t]{0.43\textwidth}
\includegraphics[scale=0.8]{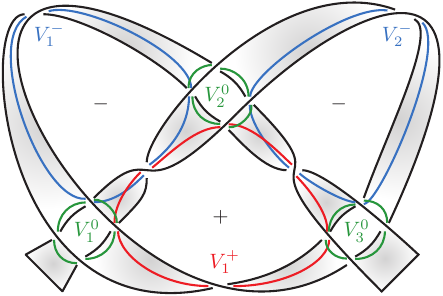}
\centering
\caption{ }
\end{subfigure}
\begin{subfigure}[t]{0.43\textwidth}
\includegraphics[scale=0.8]{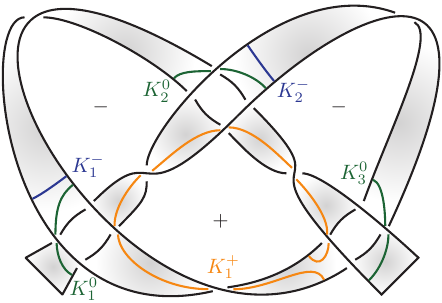}
\centering
\caption{ }
\label{fig:E6b}
\end{subfigure}
\centering
\caption{Vanishing cycles and adapted family of $E_{6}$}
\label{fig:E6}
\end{figure}
\end{example}

To sketch what happens for divides of higher depth, we would need the following conjectural  $\AI$-category. 
\begin{conjecture} \label{conj:AI}
 There exists a $\Z$-graded $\AI$-category $\mathcal{C}_\rho(M)$ whose 
 \begin{enumerate}
 \item objects are the same as that of the wrapped Fukaya category of $M$ and
 \item morphism space between two Lagrangians $L_1, L_2$ is defined by
\begin{equation}\label{conj:eq}
\hom^*_{\mathcal C_\rho}(L_1,L_2): = CF^*_\rho(L_1, L_2) = CW^{*-1}(\rho(K_1), K_2)\epsilon \oplus CW^{*}(K_1,K_2).
\end{equation}
Here, $\epsilon$ is a formal variable of degree $-1$.
\end{enumerate}
Its $\AI$-operation $\{M_k\}_{k=1}^\infty$ is defined by counting rigid pseudo-holomorphic popsicles with $\Gamma$-insertions at each sprinkles.
\end{conjecture}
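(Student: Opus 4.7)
The plan is to adapt the popsicle framework of Abouzaid--Seidel to the $\rho$-twisted wrapped setting, using the closed-open image of $\Gamma$ as sprinkle insertions and the formal parameter $\epsilon$ to track the mapping cone structure. Because the underlying vector space
\[
CF^*_\rho(L_1, L_2) \;=\; CW^{*-1}(\rho(L_1), L_2)\,\epsilon \;\oplus\; CW^{*}(L_1, L_2)
\]
is already assembled from pieces of the wrapped Fukaya category, the goal is to promote the cone differential coming from $\cap\Gamma$ to a full $A_\infty$-structure whose $M_1$ recovers the monodromy Lagrangian Floer differential.

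I would first define the arity-$k$ moduli space $\mathcal{R}^{k,\vec{s}}$ of popsicles: disks with one output and $k$ ordered inputs, together with interior marked points (``sprinkles'') constrained to lie on the geodesic joining each consecutive pair of boundary marked points; the tuple $\vec{s}=(s_1,\ldots,s_k)$ records the number of sprinkles along each edge. Boundary labels alternate between $L_i$ and $\rho(L_i)$ each time a sprinkle is crossed, compatibly with the fact that $\Gamma \in SH^*(\rho^{-1})$ shifts a Lagrangian boundary condition $\rho(L)$ back to $L$ (the same mechanism that underlies $\mathcal V$ and the twisted cap action). At each sprinkle one imposes a divisor condition against a chain-level representative of $\Gamma$ supported near $\partial M$, as in the construction of $\mathcal{CO}^\rho_L(\Gamma)$ from Section \ref{sec:symplectic var}. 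The operation $M_k$ is then defined by counting rigid solutions over all $\vec{s}$, weighting the contribution of a fixed $\vec{s}$ by an appropriate monomial in $\epsilon$ whose parity is determined by the total sprinkle count along each end.

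Compactness and transversality should follow from the standard wrapped package with quadratic Hamiltonians at infinity, combined with the fact that $\Gamma$ has a representative supported in a collar neighborhood of $\partial M$; this confines sprinkle evaluations to a compact region and rules out escape to infinity at sprinkle locations. The $A_\infty$-relations are extracted from the codimension-one boundary strata of one-dimensional components of $\mathcal{R}^{k,\vec{s}}$, which come in three kinds: (i) popsicle breaking along a boundary chord, yielding the usual quadratic composition terms; (ii) a sprinkle escaping to a boundary input or output, producing a twisted quantum cap contribution that reproduces the $\cap \Gamma$ part of $M_1$ and the off-diagonal blocks of higher $M_k$; and (iii) two sprinkles colliding in the interior of the disk.

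The main obstacle will be case (iii): in order to close the $A_\infty$-relations one has to show that collisions of sprinkles cancel against an auxiliary product-type contribution on $SH^*(\rho^{-1})$ applied to $\Gamma \otimes \Gamma$, generalizing the pair-of-pants product and BV structure on untwisted symplectic cohomology. This requires extending the framework of Uljarevic from Section \ref{sec:symplectic cohomology} to a chain-level product compatible with the $\rho$-twisted closed-open maps, and verifying that $\Gamma$ is sufficiently rigid (e.g.\ primitive/idempotent with respect to this product) for the colliding contribution to vanish or to be absorbable into lower-arity $M_j$ terms. A secondary but essential step is maintaining the $\Z$-grading, which reduces to a Maslov/Conley--Zehnder index computation for the $\rho$-twisted orbits representing $\Gamma$ together with the parity convention for $\epsilon$; once established, it guarantees that the $M_k$ assembled from popsicle counts are homogeneous of the expected degree $2-k$.
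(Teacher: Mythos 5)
The statement you are attempting to prove is explicitly a \emph{conjecture} in the paper, and the authors do not prove it; they outline the intended construction (popsicles with $\Gamma$-insertions at sprinkles, following \cite{CCJ}) and then state precisely why the proof is not complete. Your proposal correctly reproduces the intended moduli-space setup and, to your credit, identifies the same bottleneck the authors flag: the codimension-one strata where sprinkles collide in the interior of the disk. But at exactly this point your argument departs from what is actually known, and this is a genuine gap, not merely an omitted technicality.

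You suggest that sprinkle collisions can be made to cancel against ``an auxiliary product-type contribution on $SH^*(\rho^{-1})$ applied to $\Gamma\otimes\Gamma$,'' and that this can be closed by showing $\Gamma$ is primitive or idempotent for such a product. The paper describes the degeneration differently: when sprinkles collide they produce \emph{sphere bubbles carrying an induced popsicle structure}, and in the $\rho$-twisted setting these bubbles carry additional branch cuts along popsicle lines determined by $\rho$. This phenomenon simply does not occur in the original Abouzaid--Seidel / Seidel popsicle constructions, since there the sprinkles are allowed to overlap; the obstruction here is a new feature, not something controlled by the standard $A_\infty$-relations or a BV/pair-of-pants identity for $\Gamma$. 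The only setting in which the obstruction has actually been shown to vanish is the untwisted $\rho=\mathrm{Id}$ case of \cite{CCJ} for weighted homogeneous polynomials, and the mechanism there is a \emph{degree argument}: the boundary $\partial M$ carries a Reeb flow $S^1$-action of negative index, which forces the obstruction classes to sit in degrees where they must vanish. That is a fundamentally different mechanism from the algebraic primitivity/idempotency you invoke, and no analogous argument is presently available for a general monodromy $\rho$. The authors explicitly leave the vanishing of this obstruction as a further conjecture.

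In short: parts (i) and (ii) of your boundary analysis and your grading discussion are consistent with the intended construction, but part (iii) is not a gap you can fill with the tools in this paper. If you want to make progress, the realistic route is the one the paper hints at: identify the precise degree in which the popsicle-sphere obstruction class lives (taking into account the $\rho$-branch cuts and the grading of $\Gamma$ computed via $\mu_{CZ}(\Sigma)=0$), and look for geometric conditions on the singularity $f$ and its monodromy under which that degree is forced to be negative or otherwise excluded — mirroring the negative-index Reeb $S^1$-action criterion of \cite{CCJ} rather than attempting an algebraic cancellation.
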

Namely, $HF^*_\rho$ is expected to be the cohomology of this $\AI$-category. 
The conjectural $\AI$-structure is defined by counting rigid popsicles with $\Gamma$-insertions following  \cite{CCJ}. However, there are obstructions to the $\AI$-structure, which we explain in more detail below.

A popsicle, introduced by Abouzaid-Seidel \cite{AS} and Seidel \cite{Se18}, is a decorated disc with a single negative boundary marking and several positive boundary markings. 
Also, it can have several positive interior markings on geodesics connecting the boundary output and a boundary input. 
Note that the quantum cap action is the counting of the simplest non-trivial pseudo-holomorphic popsicles with a single $\Gamma$-insertion.

The last three authors \cite{CCJ} applied this idea in the case when $\rho$ is the identity: e.g., a quotient orbifold of the Milnor fiber of weighted homogeneous polynomial $f$. 
In general, interior insertions on popsicles collide with each other and create sphere bubbles with an induced popsicle structure. 
(This phenomenon does not happen in the original constructions \cite{AS, Se18}, since their sprinkles can overlap each other.)
Such popsicle spheres give obstructions to having an $\AI$-structure.
 It has been shown that the obstructions vanish for degree reason when $\partial M$ carries a Reeb flow $S^1$-action of a negative index. This holds for weighted homogeneous polynomials of log Fano/Calabi-Yau type,  hence the corresponding $\AI$-category is defined (see \cite{CCJ}). 
A similar obstruction exists for Conjecture $\ref{conj:AI}$.
They consist of similar popsicle spheres but with additional branch cuts along popsicle lines given by $\rho$.
We also conjecture that such an obstruction always vanishes when $\rho$ comes from the monodromy of an isolated singularity.

It would be interesting to study mutations of exceptional collections (Definition \ref{defn:excep}) in this conjectural $\AI$-category, which
we leave to the future. 

Let us go back to the case of higher depth divides.
We conjecture that the most natural way of expressing variation inverse images is via iterated cones of $\epsilon$-morphisms; recall from \eqref{eq:qvar1} that the monodromy Floer cohomology of two Lagrangians $K_1,K_2$ comes from either Floer cochain of $K_1$ and $K_2$ or that of $\rho(K_1)$ and $K_2$. 
The latter corresponds to the component with $\epsilon$ variable in \eqref{conj:eq} and we call them $\epsilon$-morphisms for simplicity.
In the higher depth case, we expect the vanishing cycles of A'Campo divide to be represented by iterated cones (or in general twisted complexes) of
$\epsilon$-morphisms. In Subsection \ref{subsec:depth 1}, we illustrate the $\epsilon$-cone object $L$ in an example of singularity whose A'Campo divide has depth 1.
We will leave a systematic study to future work. 

Last but not least, we remark two interesting results regarding the generation of Fukaya categories.
The first one due to Seidel \cite{S08} is that for a weighted homogenous polynomial $W$ with the weight condition $\frac{1}{w_1} + \cdots + \frac{1}{w_n} \neq 1$, its vanishing cycles generate Fukaya category of its Milnor fiber.
	On the other hand, Keating \cite{Keating} exhibited a Lagrangian torus inside the Milnor fiber of $T_{p,q,r}$ singularities which are not generated by vanishing cycles.
	It is interesting to ask whether Keating's tori is of the form $\mathcal V(L)$ for some exact Lagrangian cylinder $L$. 

	Note that for a Lagrangian plane $L$, if we attach $L$ and $\rho(L)$ via the Lagrangian cobordism in Section \ref{subsec:surgery}, then we obtain a Lagrangian sphere. 
	 But if we apply the construction to a Lagrangian $L$ which is topologically a cylinder, then we obtain a Lagrangian torus (obtained by gluing the two cylinders $L$ and $\rho(L)$).
	 Thus the existence of such $L$ would imply that the essential image of $\mathcal V$ is strictly larger than the subcategory generated by vanishing cycles.  

\subsection{Relation to pertinent works}
Experts have known that the relation between the Fukaya--Seidel category (or, the wrapped Fukaya category with stops $\Lambda_f$ determined by $f$ \cite{GPS20}) and the Fukaya category of the fiber can be described via an adjoint pair $(\bigcup, \bigcap)$ called the {\em cup} and {\em cap} functors:
$$
\begin{tikzcd}
\bigcup : \WF(M) \ar[r, shift left] & \WF(\C^n, \Lambda_f) : \bigcap. \ar[l, shift left]
\end{tikzcd}
$$
(We use a larger symbol $\bigcap$ to distinguish the cap functor from the quantum cap $\cap$.) See \cite{ASm} for the construction of certain Lefschetz fibrations; \cite{AA21} for a fiberwise wrapped construction and its applications to toric hypersurfaces; \cite{Sy19II} for a description using the Viterbo functor in the setting of partially wrapped Fukaya categories; and \cite{Jeff22} for functorial properties, including its application to singular hypersurfaces.

Probably the most relevant property of $(\bigcup, \bigcap)$ to this work is the exact triangle(\cite[Chapter~2]{Jeff22}):
\[
\begin{tikzcd}
\mu \ar[r, "s"] & \mathrm{Id} \ar[r] & \bigcap \bigcup \ar[r, "+1"] & \mu[1],
\end{tikzcd}
\quad \mu = \textrm{clockwise monodromy} = \rho^{-1}.
\]
Here, $s$ is a natural transformation defined by counting pseudo-holomorphic section invariants; see \cite{S08}. We have $\rho \circ (\bigcap \bigcup L) = \mathrm{Cone}\left( \rho(s_L) : L \to \rho(L)\right)$, which is very close to the definition of $\mathcal V(L)$. Thus, establishing the relation between $\rho\circ(\bigcap\bigcup) (L)$ and $\mathcal{V}(L)$ is equivalent to proving the equality 
\begin{equation}
\label{eq: s is Gamma}
\rho(s_L)=\mathcal {CO}^\rho_L(\Gamma).
\end{equation}
While an explicit computation of $s_L$ is generally difficult, we conjecture that equality \eqref{eq: s is Gamma} holds for isolated hypersurface singularities. The invariant $s_L$ is also expressed as the closed-open image of a certain symplectic cochain $s \in SC^*(\rho)$. The specific dynamics of the monodromy $\rho$ of the Milnor fiber allows for an explicit computation of $s$. The detailed proof, which is beyond the scope of the present paper, will be presented in a subsequent work. 

Assuming \eqref{eq: s is Gamma}, the group $HF^*_\rho(L_1, L_2)$ should conicide with $\hom_{\WF(\C^n, \Lambda_f)}\left(\bigcup L_1, \bigcup L_2\right)$. We can interpret Theorem \ref{thm:var} using cup and cap functors. For example, (1) is the adjoint property of $(\bigcup, \bigcap)$:
\[
HF^*_\rho(L_1, L_2) = \Hom_{\WF(\C^n, \Lambda_f)}\left(\bigcup L_1, \bigcup L_2\right) \simeq \Hom_{\WF(M)}\left(\rho(L_1), \bigcap\bigcup \rho (L_2) \right) = HF^*\left(L_1, \mathcal V(L_2)\right).
\]

The idea of using the monodromy orbit $\Gamma$ first appeared in the previous work \cite{CCJ} of the last three authors of this paper, in the case of weighted homogeneous polynomial $W$. In this case, $\Gamma$ is an element of the symplectic cohomology (of the identity automorphism) of the Milnor fiber quotient orbifold $[M_W/G_W]$ with respect to the maximal diagonal symmetry group $G_W$. Namely, the monodromy (which does not fix the boundary) is given by the diagonal weight action, and hence the monodromy becomes trivial in the quotient $[M_W/G_W]$. This allowed us to define the Fukaya category of the Landau--Ginzburg orbifold $(W, G_W)$.

\subsection{Acknowledgements}
We would like to thank Javier Fernandez de Bobabilla, Tomasz Pe{\l}ka, Pablo Portilla Cuadrado, Nero Budur, Kaoru Ono, and Otto van Koert for very helpful discussions and their interest in our work. We also thank the anonymous reviewer for comments and suggestions that have helped improve the paper. Bae and Cho are supported by the National Research Foundation of Korea(NRF) grant funded by the Korea government(MSIT) (No.2020R1A5A1016126). Choa is supported by the KIAS individual grant (MG079401). Cho and Jeong are supported by Samsung Science and Technology Foundation under project number SSTF-BA1402-52. Jeong is supported by the National Research Foundation of Korea (NRF) grants funded by the Korean government (MIST, No. RS-2025-24803252) and through the G-LAMP program (MOE, RS-2024-00441954).

\section{Symplectic cohomology of an exact symplectomorphism} 
\label{sec:symplectic cohomology}

Let $(W,\omega=d\lambda)$ be a Liouville domain of dimension $2n$. Let $Z$ be the associated Liouville vector field, i.e., $\iota_{Z} d\lambda =\lambda$ and let $\phi_Z^t$ be the time $t$-flow of $Z$. Then there is a collar neighborhood $(0, 1] \times \partial W$ of $\partial W$, where the radial coordinate $r \in (0, 1]$ is obtained by integrating $Z$ in the sense that $r(y) = 1$ for all $y\in \partial W$ and $r( \phi_Z^t(y)) = e^t r(y) = e^t$ for all $y \in \partial W, t\in (-\infty,0]$. Therefore $(W,d\lambda)$ can be naturally extended to a non-compact symplectic manifold $(\widehat{W},\omega=d\lambda)$ by gluing the cylindrical end $[1,\infty) \times \partial W$ with $W$ along $\partial W$ as follows:
$$\widehat{W} = W \cup_{\partial W} [1,\infty) \times \partial W
\text{ and } \lambda = r \lambda|_{\partial W} \text{ on } [1,\infty) \times \partial W,$$ 
which is called the completion of $(W,d\lambda)$.
The Liouville vector field $Z$ extends to the cylindrical end by $Z = r \partial_r$. Furthermore, the restriction $\theta:= \lambda|_{\partial W}$ is a contact form. Let $R$ be the associated Reeb vector field and  $\phi_R^t$ be its time-$t$ flow. We may assume that the Liouville form $\lambda$ is generic enough so that Reeb orbits are nondegenerate as discussed in \cite[Section 2]{A10}.

We are interested in exact symplectomorphisms $\phi:W \to W$ that are given by a Reeb flow near $\partial W$ in the sense that there exist $a \in \R$ and small $0<\eta<1$ such that 
\[ \phi(r,y ) = (\mathrm{Id} \times \phi_R^a)(r,y)=(r, \phi_R^a(y)), \forall r\in [1-\eta, 1], y \in \partial W.\]
Such $\phi$ may be extended to the completion $\widehat{W}$ by $\mathrm{Id} \times \phi_R^a$ on $[1,\infty) \times \partial W$ and will be still denoted by $\phi$ for convenience. 
We will assume that $\phi$ is such an exact symplectomorphism throughout this section.

We introduce fixed point Floer cohomology $HF^*(\phi;H,J)$ for certain time-dependent Hamiltonians $H$ and time-dependent almost complex structures $J$. In particular, when $H$ is quadratic at infinity, the corresponding fixed point Floer cohomology will be called symplectic cohomology of $\phi$ and will be denoted by $SH^*(\phi)$, see Subsection \ref{subsubsection:symplectic cohomology}.

Especially when $a=0$, $\phi$ equals the identity map near $\partial W$. In \cite{Ulj}, the author introduced the fixed point Floer cohomology $HF^*(\phi, b)$ for such an exact symplectomorphism $\phi$ and $b \in \mathbb{R}$ that is not a period of any Reeb orbits, which is the fixed point Floer cohomology $HF^*(\phi;H_b,J)$ for a linear Hamiltonian $H_b$ with slope $b$. Furthermore, they defined $HF^*(\phi, \infty)$ by the direct limit of $HF^*(\phi,b)$ with respect to the continuation maps as $b$ goes to infinity. We will see that these definitions can be extended to any exact symplectomorphism $\phi$ that is given by a Reeb flow near $\partial W$ and then in Subsection \ref{subsubsection:symplectic cohomology}, it will be verified that the symplectic cohomology $SH^*(\phi)$ is canonically isomorphic to $HF^*(\phi,\infty)$.\\ 

\subsection{Fixed point Floer cohomology}
\label{subsection:fixed point Floer cohomology}
Let us begin the construction of fixed point Floer cohomology. Let $\phi$ be a symplectomorphism on $W$ given as above. Our assumption on $\phi$ implies that there exists a compactly supported function $F_{\phi} : \widehat{W} \to \R$ such that
\begin{equation}\label{eq:F}
	\phi^*\lambda- \lambda = dF_{\phi}.
\end{equation}
This property holds since $(\mathrm{Id} \times \phi_R^a)^*( r \theta) = r \theta$ on $[1,\infty) \times \partial W \subset \widehat{W}$.

We consider the mapping torus of $\phi$:
$$T_{\phi_{}} = \R \times \widehat{W}/ (t, \phi_{} (x)) \sim (t+1,x).$$
Then sections of the projection $ \pi : T_{\phi_{}} \to \R/\Z=S^1, [(t,x)] \mapsto t$ are $\phi$-twisted loops, i.e., maps $\gamma : \R \to \widehat{W}$ satisfying
$$ \phi_{}(\gamma(t+1)) = \gamma(t).$$
Let us denote the space of twisted loops by
$$ \Omega_{\phi_{}} = \big\{ \gamma : \R \to \widehat{W} | \phi_{}(\gamma(t+1)) = \gamma(t) \big\}.$$

	Now, we consider time-dependent Hamiltonians $H : \R \times \widehat{W} \to \R$ satisfying the following:
	\begin{enumerate}
	\item $H$ is $\phi$-periodic in the sense that
	\begin{equation}
		\label{eq:phi-periodic}
		H(t+1,x) = H(t,\phi_{}(x)), \forall (t,x) \in \R \times \widehat{W}.
	\end{equation}

	\item $H$ satisfies
		\begin{equation}
		\label{eq:admissible hamiltonian}
			H(t, r,y) = h(r) \text{ in some open neighborhood of } R_i \text{'s}.
		\end{equation}
	for some increasing sequence $(R_i)_{i \in \mathbb{N}} \subset \R_{\geq R}$ and some smooth function $h : [R,\infty) \to \R$ such that
		\begin{itemize}
		\item $\lim_{i \to \infty} R_i =\infty$,
		\item $h''(r) \geq 0$ for all $r \geq R$.
		\end{itemize}	
	\end{enumerate}
	We call such a Hamiltonian $H$ $\phi$-admissible and  denote by $\mathcal{H}_{\phi} = \mathcal{H}_{\phi}(\widehat{W})$ the set of all $\phi$-admissible Hamiltonians.
	
	Let $H \in \mathcal{H}_{\phi}(\widehat{W})$ be given. The conditions \eqref{eq:F} and \eqref{eq:phi-periodic} allow us to consider the action functional $\mathcal{A}_{\phi, H} : \Omega_{\phi_{~}} \to \R$ defined by
	\begin{equation}\label{eq:action}
		\mathcal{A}_{\phi,H} (\gamma) = - \int_0^1 \gamma^* \lambda + \int_{0}^1 H(t,\gamma(t))dt  - F_{\phi_{~}}( \gamma(1)).
	\end{equation}

Critical points of $\mathcal{A}_{\phi,H}$ are twisted Hamiltonian orbits, that is, elements of the set
\begin{equation*}\label{eq:twistedorbits}
	\mathcal{P}({\phi,H}) = \big\{ \gamma\in \Omega_{\phi_{~}} | \dot{\gamma}(t) = X_{H_t} ( \gamma(t)) \big\},
\end{equation*}
where $X_{H_t}$ is the Hamiltonian vector field associated with $H_t (\cdot)= H(t,\cdot)$ on $\widehat{W}$. 
A twisted Hamiltonian orbit $\gamma \in \mathcal{P}(\phi,H)$ is called {\em non-degenerate} if
\begin{equation*}\label{eq:nondegenerate}
	\det ( d\phi \circ d\phi_{H}^1 -\text{Id}) \neq 0
\end{equation*}
at $T_{\gamma(1)} W$, where $\phi_H^t$ is the time-$t$ flow of the time-dependent Hamiltonian vector field $\{X_{H_t}\}_{t\in \R}$. We call $H$ {\em non-degenerate} if all the twisted Hamiltonian orbits $\gamma \in \mathcal{P}(\phi,H)$ are non-degenerate.

Next, let $\mathcal{J}_{\phi} = \mathcal{J}_{\phi}(\widehat{W})$ denote the set of all 1-parameter families $J = \{J_t\}_{t\in \R}$ of $\omega$-compatible almost complex structures $J_t$ of contact type on $\widehat{W}$ that are $\phi$-periodic in the sense that
\begin{equation}
	\label{eq:acs}
	J_{t+1} = \phi_{}^* J_t, \forall t\in \R.
\end{equation}
By a {\em Floer datum for $\phi$}, we mean a pair $(H,J)$ of a Hamiltonian $H \in \mathcal{H}_{\phi}(\widehat{W})$ and an almost complex structure $J \in \mathcal{J}_{\phi}(\widehat{W})$.

To endow our Floer chain complex with a $\mathbb{Z}$-grading, we first grade elements of $\mathcal{P}({\phi,H})$ for  $H \in \mathcal{H}_{\phi}(\widehat{W})$. For that purpose, the vertical tangent bundle $T^{\mathrm{ver}} T_{\phi}$ on $T_{\phi}$ given by
$$ T^\mathrm{ver} T_{\phi} : = \ker d\pi = \R \times T\widehat{W} /(t, d\phi(v)) \sim (t+1, v)$$
is required to satisfy $c_1(T^\mathrm{ver} T_{\phi}) =0$. This assumption implies that there is a non-vanishing complex volume form $\Omega$ on $T^\mathrm{ver} T_{\phi}$ (with respect to $J$), or equivalently a 1-parameter family of non-vanishing complex volume form $\{\Omega_{t}\}_{t \in \R}$ on $W$ (with respect to $\{J_t\}_{t\in \R}$) such that
\begin{equation}\label{eq:fiberwisecomplexvolumeform}
	\phi^*\Omega_t = \Omega_{t+1},\forall t\in \R.
\end{equation}
Since $\{\Omega_t\}_{t\in \R}$ is determined by $\{\Omega_t\}_{t\in [0,1]}$ due to  \eqref{eq:fiberwisecomplexvolumeform} and so is $\Omega$, we will just write $\Omega = \{ \Omega_t\}_{t \in [0,1]}$ to denote such a non-vanishing complex volume form on $T^\mathrm{ver} T_{\phi}$.

For a given non-degenerate twisted Hamiltonian orbit $\gamma \in \mathcal{P}({\phi,H})$, we assign its cohomological degree as follows. First we choose a symplectic trivialization $\Psi : [0,1]\times \R^{2n} \to \gamma^* T \widehat{W}$ such that
\begin{equation}\label{eq:compatibility1}
	d\phi \circ \Psi_1 = \Psi_0,
\end{equation}
\begin{equation}\label{eq:compatibility2}
	\Psi_t^* \Omega_t  = \wedge_{j=1}^n (dx_j +\sqrt{-1} dy_j),
\end{equation}
where $\Psi_t$ is the restriction $\Psi |_{\{t\} \times \R^{2n}}$ and $(x_1,y_1,\dots,x_n,y_n)$ is the coordinate of $\R^{2n}$.
This gives rise to a path of symplectic matrices:
\begin{equation}\label{eq:pathofsymplecticmatrices}
	\Lambda(t) = \Psi_{t}^{-1} \circ d\phi_{H}^{t} \circ \Psi_0, t\in [0,1].
\end{equation}
Then we define the cohomological degree of $\gamma$ by
$$\deg \gamma = n - \mu_{CZ}(\Lambda),$$
where the Conley-Zenhder index $\mu_{CZ}$ is defined as in \cite{Gut14}.

Let $(H,J)$ be a Floer datum for some non-degenerate Hamiltonian $H\in \mathcal{H}_{\phi}(\widehat{W})$ and let $\Omega$ be a non-vanishing complex volume form $T^\mathrm{ver}T_{\phi}$. We define the Floer complex $CF^*(\phi;H,J)$ by the graded vector space generated by $\mathcal{P}({\phi,H})$ over a field $\mathbb{K}$, where the degree of $\gamma \in \mathcal{P}({\phi,H})$ is given by $\deg \gamma$, i.e.,
$$ CF^k (\phi;H,J) = \bigoplus_{\deg \gamma = k} \mathbb{K} \cdot \gamma .$$

The differential of the Floer complex  $CF^*(\phi;H,J)$ is defined by counting Floer strips between two twisted Hamiltonian orbits of index 1 with proper signs. Here, a Floer strip means a solution $u : \mathbb{R} \times \R \to \widehat{W}$ to the following Floer equation
\begin{equation}\label{eq:floerequation}
	\begin{split}
		&\partial_s u + J_t ( \partial_t u - X_{H} (u)) =0,\\
		&\phi_{}(u(s, t+1)) = u(s,t), \forall s \in \R,t \in \R.
	\end{split}
\end{equation}

To be more precise, for given $\gamma_{-},\gamma_+ \in \mathcal{P}({\phi,H})$, we consider
\begin{equation}\label{eq:floerstrip}
	\mathcal{M} (\gamma_{-},\gamma_+;H, J) = \big\{ u : \R \times \R \to \widehat{W} : \eqref{eq:floerequation}, \lim_{s \to \pm \infty} u(s,t) = \gamma_{\pm} (t)\big\}.
\end{equation}
Considering that the radial coordinate $r$ is preserved by $\phi$ on the cylindrical end $[1,\infty)\times \partial W \subset \widehat{W}$, it is not difficult to see that a modification of the arguments in \cite{Se06} or in \cite[Section 4]{Ulj} shows that there is a sufficiently large $R'\geq 1$ such that every element $u$ of $\mathcal{M} (\gamma_{-},\gamma_+;H, J)$ do not enter $[R',\infty) \times \partial W \subset \widehat{W}$.

An element $u \in \mathcal{M}(\gamma_{-},\gamma_+;H, J)$ is called {\em regular} if the associated Fredholm operator is surjective so that the component $\mathcal{M}^u (\gamma_{-},\gamma_+;H, J)$ of $\mathcal{M} (\gamma_{-},\gamma_+;H, J)$ containing $u$ is transversely cut out and hence has dimension $\deg \gamma_- - \deg \gamma_+ $. We say that a Floer datum $(H,J)$ is regular if $H$ is non-degenerate and every element of $\mathcal{M}(\gamma_{-},\gamma_+;H, J)$ is regular. A standard argument shows that, for a given Hamiltonian $H \in \mathcal{H}_{\phi}(\widehat{W})$, there exists a residual subset $\mathcal{J}^{reg}_{\phi}(\widehat{W})$ of $\mathcal{J}_{\phi}(\widehat{W})$ such that the Floer datum $(H,J)$ is regular for all $J \in \mathcal{J}^{reg}_\phi(\widehat{W})$.

For a regular Floer datum $(H,J)$, the differential $\delta: CF^* (\phi_{};H,J) \to CF^{*}(\phi_{};H,J)$ is defined by
\begin{equation}\label{eq:differential}
	\delta(\gamma_+) = \sum_{\deg \gamma_- = \deg \gamma_+ +1} \# [\mathcal{M}( \gamma_-,\gamma_+; H,J)/\mathbb{R}] \cdot \gamma_{-},
\end{equation}
where the quotient $/\mathbb{R}$ is taken with respect to the  translation along the $s$-direction. 
The signed count $\# [\mathcal{M}( \gamma_-,\gamma_+; H,J)/\R]$ is defined as in \cite[Section 4]{Mc19}. Then the differential operator $\delta$ squares to zero as explained in \cite[Theorem 2.18]{Ulj}, \cite[Section 4]{Mc19}. We define the Floer cohomology for the Floer datum $(H,J)$ by the cohomology of the Floer complex $(CF^*(\phi_{};H,J),\delta)$:
$$HF^*(\phi_{};H,J) := H^*(CF^*(\phi_{};H,J),\delta).$$

The fixed point Floer cohomology is invariant under compact Hamiltonian isotopy in the following sense.
\begin{prop}
	\label{prop:invariance1}
	Let $\phi^1$ and $\phi^2$ be exact symplectomorphisms on a Liouville domain $(W,d\lambda)$ that are equal to a Reeb flow $\phi_R^a$ for some $a\in \R$ near $\partial W$. Furthermore, assume that $\phi^2 = \phi^1 \circ \phi_H^1$ for some Hamiltonian $H : \widehat{W} \to \R$ with a compact support $\mathrm{Supp}\ H$.
	
	Let $H^1 \in \mathcal{H}_{\phi_1}$ and $H^2 \in \mathcal{H}_{\phi_2}$ be given in such a way that
	$H^1 =H^2$ outside $\R \times K$ for some compact subset $V$ of $\widehat{W}$ containing $\mathrm{Supp}\ H$. 
	Then, for any regular Floer datum $(H^1,J^1)$ for $\phi^1$ and regular Floer datum $(H^2,J^2)$ for $\phi^2$, there is a canonical isomorphism
	\[HF^*(\phi^1 ;H^1,J^1) \cong HF^*(\phi^2 ;H^2,J^2).\]
\end{prop}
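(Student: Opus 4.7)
The plan is to prove this by constructing a continuation-type chain map using a smooth family interpolating between the two Floer data, and verifying that it has a homotopy inverse of the same kind. Fix a smooth monotone cutoff $\chi:\R \to [0,1]$ with $\chi \equiv 0$ for $s\ll 0$ and $\chi \equiv 1$ for $s\gg 0$, and set $\psi_s = \phi^1 \circ \phi_H^{\chi(s)}$. Because $H$ is compactly supported in $V$, each $\psi_s$ is an exact symplectomorphism that equals $\mathrm{Id}\times \phi_R^a$ on the cylindrical end. Choose smooth families $\{H^s\}_{s\in \R}$ and $\{J^s\}_{s\in \R}$ with $(H^s,J^s) = (H^1,J^1)$ for $s\ll 0$, $(H^s,J^s) = (H^2,J^2)$ for $s\gg 0$, and such that $H^s$ is $\psi_s$-periodic in the sense of \eqref{eq:phi-periodic} and $J^s$ is $\psi_s$-periodic in the sense of \eqref{eq:acs}. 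This is possible because the hypothesis $H^1 = H^2$ outside $\R\times V$ together with $\psi_s|_{\widehat{W}\setminus V} = \phi^1|_{\widehat{W}\setminus V} = \phi^2|_{\widehat{W}\setminus V}$ reduces the interpolation problem to the compact region $\R\times V$, where one extends $\psi_s$-periodically using $\psi_s$ as the gluing diffeomorphism.

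Second, I define the continuation map $\Phi: CF^*(\phi^1;H^1,J^1) \to CF^*(\phi^2;H^2,J^2)$ by counting, with signs, rigid solutions $u : \R_s \times \R_t \to \widehat{W}$ to the $s$-dependent Floer equation
\begin{equation*}
\partial_s u + J^s_t\bigl(\partial_t u - X_{H^s}(u)\bigr) = 0, \qquad \psi_s\bigl(u(s,t+1)\bigr) = u(s,t),
\end{equation*}
with asymptotics $\lim_{s\to -\infty} u(s,\cdot) = \gamma^-\in \mathcal{P}(\phi^1,H^1)$ and $\lim_{s\to +\infty} u(s,\cdot) = \gamma^+ \in \mathcal{P}(\phi^2,H^2)$. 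The no-escape maximum principle applies to the radial coordinate $r$ on $[1,\infty)\times \partial W$: since $\psi_s$ preserves $r$ and $H^s, J^s$ are admissible in the cylindrical end uniformly in $s$, the argument of \cite[Section 4]{Ulj} and \cite[Section 4]{Mc19} carries through to show all such $u$ stay in some $\{r \leq R'\}$. For a generic choice of $\{J^s\}$ the zero-dimensional moduli spaces are transversely cut out and compact, and breaking of one-dimensional components yields the chain map identity $\Phi\circ \delta_1 = \delta_2 \circ \Phi$.

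Third, reversing $s$ and swapping the roles of $\phi^1,\phi^2$ gives a continuation map $\Psi$ in the opposite direction. A standard two-parameter homotopy-of-homotopies argument, gluing $\psi_s\circ \psi_{-s}$ to a constant family through an $R$-parameter family of interpolations, produces chain homotopies showing that both $\Psi\circ \Phi$ and $\Phi\circ \Psi$ are chain-homotopic to the identity; the same technique shows that the induced isomorphism on $HF^*$ is independent of the choices of $\chi$ and of $\{(H^s,J^s)\}$, giving the canonicality claim.

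The main obstacle will be the compactness and transversality package in the $s$-dependent \emph{twisted} setting, which is the only substantive departure from the usual continuation argument: the boundary identification $\psi_s(u(s,t+1)) = u(s,t)$ now varies with $s$, so one must verify that the relevant energy identity, the integrated maximum principle, and the Fredholm theory all continue to hold. However, because $\psi_s$ is stationary for $|s|$ large and agrees with the same Reeb flow $\phi_R^a$ near $\partial W$ for every $s$, these estimates only require cosmetic modifications from the fixed-$\phi$ case treated in Subsection \ref{subsection:fixed point Floer cohomology}.
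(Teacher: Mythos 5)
Your proposal follows essentially the same continuation-map strategy as the paper: interpolating the symplectomorphism through $\psi_s = \phi^1\circ\phi_H^{\chi(s)}$ together with $s$-dependent Floer data agreeing with $H^1=H^2$ outside a compact set, counting solutions of the $s$-dependent twisted Floer equation, and producing a homotopy inverse from the reversed family. The paper's proof is terser (it does not spell out the interpolation $\psi_s$ or the compactness package), so you are filling in the same idea in greater detail; the one caveat is that your labelling of which asymptotic end carries the $(\phi^1,H^1,J^1)$-data is opposite to the paper's convention in \eqref{eq:differential}, a notational mismatch but not an error.
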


This can be shown by the standard argument using a continuation map. Indeed, there exists a continuation map $c_{1,2} : CF^*(\phi^1;H^1,J^1) \to CF^*(\phi^2;H^2,J^2)$ defined by using a 1-parameter family of triples $(\phi^s, H^s,J^s)$, $s\in \R$ of an exact symplectomorphism $\phi^s$ on $(W,d\lambda)$, $H^s \in \mathcal{H}_{\phi^s}$ and $J^s \in \mathcal{J}_{\phi^s}$ such that
\begin{itemize}
	\item $(\phi^s, H^s,J^s) = \begin{cases}(\phi^2, H^2,J^2) & (s<<0), \\(\phi^1, H^1,J^1)  &(s>>0), \end{cases}$
	\item $\phi^s$ is equal to the Reeb flow $\phi_R^a$ for all $s\in \R$, and
	\item $H^s$ is equal to $H^1$ outside $\R \times V$ for the compact subset $V$ above for all $s\in \R$. 
\end{itemize}
It can be seen that such a continuation map is a chain map and induces an isomorphism on the homology as it admits an inverse on the homology defined similarly.

\begin{remark}\label{remark:abstractcontactopenbook}
	An abstract contact open book is a triple $(W,\lambda, \phi)$ consisting of a Liouville domain $(W, \lambda)$ (denoted by $(W,d\lambda)$ in our paper) and an exact symplectomorphism $\phi$ on $W$ that is the identity map near $\partial W$. Following Seidel \cite{SeDehn}, McLean introduced the fixed point Floer cohomology $HF^*(\phi, \pm)$
	for a graded abstract contact open book $(W, \lambda,\phi)$,
	where the sign $\pm$ denotes the direction of the Reeb flow which is used to perturb $\phi$ near $\partial W$. 
	
	Here, a grading on 
	$(W,\lambda,\phi)$ consists of a grading on $W$ and a grading on $\phi$ compatible with the grading on $W$, see \cite{S08}, \cite{Mc19}, \cite[Section 5.4.3]{BP}. The former is determined by a non-vanishing complex volume form $\Omega_0$ on $W$ (up to homotopy) and the latter determined by a homotopy between $\Omega_0$ and $\phi^*\Omega_0$ (up to homotopy), i.e., a $1$-parameter family $\{\Omega_t\}_{t\in [0,1]}$ of complex volume form on $W$ such that
	\[ \Omega_1 = \phi^* \Omega_0.\]
	Therefore, this grading data is equivalent to our grading data, a non-vanishing complex volume form on $T^\mathrm{ver} T_{\phi}$
	as shown in \cite[Corollary 5.10]{BP}.
	
	Two abstract contact open books $(W_0,\lambda_0,\phi_0)$ and $(W_1, \lambda_1, \phi_1)$ are isotopic if $W_0$ and $W_1$ are diffeomorphic and there is a smooth $1$-parameter family of abstract contact open book $\{(W_t,\lambda_t,\phi_t)\}_{t \in [0,1]}$ connecting those two. An isotopy $\{(W_t,\lambda_t,\phi_t)\}_{t \in [0,1]}$ between two graded abstract contact open book is said to be graded if the $1$-parameter family of graded symplectomorphisms $\phi_t$ vary smoothly on $t \in [0,1]$. In \cite{Mc19}, it was proved that the fixed point Floer cohomology $HF^*(\phi, \pm)$ is invariant under a graded isotopy of graded abstract contact open book. For a graded abstract open book, 
	$HF^*(\phi,\pm)$ of \cite{Mc19} can be understood as $HF^*(\phi;H_{\pm \epsilon},J_{\pm \epsilon})$, where  $\epsilon>0$ is chosen to be smaller than the minimal period of positive/negative Reeb orbits and $(H_{\pm \epsilon}, J_{\pm \epsilon})$ is a Floer datum for $\phi$ such that $H_{\pm \epsilon}$ is a linear Hamiltonian with slope $\pm \epsilon$ as will be explained the following subsection. (see \cite[Section 6]{BP}).
\end{remark}

In the following subsections, we will consider the fixed point Floer cohomology for linear and quadratic 
$\phi$-admissible Hamiltonians, separately.
\subsubsection{Fixed point Floer cohomologies for linear Hamiltonians}
Let us first consider the fixed point Floer cohomology for linear Hamiltonians.

	Let $\phi$ be an exact symplectomorphism on a Liouville domain $(W,d\lambda)$ that equals the Reeb flow $\phi_R^a$ for some $a\in \R$ near $\partial W$. For any real number $b$ not equal to periods of any Reeb orbits, let $H_b \in \mathcal{H}_{\phi}(\widehat{W})$ be a Hamiltonian satisfying 
	$$H_b(t,r,x) = br + c , \forall (t,r,x)\in \R \times [R,\infty] \times \partial W$$
	for some $R>0$ and for some $c \in \R$.  We call such a Hamiltonian $H_b$ linear at infinity with slope $b$ or just linear with slope $b$. Note that a linear Hamiltonian $H_b \in \mathcal{H}_{\phi} (\widehat{W})$ can be chosen to be non-degenerate since $H_b$ is allowed to be time-dependent and generic enough on $\R \times \widehat{W} \setminus [R,\infty] \times \partial W$, where all the twisted Hamiltonian orbits of $H_b$ occur.
	
	For any real numbers $a_1 \leq a_2$ that are not periods of any Reeb orbits, we choose linear Hamiltonians $H_{a_1}, H_{a_2} \in \mathcal{H}_{\phi}(\widehat{W})$ with slopes $a_1$, $a_2$ such that $H_{a_1} \leq H_{a_2}$ outside a compact subset of $\widehat{W}$ and almost complex structures $J_{a_1},J_{a_2} \in \mathcal{J}_{\phi}(\widehat{W})$ for which $(H_{a_j}, J_{a_j})$ is regular for $j=1,2$. Then, as shown in \cite{Ulj}, there is a continuation map $c_{a_1,a_2} : CF^*(\phi; H_{a_1},J_{a_1}) \to CF^*(\phi; H_{a_2},J_{a_2})$ defined using a 1-parameter family of admissible Hamiltonians that are increasing from $H_{a_1}$ to $H_{a_2}$ outside a compact subset of $\widehat{W}$. We denote its induced map on homology by $c_{a_1,a_2}:HF^*(\phi; H_{a_1},J_{a_1}) \to HF^*(\phi; H_{a_2},J_{a_2})$ by abuse of notation. It was further shown that $c_{a_1,a_2}$ is independent of the choice of an increasing homotopy from $H_{a_1}$ to $H_{a_2}$ used to define it. Using these continuation maps, we define $HF^*(\phi,\infty)$ as follows.

\begin{defn}\label{defn:floercohomologyforaslope}

Let $(H_{a_i})_{i\in \mathbb{Z}_{\geq 1}}$ be a sequence of linear Hamiltonians in $\mathcal{H}_{\phi}(\widehat{W})$
for some increasing sequence $(a_i)_{i \in \mathbb{Z}_{\geq 1}}$ of real numbers $a_i \in [1 ,\infty)$ such that 
\begin{itemize}
	\item  $a_i$ is not a period of any Reeb orbit for each $i \in \mathbb{Z}_{\geq 1}$ and 
	\item $\lim_{i \to \infty} a_i = \infty$, 
\end{itemize}
and $(J^i)_{i \in \mathbb{Z}_{\geq 1}}$ is a sequence of almost complex structures in $\mathcal{J}_{\phi}(\widehat{W})$ such that the Floer datum $(H_{a_i}, J^i)$ is regular for all $i \in \mathbb{Z}_{\geq 1}$.
	Then $HF^*(\phi;\infty)$ is defined by
		\[HF^*(\phi, \infty)  =\lim_{i \to \infty} HF^*(\phi;H_{a_i},J_{a_i}).\]
		where the direct limit is taken with respect to the continuation maps as $i \to \infty$.
\end{defn}

For given two increasing sequence of Hamiltonians $(H_{a_i})_{i \in \mathbb{Z}_{\geq 1}}$ and $(H'_{a'_i})_{i \in \mathbb{Z}_{\geq 1}}$, after finding their subsequences, we may assume that
$$a_i \leq a'_i \leq a_{i+1},\forall i \in \mathbb{Z}_{\geq 1}.$$
It follows that the continuation map $c_{a_i,a_{i+1}} : HF^*(\phi;H_{a_i}, J_{a_i}) \to HF^*(\phi;H_{a_{i+1}},J_{a_{i+1}})$ factors through
$$c_{a_i,a'_i} :HF^*(\phi;H_{a_i}, J_{a_i}) \to HF^*(\phi;H'_{a'_i}, J'_{a'_i})  \text{ and } c_{a'_{i},a_{i+1}} : HF^*(\phi;H'_{a'_i}, J'_{a'_i}) \to HF^*(\phi;H_{a_{i+1}},J_{a_{i+1}} ) $$
and a similar statement holds for $c_{a'_i,a'_{i+1}}$. 
This implies that there exist canonically well-defined maps $\lim_{i\to \infty} HF^*(\phi;H_{a_i},J_{a_i}) \to \lim_{i \to \infty}  HF^*(\phi;H'_{a'_i},J'_{a'_i})$ and $\lim_{i\to \infty} HF^*(\phi;H'_{a'_i},J'_{a'_i}) \to \lim_{i \to \infty}  HF^*(\phi;H_{a_i},J_{a_i})$ inverse to each other. This shows that $HF^*(\phi, \infty)$ is well-defined regardless of the choice of Floer data $\{(H_{a_i}, J_{a_i})\}_{i\in \mathbb{Z}_{\geq 1}}$.

\subsubsection{Perturbation with Reeb flows}
\label{subsubsection:perturbation}
For a real number $c$, we say that $\widetilde{\phi}$ is a {\em good perturbation} of $\phi$ with a time-$c$ Reeb flow if
\[ \widetilde{\phi} = \phi \circ \phi^1_{H_{c}} \] 
for the time-$1$ Hamiltonian flow $\phi_{H_{c}}^1$ of a linear Hamiltonian $H_{c} \in \mathcal{H}_{\phi}(\widehat{W})$ satisfying
$$ H_c(t,r,y) = h_c(r),\forall (t,r,y) \in \R \times [1-\eta,\infty) \times \partial W]$$ for some smooth function $h_{c} : [1-\eta,\infty) \to \R$ for some $0<\eta < 1$ such that $h_{c} (r) = cr$ for $r\geq 1$. As a result, $\widetilde{\phi}$ is an exact symplectomorphism that is equal to the Reeb flow $\phi_{R}^{a+c}$ near $\partial W$. 

\begin{prop}  
	\label{prop:perturbationwithreebflow}
	Let $\phi$ be an exact symplectomorphism on a Liouville domain $(W,d\lambda)$ that is equal to a Reeb flow near $\partial W$.
	Let $\widetilde{\phi}$ be a good perturbation of $\phi$ with a time-$c$ Reeb flow for a real number $c$.
	\begin{eqnarray*}
		HF^*(\phi, \infty)  &\cong& HF^*(\widetilde{\phi}, \infty)
	\end{eqnarray*}
\end{prop}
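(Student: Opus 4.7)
The plan is to reduce the statement to a slope-by-slope comparison followed by a direct limit argument. For every $b \in \mathbb{R}$ avoiding the Reeb period spectrum of $\partial W$ (equivalently, $b-c$ avoids it), I would establish a canonical isomorphism
\[
HF^*(\phi; H_b, J) \cong HF^*\bigl(\widetilde\phi; \widetilde{H}_{b-c}, \widetilde J\bigr),
\]
where $H_b \in \mathcal{H}_\phi(\widehat W)$ has slope $b$ and $\widetilde{H}_{b-c} \in \mathcal{H}_{\widetilde\phi}(\widehat W)$ has slope $b-c$ at infinity. Both $b \to \infty$ and $b-c \to \infty$ give cofinal sequences, so once these isomorphisms are shown to be compatible with the continuation maps defining the direct limits in Definition \ref{defn:floercohomologyforaslope}, the desired isomorphism $HF^*(\phi,\infty) \cong HF^*(\widetilde\phi,\infty)$ follows.

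The generators match naturally. I would choose $\widetilde{H}_{b-c}$ so that its time-$1$ Hamiltonian flow satisfies $\phi^1_{\widetilde{H}_{b-c}} = \phi_{H_c}^{-1} \circ \phi^1_{H_b}$, which forces $\widetilde\phi \circ \phi^1_{\widetilde{H}_{b-c}} = \phi \circ \phi^1_{H_b}$. Fixed points of the two twisted time-$1$ maps then literally coincide as subsets of $\widehat W$, providing a set-theoretic bijection between $\mathcal{P}(\phi, H_b)$ and $\mathcal{P}(\widetilde\phi, \widetilde{H}_{b-c})$. To make this bijection degree-preserving, I would transport the grading data: the family of exact symplectomorphisms $\phi^s := \phi \circ \phi_{H_c}^s$, $s \in [0,1]$, induces a smooth family of vertical tangent bundles $T^{\mathrm{ver}} T_{\phi^s}$, along which any non-vanishing complex volume form deforms continuously; this gives a well-defined comparison of Conley–Zehnder indices on the two sides.

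For the chain-level isomorphism, I would build a continuation-type map using an $s$-dependent Floer datum $(\phi^s, \mathcal{H}^s, \mathcal{J}^s)$ where $\phi^s = \phi \circ \phi_{H_c}^{\beta(s)}$ for a monotone cut-off $\beta : \mathbb{R} \to [0,1]$ equal to $0$ for $s \ll 0$ and $1$ for $s \gg 0$, and where $\mathcal{H}^s$ is a compatible interpolation whose slope at infinity decreases linearly from $b$ to $b-c$ as $s$ increases. On the cylindrical end $\phi$ and $\phi_{H_c}^t$ are both Reeb flows, hence commute, so $\phi^s$ is a genuine exact symplectomorphism at every $s$, admissible Hamiltonians can be taken to be linear at infinity throughout the homotopy, and the integrated maximum principle used in the construction of Subsection \ref{subsection:fixed point Floer cohomology} gives uniform radial control on continuation solutions. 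The resulting chain map is tautologically the identity on generators via the bijection above, so it is already a chain isomorphism; its inverse comes from the reverse homotopy.

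The final step is to verify these slope-by-slope isomorphisms intertwine the continuation maps $c_{a_i, a_{i+1}}$ on the two sides. This is a standard homotopy-of-homotopies argument, gluing a monotone homotopy of slopes with the $\beta(s)$-perturbation in the $\phi$-direction and running the parametric moduli to produce the required chain homotopy. The main obstacle will be the non-compactness of $H_c$, which prevents direct use of Proposition \ref{prop:invariance1}; this is precisely resolved by the observation that the perturbation happens entirely in the region where $\phi$ and the flow of $H_c$ commute, ensuring that every continuation equation arising in the argument admits the required maximum principle and Gromov-type compactness.
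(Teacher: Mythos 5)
Your reduction to a slope-by-slope comparison followed by a direct limit is the right high-level structure, and you correctly identify that the slope changes from $b$ to $b-c$ and that the grading data must be transported along the family $\phi^s = \phi\circ\phi_{H_c}^s$. However, the mechanism you propose for the chain-level isomorphism does not work, and this is where the paper takes a genuinely cleaner route.

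The paper does \emph{not} use a continuation map at all for the slope-by-slope identification. It sets $\widetilde{H}_{b-c}(t,x) = H_b(t,\phi_{H_c}^t(x)) - H_c(t,\phi_{H_c}^t(x))$, which forces $\phi^t_{\widetilde{H}_{b-c}} = (\phi_{H_c}^t)^{-1}\circ\phi^t_{H_b}$ for \emph{all} $t$, and then transports everything — twisted orbits $\gamma\mapsto(\phi_{H_c}^t)^{-1}\circ\gamma$, volume forms $\Omega_t\mapsto(\phi_{H_c}^t)^*\Omega_t$, almost complex structures $J_t\mapsto(d\phi_{H_c}^t)^{-1}J_t\,d\phi_{H_c}^t$, and Floer trajectories $u\mapsto(\phi_{H_c}^t)^{-1}\circ u$ — by conjugation. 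This identifies the two Floer complexes as \emph{literally the same data} in different coordinates, so the chain isomorphism is tautological; no moduli counts are involved, and hence there is no compactness issue and nothing to verify. The identification of continuation maps $c_{a_i,a_{i+1}}$ on the two sides is automatic for the same reason.

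By contrast, your proposal builds a continuation-type map from $s$-dependent data $(\phi^s,\mathcal{H}^s,\mathcal{J}^s)$ and asserts that ``the resulting chain map is tautologically the identity on generators.'' This is not how continuation maps work: a continuation map is a signed count of solutions to an $s$-dependent Floer equation, and even when the generating sets on the two ends coincide, the count is generically not the identity matrix. The fact that your map is built from an honest homotopy of Floer data means you would have to show it is a quasi-isomorphism by the usual chain-homotopy argument, not claim it equals the identity. There is a second, more serious problem: your interpolation moves the slope from $b$ to $b-c$, and for one sign of $c$ this is a \emph{decreasing} slope homotopy, for which the integrated maximum principle fails and the moduli spaces are not compact — precisely the reason continuation maps in the paper are only defined in the direction of increasing slope. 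Both problems evaporate once you replace the continuation map by the conjugation map, because you are then changing coordinates rather than solving a new PDE.

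One smaller point: specifying only $\phi^1_{\widetilde{H}_{b-c}} = \phi_{H_c}^{-1}\circ\phi^1_{H_b}$ does not determine $\widetilde{H}_{b-c}$, and matching only time-$1$ maps identifies fixed points but not the twisted orbits as paths. You need the full identity $\phi^t_{\widetilde{H}_{b-c}} = (\phi_{H_c}^t)^{-1}\circ\phi^t_{H_b}$ to get the bijection of $\mathcal{P}(\phi,H_b)$ with $\mathcal{P}(\widetilde\phi,\widetilde{H}_{b-c})$ as spaces of loops, which is exactly what the paper's explicit formula for $\widetilde{H}_{b-c}$ delivers.
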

\begin{proof}
	
	Let $b\in \R$ and $H_b \in \mathcal{H}_\phi$ be a linear Hamiltonian with slope $b$. Let us define $\widetilde{H}_{b-c}: \R \times \widehat{W} \to \R$ by
	\[ \widetilde{H}_{b-c}(t,x) = H_b(t, \phi_{H_c}^t(x)) - H_c(t, \phi_{H_c}^t(x)).\]
	Then, the assumption that $H_b, H_c \in \mathcal{H}_{\phi}(\widehat{W})$ implies that $\widetilde{H}_{b-c} \in \mathcal{H}_{\widetilde{\phi}}(\widehat{W})$. Furthermore, $\widetilde{H}_{b-c}$ is linear with slope $b-c$ at infinity.

	A simple computation shows that the time $t$-flow of the Hamiltonian vector field associated with $\widetilde{H}_{b-c}$ is given by
	\[\phi_{\widetilde{H}_{b-c}}^t(x) = (\phi_{H_c}^t)^{-1} \circ \phi_{H_b}^t(x).\]
	As a consequence, for a given $\gamma \in  \mathcal{P}({\phi,H_b})$, the path $\widetilde{\gamma}: [0,1] \to \widehat{W}$ defined by
	\[ \widetilde{\gamma}(t) = (\phi_{H_c}^t)^{-1}(\gamma(t)) \]
	is an element of $\mathcal{P}({\widetilde{\phi},\widetilde{H}_{b-c}})$. This correspondence gives the bijection
	\begin{equation}\label{eq:bijection}
		\mathcal{P}({\phi,H_b}) \leftrightarrow{} \mathcal{P}({\widetilde{\phi},\widetilde{H}_{b-c}}), \gamma \mapsto \widetilde{\gamma}.
	\end{equation}
	
	Assume that we have used the complex volume form $\{\Omega_t\}_{t\in [0,1]}$ on the vertical tangent bundle $T^\mathrm{ver} T_{\phi}$ to grade elements of $\mathcal{P}({\phi, H_b})$. If we grade elements of $\mathcal{P}({\widetilde{\phi},\widetilde{H}_{b-c}})$ with the complex volume form $\{\widetilde{\Omega}_t\}_{t \in [0,1]}$ on the vertical tangent bundle $T^\mathrm{ver} T_{\widetilde{\phi}}$ given by
	\[ \widetilde{\Omega}_t = (\phi_{H_c}^t)^* \Omega_t, t\in [0,1],\]
	then the bijection \eqref{eq:bijection} respects the gradings on $\mathcal{P}({\phi,H_b})$ and $\mathcal{P}({\widetilde{\phi},\widetilde{H}_{b-c}})$.

	Furthermore, for a given $J =\{J_t\} \in \mathcal{J}_{\phi}(\widehat{W})$, we define $\widetilde{J} =\{\widetilde{J}_t\} \in \mathcal{J}_{\widetilde{\phi}}(W)$ by
	\[ \widetilde{J}_t = (d\phi_{H_c}^t)^{-1} \circ J_t \circ d\phi_{H_c}^t.\]
	Then the bijection \eqref{eq:bijection} further extends to an identification between $\mathcal{M} (\gamma_{-},\gamma_+;H_b, J) \to  \mathcal{M} (\widetilde{\gamma}_{-},\widetilde{\gamma}_+;\widetilde{H}_{b-c}, \widetilde{J})$ by mapping $u \in \mathcal{M} (\gamma_{-},\gamma_+;H_b, J)$ to $\widetilde{u} \in  \mathcal{M} (\widetilde{\gamma}_{-},\widetilde{\gamma}_+;\widetilde{H}_{b-c}, \widetilde{J})$ defined by
	\begin{equation}\label{eq:bijectionbetweenmoduli}
		\widetilde{u}(s,t) = (\phi_{H_c}^t)^{-1} \circ u(s,t).
	\end{equation}
	Furthermore, if a Floer datum $(H_b,J)$ for $\phi$ is regular, then $(\widetilde{H}_{b-c}, \widetilde{J})$ is a regular Floer datum for $\widetilde{\phi}$. Summarizing what we have observed, the bijection \eqref{eq:bijection} induces a chain isomorphism
	\begin{equation}\label{eq:chainisomorphism}
		CF^*(\phi;H_b,J) \to CF^*(\widetilde{\phi};\widetilde{H}_{b-c},\widetilde{J}).
	\end{equation} 
	The correspondence \eqref{eq:bijection} further extends to a bijection between moduli spaces used to define continuations maps. Therefore, since $b-c$ goes to infinity as $b$ goes to infinity for a fixed real number $c$, the desired isomorphism follows.
\end{proof}

\subsubsection{Symplectic cohomology of $\phi$}
\label{subsubsection:symplectic cohomology}
Let us consider the fixed point Floer cohomology for quadratic $\phi$-periodic Hamiltonians.

	Let $H^0 \in \mathcal{H}_{\phi}(\widehat{W})$ be a Hamiltonian satisfying 
	$$ H^0(t,r,x) = r^2, \forall (t,r,x) \in \R \times [R,\infty] \times \partial W$$
	for some $R \geq 1$.  
	
	Since $H^0$ is time-independent on $[R,\infty] \times \partial W$ and $\phi$ is assumed to be the identity map on $\widehat{W}\setminus W$, we further need to perturb the Hamiltonian $H^0$ with a time-dependent function to get rid of the $S^1$-symmetry of Hamiltonian orbits. Indeed, as done in \cite{A10}, we choose a smooth function $F : \R \times \widehat{W} \to \R$ such that 
	\begin{itemize}
		\item $\mathrm{Supp}\ F \subset [R,\infty] \times \partial W$,
		\item $F(t+1,x) = F(t,x)$ for all $(t,x) \in \R\times \widehat{W}$,
		\item $F$ and $\lambda(X_F)$ are uniformly bounded in absolute value, and 
		\item $F$ vanishes outside small neighborhoods of periodic Hamiltonian orbits of $X_{H^0}$.
	\end{itemize}
	Then we define $H \in \mathcal{H}_{\phi}(\widehat{W})$ by
	\begin{equation}
	\label{eq:quadratic}
	H(t,x) = H^0(t,x) + F(t,x), \forall (t,x) \in \mathbb{R} \times \widehat{W},
	\end{equation}
	which makes sense since both $H^0$ and $F$ are $\phi$-periodic. We call such a Hamiltonian $H$ quadratic at infinity or just quadratic. As proven in \cite[Lemma 5.1]{A10}, for a generic choice of such functions $F$ and $H^0$, every twisted Hamiltonian orbit $\gamma \in \mathcal{P}(\phi,H)$ is non-degenerate. 
	
	For a quadratic Hamiltonian $H \in \mathcal{H}_{\phi}(\widehat{W})$ and $J \in \mathcal{J}_{\phi}(\widehat{W})$ that form a regular Floer datum $(H,J)$, we denote the Floer complex $(CF^*(\phi;H,J), \delta)$ by $SC^*(\phi)$ and denote the cohomology of $SC^*(\phi)$ by $SH^*(\phi)$, which we will call the symplectic cohomology of $\phi$. The name ``symplectic" can be justified since $SH^*(\phi)$ is canonically isomorphic to the symplectic cohomology $SH^*(W)$ when $\phi = \mathrm{Id}$.

A slight modification of the argument given in \cite[Section (3e)]{Se06} adapted to the fixed point Floer cohomology proves the following proposition.
\begin{prop}\label{prop:equivalence} Let $\phi$ be an exact symplectomorphism on a Liouville domain $(W,d\lambda)$ that is equal to a Reeb flow near $\partial W$. Then there is a canonical isomorphism:
	\[HF^*(\phi, \infty) \cong SH^*(\phi).\]
\end{prop}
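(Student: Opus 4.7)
The plan is to exhibit a common cofinal family of $\phi$-admissible Hamiltonians that computes both $HF^*(\phi,\infty)$ and $SH^*(\phi)$, following the strategy of Seidel \cite[Section (3e)]{Se06} and adapting the key maximum principle to the $\phi$-twisted setting.

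First, I would fix a quadratic Hamiltonian $H \in \mathcal{H}_\phi(\widehat{W})$ as in \eqref{eq:quadratic} representing $SC^*(\phi)$, and construct a sequence of auxiliary Hamiltonians $\{K^{(i)}\}_{i\geq 1}$ in $\mathcal{H}_\phi(\widehat{W})$ with the following properties. Choose an increasing sequence $R_i \to \infty$ such that $2R_i$ is never a period of a Reeb orbit of $\partial W$. Let $K^{(i)}$ agree with $H$ on $\{r \leq R_i\}$ and equal $2R_i \cdot r + c_i$ on $\{r \geq R_i+1\}$, interpolated smoothly with $(K^{(i)})''(r) \geq 0$; in particular $K^{(i)}$ is linear at infinity with slope $2R_i$. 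Since $K^{(i)}$ and a linear Hamiltonian $H_{2R_i}$ share the same slope at infinity, a monotone compactly-supported homotopy yields an isomorphism $HF^*(\phi; K^{(i)}, J^{(i)}) \cong HF^*(\phi; H_{2R_i}, J_{2R_i})$ by the standard continuation argument already used in Subsection \ref{subsection:fixed point Floer cohomology}.

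The analytic heart of the argument is a \emph{no-escape} lemma adapted to the twisted boundary condition. Because $\phi$ acts on $[1,\infty) \times \partial W$ as $\mathrm{Id} \times \phi_R^a$, it commutes with the Liouville flow and preserves the radial coordinate $r$; in particular, the condition $\phi(u(s,t+1)) = u(s,t)$ in \eqref{eq:floerequation} preserves the level sets of $r$ on the cylindrical end. Applying the usual maximum principle to the subharmonic function $r \circ u$ for a Floer strip $u$ (as in \cite[Section 4]{Ulj}), one concludes that for $i$ sufficiently large, every Floer strip for $K^{(i)}$ connecting twisted orbits contained in $\{r < R_i\}$ stays inside $\{r \leq R_i\}$, where $K^{(i)} = H$. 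Hence the subspace of $CF^*(\phi; K^{(i)}, J^{(i)})$ generated by orbits in $\{r < R_i\}$ is a subcomplex that embeds as a subcomplex of $SC^*(\phi) = CF^*(\phi; H, J)$. Every generator of $SC^*(\phi)$, being contained in a bounded region, appears in this subcomplex for $i$ large enough, and the same maximum principle applied to continuation cylinders shows that the inclusions $CF^*(\phi; K^{(i)}) \hookrightarrow SC^*(\phi)$ intertwine the continuation maps $c_{i,i+1}$ between successive $K^{(i)}$.

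Combining these, I take direct limits:
\[
HF^*(\phi,\infty) \;=\; \varinjlim_{i} HF^*(\phi; H_{2R_i}, J_{2R_i}) \;\cong\; \varinjlim_{i} HF^*(\phi; K^{(i)}, J^{(i)}) \;\cong\; HF^*(\phi; H, J) \;=\; SH^*(\phi),
\]
where the final isomorphism uses that the cofinal chain-level inclusions are compatible with differentials and exhaust $SC^*(\phi)$. The independence of the result from auxiliary choices follows from the invariance statements already established (Proposition \ref{prop:invariance1} and the well-definedness of $HF^*(\phi,\infty)$ discussed after Definition \ref{defn:floercohomologyforaslope}). The main obstacle I expect is the careful verification of the maximum principle in the $\phi$-twisted setting: one must confirm that the standard argument bounding $r \circ u$ on half-cylinders extends across the gluing region $t \mapsto t+1$ under the condition $\phi(u(s,t+1)) = u(s,t)$, which ultimately relies on $\phi$ being an exact symplectomorphism preserving the contact form on $[1,\infty) \times \partial W$. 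Once this is secured, the comparison proceeds in parallel with the untwisted case.
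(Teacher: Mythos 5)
Your proposal is essentially the argument the paper has in mind: the paper simply cites Seidel's \cite[Section~(3e)]{Se06} adapted to the twisted setting, and your cofinal-family-plus-maximum-principle reasoning is the expected content of that reference. One small technicality: with $H = r^2$ at the end, a smooth convex interpolation that equals $H$ for $r \le R_i$ cannot have slope exactly $2R_i$ for $r \ge R_i+1$ (the slope at $R_i$ is already $2R_i$ and strictly increases just past it), so the linear slope of $K^{(i)}$ should be some $a_i > 2R_i$ chosen outside the Reeb spectrum, or alternatively $K^{(i)}$ should agree with $H$ only on $\{r \le R_i - \eta\}$; this does not affect the rest of the argument.
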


Combining Proposition \ref{prop:perturbationwithreebflow} and Proposition \ref{prop:equivalence}, we have the following corollary.
\begin{cor}\label{cor:invarianceofsymplecticcohomology}
	Let $\widetilde{\phi}$ be a good perturbation of $\phi$ with a time-$c$ Reeb flow for some real number $c$. Then there is an isomorphism
	\[ SH^*(\phi) \cong SH^*(\widetilde{\phi}).\]
\end{cor}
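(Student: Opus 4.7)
The plan is to deduce the isomorphism by chaining the two preceding results. First I would apply Proposition \ref{prop:equivalence} to $\phi$ itself, which is by hypothesis an exact symplectomorphism equal to the Reeb flow $\phi_R^a$ near $\partial W$, to obtain a canonical isomorphism $SH^*(\phi) \cong HF^*(\phi, \infty)$. Next, since $\widetilde\phi$ is a good perturbation of $\phi$ with a time-$c$ Reeb flow, Proposition \ref{prop:perturbationwithreebflow} supplies an isomorphism $HF^*(\phi, \infty) \cong HF^*(\widetilde\phi, \infty)$. Finally, as noted in the paragraph introducing good perturbations, $\widetilde\phi$ is itself an exact symplectomorphism that equals the Reeb flow $\phi_R^{a+c}$ near $\partial W$, so Proposition \ref{prop:equivalence} applies again (now to $\widetilde\phi$) to give $HF^*(\widetilde\phi, \infty) \cong SH^*(\widetilde\phi)$. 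Composing,
\[
SH^*(\phi) \;\cong\; HF^*(\phi, \infty) \;\cong\; HF^*(\widetilde\phi, \infty) \;\cong\; SH^*(\widetilde\phi),
\]
which is the desired conclusion.

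There is no substantive obstacle here beyond verifying the hypotheses of the two input propositions for $\phi$ and $\widetilde\phi$; the only point that deserves a brief check is precisely that $\widetilde\phi$ satisfies the running assumption of Section \ref{sec:symplectic cohomology} (exact symplectomorphism given by a Reeb flow near $\partial W$), which is immediate from the definition of a good perturbation since $\phi_{H_c}^1$ coincides with $\phi_R^c$ on the cylindrical end. All three isomorphisms are canonical, so no additional compatibility is needed to assemble them into a single canonical isomorphism $SH^*(\phi) \cong SH^*(\widetilde\phi)$.
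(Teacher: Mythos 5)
Your proof is correct and coincides exactly with the paper's argument, which presents the corollary as an immediate consequence of combining Proposition \ref{prop:perturbationwithreebflow} with Proposition \ref{prop:equivalence} applied to both $\phi$ and $\widetilde\phi$.
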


\begin{remark}
	\label{remark:continuation}
	As mentioned in Remark \ref{remark:abstractcontactopenbook}, $HF^*(\phi,\pm)$ can be understood as the fixed point Floer cohomology $HF^*(\phi; H_{\pm \epsilon},J_{\pm})$ for some admissible Hamiltonian $H_{\pm\epsilon} \in \mathcal{H}_{\phi}(\widehat{W})$ that equals $\pm \epsilon r +c$ for a sufficiently small $\epsilon>0$ and a real number $c$ on the cylindrical end $[1,\infty) \times \partial W$ and some $J_{\pm} \in \mathcal{J}_{\phi}(\widehat{W})$.
	
	For later use in this paper, note that there is a continuation map
	\[ c : HF^*(\phi; H_{-\epsilon},J_-) \to HF^*(\phi; H_{\epsilon},J_+)\]
	defined using an increasing 1-parameter family of admissible Hamiltonians from $H_{-\epsilon}$ to $H_{\epsilon}$. Similarly, since there is also an increasing 1-parameter family of admissible Hamiltonians from $H_{\pm \epsilon}$ to a quadratic Hamiltonian $H$, we may consider continuation maps
	\[c_{\pm} : HF^*(\phi; H_{\pm \epsilon},J_\pm) \to SH^*(\phi).\]
	We will use the continuation map $c_+$ when defining the class $\Gamma$ in Section \ref{subsec:Gamma}
\end{remark}

\section{Monodromy class $\Gamma$ of an isolated singularity}
\label{sec:Gamma}
Let $f:(\C^n,0) \to (\C,0)$ be a germ of an isolated hypersurface singularity for some $n \geq 2$.
The monodromy $\rho$ of the Milnor fiber $M$ of $f$ can be represented by an exact symplectomorphism that is
the identity in a neighborhood of the boundary $\partial M$.  We will define a Floer cohomology class $\Gamma \in SH^*(\rho^{-1})$, called the monodromy class. We will work with a particular monodromy symplectomorphism $\rho$ constructed by Fernandez de Bobabilla-Pełka \cite{BP}, since it has no fixed points except possibly in a neighborhood of $\partial M$. This will be used to show that $\Gamma$ is a cohomology class.
 
 \subsection{Milnor fiber and Monodromy $\rho$ of an isolated singularity}
  Let $\mathbb{B}_\epsilon(0) \subset \mathbb{C}^{n}$ be the Milnor ball of $f$. For
a sufficiently small $0<\delta < \epsilon$, we have a Milnor fibration
\begin{equation}\label{eq:mfib}
f|_{f^{-1}(B_{\delta}^*(0))} : f^{-1} (B_{\delta}^*(0)) \cap \mathbb{B}_\epsilon(0)  \to B_{\delta}^*(0).
\end{equation}
Here $B_{\delta}(0) \subset \C$ is the closed disc of radius $\delta$ centered at the origin and  $B_{\delta}^*(0) := B_{\delta}(0) \setminus\{0\}$.
The pre-image of a point of $f$ is diffeomorphic to the Milnor fiber $M$, and the parallel transport along the circle $S_{\delta} : = \partial B_{\delta}(0)$ on the base
defines a monodromy of $M$. It is an important question to understand the properties of this monodromy.

A'Campo considered an embedded resolution of the singularity $f$ until the preimage of 0 becomes normal crossing divisors and used their local structure to define so-called the A'Campo space of $f$. This was used to compute the zeta function of the monodromy of $f$. In particular, the monodromy of $f$ can be chosen so that it does not have any fixed points except in a neighborhood of the boundary of the Milnor fiber. Thus one may ask if there exists a symplectic monodromy having such a nice topological behavior studied by A'Campo. 

Fernandez de Bobabilla-Pe{\l}ka \cite{BP} answered this positively by constructing a symplectic version of A'Campo space. Let us recall their settings in two steps. The first step is a symplectic refinement of the fibration \eqref{eq:mfib} and the second step is to do the same thing for A'Campo space.

For the first step, they constructed a Liouville fibration $f : N_{\delta} \to  B_{\delta}^*(0)$ as a modification of the Milnor fibration \eqref{eq:mfib} where $N_{\delta} \subset  f^{-1} (B_{\delta}^*(0))$ is a carefully
chosen submanifold with corners via symplectic parallel transports so that monodromy
around any loop in $B_{\delta}^*(0)$ is given by an exact symplectomorphism.

Namely, the fibration $f:N_{\delta} \to  B_{\delta}^*(0)$ is equipped with a $1$-form $\lambda_f \in \Omega^1(N_{\delta})$ that restricts to a Liouville $1$-form on each fiber.
Let ${M}_{\delta} = N_{\delta} \cap f^{-1} (\delta)$ be the fiber at the boundary point $\delta \in S_{\delta}$ and let ${\lambda}_{\delta} \in \Omega^1({M}_{\delta})$ be the restriction of $\lambda_f$ to the fiber ${M}_{\delta}$. Then, consider the restriction of this fibration to the boundary circle $S_{\delta} \subset B_{\delta}^*(0)$.
As argued in \cite[Section 5]{BP}, this admits a symplectic monodromy trivialization (\cite[Definition 5.5]{BP}) $\Phi :{M}_{\delta} \times [0,1] \to N_{\delta} \cap f^{-1}(S_{\delta})$ such that
\begin{equation}\label{rhodel}
f|_{f^{-1}(S_{\delta})} (\Phi(x,t)) = \delta \cdot e^{2 \pi i t} \in S_{\delta},\forall (x,t) \in M_{\delta} \times [0,1].
\end{equation}
In particular, ${\rho}_{\delta} = \Phi( - ,1): M_{\delta} \to M_{\delta}$ is the symplectic monodromy relative to $\Phi$, which is an exact symplectomorphism that is the identity map near $\partial M_{\delta}$.

For the second step, denote by  $\tau: B_{\delta, \log}(0) \to B_{\delta}(0)$ the real oriented blow up at the origin.
Namely,  the origin is blown-up to the circle $\partial B_{\delta,\log}(0) := \tau^{-1}(0)$, called the circle at radius 0. Note that $B_{\delta,\log}(0) \setminus \partial B_{\delta,\log}(0)$ is identified with $B_{\delta}^*(0)$. 
 Fernandez de Bobabilla-Pe{\l}ka \cite{BP} constructed a symplectic version of A'Campo space $A$, which has a Liouville fibration
 $$ f_{A}: A \to B_{\delta,\log}(0)$$ extending $f|_{f^{-1}(B_{\delta}^*(0))}$ to the circle $\partial B_{\delta,\log}(0)$ at radius zero.

Furthermore, in \cite[Section 5.5]{BP}, they constructed a subset $N$ of $A$ and a Liouville fibration $f_{A}|_{N} : N \to B_{\delta,\log}(0)$ together with a $1$-form $\lambda_N\in \Omega^1(N)$ in such a way that
\begin{enumerate}
\item \label{item:1} it coincides with the usual Milnor fibration $f: f^{-1} (B_{\delta}^*(0)) \to B_{\delta}^*(0)$ outside $f^{-1} (B_{\delta'}(0))$ for a certain $0 < \delta' <\delta$ and
\item the monodromy along the circle $\partial B_{\delta,\log}(0)$, called the symplectic monodromy at radius zero, has the same dynamics as that considered by A' Campo.
\end{enumerate}
See \cite[Section 5.5, Corollary 7.9]{BP} for more details. Using this construction, we define the following. 
\begin{defn}
	\label{defn:monodromy}
	Let $(M,d\lambda)$ be the Liouville domain obtained as a fiber of $f_{A}|_N$ at some point in $\partial B_{\delta,\log}(0)$ equipped with the Liouville $1$-form $\lambda := \lambda_N|_M$ and let $\rho :M \to M$ be the symplectic monodromy at zero.
\end{defn}
In particular, one important dynamical property of the symplectic monodromy $\rho$ used in this paper is that its fixed locus is given by $\partial M$. By gluing a collar neighborhood of $\partial M$ to $M$ and extending $\rho$ trivially over it, we assume that the fixed point set of $\rho$ is a neighborhood of $\partial M$.

\subsection{Floer complex for $\rho^{-1}$ and its Hamiltonian perturbation $\check{\rho}^{-1}$}
For the exact symplectic monodromy $\rho$ (at radius 0), we consider the symplectic cohomology $SH^*(\rho^{-1})$  of its inverse $\rho^{-1}$ defined in the previous subsection and will define a certain cohomology class $\Gamma \in SH^0(\rho^{-1})$. 

Let us first comment on the relation with $\rho$ and $\rho_\delta$ defined from \eqref{rhodel}.
The first item \eqref{item:1} for the fibration $f_{A}: N \to B_{\delta,\log}(0)$ shows that $\rho$ and $\rho_\delta$ are isotopic as exact symplectomorphisms \cite{BP}. It follows that the abstract contact open book $(M,\lambda,\rho)$ is isotopic to $(M_{\delta}, {\lambda}_{\delta}, {\rho}_{\delta})$. As a consequence the Floer cohomologies $HF^*(\rho^{\pm 1},\pm)$ and $HF^*({\rho}^{\pm 1}_{\delta},\pm)$ are canonically isomorphic as proven in \cite[Appendix B]{Mc19}. A slight modification of their method further shows that $SH^*(\rho^{\pm 1})$ and $SH^*({\rho}_{\delta}^{\pm 1})$ are canonically isomorphic.

The radius 0 monodromy $\rho$ has the following property as in the case of the topological monodromy studied by A'Campo: The fixed point set $\mathrm{Fix}\ \rho$ is given by a tubular neighborhood of $\partial M$.  More precisely, $\mathrm{Fix}\ \rho$ is a codimension zero submanifold $B_0$ of $M$, which is homeomorphic to the cylinder $[1-\tilde{\eta},1] \times f^{-1}(0) \cap \partial \mathbb{B}_{\epsilon}(0)$ of the link $f^{-1}(0) \cap \partial \mathbb{B}_{\epsilon}(0)$ for some small $0<\tilde{\eta}<1$.

We can also perturb the fixed points away using a Hamiltonian. Namely, it was further argued in \cite{Mc19}, \cite{BP} that there exist an open neighborhood $V$ of $B_0$ and a time-independent Hamiltonian $H_{0} : V \to (-\infty, 0]$ such that
\begin{itemize}
\item $B_0 = H_0^{-1} (0) = \mathrm{Fix}\ \rho$ and
\item $\rho|_{V} = {\phi_{H_0}^1}|_{V} :V \to V$.
\end{itemize}
Then $\rho$ can be further perturbed into $\check{\rho}$ with a Hamiltonian isotopy that equals a positive Reeb flow near $\partial M$ so that the fixed locus $B_0$ vanishes as follows.
\begin{prop}[\cite{Mc19}, Section 5]\label{prop:perturbation}
There exist a Hamiltonian $b : V \to \R$ and small $\nu>0$ satisfying
\begin{itemize}
\item $b = F \circ H_0$ for some smooth function $F : \R \to \R$ in $H_0^{-1} \left[ -\nu ,-\frac{\nu}{3}\right]$ such that $F \circ H_0 = H_0$ near $H_0^{-1} (-\nu)$,
\item $b$ is $C^2$-small in $H_0^{-1} \left(  -\frac{\nu}{3},0 \right)$,
\item $b = \epsilon r$ near $\partial M$, where $r$ is the radial coordinate of $M$ for $\epsilon>0$ smaller than minimal period of Reeb vector field, and
\item $b$ has no critical points.
\end{itemize}
Then $\check{\rho} : M \to M$ defined by
$$\check{\rho} =\begin{cases}\rho& {\mathrm{outside }\ V} \\ \phi_b^1 &{\mathrm{inside }\ V}\end{cases}$$
has no fixed points. (See Figure \ref{fig:rhocheck}.)
\end{prop}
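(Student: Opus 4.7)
The plan is to build $b$ in four nested zones moving outward from deep in $U$ to $\partial M$, and then to verify the absence of fixed points zone by zone. First, I would fix a collar coordinate $r\in[1-\tilde{\eta}',1]$ on a neighbourhood of $\partial M$ in which the Liouville vector field is $r\partial_r$ and the Reeb vector field $R$ acts tangent to $\{r=\mathrm{const}\}$. Because $\rho=\mathrm{id}$ near $\partial M$ while $\rho|_{U}=\phi_{H_0}^1$, one may arrange $H_0\equiv 0$ on $\{r\ge 1-\tilde{\eta}'\}\subset B_0$. Also fix $\epsilon>0$ strictly smaller than the minimal period of every Reeb orbit on $\partial M$.

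Next, choose $\nu>0$ small and a smooth $F:\R\to\R$ with $F'>0$ everywhere, $F(s)=s$ for $s\le -\nu$, and $F'|_{[-\nu,-\nu/3]}$ bounded above by a small constant $\tau_*$ to be specified. On $H_0^{-1}[-\nu,-\nu/3]$ put $b:=F\circ H_0$. On the thin collar near $\partial M$ put $b:=\epsilon r$. On the remaining transition zone inside $H_0^{-1}(-\nu/3,0)$ interpolate with a cutoff $\chi$, setting
\[ b:= (1-\chi)\,F(H_0)+\chi\,\epsilon r. \]
By matching the boundary values of $F\circ H_0$ to $\epsilon r$ (which can be done by rescaling $F$ and $\epsilon$ if needed) and by taking $F'$ uniformly small on $[-\nu/3,0)$, one makes $b$ itself $C^2$-small on the transition zone, while arranging the cutoff so that $\partial_r b>0$ throughout (which kills any critical points there).

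The no-critical-points condition is then clear in each zone: on $\{H_0\le -\nu\}$ one has $b=H_0$ with $dH_0\neq 0$ since $H_0^{-1}(0)=B_0$ is the locus of $dH_0=0$; on $H_0^{-1}[-\nu,-\nu/3]$ one has $db=F'(H_0)\,dH_0\neq 0$; on the transition zone $\partial_r b>0$ by design; near $\partial M$ one has $db=\epsilon\,dr\neq 0$. For the no-fixed-points conclusion for $\check{\rho}$, I would treat the four regions separately. Outside $U$: $\check{\rho}=\rho$ and $\mathrm{Fix}(\rho)\subset B_0\subset U$. Deep region $\{H_0\le -\nu\}$: $\check{\rho}=\phi_{H_0}^1$, whose fixed set is $B_0=H_0^{-1}(0)$, disjoint from here. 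Zone $H_0^{-1}[-\nu,-\nu/3]$: since $X_{F\circ H_0}=F'(H_0)X_{H_0}$ is tangent to level sets and $H_0$ is $\phi_{H_0}^t$-invariant, we have $\phi_b^1|_{H_0=c}=\phi_{H_0}^{F'(c)}|_{H_0=c}$; choosing $\tau_*$ smaller than the minimal return time of $\phi_{H_0}^t$ on the compact annular region $H_0^{-1}([-\nu,-\nu/3])$ kills all such fixed points. Transition zone: $b$ is $C^2$-small and $X_b$ nowhere vanishes, so in a local Darboux chart $\phi_b^1(x)=x+\int_0^1 X_b(\phi_b^s(x))\,ds$, and $C^2$-smallness keeps $\phi_b^s(x)$ close to $x$ and $X_b(\phi_b^s(x))$ close to the nonzero vector $X_b(x)$, so the integral does not vanish. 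Near $\partial M$: $\phi_b^1$ is the time-$\epsilon$ Reeb flow, which has no fixed points by the choice of $\epsilon$.

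The main obstacle is the combinatorial balancing act in the transition zone: simultaneously ensuring $C^2$-smallness of $b$, the matching of $F\circ H_0$ to $\epsilon r$, and the nowhere-vanishing of $db$. The key analytic input is a positive lower bound on the minimal return time of $\phi_{H_0}^t$ on the compact region $H_0^{-1}([-\nu,-\nu/3])$, which follows from compactness together with the A'Campo-type dynamical input that $\rho=\phi_{H_0}^1$ has no fixed points outside $B_0$. Everything else is a careful cutoff construction as carried out in \cite{Mc19}.
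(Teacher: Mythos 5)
The paper does not prove Proposition \ref{prop:perturbation}; it is stated with a citation to McLean \cite{Mc19}, Section 5, so there is no internal proof to compare against. Evaluating your proposal on its own terms: the zone-by-zone framework is the right skeleton, and both extremes are handled correctly — near $\partial M$ because $\epsilon$ is below the minimal Reeb period, and near $H_0^{-1}(-\nu)$ because $F=\mathrm{id}$ there, so $\phi_b^1=\rho$, whose fixed set $B_0=H_0^{-1}(0)$ is disjoint from that level. The difficulty is concentrated in the middle annulus $H_0^{-1}[-\nu,-\nu/3]$.

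You impose both $F(s)=s$ for $s\le-\nu$ (forcing $F'(-\nu)=1$) and $F'|_{[-\nu,-\nu/3]}\le\tau_*$ with $\tau_*$ small; these are incompatible as stated, and the natural repair — let $F'$ decrease smoothly from $1$ to $\tau_*$ over some subinterval of $[-\nu,-\nu/3]$ — reopens a gap: for $c$ in that interpolation range, $\phi_b^1|_{\{H_0=c\}}=\phi_{H_0}^{F'(c)}$ with $F'(c)\in(\tau_*,1)$, and your bound $\tau_*<T_{\min}$ (minimal return time from compactness) says nothing about this regime. Crucially, the hypothesis that $\rho=\phi_{H_0}^1$ has no fixed points on $\{H_0=c\}$ does not exclude a closed $X_{H_0}$-orbit of period, say, $0.7$ on that level set; such an orbit is invisible to $\phi_{H_0}^1$ yet gives a fixed point of $\phi_{H_0}^{0.7}$, and nothing in your argument prevents $F'(c)$ from hitting such a period. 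What is actually needed — and what the radius-zero/A'Campo dynamics furnishes through the normal-crossings local model, which is the dynamical content McLean exploits — is the stronger fact that $\phi_{H_0}^t$ has no fixed points on $U\setminus B_0$ for \emph{every} $t\in(0,1]$, equivalently that the minimal period of closed $X_{H_0}$-orbits on $H_0^{-1}[-\nu,-\nu/3]$ exceeds $1$. Once that input is granted, any monotone $F$ with $0<F'\le 1$ works with no return-time gymnastics, and the rest of your outline goes through; for cleanliness, the transition-zone step should be phrased as a rescaling $b=\epsilon\bar b$ so that $\phi_b^1=\phi_{\bar b}^\epsilon$ and $\epsilon$ is taken below the minimal period of nonconstant closed $X_{\bar b}$-orbits on the compact closure of the transition region, rather than appealing informally to $C^2$-smallness (which alone does not give a lower bound on $|\nabla b|$ relative to $\|b\|_{C^2}^2$).
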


\begin{figure}[h]
\begin{subfigure}[t]{0.45\textwidth}
\includegraphics[scale=0.9]{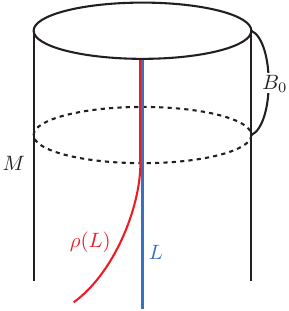}
\centering
\caption{$\rho = \mathrm{Id}$ on $B_0$}
\end{subfigure}
\begin{subfigure}[t]{0.45\textwidth}
\includegraphics[scale=0.9]{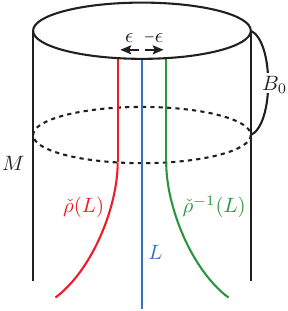}
\centering
\caption{$\check\rho$ and $\check\rho^{-1}$ has no fixed point}
\end{subfigure}
\centering
\caption{Illustration of  the image of $\rho$ and $\check\rho$ for the curve $L$ in the case $n=2$.}
\label{fig:rhocheck}
\end{figure}

Now we will argue that $SH^*(\check{\rho}^{-1})$ and $SH^*(\rho^{-1})$ are isomorphic. Indeed, the above proposition says more specifically that 
\begin{equation}\label{eq:rhoK}
 \check{\rho} = \phi_K^1 \circ \rho,
 \end{equation}
where $\phi_K^1$ is the time-$1$ flow of a time independent Hamiltonian $K$ on $M$ such that $K = \epsilon r$ on a collar neighborhood $[1-\eta,1] \times \partial M$ of $\partial M$ in $M$ for some $\eta \in (0,1)$ such that $[1-\eta,1] \times \partial M \subset \mathrm{Fix}\ \rho$. 

On the other hand, there exists a Hamiltonian $H_\epsilon \in \mathcal{H}_{\rho}$ satisfying 
$$ H_{\epsilon}(t,r,y) = h_{\epsilon}(r), \forall (t,r,y) \in \R \times [1-\eta, \infty) \times \partial M,$$
 where $h_{\epsilon} : [1-\eta,\infty) \to \R$ satisfies $h_{\epsilon}(r) = \epsilon r -\epsilon$ for $r\geq 1$. This means that $\widetilde{\rho}^{-1} := \rho^{-1} \circ \phi_{-H_\epsilon}^1$ is a good perturbation of $\rho^{-1}$ with a time-$(-\epsilon)$ Reeb flow. It follows from Corollary \ref{cor:invarianceofsymplecticcohomology} that there exists an isomorphism
\begin{equation}\label{eq:isomorphism1}
SH^*(\rho^{-1}) \cong SH^*(\widetilde{\rho}^{-1}).
\end{equation}

Furthermore, by construction of $\check{\rho}$ and $\widetilde{\rho}$, it can be deduced that there is a Hamiltonian isotopy between $\check{\rho}^{-1}$ and $\widetilde{\rho}^{-1}$ generated by some time-independent Hamiltonian whose support lies in the interior of $M$. Thus, Proposition \ref{prop:invariance1} implies that there exists a canonical isomorphism
\begin{equation}\label{eq:isomorphism2}
SH^*(\widetilde{\rho}^{-1}) \cong SH^*(\check{\rho}^{-1}).
\end{equation}
 
 Combining \eqref{eq:isomorphism1} and \eqref{eq:isomorphism2}, we obtain the desired isomorphism
\begin{equation}\label{eq:isomorphism3}
SH^*(\rho^{-1}) \cong SH^*(\check{\rho}^{-1}).
\end{equation}

\subsection{Monodromy class $\Gamma$} \label{subsec:Gamma}
Let us begin the construction of the cohomology class $\Gamma \in SH^*(\check{\rho}^{-1})$. 
We will first define it as a cohomology class in $HF^*(\rho^{-1},+)$ and consider its image under the continuation map $c_+ : HF^*(\rho^{-1},+) \to SH^*(\rho^{-1})$; see Remark \ref{remark:abstractcontactopenbook} for the definition of $HF^*(\rho^{-1},+)$ and see Remark \ref{remark:continuation} for the continuation map $c_+$.
Since we will work with closed-open maps from $SH^*({\rho}^{-1})$ to $HW^*(L,{\rho}(L))$ for some Lagrangian $L$ of $M$ later in this paper, it is more natural to regard $\Gamma$ as an element of $SH^*({\rho}^{-1})$.

Let $H_{2\epsilon} \in \mathcal{H}_{\check{\rho}^{-1}}$ be a linear Hamiltonian with slope $2\epsilon$ for $\epsilon>0$ chosen above such that
\begin{itemize}
\item it is constantly zero on $M$ and
\item it satisfies $H_{2\epsilon} (t,r,y) = h_{2\epsilon}(r), \forall (t,r,y) \in \R \times [1,\infty)\times \partial M$ for some function $h_{2\epsilon} : [1,\infty) \to \R$ that is linear with slope $2\epsilon$ at infinity and satisfies $h_{2\epsilon}''(r) \geq 0$.
\end{itemize}
Then, for an almost complex structure $J_{2\epsilon} \in \mathcal{J}_{\check{\rho}^{-1}}$ that forms a regular Floer datum $(H_{2\epsilon},J_{2\epsilon})$, $HF^*(\rho^{-1},+)$ is defined by $HF^*(\check{\rho}^{-1};H_{2\epsilon},J_{2\epsilon})$. This can be justified since there is an isomorphism $HF^*(\check{\rho}^{-1}, H_{2\epsilon},J_{2\epsilon}) \cong HF^*(\rho^{-1};\widetilde{H}_{\epsilon},\widetilde{J}_{\epsilon})$ for some linear Hamiltonian $\widetilde{H}_{\epsilon}$ of slope $\epsilon$ as seen in the proof of Proposition \ref{prop:perturbationwithreebflow}.

Here, it makes sense to assume that $H_{2\epsilon}$ is zero on $M$ and non-degenerate simultaneously since $\check{\rho}^{-1}$ has no fixed point in $M$ (Proposition \ref{prop:perturbation}). Furthermore, there is a unique $r_0 \in (1,\infty)$ such that $h'_{2\epsilon}(r_0) = \epsilon$ by our choice of $h_{2 \epsilon}$. These observations and Proposition \ref{prop:perturbation} make it easier to compute $HF^*(\rho^{-1}, +)$.

\begin{lemma}
The following hold.
\begin{enumerate}
\item There is no twisted Hamiltonian orbit $\gamma \in \mathcal{P}({\check{\rho}^{-1},H_{2\epsilon}})$ whose image interesects $M \cup [1,r_0) \times \partial M$.
\item $\Sigma:= \{r_0\} \times \partial M$ forms a Morse-Bott component of twisted Hamiltonian orbits in $\mathcal{P}({\check{\rho}^{-1},H_{2\epsilon}})$.
\end{enumerate}
\end{lemma}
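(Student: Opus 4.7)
The plan is to analyze the twisted orbit equation $\dot\gamma(t) = X_{H_{2\epsilon}}(\gamma(t))$ subject to $\check\rho^{-1}(\gamma(1)) = \gamma(0)$ directly, using the explicit form of $H_{2\epsilon}$ and of $\check\rho^{-1}$ on the interior of $M$ and on the cylindrical end separately. Recall that $\check\rho^{-1}$ extends to $\widehat{M}$ as $\mathrm{Id} \times \phi_R^{-\epsilon}$ on $[1,\infty)\times \partial M$, while $X_{H_{2\epsilon}}$ vanishes on $M$ and equals $h'_{2\epsilon}(r)\, R$ on the cylindrical end.

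For part (1), any twisted orbit whose image lies in the interior of $M$ must be constant (since $X_{H_{2\epsilon}}$ vanishes there), hence a fixed point of $\check\rho^{-1}$; by Proposition \ref{prop:perturbation} no such points exist. For a twisted orbit with image in $[1,r_0)\times \partial M$, the Hamiltonian flow preserves $r$, so $\gamma$ lies on a single hypersurface $\{r\}\times \partial M$ and has the form $\gamma(t) = (r,\, \phi_R^{t h'_{2\epsilon}(r)}(y_0))$. Composing the time-$1$ flow with $\check\rho^{-1}$, the twisted condition reduces to $\phi_R^{h'_{2\epsilon}(r)-\epsilon}(y_0) = y_0$. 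Since $h'_{2\epsilon}$ is non-decreasing with $h'_{2\epsilon}(1)=0$ and $h'_{2\epsilon}(r_0)=\epsilon$, the exponent lies in $[-\epsilon,0)$ for $r\in[1,r_0)$, and since $\epsilon$ is strictly less than the minimal period of the Reeb flow, no Reeb orbit has such a period, ruling out all solutions in the stated region.

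For part (2), at $r=r_0$ one has $h'_{2\epsilon}(r_0)=\epsilon$, so $\check\rho^{-1}\circ \phi_{H_{2\epsilon}}^1$ restricts to the identity on $\Sigma$; hence every point of $\Sigma$ yields a twisted Hamiltonian orbit. To verify the Morse-Bott condition I would compute the linearization $L = d(\check\rho^{-1}\circ \phi_{H_{2\epsilon}}^1)$ at $(r_0,y_0)\in \Sigma$: on the contact distribution the differentials $d\phi_R^{\epsilon}$ and $d\phi_R^{-\epsilon}$ compose to the identity; on the Reeb line $\mathbb{R} R$ both factors act as the identity; differentiating the flow $(r,y)\mapsto (r,\phi_R^{h'_{2\epsilon}(r)}(y))$ in the radial direction and then applying $d\check\rho^{-1}$ sends $\partial_r$ to $\partial_r + h''_{2\epsilon}(r_0)\, R$. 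Thus the $1$-eigenspace of $L$ equals $T\partial M = T_{(r_0,y_0)}\Sigma$ precisely when $h''_{2\epsilon}(r_0)>0$, which is the main technical point: the profile $h_{2\epsilon}$ must be chosen strictly convex at $r_0$ (which also guarantees uniqueness of $r_0$), and this requirement should be folded into the construction of the Floer datum from the start.
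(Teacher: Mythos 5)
Your proof of part (1) is essentially the paper's: both pass to the bijection between twisted Hamiltonian orbits and $\mathrm{Fix}\bigl(\check\rho^{-1}\circ\phi^1_{H_{2\epsilon}}\bigr)$, treat the two regions $M$ and $[1,r_0)\times\partial M$ separately using $H_{2\epsilon}\equiv 0$ on $M$ and the radial form $h_{2\epsilon}(r)$ on the cylindrical end, and rule out fixed points on $[1,r_0)\times\partial M$ by observing that $h'_{2\epsilon}(r)-\epsilon$ is nonzero of absolute value at most $\epsilon$, which was chosen below the minimal Reeb period. For part (2), the paper only checks that each point of $\Sigma$ produces a twisted orbit and then asserts that $\Sigma$ is a Morse--Bott family, without computing the linearization.

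Your linearization computation for part (2) fills in that asserted step and is correct: on the contact distribution $d\phi_R^{\epsilon}$ and $d\phi_R^{-\epsilon}$ cancel, the Reeb direction is fixed, and $\partial_r$ is mapped to $\partial_r + h''_{2\epsilon}(r_0)\,R$, so $\ker\bigl(d(\check\rho^{-1}\circ\phi^1_{H_{2\epsilon}})-\mathrm{Id}\bigr)=T\Sigma$ exactly when $h''_{2\epsilon}(r_0)>0$. You are right to flag that this is a genuine requirement not literally implied by the paper's hypothesis $h''_{2\epsilon}\geq 0$ together with uniqueness of $r_0$: a profile with $h'_{2\epsilon}(r)-\epsilon=(r-r_0)^3$ near $r_0$ satisfies both but has $h''_{2\epsilon}(r_0)=0$ and would fail Morse--Bott non-degeneracy in the radial direction. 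The paper implicitly folds this into ``our choice of $h_{2\epsilon}$,'' and your explicit statement of the strict-convexity condition at $r_0$ is a useful clarification rather than a different proof strategy.
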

\begin{proof}
 Note that the set $\mathcal{P}({\check{\rho}^{-1}, H_{2\epsilon}})$ is in one-to-one correspondence with the fixed point set $\mathrm{Fix}\left(\check{\rho}^{-1} \circ \phi_{H_{2\epsilon}}^1\right)$.
But, for $x \in M$, $\check{\rho}^{-1} \circ \phi_{H_{2\epsilon}}^{1}(x) = \check{\rho}^{-1} (x) \neq x$ since $H_{2\epsilon} = 0$ on $M$ and $\check{\rho}^{-1}$ is assumed to have no fixed point as in Proposition \ref{prop:perturbation}. Moreover, for $x =(r,y)\in [1,r_0) \times \partial W$,
$$\check{\rho}^{-1} \circ \phi_{H_{2\epsilon}}^{1}(x) = \check{\rho}^{-1}\left( r, \phi_R^{h_{2\epsilon}'(r)}(y)\right) = \left(r, \phi_R^{h_{2\epsilon}'(r)-\epsilon}(y) \right)\neq (r,y)=x$$
 due to our choice of $\epsilon$ and $r_0$.

For the second statement, for any $y \in \partial M$, $\gamma_y : [0,1] \to \widehat{M}$ defined by
\[\gamma_y(t) = \left(r_0, \phi_R^{h'(r_0 )t }(y)\right) = \left(r_0, \phi_R^{\epsilon t } (y)\right)\]
is an element of $\mathcal{P}({\check{\rho}^{-1},H_{2\epsilon}})$ since $\check{\rho}^{-1}$ is extended to $[1-\eta,\infty) \times \partial M$ by $\mathrm{Id}\times \phi_R^{-\epsilon}$. These form a Morse-Bott component of twisted Hamiltonian orbits, which we denote by $\Sigma =\{ \gamma_y | y \in \partial M\}.$ (See Figure \ref{fig:Sigma}.)
\end{proof}

\begin{figure}[h]
\includegraphics[scale=0.8]{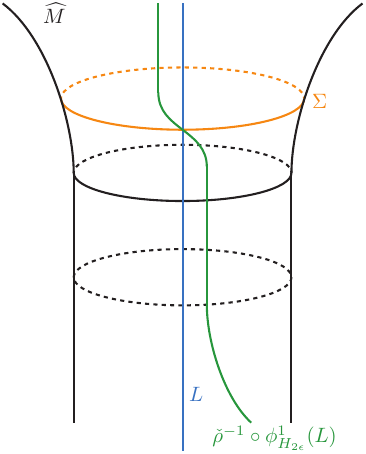}
\centering
\caption{Illustration of the Morse-Bott family $\Sigma$ for $n=2$ case}
\label{fig:Sigma}
\end{figure}

Our class $\Gamma$ will correspond to the fundamental chain of the Morse-Bott component $\Sigma$.  More precisely, we use the perturbation introduced in \cite{KvK16} to define it. Indeed, for a small real number $\delta>0$, Morse function $g_{\partial M}: \partial M \cong \{r_0\} \times \partial M \to \R$, and a smooth concave function $q : [1-\eta,\infty) \to [0,1]$ supported near $r_0$ with a unique maximum at $r_0$, we define a perturbation $H_{2\epsilon,\delta}$ of $H_{2\epsilon}$ with $g_{\partial M}$ as follows:
\begin{align*}
		\overline g(t,r,y)&:= \left( (g_{\partial M} \cdot q) \circ \phi_K^t(r,y) \right) ,\forall (t,r,y) \in \R \times [1-\eta,\infty) \times \partial M,\\
		H_{2\epsilon,\delta}&:= H_{2\epsilon} +\delta \overline g,
\end{align*}

where $g_{\partial M} \cdot q : \widehat{M} \to \R$ is a smooth function supported in $[1-\eta,\infty) \times \partial M$ such that
$$g_{\partial M} \cdot q(r,y) = g_{\partial M}(y) q(r), \forall (r,y) \in [1-\eta,\infty) \times \partial M.$$ 
Note that $H_{2\epsilon,\delta}$ is still a linear Hamiltonian in $\mathcal{H}_{\check{\rho}^{-1}}$ with slope $2\epsilon$ at infinity since $q$ is supported near $r_0$.
		
 Let $J_{2\epsilon,\delta} \in \mathcal{J}_{\check{\rho}^{-1}}$ be an almost complex structure such that $(H_{2\epsilon,\delta},J_{2\epsilon,\delta})$ forms a regular Floer datum. If $\delta>0$ is sufficiently small, then the Floer complex $CF^*({\rho}^{-1}; H_{2\epsilon,\delta},J_{2\epsilon,\delta})$ is generated by short twisted Hamiltonian orbits $\gamma_p \in \mathcal{P}_{\check{\rho}^{-1}, H_{2\epsilon,\delta}}$, $p \in \mathrm{Crit}\ g$. Here, by a short twisted Hamiltonian orbit, we mean a twisted Hamiltonian orbit $\gamma_p(t) = (\mathrm{Id} \times \phi_{R}^{\epsilon t})(p)$ of length $\epsilon$, which can be chosen to be an arbitrarily small positive number. Furthermore, the differential on $CF^*(\check{\rho}^{-1};H_{2\epsilon,\delta},J_{2\epsilon,\delta})$ is identified with the Morse differential on the Morse complex $CM^*(g_{\partial M})$ of $g_{\partial M}$, which follows from \cite{CFHW96}, \cite{KvK16}.

For each critical point $p \in \mathrm{Crit}\ g$, the degree of $\gamma_p$ is given by
\begin{align*}
	\deg \gamma_p &= - \mu_{CZ}(\Sigma) + \mathrm{ind}_{g_{\partial M}} p.
\end{align*}
Here the Conley-Zehnder index $\mu_{CZ}(\Sigma)$ for the linearized Hamiltonian flow is defined via the symplectic trivialization of a Hamiltonian orbit $\gamma \in \Sigma$ as in Section \ref{sec:symplectic cohomology} (see \cite{Gut14}).   In Proposition \ref{prop:index}, we will show that  $\mu_{CZ}(\Sigma)=0$. Now we define the class $\Gamma_{\epsilon}$ as follows.
\begin{defn}
 We define $\Gamma_{\epsilon} \in CF^*(\check{\rho}^{-1};H_{2\epsilon, \delta},J_{2\epsilon,\delta})$ by
\[ \Gamma_{\epsilon} = \sum_{\substack{p \in \mathrm{Crit}\ g_{\partial M}, \\ \mathrm{ind}_{g_{\partial M}} p = 0}} \gamma_{p}.\]
\end{defn}
According to the above formula, if we assume $\mu_{CZ}(\Sigma)=0$, then the degree of $\Gamma$ is given by
\[ \deg \Gamma_{\epsilon} = -\mu_{CZ} (\Sigma)  +\mathrm{ind}_{g_{\partial M}} p = 0.\]
Furthermore, $\Gamma_{\epsilon}$ is a cycle since $\sum_{p \in \mathrm{Crit}\ g_{\partial M},  \mathrm{ind}_{g_{\partial M}} p = 0} p$ is a cycle of the Morse complex of $g_{\partial M}$ representing the identity class in the cohomology of $\partial M$ or the fundamental class of the homology of $\partial M$ under the canonical isomorphisms:
\[ HM^0(g) \cong H^0(\partial M;\mathbb{K}) \cong H_{2n-1}(\partial M;\mathbb{K}).\]

Now we are ready to define the class $\Gamma$.
\begin{defn}[Monodromy class]
\label{defn:gamma} The monodromy class
$\Gamma \in SH^*(\check{\rho}^{-1}) \cong SH^*(\rho^{-1})$ is defined by 
\[ \Gamma = c_+(\Gamma_{\epsilon}),\]
where $c_+ : HF^*(\check{\rho}^{-1}; H_{2\epsilon},J_{2\epsilon}) \to SH^*(\check{\rho}^{-1})$ is the continuation map introduced in Remark \ref{remark:continuation}.
\end{defn}

To realize the chain level description of the class $\Gamma$, we consider a regular Floer datum $(H,J)$ consisting of an almost complex structure $J \in \mathcal{J}_{\check{\rho}^{-1}}$ and a quadratic Hamiltonian $H \in \mathcal{H}_{\check{\rho}^{-1}}$ such that
\begin{itemize}
\item it is constantly zero on $M$ and 
\item it satisfies \eqref{eq:quadratic} for some quadratic Hamiltonian $H_0$ and some perturbation function $F$ such that 
\begin{equation*}
H_0(t,r,y) = h(r), \forall (t,r,y) \in \R \times [1,\infty) \times\partial M
\end{equation*}
for some function $h : [1,\infty) \to \R$ quadratic at infinity.
\end{itemize}
Then $SH^*(\check{\rho}^{-1})$ can be defined by $HF^*(\check{\rho}^{-1};H,J)$.

Moreover, for convenience, we may choose $h$ in such a way that
\begin{itemize}
\item $h = h_{2\epsilon}$ on $[1, 2r_0]$ for $r_0$ introduced above and
\item $h \geq h_{2\epsilon}$ everywhere.
\end{itemize}

Then, as in the case of $H_{2\epsilon}$, there is a Morse-Bott family of twisted orbits of $\mathcal{P}({\check{\rho}^{-1}, H})$ diffeomorphic to $\partial M$ along $\{r_0\} \times \partial M \subset \widehat{M}$. Once again, we may perturb $H$ with the Morse function $g_{\partial M}$ on $\partial M$ and some sufficiently small $\delta>0$ exactly in the same way we did for $H_{2\epsilon}$. Let us call the resulting perturbed Hamiltonian $H_{\delta}$. Consequently, for each $p \in \mathrm{Crit}\ g_{\partial M}$, there is a corresponding twisted Hamiltonian orbit $\gamma_p \in \mathcal{P}_{\check{\rho}^{-1},H_{\delta}}$ and hence we may define $\Gamma \in CF^*(\check{\rho}^{-1}; H_{\delta},J_{\delta})$ by 
\[ \Gamma = \sum_{\substack{p \in \mathrm{Crit}\ g_{\partial M},\\ \mathrm{ind}_{g_{\partial M}} p =0}} \gamma_p .\] 

We may assume that
\[ H_{2\epsilon, \delta} = H_{\delta} \]
on $M \cup [1,2r_0]\times \partial M \subset \widehat{M}$ due to our choice of $h$. Then considering that the continuation map does not decrease the action value, it is not difficult to see that the continuation map $c_+ : CF^*(\check{\rho}^{-1}; H_{2\epsilon,\delta},J_{2\epsilon,\delta}) \to CF^*(\check{\rho}^{-1}; H_{\delta},J_{\delta})$ is identified with the inclusion $CF^*_{(-2\epsilon,\infty)}(\check{\rho}^{-1};H_{\delta},J_{\delta}) \to CF^*(\rho^{-1};H_{\delta},J_{\delta})$ of the subcomplex generated by twisted Hamiltonian chords of action greater than $-2\epsilon$ and there for it sends $\Gamma_{\epsilon}$ to $\Gamma$ on the chain level.

\subsection{Comparison with the spectral sequence of McLean\cite{Mc19}}\label{remark:spectralsequence}
 The spectral sequence associated with the action filtration is a useful tool for studying Floer cohomology.
McLean \cite{Mc19} proved that the minimal natural number $m$ such that $HF^*(\rho^m,+)$ does not vanish is equal to the multiplicity of the singularity (conjectured by Seidel \cite{SeidelLecture2014}) using the spectral sequence.
The monodromy class $\Gamma$ should be visible in the similar spectral sequence that converges to the Floer cohomology  $HF^*(\rho^{-1}, + )$, and we specify the corresponding element in this subsection.
For that purpose, we first show that a local fixed point cohomology $HF^*_{\mathrm{loc}}(\rho^{-1}, + )$ that appears in the spectral sequence is isomorphic to the singular cohomology $H^*(\partial M;\mathbb{K})$, following McLean \cite{Mc19}, Fernandez de Bobabilla and Pe{\l}ka \cite{BP}.

To see this, we need some preparations.
First, our degree $\deg$ is different from the degree $\deg_{\#}$ used in \cite{Mc19}. Indeed, those two are related by
	\[  \deg = n + \deg_{\#}.\]
	This means that there is an isomorphism
	\begin{equation}
		\label{eq:McLean1}
		HF^*(\rho^{-1}, \pm ) \cong HF^{*-n}_{\#} (\rho^{-1}, \pm),
	\end{equation}
	where $HF^{*}_{\#} (\rho^{-1}, \pm)$ is the fixed point Floer cohomology defined using the degree convention $\deg_{\#}$ in \cite{Mc19}.

Second,  we already have seen that there is a perturbation $\check{\rho}^{-1}$ of $\rho^{-1}$ with a negative Reeb flow near $\partial M$, i.e., there is a time-independent Hamiltonian $K$ linear with slope $\epsilon$ on $[1-\eta, 1] \times \partial M$ such that $\check{\rho}^{-1} = \rho^{-1} \circ \phi_{-K}^{1}$. 

\begin{lemma}
There is a perturbation $\widetilde{\rho^{-1}}$ of $\rho^{-1}$ with a positive Reeb flow near $\partial M$ such that the fixed locus $\widetilde{B}=\mathrm{Fix} \widetilde{\rho^{-1}}$ of $\widetilde{\rho^{-1}}$ is given by a connected submanifold of $M$ of codimension $0$, which is diffeomorphic to the cylinder $[a,b] \times \partial M$ for some real numbers $a<b$. Furthermore, there is an open neighborhood $\widetilde{U}$ of $\widetilde{B}$ in $M$ and a time-independent Hamiltonian $\widetilde{H} : \widetilde{U} \to [0,\infty)$ such that
\begin{itemize}
	\item $\widetilde{H}^{-1}(0) = \widetilde{B}$,
	\item $\widetilde{\rho^{-1}}|_{\widetilde{U}} = \phi_{\widetilde{H}}^1$, and
	\item for every $p\in \partial \widetilde{B}$, there is an open neighborhood $V_p$ of $p$ such that $H|_{V_p \setminus \overline{\widetilde{B}}} >0$.
\end{itemize}
\end{lemma}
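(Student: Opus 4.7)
The plan is to adapt Proposition \ref{prop:perturbation} in two ways: build the perturbation for $\rho^{-1}$ rather than $\rho$, and arrange the perturbing Hamiltonian so that a codimension-zero cylindrical slab in the interior of $B_0 = \mathrm{Fix}\,\rho^{-1}$ survives as the new fixed locus, while the perturbation near $\partial M$ is a positive (rather than negative) Reeb flow. Because $H_0$ is time-independent and $\rho|_U = \phi_{H_0}^1$, we have $\rho^{-1}|_U = \phi_{-H_0}^1$, with $-H_0: U \to [0,\infty)$ and $(-H_0)^{-1}(0) = B_0$. This provides a ready-made positive Hamiltonian cutting out the old fixed locus, which I will deform to cut out only a subslab.

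Under the model $B_0 \cong [1-\tilde\eta, 1] \times \partial M$ with Liouville form $r\theta$ along the collar, I would fix radii $1-\tilde\eta < a < b < 1$ and choose a smooth profile $h: [1-\tilde\eta, 1] \to [0,\infty)$ with $h \equiv 0$ on $[a,b]$, $h > 0$ elsewhere, no critical points outside $[a,b]$, $h(r) = \epsilon r + c$ on $[1-\eta', 1]$ for some $0<\eta'<1-b$ and sufficiently small $\epsilon>0$, and $|h'|$ everywhere strictly smaller than the minimal period of closed Reeb orbits on $\partial M$. I would then extend $h(r)$ to a Hamiltonian $\widetilde b: U \to [0,\infty)$ that equals $h(r)$ on a neighborhood of the collar portion of $B_0$, equals $-H_0$ near $\partial U$, and interpolates through an intermediate layer where $\widetilde b$ is $C^2$-small, in such a way that $\widetilde b \geq 0$, $\widetilde b^{-1}(0) = \widetilde B := [a,b]\times \partial M$, and $\widetilde b$ has no critical points outside $\widetilde B$. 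Define
\[\widetilde{\rho^{-1}} = \begin{cases} \rho^{-1} & \text{outside } U,\\ \phi_{\widetilde b}^1 & \text{inside } U,\end{cases}\]
which patches smoothly since near $\partial U$ we have $\widetilde b = -H_0$, giving $\phi_{\widetilde b}^1 = \phi_{-H_0}^1 = \rho^{-1}|_U$ there. Near $\partial M$ this coincides with the positive Reeb flow $\phi_R^\epsilon$, as required.

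To read off the fixed locus: outside $U$ there are no fixed points, since $\mathrm{Fix}\,\rho^{-1} = B_0 \subset U$; inside $U$, the fixed points of $\phi_{\widetilde b}^1$ consist precisely of the critical set $\widetilde B$, because in its complement $X_{\widetilde b}$ is a nonzero vector field whose time-one flow cannot close up, thanks to the bound on $|h'|$ and the $C^2$-smallness in the transition layer. Taking $\widetilde U$ to be a small open neighborhood of $\widetilde B$ inside $U$ and $\widetilde H := \widetilde b|_{\widetilde U}$, the three bulleted properties follow: $\widetilde H^{-1}(0) = \widetilde B$ by construction, $\widetilde{\rho^{-1}}|_{\widetilde U} = \phi_{\widetilde H}^1$ by definition, and $\widetilde H|_{V\setminus \overline{\widetilde B}}>0$ for any small neighborhood $V$ of $p \in \partial \widetilde B$ since $h(r)>0$ for $r \notin [a,b]$.

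The main obstacle is excluding spurious fixed points of $\phi_{\widetilde b}^1$ either in the transition region between the collar model and $-H_0$, or arising from closed Reeb orbits on $\partial M$ whose period coincides with $h'(r)$ for some $r \notin [a,b]$. Both are handled by the same mechanism used for $\check\rho$ in Proposition \ref{prop:perturbation}: one keeps $|h'|$ and the $C^2$-norm of $\widetilde b$ in the transition layer strictly below the minimal Reeb period, so that the time-one Hamiltonian flow cannot close up off the critical locus.
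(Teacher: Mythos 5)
Your construction takes a genuinely different route from the paper's. The paper builds $\widetilde{\rho^{-1}}$ as the composition $\check{\rho}^{-1}\circ\phi_G^1$, where $\check{\rho}^{-1}$ is the fixed-point-free perturbation from Proposition~\ref{prop:perturbation} and $G$ is a \emph{radial} Hamiltonian of the form $g(r)$ supported in the collar $[1-\eta,1]\times\partial M\subset\mathrm{Fix}\,\rho$. Because both $K=\epsilon r$ and $G=g(r)$ are radial on this collar, their flows commute, so one has cleanly $\widetilde{\rho^{-1}}|_{\widetilde U}=\phi_{G-K-V}^1$; the critical set is exactly the slab $\{g'(r)=\epsilon\}$, and outside the collar $G$ is locally constant so $\widetilde{\rho^{-1}}=\check{\rho}^{-1}$ has no fixed points. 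This keeps all the analysis in the radial regime and re-uses Proposition~\ref{prop:perturbation} as a black box. You instead work directly with the Hamiltonian $-H_0$ that generates $\rho^{-1}|_U$, replacing it by a new function $\widetilde b$ that agrees with $-H_0$ near $\partial U$, is radial $h(r)$ near $\partial M$, and has $\widetilde b^{-1}(0)=\widetilde B$. This is conceptually direct, but it creates a genuine transition region where you must interpolate between the radial profile $h(r)$ and the non-radial function $-H_0$ without introducing spurious critical points or $1$-periodic orbits, and the proposal handles this only by appealing to ``the same mechanism as Proposition~\ref{prop:perturbation}.'' That step is plausible (and the analogous interpolation is carried out by McLean for $\check{\rho}$), but it is not automatic: in general a critical-point-free interpolation between a radial function and $-H_0$ with prescribed zero set requires care about the level-set structure of $H_0$, and the $C^2$-smallness you invoke does not by itself exclude $1$-periodic orbits off $\widetilde B$. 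So your argument is right in spirit and more elementary in shape, but a careful reader would want the transition-layer claim expanded; the paper's compositional device avoids this entirely by arranging that the only non-radial piece, $\check{\rho}^{-1}$, is already known to be fixed-point free.
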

\begin{proof}
We may consider a time-independent 
Hamiltonian $G$ on $M$ satisfying $G = g(r)$ on $[1-\eta,1] \times \partial M$ for some $g : [1-\eta, 1] \to \R$ such that
\begin{itemize}
\item $g'(1-\eta) =0$,
\item $g'(1) = 2\epsilon$,
\item $g''(r) \geq 0$, and
\item $g'(r) = \epsilon$ for $r\in \left[1-\frac{2\eta}{3}, 1-\frac{\eta}{3}\right]$.
\end{itemize}
Then $\widetilde{{\rho}^{-1}}$ defined by
\[ \widetilde{{\rho}^{-1}} = \check{\rho}^{-1} \circ \phi_G^1 = \rho^{-1} \circ \phi_{-K}^{1} \circ \phi_{G}^1\]
satisfies all the properties in the statement. For example, the fixed locus of $\widetilde{{\rho}^{-1}}$ is given by
\[\widetilde{B} := \left[1-\frac{2\eta}{3}, 1-\frac{\eta}{3}\right] \times \partial M\]
and it is contained in an open neighborhood
\[ \widetilde{U} = (1-\eta, 1) \times \partial M,\]
where $\widetilde{\rho^{-1}}$ is given by the time-$1$ flow of the time-independent Hamiltonian
\[ \widetilde{H}  = G - K - C \]
for the constant $C = G(1, y) - K(1,y)$, $y \in \partial M$.
\end{proof}

Now, if we consider the local fixed point Floer cohomology $HF^*_{\mathrm{loc}} (\rho^{-1} , +)$ localized at $\widetilde{U}$ and the degree adjustment \eqref{eq:McLean1} explained above, then the argument in the proof of \cite[Lemma 9.4]{Mc19}, \cite[Proposition 6.3]{BP} shows that there is an isomorphism
\begin{equation}
\label{eq:McLean2}
HF^{*}_{\mathrm{loc}} (\rho^{-1}, +) \cong H_{2n - * - \mu_{CZ}(\widetilde{B})} (\widetilde{B}, \partial \widetilde{B};\mathbb{K}).
\end{equation}

Almost the same argument as in Subsection \ref{subsubsection:grading} can be applied here to show that $\mu_{CZ}(\widetilde{B})=0$. Furthermore, since $\widetilde{B} \cong [a,b] \times \partial M$ and $\partial \widetilde{B} \cong  \{b\} \times \partial M \bigsqcup \{a\} \times \partial M$, the long exact sequence of homology associated with the triple $([a,b]\times \partial M, \{a, b\} \times \partial M, \{a\} \times \partial M)$ shows that
\begin{equation}
\label{eq:les}
H_{2n - *} (\widetilde{B}, \partial \widetilde{B};\mathbb{K}) \cong HF_{2n-1-*}(\partial M ;\mathbb{K})\cong HF^*(\partial M ;\mathbb{K}),
\end{equation}
where the second isomorphism is the Poincar\'{e} duality.

Finally, since the fixed locus of $\widetilde{\rho^{-1}}$ is given by $\widetilde{B} ( \subset \widetilde{U}$), there is no difference between the fixed point Floer cohomology $HF^* (\rho^{-1},+)$ and the local fixed point Floer cohomology $HF^*_{\mathrm{loc}} (\rho^{-1} , +)$. Consequently, combining \eqref{eq:McLean2}, \eqref{eq:les} and $\mu_{CZ}(\widetilde{B}) =0$, we obtain an isomorphism
\[ HF^*(\rho^{-1},+) \cong H^*(\partial M ;\mathbb{K}).\]

\subsection{Grading}
\label{subsubsection:grading}
Let us discuss the grading on $SH^*(\check{\rho}^{-1})$ and show that $\mu_{CZ}(\Sigma) =0$ as asserted above.

First note that $c_1\big(f^{-1} ( B_{\delta}^*(0) \setminus f^{-1}(0)\big) =0$ (for a small $\delta >0$) as there is the standard non-vanishing holomorphic volume form $dz_1 \wedge \dots \wedge dz_n$ on $\C^n \setminus f^{-1}(0)$
(here $(z_1,\dots,z_n)$ is the standard coordinate of $\C^n$).
Furthermore, there exists a fiberwise complex volume form $\overline{\Omega}^{\mathrm{ver}}_{\delta}$ on the vertical tangent bundle $T^\mathrm{ver}f^{-1}(B_{\delta}^*(0)) = \ker df$ over $f^{-1}(B_{\delta}^*(0))$ such that
\begin{equation}\label{eq:residualcomplexvolumeform}
	dz_1\wedge \dots \wedge dz_n = \overline{\Omega}^\mathrm{ver}_{\delta} \wedge df.
\end{equation}
 Also  observe that the symplectic monodromy trivialization $\Phi$ on $f^{-1}(S_{\delta})$ identifies $f^{-1}(S_{\delta})$ with the mapping torus $T_{\rho_{\delta}}$ as a fibration over $S^1$.
 Recall that a non-vanishing complex volume form on the vertical tangent bundle $T^\mathrm{ver} T_{\rho_{\delta}}$ determines a grading on $(M_{\delta},\lambda_{\delta},\rho_{\delta})$ (see Remark \ref{remark:abstractcontactopenbook}).
Therefore, the restriction of $\overline{\Omega}^\mathrm{ver}_{\delta}$ to $f^{-1}(S_{\delta}) \cong T_{\rho_{\delta}}$ induces a grading on $(M_{\delta},\lambda_{\delta}, \rho_{\delta})$ (see \cite[Setting 5.11]{BP} for more details),
and thus  a grading on $(M,\lambda,\rho)$ associated with the Milnor fiber at radius zero
\cite[Corollary 5.16]{BP}.
Observe further that $\overline{\Omega}^\mathrm{ver}_{\delta}$ is well-defined away from the critical point $0 \in f^{-1} (B_{\delta}(0)) \subset \C^n$ by \eqref{eq:residualcomplexvolumeform}.

From Remark \ref{remark:abstractcontactopenbook}, this grading determines a non-vanishing complex volume form $\overline{\Omega}$ on the vertical tangent bundle of $T^\mathrm{ver} T_{\rho}$ uniquely up to homotopy. In other words, there is a 1-parameter family of complex volume forms $\{\overline{\Omega}_t\}_{t\in [0,1]}$ on $\widehat{M}$ such that $\overline{\Omega}_1 = \rho^* \overline{\Omega}_0.$

Next, the fact that $(M,\lambda,\rho)$ is graded implies that  $(M,\lambda,\rho^{-1})$ is also graded. Indeed, we have $\widetilde{\Omega}=\{\widetilde{\Omega}_t\}_{t \in [0,1]}$ on the vertical tangent bundle $T^\mathrm{ver} T_{\rho^{-1}}$ given by
\begin{equation}\label{eq:tildeomega}
\widetilde{\Omega}_t = \overline{\Omega}_{1-t},
\end{equation}
which satisfies $(\rho^{-1})^* \widetilde{\Omega}_0 = \widetilde{\Omega}_1$.  

Finally, to endow the Floer complex $SC^*(\check{\rho}^{-1})$ with a grading, we define a complex volume form ${\Omega} = \{\Omega_t\}_{t \in [0,1]}$ on the vertical tangent bundle $T^\mathrm{ver} T_{\check{\rho}^{-1}}$ by $\Omega_t =  (\phi_{-K}^t)^*\widetilde{\Omega}_t,$ where $K$ is from \eqref{eq:rhoK}.

As the Morse-Bott component $\Sigma$ lies in the collar neighborhood of $\partial M$, we need to
discuss a collar trivialization of our Milnor fibration and the complex volume form thereof.
Recall from \cite[Setting 5.11]{BP} that the authors found an open subset $W$ of $\C^n$ containing $0$ that satisfies
\begin{itemize}
\item its closure $\overline{W}$ is compact,
\item its boundary is a manifold, and
\item $f^{-1}(0) \pitchfork \partial W$.
\end{itemize}
Then modifying the restriction of $f$ to $\overline{W} \cap f^{-1}(B_{\delta}^*(0))$, they constructed a subset $N_{\delta}$ of $W$ such that the restriction of $f$ to $N_{\delta}$ is a Liouville fibration $f: N_{\delta} \to B_{\delta}^*(0)$. They further showed that there is a collar neighborhood $C^*_{\delta}$ of the horizontal boundary $\partial_{hor} N_{\delta}$ with a collar trivialization ${F}_{\delta}^*$:
\[ {F}_{\delta}^* : D_\delta \times B_{\delta}^*(0) \xrightarrow{\sim} C^*_{\delta},\]
for some collar neighborhood $D_{\delta}$ of $\partial M_{\delta}$ in $M_{\delta}$, for which the symplectic monodromy $\Phi : M _{\delta} \times [0,1] \to f^{-1}(S_{\delta})$ satisfies (for all $x\in D_{\delta}$ and $t\in [0,1]$)
\begin{equation}\label{eq:identityoncollar}
	\Phi (x, t) = {F}_{\delta}^* (x, \delta e^{2\pi i t}).
\end{equation}

Moreover, their construction further says that the collar trivialization ${F}_{\delta}$ extends even over $f^{-1}(0)$
since the origin is the unique critical point inside the Milnor ball. Hence there is a collar neighborhood $D_0$ of the boundary of the singular fiber $N_{\delta} \cap f^{-1}(0)$ so that the collar neighborhood $C_{\delta}^*$ of $\partial N_{\delta}$ can be extended over $f^{-1}(0)$ by adding $D_0$:
\[ C_{\delta}= C^*_{\delta} \cup D_0.\]
Therefore, the collar trivialization $F_{\delta}^*$ extends to a collar trivialization ${F}_{\delta}$ for $C_{\delta}$:
\begin{equation}\label{eq:collartrivializationdelta}
	{F}_{\delta} : D_{\delta} \times B_{\delta}(0) \xrightarrow{\sim} C_{\delta}.
\end{equation}

An analogous collar trivialization exists for the Milnor fibration at radius zero. Consider the A'Campo space $A$ and the associated fibration $f_A : A \to B_{\delta,\log}(0)$, whose fiber is denoted by $M$. By \cite[Setting 5.15]{BP},  there is a collar neighborhood ${C}$ of the horizontal boundary of a compact subdomain $N$ of $A$ obtained as the fiber product of the  projection map $B_{\delta,\log}(0) \to B_{\delta}(0)$ and the fibration $f: C_{\delta} \to B_{\delta}(0)$:
\[ 
\begin{tikzcd}
{C} \arrow{r}{} \arrow[swap]{d}{f_A}&  C_{\delta} \arrow{d}{f} \\
B_{\delta,\log}(0) \arrow{r}{} & B_{\delta}(0).
\end{tikzcd}
\]

 Consequently, there is a collar neighborhood $D$ of $\partial M \cap N$ in $M$ identified with the collar neighborhood $D_0$ of the boundary of $N_{\delta} \cap f^{-1}(0)$ by definition and a collar trivialization of ${C}$
\begin{equation}\label{eq:collarneighborhoodofrho}
	{F} : D \times B_{\delta,\log}(0) \xrightarrow{\sim} {C}
\end{equation}
obtained from the collar trivialization ${F}_{\delta}$ \eqref{eq:collartrivializationdelta}
satisfying $\mathrm{pr}_{B_{\delta,\log}(0)} = f_A \circ {F}$.

Identifying $\partial B_{\delta,\log}(0)$ with $S^1=\R/\Z$, then $N\cap f_A^{-1}(\partial B_{\delta,\log}(0)) (\xrightarrow{f_A} \partial B_{\delta,\log}(0) =S^1)$ is identified with the mapping torus $T_{\rho}$ as a fibration over $S^1$.

As we work with $\rho^{-1}$, we use the diffeomorphism $\sigma : T_{\rho} \to T_{\rho^{-1}}, [(t,x)] \mapsto [(1-t,x)]$ that lifts the orientation-reversing diffeomorphism $S^1 \to S^1, t\mapsto 1-t$ and the collar trivialization ${F}$ to obtain the collar neighborhood $E = \sigma({C}) \subset T_{\rho^{-1}}$ of its horizontal boundary and consequently a collar trivialization 
\begin{equation}\label{eq:collartrivializationzero}
\widetilde{F}  = \sigma \circ {F} : D \times S^1 \to E
\end{equation}
of $E$ in $T_{\rho^{-1}}$. Then we observe the following.
\begin{lemma}\label{lemma:timeindependentoncollar}
	The complex volume form $\widetilde{\Omega} = \{\widetilde{\Omega}_t\}_{t \in [0,1]}$ \eqref{eq:tildeomega} on the vertical tangent bundle $T^\mathrm{ver} T_{\rho^{-1}}$ is homotopic to a complex volume form $\widetilde{\Omega}' = \{\widetilde{\Omega}'_t \}_{t \in [0,1]}$ that is independent of $t \in [0,1]$ on the collar neighborhood $E$ in the sense that the complex volume form on $D$ given by
	\[ \widetilde{F}^* \widetilde{\Omega}' |_{D \times \{ t\}}\]
	is independent of $t \in [0,1]$ as a complex volume form on $D$, where each fiber $D \times \{t\}$ is identified with $D$.
\end{lemma}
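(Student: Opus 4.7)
The plan is to exploit the contractibility of the disc $B_\delta(0)$ over which the ambient Milnor fibration is defined. First, since $\rho$ equals the identity on a collar of $\partial M$ by construction, the trivialization $\widetilde F : D \times S^1 \to E$ identifies $T^{\mathrm{ver}}T_{\rho^{-1}}|_E$ with the pullback of $TD$ from $D$ to $D \times S^1$. Under this identification, $\widetilde F^* \widetilde\Omega|_{D \times S^1}$ is a smooth $S^1$-family $\{\Theta_t\}_{t \in S^1}$ of non-vanishing complex volume forms on $D$, and the lemma amounts to showing that this loop is smoothly nullhomotopic through such families.

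Next, I would trace the construction to see that $\{\Theta_t\}$ extends to a continuous family parametrized by the full disc $B_\delta(0)$. The form $\widetilde\Omega$ arises from $\overline\Omega_\delta^{\mathrm{ver}}$ via the identifications $\sigma : T_\rho \to T_{\rho^{-1}}$ and the symplectic monodromy trivialization $\Phi$, while the collar trivialization $\widetilde F$ is built from $F_\delta$ via $\sigma$ and the oriented real blow-up $B_{\delta,\log}(0) \to B_\delta(0)$; the key compatibility \eqref{eq:identityoncollar} ensures that $\Phi$ agrees with $F_\delta$ on the collar. Thus, under the natural diffeomorphism $D \cong D_\delta$, the loop $\{\Theta_t\}$ is identified with the restriction to $\partial B_\delta(0) = S_\delta$ of the $B_\delta(0)$-family
\[
\widetilde\Theta : B_\delta(0) \to \{\text{non-vanishing complex volume forms on } D_\delta\}, \qquad b \mapsto F_\delta^* \overline\Omega^{\mathrm{ver}}_\delta|_{D_\delta \times \{b\}}.
\]
Since $B_\delta(0)$ is contractible, composing $\widetilde\Theta$ with any smooth contracting isotopy of loops $\gamma_s : S^1 \to B_\delta(0)$, $s \in [0,1]$, from the boundary inclusion $\gamma_0$ to a constant map $\gamma_1$ yields a smooth $1$-parameter homotopy $\{\Theta_t^s\}$ with $\Theta_t^0 = \Theta_t$ and $\Theta_t^1$ independent of $t$.

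Finally, I would globalize the homotopy. The ratios $h_s := \Theta^s/\Theta^0 : D \times S^1 \to \mathbb{C}^*$ are smooth functions with $h_0 \equiv 1$. Choosing a smooth bump function $\chi$ on $T_{\rho^{-1}}$ equal to $1$ on $E$ and compactly supported in a slightly larger open neighborhood $E'$ of $E$, I extend each $h_s$ to a smooth function $\tilde h_s : T_{\rho^{-1}} \to \mathbb{C}^*$ equal to $h_s$ on $E$ and identically $1$ outside $E'$; this is a routine construction using $\chi$ together with a coherent choice of branch of $\log h_s$ (possible since $h_0 \equiv 1$). Setting $\widetilde\Omega^s := \tilde h_s \cdot \widetilde\Omega$ gives a smooth homotopy of complex volume forms on $T^{\mathrm{ver}}T_{\rho^{-1}}$ from $\widetilde\Omega$ to $\widetilde\Omega' := \widetilde\Omega^1$; the compatibility $(\rho^{-1})^*\widetilde\Omega^s_0 = \widetilde\Omega^s_1$ is automatic because the modification happens only where $\rho^{-1} = \mathrm{Id}$ and $h_s$ is $S^1$-periodic by construction, and by design $\widetilde F^*\widetilde\Omega'|_{D \times \{t\}}$ is a $t$-independent complex volume form on $D$.

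The main obstacle is the identification performed in the second step: one must carefully track the grading on $(M,\lambda,\rho)$ as induced from the grading on $(M_\delta,\lambda_\delta,\rho_\delta)$ via the isotopy of abstract contact open books and \cite[Corollary 5.16]{BP}, and verify that the various gluings between $\Phi$, $\sigma$, $F_\delta$, and $F$ intertwine the complex volume forms correctly, so that $\{\Theta_t\}$ genuinely coincides with the boundary restriction of $\widetilde\Theta$ rather than differing from it by some nontrivial loop of multiplicative factors.
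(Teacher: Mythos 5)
Your proposal takes essentially the same route as the paper: both reduce to the statement for $\overline{\Omega}$ on $T^{\mathrm{ver}}T_{\rho}$, exploit that $F_\delta$ and $\overline{\Omega}^{\mathrm{ver}}_\delta$ extend over the contractible base $B_\delta(0)$ to conclude that the restricted family of complex volume forms on $D_\delta$ is nullhomotopic, and then transfer this to $T^{\mathrm{ver}}T_\rho$ via the fiber-product description of the collar $C$ together with the graded isotopy of \cite[Corollary 5.16]{BP}. Your bump-function globalization step and the check of $(\rho^{-1})^*$-compatibility are implicit in the paper but correctly handled, and the compatibility you flag as the remaining obstacle (that $\{\Theta_t\}$ agrees with the boundary restriction of $\widetilde\Theta$) is exactly what the paper argues from \eqref{eq:identityoncollar} and the construction of $F$ from $F_\delta$.
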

\begin{proof} By the definition of $\widetilde{\Omega}_t$ in \eqref{eq:tildeomega}, it suffices to show that the complex volume form $\overline{\Omega} = \{ \overline{\Omega}_t \}_{t\in [0,1]}$ on the vertical tangent bundle $T^\mathrm{ver} T_{\rho}$ is homotopic to a complex volume form $\overline{\Omega}' = \{ \overline{\Omega}'_t\}_{t\in [0,1]}$ such that ${F}^* \overline{\Omega}'|_{D \times \{t\}}$ is independent of $t \in [0,1]$ as a complex volume form on $D$. We prove this assertion for a proof.
	
Since the collar trivialization ${F}_{\delta}$ \eqref{eq:collartrivializationdelta} extends over $B_{\delta}(0)$ and the fiberwise complex volume form $\overline{\Omega}^\mathrm{ver}_{\delta}$ is well-defined over $C_{\delta}$, we deduce that $\left\{ {F_{\delta}^*\overline{\Omega}}|_{D_{\delta} \times \{t\}}\right\}_{t\in [0,1]}$ is homotopic to the 1-parameter family of constant complex volume form on $D_\delta$.
	
	Now \cite[Corollary 5.16]{BP} says that the graded abstract open book $(M,\lambda,\rho)$ at radius zero is graded isotopic to the graded abstract open book $(M_{\delta}, \lambda_{\delta}, \rho_{\delta})$, whose grading is induced by the complex volume form $\{ \overline{\Omega}_{\delta,t}\}_{t\in [0,1]}$ on $T^\mathrm{ver} T_{\rho_{\delta}}$ as explained above. Recall that the collar neighborhood ${C}$ of $T_{\rho}$ that appears in \eqref{eq:collarneighborhoodofrho} is obtained as the fiber product of the obvious projection map $B_{\delta,\log}(0) \to B_{\delta}(0)$ and the fibration $f: C_{\delta} \to B_{\delta}(0)$. Furthermore the collar trivialization ${F}$ \eqref{eq:collartrivializationzero} is obtained from the collar trivialization ${F}_{\delta}$ \eqref{eq:collartrivializationdelta}. It follows that the restriction of complex volume form $\{\overline{\Omega}_t\}_{t \in [0,1]}$ on the vertical tangent bundle $T^\mathrm{ver} T_{\rho}$ is homotopic to a complex volume form $\{\overline{\Omega}'_t\}_{t \in [0,1]}$ such that ${F}^* {\overline{\Omega}'_t}|_{D \times \{t\}}$ is independent of $t\in [0,1]$. This completes the proof.
\end{proof}
Finally, using the above lemma, we prove that $\mu_{CZ}(\Sigma)=0$ as desired.
\begin{prop}\label{prop:index}
Let $\Sigma$ be the Morse-Bott component of twisted Hamiltonian orbits $\mathcal{P}({\check{\rho}^{-1}, H})$ for a quadratic Hamiltonian $H \in \mathcal{H}_{\check{\rho}^{-1}}$ discussed in Subsection \ref{subsec:Gamma}. Then,
\[ \mu_{CZ} (\Sigma) = 0.\]
\end{prop}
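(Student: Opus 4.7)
The plan is to compute $\mu_{CZ}(\Sigma)$ by an explicit trivialization calculation along a typical Morse-Bott orbit in $\Sigma$, using Lemma \ref{lemma:timeindependentoncollar} to simplify the grading data on the collar neighborhood of $\partial M$. I would fix a representative orbit $\gamma_y(t) = (r_0, \phi_R^{\epsilon t}(y)) \in \Sigma$ and decompose the ambient tangent bundle along it into the radial line $\mathbb{R}\langle\partial_r\rangle$, the Reeb line $\mathbb{R}\langle R\rangle$, and the contact distribution $\xi$. By Lemma \ref{lemma:timeindependentoncollar}, after a homotopy we may take $\widetilde{\Omega}$ to be time-independent on the collar; since $\Omega_t = (\phi_{-K}^t)^*\widetilde{\Omega}_t$ and $\phi_{-K}^t$ restricts to $\mathrm{Id}\times \phi_R^{-\epsilon t}$ there, the dependence of $\Omega_t$ on $t$ in the collar comes purely from the Reeb flow.

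Next, I would choose a symplectic trivialization $\Psi_t$ of $\gamma_y^* T\widehat{M}$ satisfying \eqref{eq:compatibility1} and \eqref{eq:compatibility2} by taking a fixed (time-independent) unitary frame on the complex line spanned by $(\partial_r, R)$ and by transporting a fixed unitary frame of $\xi|_y$ along the Reeb flow, i.e.\ $\Psi_t|_\xi = d\phi_R^{\epsilon t}\circ \Psi_0|_\xi$. The twist condition \eqref{eq:compatibility1} holds because $\check{\rho}^{-1}$ acts on the collar as $\mathrm{Id}\times \phi_R^{-\epsilon}$, and the volume-form condition \eqref{eq:compatibility2} holds by the time-independence of $\widetilde{\Omega}$ combined with the Reeb-equivariance of $\Psi_t$. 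With this trivialization in hand, I would then compute $\Lambda(t) = \Psi_t^{-1}\circ d\phi_H^t \circ\Psi_0$: on the $(\partial_r, R)$-plane it is the shear $\bigl(\begin{smallmatrix}1 & 0 \\ t\,h''(r_0) & 1\end{smallmatrix}\bigr)$ coming from the quadratic radial profile of $H$, and on $\xi$ it is the identity, since the Reeb contributions of $d\phi_H^t|_\xi = d\phi_R^{\epsilon t}|_\xi$ (using $h'(r_0) = \epsilon$) and $\Psi_t^{-1}|_\xi$ exactly cancel.

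The final step is to compute the Robbin-Salamon index of this Morse-Bott path following the conventions in \cite{Gut14}. The constant identity path on $\xi$ contributes $0$, and the pure positive shear on the $(\partial_r, R)$-plane contributes $0$ once one accounts for the standard Morse-Bott correction along the degenerate Reeb direction; this matches exactly the analogous calculation for the Morse-Bott family of short positive Reeb orbits in the symplectic cohomology of a Liouville domain, which produces the unit class in degree zero (cf.\ \cite{KvK16}, \cite{CFHW96}). Hence $\mu_{CZ}(\Sigma) = 0$. The main obstacle is ensuring that the trivialization is simultaneously compatible with both \eqref{eq:compatibility1} and \eqref{eq:compatibility2}; this is precisely where Lemma \ref{lemma:timeindependentoncollar} enters essentially, and after it is applied the remainder of the argument reduces to the familiar short Reeb orbit calculation on a Liouville domain.
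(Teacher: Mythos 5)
Your strategy is the same as the paper's: use Lemma~\ref{lemma:timeindependentoncollar} and the perturbation structure of $\check{\rho}^{-1}$ near the collar to choose a trivialization $\Psi_t$ adapted to the Reeb flow, so that both \eqref{eq:compatibility1} and \eqref{eq:compatibility2} are satisfied, and then read off $\Lambda(t)$.

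Where you differ is in the last step, and the difference is worth noting. The paper's proof concludes abruptly by asserting $d\phi_H^t = d(\mathrm{Id}\times\phi_R^{\epsilon t})$, so that $\Lambda(t)$ is constantly the identity. You instead observe that this equality only holds on $T(\{r_0\}\times\partial M)$: since $\phi_H^t(r,y) = (r,\phi_R^{h'(r)t}(y))$ and $h''(r_0) > 0$ (this positivity is forced because $h'$ must increase through $\epsilon$ transversally at $r_0$), the differential $d\phi_H^t$ picks up a shear $\partial_r \mapsto \partial_r + t\,h''(r_0) R$, so that $\Lambda(t)$ is the corresponding unipotent matrix on the $(\partial_r,R)$-plane and the identity on $\xi$. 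Your identification of the shear is a genuine refinement that the paper glosses over; the paper's argument as written is imprecise at exactly the point you have made precise.

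Having isolated the shear, however, you need to justify that it does not affect $\mu_{CZ}(\Sigma)$, and this is where your write-up has a soft spot. You invoke the ``standard Morse-Bott correction along the degenerate Reeb direction'' and cite the short Reeb orbit computation in \cite{KvK16}, \cite{CFHW96} by analogy, without pinning down the crossing-form computation in the specific Robbin--Salamon/Gutt conventions used in this paper (recall $\deg\gamma = n - \mu_{CZ}(\Lambda)$ per \cite{Gut14}). The crossing form of the shear at $t=0$ is positive semidefinite on the full $(\partial_r,R)$-plane, while for $t>0$ the kernel of $\Lambda(t)-\mathrm{Id}$ is exactly the degenerate Reeb direction with vanishing crossing form; checking that this path contributes $0$ rather than $\pm\frac{1}{2}$ requires unpacking the Morse--Bott convention, not just pointing at the analogous computation. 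Since this last step is precisely the delicate part of the proposition, it should be carried out, or at least the relevant formula from \cite{Gut14} should be quoted verbatim. Modulo that, your proof is sound and in fact more careful than the printed one.
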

\begin{proof}
First of all, we may shrink ${C}$ so that the corresponding collar neighborhood $D$ of $\partial M$ is included in the region $[1-\eta, 1] \times \partial M$ where $K$ is linear with slope $\epsilon$. It follows that, on $D \subset [1-\eta, 1] \times \partial M$, $\phi_{-K}^t$ is given by $\mathrm{Id} \times \phi_{R}^{-\epsilon t}$ for all $t\in [0,1]$. As a consequence of Lemma \ref{lemma:timeindependentoncollar} and the above observation, the complex volume form $\Omega= \{ \Omega_t\}_{t \in [0,1]}$ on the vertical tangent bundle $T^\mathrm{ver} T_{\check{\rho}^{-1}}$ can be homotoped so that
\begin{equation}\label{eq:complexvolumeform}
F^* \Omega|_{D \times \{ t\}} = (\mathrm{Id} \times \phi_{R}^{-\epsilon t})^* F^* \Omega|_{D \times \{0\}},
\end{equation}
where both $D \times \{t\}$ and $D\times \{0\}$ are identified with $D$.
Since the grading depends only on the homotopy type of complex volume form on the vertical tangent bundle $T^\mathrm{ver} T_{\check{\rho}^{-1}}$, we may assume that $\Omega = \{ \Omega_t\}_{t\in [0,1]}$ already satisfies \eqref{eq:complexvolumeform}. 

For a quadratic Hamiltonian $H \in \mathcal{H}_{\widetilde{\rho}^{-1}}$ as chosen in Subsection \ref{subsec:Gamma}, let $\gamma \in \mathcal{P}({\check{\rho}^{-1};H})$ be a twisted Hamiltonian orbit belonging to the Morse-Bott component $\Sigma$. Note that the image of $\gamma$ lies in $D \cup [1,\infty) \times \partial M$. Then, we may choose a trivialization $\Psi: [0,1] \times \R^{2n} \to \gamma^* M$ in such a way that
\[ \Psi_t = d (\mathrm{Id} \times \phi_R^{\epsilon t})_* \Psi_0\]
in $T_{\check{\rho}^{-1}}$. Therefore \eqref{eq:compatibility1} is satisfied for $\Psi$.
Moreover, it satisfies \eqref{eq:compatibility2} due to \eqref{eq:complexvolumeform}. Now it follows that the path of symplectic matrices $\Lambda$ given by
\[ \Lambda(t) = \Psi_t^{-1} \circ d (\phi_H^{t}) \circ \Psi_0 = \Psi_t^{-1} \circ d (\mathrm{Id} \times \phi_R^{\epsilon t}) \circ \Psi_0 \]
is constantly the identity matrix, which implies that the index  $\mu_{CZ}(\Sigma)$ is zero.
\end{proof}

\section{Variation operator $\mathcal {V}$ via the closed-open map}
\label{sec:symplectic var}

We have defined the cohomology class $\Gamma \in SH^0(\rho^{-1})$ in Definition \ref{defn:gamma}.
The goal of this section is to construct the variation operator $\mathcal{V}$ via the action of $\Gamma$ on Lagrangian Floer theory through the closed-open map
\[\mathcal{CO}^\rho_L : SH^* \left(\rho^{-1}\right) \to HW^* \left(L, \rho(L)\right). \]
In analogy with the definition of $\Gamma$, we define $\Gamma_L \in CW^*(L, \rho(L))$ for each Lagrangian $L$. This will be useful throughout the paper.
It turns out to be equal to the lowest action value term of $\mathcal{CO}^\rho_L(\Gamma)$.

\subsection{Wrapped Floer cohomology}
\label{subsec: WF}
	From now on, a Lagrangian $L$ we consider is always assumed to have the following properties: 
	\begin{itemize}
		\item it is orientable and spin, 
		\item it satisfies $2c_1(M, L)=0$, and
		\item it is either closed or it has cylindrical ends at $\partial M$. 
	\end{itemize}
	
	We briefly recall the definition of wrapped Floer complex (see \cite{A10}).
	We take $H$ a time-independent Hamiltonian that is quadratic at infinity. 
	First, we choose a regular Floer datum $(H_{L_1, L_2}, J_{L_1, L_2})$ for a pair of Lagrangians $(L_1, L_2)$.  
	$H_{L_1, L_2}$ is a small, possibly time dependent perturbation of $H$ so that Hamiltonian chords from $L_0$ to $L_1$ are non-degenerate.
	We denote the set of non-degenerate Hamiltonian chords from $L_0$ to $L_1$ by
		\[\mathcal P(L_1, L_2 ; H_{L_1, L_2}).\]  
	$J_{L_1, L_2}$ denotes a time-dependent almost complex structure of contact type at infinity. 
	For a generic choice of $J_{L_1, L_2}$, every element of the following moduli space of pseudo-holomorphic strips is regular; 
		\begin{align*}
		\mathcal M_{d_{CW}} \left(\xi_+, \xi_-; H_{L_1, L_2}, J_{L_1, L_2}\right):=
		\left\{u: \R\times [0,1] \to M  \;\middle|\;			\begin{array}{l}
			\partial_su + J_{L_1, L_2}\left(\partial_tu-X_{H_{L_1, L_2}}(u)\right) =0, \\
			\lim_{s \to -\infty} u(s,t) = \xi_-(t), \hspace{5pt} \lim_{s\to \infty} u(s,t) = \xi_+(t), \\
			u(s,0) \in L_1, \hspace{5pt} u(s,1) \in L_2
			\end{array}
		\right\}.
		\end{align*}
	
	The wrapped Floer complex of $L_1$ and $L_2$ is defined as 
		\[CW^*(L_1, L_2; H_{L_1, L_2}, J_{L_1, L_2}):= \bigoplus_{\xi \in \mathcal P(L_0, L_1;H_{L_1, L_2})} \mathbb{K} \cdot \xi,\]
	where the grading is given by the Maslov index. 
	The Floer differential $d_{CW}$ is defined as a signed counting of rigid pseudo-holomorphic strips modulo translational symmetry:
	\[d_{CW}(\xi_+) := \sum_{\deg(\xi_-) = \deg(\xi_+)+1} \#\left[\mathcal M_{d_{CW}} \left(\xi_+, \xi_-;  H_{L_1, L_2}, J_{L_1, L_2}\right)/ \R\right]\cdot \xi_-.\]
	
\subsubsection{The induced grading}
\label{subsubsec: induced grading}
	Recall that we have equipped the Milnor fiber $M$ with a 1-parameter family of complex volume form $\{\Omega_t \}$ in Section \ref{subsubsection:grading}.
	Suppose that $L\subset M = M_0$ is graded with respect to $\Omega_{0}$, which means that $L$ carries a particular choice of a lift $\Lambda_0$ of a squared phase function 
	$\overline\Lambda_0: L \to S^1 $:
		\[
		\begin{tikzcd}
		L \arrow[r, dotted, "\Lambda_0"]\arrow[dr, "\overline \Lambda_0"'] & \R \arrow{d}\\
		& S^1
		\end{tikzcd},
			\hspace{5pt}
		\overline\Lambda_0(p) = \left( \frac{\Omega_0(\Lambda^n T_pL)}{\vert\Omega_0(\Lambda^n T_pL)\vert} \right)^2.
		\]
	We have a relative family of squared phase function $\overline\Lambda_t: L \times [0, 1]\to S^1$ by
	replacing $\Omega_0$ with $\Omega_t$.
	This can be viewed as a homotopy between $\overline \Lambda_{0}$ and other $\overline \Lambda_{t}$. 
	We conclude that we have a one-parameter family $\Lambda_{t}: L\times [0,1] \to \R$ of lifts.
	Since $\rho^*\Omega_{t_0} = \Omega_{t_0+1}$, one obtains a canonical grading $(\rho^{-1})^*\Lambda_{1}$ on $\rho(L)$.	
	 
	Let $\rho_t: M_0 \to M_t$ be a symplectic parallel transport \cite[Chapter(8k)]{S08} around a circle in the base.
	Then one obtains a moving boundary condition
		\[\mathcal L_t=\left(\rho_t (L), (\rho^{-1}_{t})^* \Lambda_t\right)\] 
	from $L_0=L$ to $L_1= \rho_1(L)$.
The grading structure is transported accordingly because our $\{\Omega_t\}$ is defined as a family of complex volume forms induced by the fibration. 
	This boundary condition can be equivalently expressed as a `branch cut' version. By abuse of notation, we write
	\begin{align}
	\label{defn:moving bd}
	\mathcal L_t=\left\{ 
	\begin{array}{ll} 
	(L, \Lambda_t) & t\in (0, t_0), \\
	\left(\rho(L), (\rho^{-1}_{t})^* \Lambda_t\right) & t\in (t_0, 1)
	\end{array}
	\right.
	\end{align}
	with a jump at $t_0 \in (0,1)$.
	These correspond to two equivalent descriptions of pseudo-holomorphic curves twisted by $\rho$.  
	When a boundary condition includes a branch cut, we may place the branch cut on the domain of the curve and twist the map along it by $\rho$, causing the boundary condition to jump at the branch cut.
	Alternatively, we may interpret the same map as a pseudo-holomorphic section of a symplectic mapping torus associated with $\rho$ (with appropriate insertions):
		\[
		\begin{tikzcd}
		E_\rho:= \frac{M \times \R_s\times\R_t}{(s, t+1, x) \sim (s, t, \rho(x))} \arrow[r, "\pi"] & \R \times S^1
		\end{tikzcd}
		,
		\hspace{5pt}
		\pi(x,s,t) = (s, t).
		\]
	{\em We mostly stick to the `branch cut' description in the paper}. From now on, $L$ is understood as a Lagrangian submanifold equipped with a grading and a spin structure, while $\rho(L)$ is always considered as a graded Lagrangian $\left(\rho(L) , (\rho^{-1})^* \Lambda_1\right)$.
	
\subsection{Construction of $\Gamma_L$}
\label{subsec: Construction of GammaL}

	 Using Proposition \ref{prop:perturbation}, we choose a neighborhood $U:=(1-\eta, 1]\times \partial M\subset \mathrm{Fix}(\rho) $ of $\partial M$ on which $\check \rho$ is written as a Hamiltonian flow of $b$. 
	In particular, $\check \rho^{-1}$ have no fixed points in $M^\mathrm{in}:= M \setminus (1-\eta,1] \times \partial M$. 
	We extend this neighborhood, still denoted as $U$, in the symplectization of $M$, so that $U \simeq (1-\eta,1+\eta) \times \partial M $. 
	Suppose that a Lagrangian $L$ has nontrivial cylindrical ends at infinity, then we may write
	\[L\cap U \simeq (1-\eta, 1+\eta) \times \partial L \subset (1-\eta,1+\eta) \times \partial M \simeq U.\]
	We also write $\mathcal P\left(L, \rho(L); H_{L, \rho(L)}, U\right)$ for the subset of  $\mathcal P\left(L, \rho(L); H_{L, \rho(L)} \right)$ consisting of Hamiltonian chords in $U$. 
	The local Floer complex is defined as
		\[CF^*_\mathrm{loc}\left(L, \rho(L); H_{L, \rho(L)}, J_{L, \rho(L)} \right) = \bigoplus_{\xi \in \mathcal P \left(L, \rho(L); H_{L, \rho(L)}, U\right)} \mathbb{K} \cdot \xi,\]
	with its differential defined as a counting of local pseudo-holomorphic strips modulo translations. 
$$d_\mathrm{loc}(\xi_+) := \sum_{\xi_- \in \mathcal P\left(L, \rho(L); H_{L, \rho(L)}, U\right)} \#\left[\mathcal M_{d_\mathrm{loc}}\left(\xi_+, \xi_-; H_{L, \rho(L)}, J_{L, \rho(L)}, U\right)/\R\right] \cdot \xi_-, $$
$$	\mathcal M_{d_\mathrm{loc}}\left(\xi_+, \xi_-; H_{L, \rho(L)}, J_{L, \rho(L)}, U\right):= \left\{ u\in \mathcal M_{d_{CW}} \left(\xi_+, \xi_-; H_{L, \rho(L)}, J_{L, \rho(L)} \right) \;\middle|\;  u(\R\times [0,1]) \subset U \right\}. $$
	
	The next proposition justifies the definition of the local Floer complex. 
	
	\begin{prop}
	\label{prop:GammaL}
		For $\Vert \nabla(H_{L, \rho(L)}-H)\Vert \ll 1$,  $\left( CF^*_\mathrm{loc}\left(L, \rho(L); H_{L, \rho(L)}, J_{L, \rho(L)}\right), d_\mathrm{loc} \right)$ is a well-defined complex.  
		In particular, the cohomology of the complex, denoted as $HF^*_\mathrm{loc} \left(L, \rho(L)\right)$, is independent of the choice of $H_{L, \rho(L)}$ and $J_{L, \rho(L)}$. Moreover, there is an isomorphism
		\[HF^*_\mathrm{loc} \left(L, \rho(L)\right) \simeq H^*(L\cap U).\]
	\end{prop}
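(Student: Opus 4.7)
The plan is to exploit that $\rho|_U = \mathrm{Id}$, so $L\cap U = \rho(L)\cap U$ set-theoretically, and reduce the local Floer complex to the Morse complex of $L\cap U \simeq (1-\eta, 1+\eta)\times \partial L$ via the standard Morse--Bott--Floer / Piunikhin--Salamon--Schwarz procedure.

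First I would arrange a Morse--Bott picture in $U$. Choosing $H$ time-independent with $X_H$ tangent to $L$ along $L\cap U$ (for instance $H$ vanishing on a neighborhood of $L\cap U$), the time-$1$ chords of $X_H$ from $L$ to $\rho(L) = L$ lying in $U$ are constant and sweep out the Morse--Bott family $L\cap U$. Perturbing by a small Morse function $g$ on $L\cap U$ gives $H_{L,\rho(L)}$ whose chords in $U$ are in bijection with $\mathrm{Crit}\,g$. As in the index computation of Proposition \ref{prop:index}, the Maslov-type shift from the Morse--Bott family vanishes because $\rho$ is the identity on $U$ and the transported grading $(\rho^{-1})^*\Lambda_{1}$ agrees with $\Lambda_0$ there, so the $\Z$-grading of each chord equals the Morse index of the corresponding critical point.

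The main obstacle is to confine Floer strips in $\mathcal M_{d_\mathrm{loc}}$ to $U$, ensuring Gromov compactness modulo local breakings. On the outer side of $U$ I would apply the standard maximum principle for the radial coordinate, since $J_{L,\rho(L)}$ is of contact type near $\{1+\eta\}\times \partial M$ and $H_{L,\rho(L)}$ has appropriate slope there. On the inner side, each local generator has action of order $\|H_{L,\rho(L)} - H\|_{C^0}$, so by Floer's energy identity any connecting strip has energy of the same order; combined with monotonicity and $C^0$-estimates, this forces strips into an arbitrarily small neighborhood of $L\cap U$ (after shrinking the perturbation and $\eta$), hence into $U$. The same estimates give $d_\mathrm{loc}^2 = 0$ and, applied to a small homotopy of Floer data and the resulting continuation map, yield invariance under the choice of $(H_{L,\rho(L)}, J_{L,\rho(L)})$.

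With confinement in hand, the classical Morse--Bott--Floer identification (Pozniak, Frauenfelder) shows that rigid elements of $\mathcal M_{d_\mathrm{loc}}$ correspond bijectively to negative gradient trajectories of $g$, so $(CF^*_\mathrm{loc}, d_\mathrm{loc})$ is chain isomorphic to $(CM^*(g), d_M)$, which computes $H^*(L\cap U)$ as desired.
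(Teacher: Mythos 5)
Your proposal is correct in spirit but takes a genuinely different route from the paper. You work directly with $\rho$ and choose the ambient wrapping Hamiltonian $H$ to be time-independent with $X_H$ tangent to $L$ on $L\cap U$ (e.g.\ vanishing there), so that the local chords form a Morse--Bott family equal to $L\cap U$ itself, and then you Morse-perturb. The paper instead keeps the chords nondegenerate by effectively passing to the good perturbation $\check\rho=\phi_K^1\circ\rho$: the Hamiltonian $H_{L,\rho(L),\delta}=G+\delta\bar g_L+F_L$ with $G=H\circ\phi_K^t-K\circ\phi_K^t$ is precisely designed so that, after the time-dependent coordinate change $(\phi_K^t)^{-1}$, the local chords localize at the single slice $\{r_0\}\times\partial L$, and the Morse function $g_{\partial L}\cdot q$ (with $q$ concave, maximized at $r_0$, and inward-pointing boundary gradients) is used there. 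Both Morse--Bott families are homotopy equivalent to $\partial L$, so both produce $H^*(L\cap U)$; but the paper's choice is deliberately calibrated to match the Morse--Bott picture used to define $\Gamma\in SH^*(\check\rho^{-1})$ in Section~3, which is crucial for the later identification $\mathcal{CO}^\rho_{L,\mathrm{loc}}(\Gamma)=\Gamma_L$ (Theorem~\ref{thm: PSS CO diagram}). In your route the local complex and its cohomology come out right, but it would take extra work to align your Floer datum with the one used for $\Gamma$.

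Two points you glossed over are worth making explicit. First, ``the transported grading $(\rho^{-1})^*\Lambda_1$ agrees with $\Lambda_0$ there'' is not automatic just from $\rho|_U=\mathrm{Id}$; it relies on the family of complex volume forms $\{\Omega_t\}$ being homotopic to a $t$-independent one on the collar, which is the nontrivial content of Lemma~\ref{lemma:timeindependentoncollar} (using the geometry of the Milnor fibration). Second, ``a small Morse function $g$ on $L\cap U$'' must be chosen with inward-pointing gradients at both ends of $(1-\eta,1+\eta)\times\partial L$; otherwise the Morse complex on this open manifold does not compute $H^*(L\cap U)$. Your confinement estimate also needs the chords to sit in a compact subregion of $U$ (otherwise a strip close to a chord near $r=1\pm\eta$ can leak out), which again is exactly what the concave factor $q$ maximized at $r_0$ arranges. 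With those points repaired, your argument is sound.
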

	
	\begin{proof}
	See \cite{Pozniak}, \cite{CFHW96} for a detailed proof.
	We sketch the proof of the last claim, which follows from an identification 
	\begin{align}
	\label{eqn: C and CF}
	CF^*_\mathrm{loc}\left(L, \rho(L); H_{L, \rho(L)}, J_{L, \rho(L)}\right) \simeq CM^*\left(g_L \vert _{L\cap U}\right)
	\end{align}
	 of chain complexes for a particular Hamiltonian $H_{L, \rho(L)}$ and a Morse function $g_L$. 
	 
	 The intersection of $L$ and $\rho(L)$ is highly degenerate; $\rho$ is the identity on the set $B_0$. 
	 To get rid of this degeneracy, we first perturb $\rho$ to $\check \rho$ by Hamiltonian function $b$ in Proposition \ref{prop:perturbation}. Then one can check that 
	\begin{align*}
	L\cap \check \rho(L) \cap U = \emptyset, \hspace{5pt} & \phi_H^1(L)\cap  \check \rho(L) \cap U = \{r_0\} \times \partial L
	\end{align*} 
	for a fixed $r_0 \in (1-\eta, 1+\eta)$.
	Next, we choose a $C^2$-small function $g_L$ on $M$ by the following procedure.
	Start with a positive $C^2$-small Morse function $g_{\partial L}$ on $\partial L$ and
	take smooth concave function $q =q(r)$ with the unique maximum at $r_0$. 
	Then, $g_{L\cap U} = g_{\partial L} \cdot q$ on $L\cap U$ has its critical points on $\{r_0\} \times \partial L$ and its gradients pointing inward at the boundary.
	We extend it to a $C^2$-small function $g_L$ on $M$ so that its gradients flow into $L$ along the normal direction of the Weinstein neighborhood of $L \cap U$. 
	
	Next, recall that $\check \rho = \phi^1_K\circ \rho$ for another time-independent Hamiltoanian $K$. Define a time-dependent Hamiltonian
		\begin{align*}
		\overline g_L(t, x)&:= g_L \circ \phi^{t}_K,\\
		H_{L, \rho(L) , \delta}&:= G+\delta \overline g_L + F_L, \hspace{5pt} G = H\circ\phi^t_K -K\circ \phi^t_K, \hspace{5pt} 0<\delta \ll 1.
		\end{align*}

	Here, $F_L$ is a generic $C^2$-small time-dependent perturbation whose support is disjoint from that of $\overline g_L$.
	As seen in \eqref{eq:bijection}, under the time dependent coordinate change $(\phi_{K}^{t})^{-1}: M \to M$, the set $\mathcal P (L, \rho(L); H_{L, \rho(L), \delta}, U)$ is in one-to-one correspondence with the set of Morse critical points of $g_L$ inside $U$ for a small enough $\delta$. 
	Also, Floer strips are in one-to-one correspondence with Morse trajectories between those critical points. 
	The term $F_L$ is chosen as in \ref{subsubsection:symplectic cohomology} so that $H_{L, \rho(L), \delta}$ non-degenerate. 
	Therefore, we get the desired identification (\ref{eqn: C and CF}) up to degree shift given by $\mu_\mathrm{CZ}(\partial L)$. An argument similar to Proposition \ref{prop:index} shows $\mu_\mathrm{CZ}(\partial L)=0$. 
	\end{proof}
	\begin{defn}
	For each Lagrangian $L$ with cylindrical ends, we define 
		\[\Gamma_L \in CF^0_{\mathrm{loc}}\left(L, \rho(L); H_{L, \rho(L), \delta}, J_{L, \rho(L)}\right)\]
	by the cochain corresponding to the fundamental cycle of $CM^*\left(g_L \vert _{L\cap U}\right)$. 
	\end{defn}
	Note that $\Gamma_L$ does not define a cocycle in $CW^*\left(L, \rho(L)\right)$ for general $L$.
	It does define a cocycle in the following situation, just as $\Gamma$ does. 
	\begin{prop}
		Further assume $L\cap \rho(L) = \emptyset$ except where $\rho=\mathrm{Id}$ near $\partial M$.
		Then $[\Gamma_L]$ defines a cocycle of $CW^*\left(L, \rho(L); H_{L, \rho(L)}\right)$.
	\end{prop}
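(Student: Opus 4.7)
\emph{Proof plan.} The strategy is to establish that the global wrapped Floer differential $d_{CW}(\Gamma_L)$ coincides with the local Floer differential $d_\mathrm{loc}(\Gamma_L)$ of Proposition \ref{prop:GammaL}, and then invoke the fact that $\Gamma_L$ corresponds under the identification \eqref{eqn: C and CF} to the fundamental cocycle of $CM^*(g_L|_{L \cap U})$, which is closed under the Morse differential. The reduction proceeds in two stages, both exploiting the hypothesis $L \cap \rho(L) = \emptyset$ outside $U$.

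First I would classify the chord set $\mathcal{P}\bigl(L, \rho(L); H_{L, \rho(L)}\bigr)$. Under the hypothesis, for small enough $\delta$ the only chords whose image meets the compact part of $\widehat M$ lie in $U$ (the local chords of the Morse--Bott construction), while all remaining chords are wrapping chords on the cylindrical end $[1,\infty) \times \partial M$ corresponding to Reeb chords of $\partial L$ to itself, using that $\rho = \mathrm{Id}$ there so $\rho(L) = L$ on the end. A direct computation gives wrapping chord action of order $-r^2/2$ at radius $r$, whereas $\Gamma_L$-chords sit at a fixed radius $r_0$ with action of order $O(\delta)$, and in particular strictly greater than every wrapping chord action for $\delta$ small. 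Since Floer trajectories in the cohomological convention satisfy $\mathcal{A}(\gamma_-) \geq \mathcal{A}(\gamma_+)$, no trajectory from a $\Gamma_L$-chord can terminate at a wrapping chord, and hence $d_{CW}(\Gamma_L)$ is supported on local chords.

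Next I would verify that for sufficiently small $\delta$, every Floer strip from a $\Gamma_L$-chord to another local chord stays inside $U$. The action spread among local chords is $O(\delta)$, whereas any pseudo-holomorphic strip with boundary on $L \cup \rho(L)$ that exits $U$ must carry a positive symplectic area $\hbar > 0$ independent of $\delta$: a Gromov-compactness argument on a hypothetical sequence of exiting strips as $\delta \to 0$ would produce a non-constant limit configuration with boundary on $L \cup \rho(L)$ supported outside $U$, which is excluded by exactness of $L$ and $\rho(L)$ (no disc bubbles) together with the hypothesis $L \cap \rho(L) \cap (\widehat M \setminus U) = \emptyset$. Choosing $\delta < \hbar$ confines all relevant strips to $U$, so the restriction of $d_{CW}$ to $\Gamma_L$ computes $d_\mathrm{loc}(\Gamma_L)$, which vanishes because $\Gamma_L$ represents the fundamental class of $H^0(L \cap U)$.

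The main obstacle is producing the uniform area bound $\hbar$ in the second stage: one needs a clean geometric lower bound on the symplectic area of strips exiting $U$, which is independent of $\delta$ and robust against possible drift along the cylindrical end. This requires combining the standard no-escape lemma for contact-type almost complex structures (to prevent escape to infinity) with the exactness-based exclusion of disc bubbles and the geometric separation of $L$ and $\rho(L)$ outside $U$ coming from the hypothesis.
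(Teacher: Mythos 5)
Your proof follows essentially the same route as the paper's one\hyphenchar\font=-1 sentence argument: under the hypothesis the only generators of $CW^*(L,\rho(L))$ are the local chords in $U$ and the wrapping chords on the end, the action ordering makes the span of local chords into a subcomplex, and confinement of small\hyphenchar\font=-1 energy strips identifies the restricted differential with $d_\mathrm{loc}$, where $\Gamma_L$ is closed by Proposition~\ref{prop:GammaL}. These are exactly the two ingredients the paper packages into Proposition~\ref{prop:GammaL} (citing CFHW and Pozniak) and Proposition~\ref{prop: locality}, so the reduction agrees.

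The compactness step is stated a bit loosely, though. With exactness precluding sphere and disc bubbles, the Gromov limit of a sequence of strips with topological energy $\mathcal{A}(\xi_-)-\mathcal{A}(\xi_+)=O(\delta_n)\to 0$ is an $s$-independent solution, i.e.\ a chord of the limiting autonomous Hamiltonian $H_0$ --- so exactness forces the limit to \emph{be} a chord, it does not ``exclude a non-constant limit configuration'' as you write. What finishes the exclusion is the classification of those limit chords: on the cylindrical part they are confined to $\{r_0\}\times\partial M$ (Lemma~\ref{lem: about energy zero curve} and Proposition~\ref{prop: locality}~(1)--(2)), and in $M^\mathrm{in}$ they would be interior intersection points of $L$ and $\rho(L)$, which is precisely where your hypothesis enters. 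A reparametrized sequence exiting $U$ would accumulate at a point of $\partial U$, which is neither of these, and this is the actual contradiction. With that adjustment your argument is a correct, more explicit rendering of the paper's appeal to local Floer theory.
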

	\begin{proof}
	The inclusion $CF_\mathrm{loc}^*\left(L, \rho(L); H_{L, \rho(L), \delta}\right) \to CW^*\left(L, \rho(L) \right)$ is a cochain map since $L$ and $\rho(L)$ does not have interior intersection points. The result follows. 
	\end{proof}

\subsection{Closed-open map and $\Gamma_L$}
	In this subsection, we describe a relation between $\Gamma$ and $\Gamma_L$ under the twisted closed-open map $\mathcal{CO}_L^\rho$.

	\subsubsection{Twisted closed-open map}
	We recall the definition of the twisted version of the closed-open map \cite{Sei01}:
	\begin{equation}
		\label{eq:co_L}
			\mathcal{CO}_L^\rho: SC^* \left(\rho^{-1}\right) \to CW^*\left(L, \rho(L)\right).
	\end{equation}
	Let $S$ be the standard disc with the following decorations:
		\begin{itemize}
			\item one interior marked point $0 \in D^2$ equipped with a positive cylindrical end $\eta^+$,
			\item one boundary marked point $-1\in \partial D^2$ equipped with a negative strip-like end $\epsilon^-$, and
			\item a branch cut $(0,1] \subset D^2$.
		\end{itemize}
	We use coordinate $z = e^{s+it}$ with $s\in (-\infty, 0], \hspace{3pt} t\in [0, 2\pi]$.
	The branch cut indicates that we distinguish $e^{s+2\pi i}$ from $e^{s+0\cdot i}$. 
	We choose a domain-dependent Hamiltonian function denoted by $H_S$,
	a domain-dependent almost complex structure $J_S$, and a closed one form $\alpha$ on $S$ which satisfy 
	\begin{align*}
	(\eta^+)^* H_S = H&,&(\eta^+)^* J_S = J&,&(\eta^+)^* \alpha = dt, \\
	(\epsilon^-)^* H_S= H_{L, \rho(L)}&,&(\epsilon^-)^*J_S = J_{L, \rho(L)}&,&(\epsilon^-)^*\alpha= dt 
	\end{align*}
	with $\alpha \vert_{\partial D^2} = 0$ and  $d_S(H_S \cdot \alpha)\leq 0$ holds for $\forall s\in S$. 
	
	\begin{remark}
	\label{rmk:choice of subclosed one form}
The condition $d_S(H_S \cdot \alpha)\leq 0$ is required to apply the maximum principle.
	In particular, $H_S$ must decrease along the radial direction of $D^2$.
	To make such a choice possible, we may assume that the perturbation data satisfy $ H \leq H_{L, \rho(L)} $ at every point. This is different from \cite{A10} where the Liouville rescaling trick is used at the price of changing the weight of the closed-open map equation.
	In the case of our interest, $H$ and $H_{L, \rho(L)}$ are obtained by adding a bounded perturbation term $\overline g$ and $\overline g_L$ to a fixed function.  
	We may assume $0 \leq \overline g \leq \delta_1 \leq  \overline g_L  \leq \delta_2 \ll 1 $  by adding constants and scaling. 
	\end{remark}

	Now, consider the following moduli space of pseudo-holomorphic curves 
		\begin{align*}
		\mathcal M_{\mathcal{CO}_L^\rho}\left( \gamma_+, \xi_-; H_S, J_S \right):=
			\left\{u: S \to M  \;\middle|\;
							\begin{array}{l}
				\left(du-X_{H_S}(u) \otimes \alpha \right)^{0,1}_{J_S}=0,\\
				\lim_{s\to -\infty}(\epsilon^-)^* u = \xi_-(t), \hspace{5pt} \lim_{s\to \infty} (\eta^+)^* u = \gamma_+(t), \\
				\rho\circ u(e^{s+2\pi i}) = u(e^{s+0\cdot i}),\\
				u(e^{it}) \in\mathcal L_t
				\end{array} 
			\right\}.
		\end{align*}
	The twisted closed-open map is defined by counting rigid elements of the moduli space: 
	\[\mathcal{CO}_L^\rho(\gamma_+): = \sum_{\xi_- \in \mathcal P(L, \rho(L); H_{L, \rho(L)})} \#\mathcal M_{\mathcal{CO}_L^\rho}\left( \gamma_+, \xi_-; H_S, J_S \right) \cdot \xi_- .\]	
	The following proposition implies that $\mathcal{CO}^\rho_L \left( \Gamma \right)$ is a degree zero cocycle.

	\begin{prop}
		\label{prop:co}
		$\mathcal{CO}^\rho_L$ is a cochain map.
	\end{prop}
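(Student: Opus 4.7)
The approach is the standard degeneration analysis for moduli spaces of punctured discs in Floer theory, adapted to the twisted setting. I would show that $d_{CW}\circ \mathcal{CO}_L^\rho = \mathcal{CO}_L^\rho \circ \delta$ by examining the Gromov--Floer compactification of the $1$-dimensional component $\mathcal{M}^1_{\mathcal{CO}_L^\rho}(\gamma_+,\xi_-;H_S,J_S)$ (i.e.\ when $\deg\gamma_+-\deg\xi_-=1$) for each pair $(\gamma_+,\xi_-)$, and identifying its codimension-one boundary strata.

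First I collect the usual analytic package. Transversality is obtained by choosing the domain-dependent data $(H_S,J_S)$ generically within the class specified in Section~\ref{subsec: WF}; this makes $\mathcal{M}^1$ a smooth $1$-manifold. Compactness requires three inputs: (i) an energy bound from the action functional, which is automatic once asymptotes are fixed and the subclosed condition $d_S(H_S\cdot \alpha)\le 0$ of Remark~\ref{rmk:choice of subclosed one form} holds; (ii) an a priori $C^0$ bound along the cylindrical end of $\widehat{M}$, obtained from the maximum principle for contact-type $J$ exactly as in \cite{A10}, the twist $\rho$ being the identity on the cylindrical end so the argument localizes; and (iii) exclusion of sphere and disc bubbling, immediate from exactness of $d\lambda$ and of the Lagrangian boundary conditions $L$ and $\rho(L)$.

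With these in place, the compactification $\overline{\mathcal{M}}^1$ is a compact $1$-manifold with boundary, and its codimension-one strata split into two families. Breaking at the interior positive cylindrical end produces pairs $(u_1,u_2)$ with $u_1\in \mathcal{M}(\gamma'_+,\gamma_+;H,J)/\R$ a $\rho^{-1}$-twisted Floer cylinder contributing to $\delta$ on $SC^*(\rho^{-1})$, and $u_2\in \mathcal{M}_{\mathcal{CO}_L^\rho}(\gamma'_+,\xi_-;H_S,J_S)$; summing these gives $\mathcal{CO}_L^\rho\circ\delta(\gamma_+)$. Breaking at the boundary negative strip-like end produces pairs $(u_1,u_2)$ with $u_1\in \mathcal{M}_{\mathcal{CO}_L^\rho}(\gamma_+,\xi'_-;H_S,J_S)$ and $u_2\in \mathcal{M}_{d_{CW}}(\xi'_-,\xi_-;H_{L,\rho(L)},J_{L,\rho(L)})/\R$; these sum to $d_{CW}\circ\mathcal{CO}_L^\rho(\gamma_+)$. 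Since $\partial\overline{\mathcal{M}}^1$ has signed count zero, the two contributions cancel, yielding the desired identity.

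The main technical point is bookkeeping rather than a genuine obstacle: one must check that the broken configuration at the interior puncture genuinely carries the $\rho^{-1}$-twist entering the definition of $\delta$ in Section~\ref{sec:symplectic cohomology}. The branch cut along $(0,1]\subset D^2$ in the domain $S$ propagates into the Floer cylinder that breaks off at the end; after locally trivializing the mapping torus $T_{\rho^{-1}}$ and unwinding the branch cut, the required compatibility reduces to the standard untwisted gluing and compactness statements, exactly as in \cite{Se06} and \cite{Ulj}. The degree count $\deg \mathcal{CO}_L^\rho(\gamma_+)=\deg \gamma_+$ is checked via the index formula for the Cauchy--Riemann operator on $S$ with the prescribed asymptotics and the grading conventions of Section~\ref{subsubsec: induced grading}.
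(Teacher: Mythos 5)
Your proof is correct and matches the standard Floer-theoretic argument that the paper, citing \cite{Se06}, treats as routine: a Gromov--Floer compactification of the one-dimensional moduli space, with the two codimension-one boundary strata (interior cylinder breaking and boundary strip breaking) pairing up to give $\mathcal{CO}_L^\rho\circ\delta$ and $d_{CW}\circ\mathcal{CO}_L^\rho$ respectively, while the technical ingredients --- the subclosedness hypothesis of Remark~\ref{rmk:choice of subclosed one form} for the energy bound, maximum principle for the $C^0$ estimate, and exactness to rule out bubbling --- are exactly the ones in play here. One tiny bookkeeping remark: with the paper's cohomological conventions (the differential $\delta$ raises degree, so $\mathcal{M}(\gamma_-,\gamma_+)/\R$ is rigid when $\deg\gamma_-=\deg\gamma_+{+}1$), the one-dimensional component of the $\mathcal{CO}$ moduli space occurs at $\deg\xi_- - \deg\gamma_+ = 1$, not $\deg\gamma_+ - \deg\xi_- = 1$ as written; this does not affect the substance of the argument.
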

	\begin{proof}
	This can be proved by a slight modification of the arguments in \cite[Subsection 5.2]{A10} involving pseudo-holomorphic curves with branch cuts glued via $\rho$.
	\end{proof}

\subsubsection{Local closed-open map}
	We will consider the moduli space of local pseudo-holomorphic curves as before. Recall that $\rho$ is the identity map on $U$. Now, consider the moduli space
		\begin{align*}
		\mathcal M\left(\gamma_+, \xi_-; H_S, J_S, U\right):= \left\{u\in \mathcal M\left(\gamma_+, \xi_-; H_S, J_S\right) \;\middle|\; u(S) \subset U\right\}
		\end{align*}
	for a specific Hamiltonian perturbation we used before and define a local version of the twisted closed-open map by
		\[\mathcal {CO}_{L, \mathrm{loc}}^\rho: CF^*_\mathrm{loc} \left(\rho^{-1}; H_{\delta_1}, J \right) \to CF^*_\mathrm{loc} \left(L, \rho(L); H_{L, \rho(L), \delta_2}, J_{L, \rho(L)}\right)\]
	for sufficiently small $\delta_1, \delta_2 \ll 1$. 
	\begin{lemma}
		For sufficiently small $0 \leq \delta_1, \delta_2\ll 1$, every element $u\in \mathcal M(\gamma_+, \xi_-; H_S, J_S, U)$ has uniformly bounded image away from the boundary of $U$. 
	\end{lemma}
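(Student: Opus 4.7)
The strategy is to apply an integrated maximum principle to the radial coordinate $r\circ u$ on the cylindrical neighborhood $U\simeq (1-\eta,1+\eta)\times \partial M$. After shrinking $U$ further so that $\rho=\mathrm{Id}$ on $U$ (as the statement permits), the branch cut in $S$ becomes invisible inside $U$, and $u$ may be regarded as an ordinary solution of the Floer equation $(du-X_{H_S}\otimes \alpha)^{0,1}_{J_S}=0$ with the usual closed-open boundary datum. I will show that $r\circ u$ attains both its maximum and minimum on the asymptotic ends, where the asymptotic orbits are constrained to lie near fixed radii, so that the entire image of $u$ is trapped in a compact sub-collar of $U$.

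For the interior subharmonicity, the standard computation from wrapped Floer theory shows that if $J_S$ is of contact type (so that $r$ is plurisubharmonic and $-dr\circ J_S=\lambda|_{\ker dr}$) and if the domain-dependent Hamiltonian and closed one-form $\alpha$ satisfy $d_S(H_S\alpha)\le 0$ as arranged in Remark \ref{rmk:choice of subclosed one form}, then
\[\Delta_S(r\circ u)\ge 0\]
wherever $H_S$ depends only on $r$. In our setting the only place $H_S$ depends on variables other than $r$ comes from the small $C^2$-perturbation $\delta_1\overline g + \delta_2\overline g_L$; this contributes an error of size $O(\delta_1+\delta_2)$, which can be absorbed by subtracting a multiple of a fixed subharmonic barrier on $S$. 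At the Lagrangian boundary $\partial S$, both $L\cap U$ and $\rho(L)\cap U$ are Liouville cylinders over Legendrians in $\partial M$, so the Hopf-lemma-type argument from \cite{A10} shows that $r\circ u$ cannot attain a strict interior-direction maximum on $\partial S$. Combining these ingredients, the maximum of $r\circ u$ is realized on the asymptotic ends.

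At the strip-like end $\epsilon^-$ the orbit $\xi_-$ lies on the Morse--Bott component $\{r_L\}\times\partial L$ up to a perturbation of size $O(\delta_2)$ from the Morse datum, and at the cylindrical end $\eta^+$ the orbit $\gamma_+$ lies on $\{r_0\}\times \partial M$ up to size $O(\delta_1)$. Fixing $r_0$ and $r_L$ in advance, one may choose $\eta$ large enough that both radii lie in a compact subinterval $[r_-,r_+]\subset (1-\eta,1+\eta)$. Then for sufficiently small $\delta_1,\delta_2$ the asymptotic values of $r\circ u$ lie in this compact subinterval, so by the maximum principle and the dual argument applied to $-r$ the entire image of $u$ is contained in $[r_--\epsilon_0,r_++\epsilon_0]\times \partial M$ for some small $\epsilon_0>0$, uniformly in $u$. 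This is the desired uniform separation from $\partial U$.

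The main technical obstacle is verifying the Hopf step at the Lagrangian boundary: the perturbed Hamiltonian $H_{L,\rho(L),\delta_2}$ is obtained through the time-dependent coordinate change $(\phi_K^t)^{-1}$ used in the proof of Proposition \ref{prop:GammaL}, which twists the cylindrical structure of $L$ slightly. One must check that this twist modifies the outward normal derivative in the Hopf computation only by an $O(\delta_2)$ term, so that the sign is preserved for small enough $\delta_2$; once this is in hand, everything above assembles into the uniform bound.
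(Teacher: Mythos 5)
Your proposal takes a different route from the paper's proof.  The paper proves the lemma by reducing to Proposition~\ref{prop: locality}, which runs a Gromov-compactness contradiction argument: one takes a sequence of counterexamples $(\delta_n,u_n)$ with $\delta_n\to 0$, passes to a subsequence converging to a solution for the autonomous Hamiltonian, removes the Hamiltonian term by the time-dependent change of coordinates $\phi_{-H}^t$ (``unwrapping''), and then invokes Lemma~\ref{lem: about energy zero curve}: since the asymptotic orbits have action tending to $0$, the limit has topological (hence geometric) energy zero, forcing its image to lie on a Reeb trajectory at $\{r_0\}\times\partial M$.  This argument confines $r\circ u$ from \emph{both} sides at once.

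There is a genuine gap in your version at the place you call ``the dual argument applied to $-r$''.  For a contact-type $J$, the function $r\circ u$ is \emph{subharmonic}, not harmonic: on the $J$-holomorphic part one has $\Delta(r\circ u)=|\pi_{\ker\theta}\,\partial_s v|^2\ge 0$, and the admissibility conditions of Remark~\ref{rmk:choice of subclosed one form} ($d_S(H_S\alpha)\le 0$, $h''\ge0$) preserve this sign.  Consequently there is a legitimate maximum principle bounding $r\circ u$ from above, but $-r\circ u$ is not subharmonic and there is no minimum principle; nothing in your argument prevents a sequence of curves from dipping arbitrarily close to the inner wall $\{1-\eta\}\times\partial M$ of $U$ while keeping their ends near $\{r_0\}$.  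That inward escape is precisely what the paper's energy-zero lemma rules out, and it is what makes the statement nontrivial: Proposition~\ref{prop: locality}(3) only assumes $u(S)\subset U$ and must produce a compact $V\Subset U$ independent of $u$.  Without an argument replacing the nonexistent ``minimum principle'', the uniform lower bound on $r\circ u$ is not established.  (The two further issues you flag --- absorbing the $O(\delta)$ error from the Morse perturbation by a barrier, and the Hopf step at the perturbed Lagrangian boundary --- are real but secondary; they would also need to be carried out, and they become moot once the inner bound is missing.)
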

	\begin{proof}
	See Proposition \ref{prop: locality} below. 
	\end{proof}
	In particular, $\mathcal{CO}_{L, \mathrm{loc}}^\rho$ is a well-defined chain map. 
	One of the main results of this section is a partial computation of $\mathcal{CO}_{L, \mathrm{loc}}^\rho(\Gamma)$. 
	To state the result, choose 
	\begin{itemize}
		\item a Morse function $f_M$ on $M$ such that $f_M \vert_U$ is still Morse and its negative gradient flow points inward along $\partial U$, and
		\item a Morse function $f_L$ on $L$ such that $f_L \vert_{L\cap U}$ is still Morse and its negative gradient flow points inward along $\partial(L\cap U)$.
	\end{itemize}
	\begin{thm}
	\label{thm: PSS CO diagram}
	There are cochain maps
		$$\Phi : CM^*\left(f_M\vert_U\right)\to CF^*_\mathrm{loc}\left(\rho^{-1}; H_{\delta_1}, J\right),$$
		$$\Phi_L :  CM^*\left(f_L\vert _{L\cap U}\right)\to CF^*_\mathrm{loc} \left(L, \rho(L); H_{L, \rho(L), \delta_2}, J_{L, \rho(L)}\right),$$
	which make the following diagram commute:
	\[
	\begin{tikzcd}[column sep=large]
		H^*\left(U, \C \right)\ar[r, "i^*"] \ar[d, "\Phi"]& H^*\left(L \cap U, \C \right)  \ar[d, "\Phi_L"]\\
		SH^*_\mathrm{loc}\left(\rho^{-1}\right) \ar[r, "\mathcal{CO}_{L,\mathrm{loc}}^\rho"] & HF^*_\mathrm{loc}\left(L, \rho(L) \right)
	\end{tikzcd}.
	\]
	Moreover, $\Phi$ and $\Phi_L$ are quasi-isomorphisms.
	\end{thm}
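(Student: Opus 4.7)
The plan is to build $\Phi$ and $\Phi_L$ as standard PSS-type chain maps, identify them with the explicit Morse--Bott quasi-isomorphisms developed in Subsection \ref{subsec:Gamma} and Proposition \ref{prop:GammaL}, and then deduce commutativity of the square by a one-parameter cobordism in which the interior marked point of the closed-open disc slides toward the boundary.

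For $\Phi$, I would take as domain a plane $\C$ with a positive cylindrical end at infinity, a negative Morse half-trajectory of $f_M|_U$ attached at the center, and a domain-dependent Hamiltonian cut off from $0$ near the center to $H_{\delta_1}$ near the puncture, coupled to a subclosed one-form $\alpha$ as in Remark \ref{rmk:choice of subclosed one form}. Since $\rho$ is the identity on the shrunken $U$, no branch cut is needed locally and the equation becomes the classical PSS equation landing in $CF^*_\mathrm{loc}(\check{\rho}^{-1}; H_{\delta_1}, J)$. The map $\Phi_L$ is defined in parallel: the domain is a half-plane with one negative strip-like end, a Morse half-trajectory of $f_L|_{L\cap U}$ attached at the boundary marked point, and Lagrangian label $L$ on both boundary arcs (using that $\rho|_U = \mathrm{Id}$).

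To show $\Phi$ and $\Phi_L$ are quasi-isomorphisms I would run the standard PSS-homotopy argument. Proposition \ref{prop:GammaL} already identifies $CF^*_\mathrm{loc}(L,\rho(L))$ with the Morse complex $CM^*(g_L|_{L\cap U})$; an adiabatic/gluing comparison then shows $\Phi_L$ is chain-homotopic to the continuation morphism $CM^*(f_L|_{L\cap U}) \to CM^*(g_L|_{L\cap U})$, which is a quasi-isomorphism. The argument for $\Phi$ is formally the same, now invoking the Morse--Bott description of Subsection \ref{subsec:Gamma} under which $CF^*_\mathrm{loc}(\check{\rho}^{-1}; H_{\delta_1}, J) \simeq CM^*(g_{\partial M})$, combined with the deformation retraction $U \simeq \partial M$. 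Compactness of all these moduli spaces reduces to the same locality estimate as in the lemma preceding the theorem, using in addition that the Morse flows of $f_M$ and $f_L$ have been chosen to point inward along $\partial U$ and $\partial (L\cap U)$.

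Commutativity of the square is then proved by a one-parameter cobordism. Consider the moduli space of closed-open discs decorated with a Morse half-trajectory of $f_M$ at the interior marked point and a Morse half-trajectory of $f_L$ at the boundary marked point, where the position of the interior marked point is allowed to slide along the radial segment from $0$ to $-1$. At one end of the family the domain degenerates into a PSS plane (realizing $\Phi$) glued to a closed-open disc (realizing $\mathcal{CO}_{L,\mathrm{loc}}^\rho$), producing $\mathcal{CO}_{L,\mathrm{loc}}^\rho \circ \Phi$. At the other end the interior marked point collides with the boundary marked point; the limiting configuration is a PSS half-plane (realizing $\Phi_L$) whose auxiliary Morse trajectory, now running from a critical point of $f_M|_U$ through $U$ to a critical point of $f_L|_{L\cap U}$, realizes the chain-level restriction $i^* : CM^*(f_M|_U) \to CM^*(f_L|_{L\cap U})$. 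The main obstacle will be the careful bookkeeping around the collision of the two marked points: arranging compatible perturbation data across this stratum, ensuring transversality of the resulting $1$-parameter family, and ruling out additional codimension-one phenomena such as disc bubbles escaping $U$. Once this is handled, the usual cobordism identification of the two boundary contributions yields $\Phi_L \circ i^* = \mathcal{CO}_{L,\mathrm{loc}}^\rho \circ \Phi$ on cohomology.
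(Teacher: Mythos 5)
Your overall strategy (build $\Phi$ and $\Phi_L$ as local PSS chain maps, compare with the Morse--Bott identifications, and prove commutativity by a cobordism that moves the $\Gamma$-insertion toward the boundary of the disc) is the right intuition and matches the paper's plan.  However, the proposal as written has three concrete gaps.

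First, the moduli space you describe for the cobordism is wrong as stated.  You attach a Morse half-trajectory of $f_L$ at the boundary marked point in the \emph{generic} one-parameter family.  But the two maps being compared both land in $CF^*_\mathrm{loc}(L,\rho(L))$, so the output at the boundary marked point must be a Hamiltonian chord of $(L,\rho(L))$, not a Morse critical point.  The Morse trajectory of $f_L$ only appears as a broken piece at the codimension-one degenerate end, not as part of the interpolating family.

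Second, you collapse two genuinely different degenerations into one ``radial sliding'' parameter.  The cylindrical input where $\Gamma$ sits cannot simply slide to the boundary: the data type at that puncture (a twisted Hamiltonian orbit) does not become a Lagrangian boundary datum in the limit.  The paper therefore proceeds in two stages.  The first homotopy $K_I$ keeps the Morse attachment of $f_M$ at the center and instead stretches the domain conformally (the family $\Psi_R$), which as $R\to\infty$ breaks off the PSS cylinder computing $\Phi$ and leaves a closed-open disc computing $\mathcal{CO}^\rho_{L,\mathrm{loc}}$; at $R=1$ this produces the intermediate operator $F$, which has no cylindrical puncture at all, only a Morse attachment at the center.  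Only after the cylindrical input has been traded for a Morse attachment does the second homotopy $K_{II}$ slide that attachment point toward the boundary; and even then $K_{II}$ is a \emph{union of two moduli spaces} (Proposition~\ref{prop:second homotopy}), because at the far end the pearl structure (constant exact disc plus finite Morse flow of $f_L$) must grow and then break to produce $\Phi_L\circ i^*$.  Your description ``the domain degenerates into a PSS plane glued to a closed-open disc'' at one endpoint is not something that sliding a marked point can produce; it requires the conformal-stretching parameter.

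Third, the quasi-isomorphism claim is under-supported.  You appeal to an ``adiabatic/gluing comparison'' with a Morse continuation, and state that compactness ``reduces to the same locality estimate'' as Proposition~\ref{prop: locality}.  But that estimate controls solutions whose inputs and outputs are Hamiltonian orbits or chords in $U$.  For the \emph{inverse} PSS map the output is a Morse critical point, the cut-off $\check{a}(s)$ breaks monotonicity of the Floer equation, and the maximum principle does not directly confine solutions to $U$.  This is precisely why the paper constructs $\Phi^{-1}$ explicitly and invokes the dichotomy of Proposition~\ref{prop:dichotomy} (following Ritter): one must show that any solution of the non-monotone PSS equation either stays in $U$ or escapes a fixed larger set $W$, so that the $U$-confined solutions form a compact moduli space.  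Without this, the assertion that $\Phi$ and $\Phi_L$ are cohomology isomorphisms is not established.
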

	The vertical homomorphisms in Theorem \ref{thm: PSS CO diagram} are versions of PSS homomorphisms, or more precisely, their local inverses. 
	We can ignore the twist by $\rho$ since $U \subset \mathrm{Fix}(\rho)$.
	At the level of cochains, 
	\[\Phi: CM^*\left( f_M\vert_U \right)\to CF^*_\mathrm{loc}\left(\rho^{-1}; H_{\delta_1}, J\right)\] 
	is defined as follows: consider the following data on a cylinder. 
	\begin{itemize}
		\item A regular Floer datum $(H_{\delta_1}, J)$ for $\rho^{-1}$, and
		\item a sub-closed one form $\alpha = a(s)dt$, where $a(s)$ is a smooth function only depending on $s$-coordinate decreasing from $1$ to $0$. 
	\end{itemize}
	Then we define 
		$$\Phi(p_+):= \sum_{\gamma_- \in \mathcal P(\rho^{-1}; H_{\delta_1}(u), U)} \#\mathcal M_\Phi \left(p_+, \gamma_-; f_M, H_{\delta_1}, J, U\right) \cdot \gamma_-,$$
		where
		$$\mathcal M_\Phi \left(p_+, \gamma_-; f_M, H_{\delta_1}, J, U\right) :=\left\{(u, l)  \;\middle|\; 
			\begin{array}{ll}
			u: \R\times S^1 \to M, &\partial_su +J(\partial _t u-X_{H_{\delta_1}}(u)\cdot a(s))=0,\\
			u(-\infty, t) = \gamma_-(t), & u(\infty, t) = l(0), \\
			u(\R \times S^1) \subset U,&\\
			l: [0, \infty) \to M, & l'(s)+\nabla f_M \circ l =0,\\
			l(\infty) = p_+, & l([0, \infty))\subset U
			\end{array}
		\right\}.$$
		Similarly, 
	\[\Phi_L :  CM^*\left(f_L\vert _{L\cap U}\right)\to CF^*_\mathrm{loc} \left(L, \rho(L); H_{L, \rho(L), \delta_2}, J_{L, \rho(L)}\right)\] 
	is defined as follows. Consider the following data on an infinite strip:
	\begin{itemize}
		\item  a Floer datum $(H_{L, \rho(L), \delta_2}, J_{L, \rho(L)})$ for $(L, \rho(L))$, and 
		\item  a sub-closed one form $\alpha = a(s)dt$, where $a(s)$ is a smooth function only depending on $s$-coordinate decreasing from $1$ to $0$. 
	\end{itemize}
	Then $\Phi_{L}$ is defined by
		$$\Phi_L(q_+) := \sum_{\gamma_- \in \mathcal P(L, \rho(L), H_{L, \rho(L), \delta_2}, U)} \#\mathcal M_{\Phi_L} \left(q_+, \gamma_-; f_L, H_{L, \rho(L), \delta_2}, J_{L, \rho(L)}, U\right) \cdot \gamma_-,$$
		where
		$$\mathcal M_{\Phi_L} \left(q_+, \xi_-; f_L, H_{L, \rho(L), \delta_2}, J_{L, \rho(L)}, U\right) :=\left\{(u, l)  \;\middle|\;
			\begin{array}{ll}
			u: \R \times [0,1] \to M, & \partial_su +J_{L, \rho(L)}(\partial _t u-X_{H_{L, \rho(L), \delta_2}}(u)\cdot a(s))=0, \\
			u(-\infty, t) = \xi(t), & u(\infty, t) = l(0), \\
			u(\R \times [0,1]) \subset U, & u(\R \times \{0\} \sqcup \R\times \{1\}) \subset L,\\
			l: [0, \infty) \to L, & l'(s)+\nabla f_L \circ l =0,\\
			l(\infty) = q_+, &  l([0, \infty))\subset L\cap U\\
			\end{array}
		\right\}.$$
	Meanwhile, the pull-back $i^*$ is defined by counting rigid pairs of Morse flows concatenated at a point; 
		$$i^*(q_+) := \sum_{p_- \in \mathrm{crit}(f_L) \cap U}\#\mathcal M_{i^*}\left(p_+,q_-; f_M, f_L, U \right) \cdot p_-,$$
		where
		$$\mathcal M_{i^*}\left(p_+,q_-; f_M, f_L, U \right) := \left\{ (l_1, l_2)  \;\middle|\;
			\begin{array}{lll}
			l_1: (-\infty, 0] \to M,  & l'_1(s)+\nabla f_M \circ l_1 =0,&\\
			l_1(\infty) =q_-, & l_1(0) =l_2(0), & l_1((-\infty, 0]) \subset U, \\
			l_2: [0, \infty) \to L,  & l'_2(s)+\nabla f_L \circ l_2 =0,\\
			l_2(0) = p, & l_2([0, \infty)) \subset L\cap U& 
			\end{array}
			\right\}.$$
	\begin{prop}
		For sufficiently small $0<\delta_i\ll 1, \hspace{5pt} (i=1,2)$, every element of each moduli space has uniformly bounded image away from the boundary of $U$. 
	\end{prop}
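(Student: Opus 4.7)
The plan is to handle the three moduli spaces $\mathcal{M}_{i^*}$, $\mathcal{M}_\Phi$, and $\mathcal{M}_{\Phi_L}$ in turn, reducing each case to the combination of (i) an integrated maximum principle at the outer boundary $\{1+\eta\}\times\partial M$ of $U$ (where the radial coordinate $r$ plays the role of a barrier), and (ii) a compactness-plus-degeneration argument at the inner boundary $\{1-\eta\}\times\partial M$, where no such barrier exists but the Hamiltonian perturbations are $O(\delta_i)$-small.

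For $\mathcal{M}_{i^*}$ the statement is immediate from the choice of $f_M$ and $f_L$: their negative gradients point strictly inward along $\partial U$ and $\partial(L\cap U)$, respectively, so broken pairs of half-trajectories $(l_1,l_2)$ with endpoints in the interior are trapped in a compact subset of $U$. For $\mathcal{M}_\Phi$, at the outer end of $U$ the Hamiltonian $H_{\delta_1}$ is linear in $r$ with positive slope, $J$ is of contact type, and $\alpha=a(s)dt$ satisfies $a(s)\in[0,1]$ with $a$ monotone; the standard integrated maximum principle of Abouzaid--Seidel/Ritter applied to $r\circ u$ on the preimage of $\{r\geq 1+\eta/2\}$ (possible since $d_S(H_{\delta_1}\alpha)\leq 0$ by the choice in Remark \ref{rmk:choice of subclosed one form}), combined with the fact that the asymptotic orbit $\gamma_-$ lies on $\{r_0\}\times\partial M$ with $r_0<1+\eta/2$ and that the Morse half-ray ends at a critical point in $U$, forces the image of $u$ to lie in $\{r<1+\eta/2\}$.

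At the inner boundary $\{1-\eta\}\times\partial M$ there is no radial barrier, so I would argue by contradiction using the smallness of $\delta_1$. Suppose no uniform bound exists: then there is a sequence $\delta_{1,n}\to 0$ and pairs $(u_n,l_n)\in\mathcal{M}_\Phi$ whose images eventually enter any prescribed neighborhood of $\{1-\eta\}\times\partial M$. Exactness of $(M,d\lambda)$ together with the compactly supported nature of the perturbations $\overline{g}$ and $F$ gives a uniform $a priori$ energy bound, so Gromov compactness applies with no sphere bubbling. In the Gromov limit every Floer cylinder component satisfies the unperturbed equation $\partial_s u+J\partial_t u=0$ (since $\delta_{1,n}\to 0$); such cylinders have finite energy and stationary asymptotics, hence are constant. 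The limit therefore reduces to a single gradient trajectory of $f_M$ joining two critical points in the interior of $U$, which by the inward-pointing property of $-\nabla f_M$ along $\partial U$ is trapped in a compact subset of $U$, contradicting the assumption that the limit touches $\{1-\eta\}\times\partial M$. The argument for $\mathcal{M}_{\Phi_L}$ is identical mutatis mutandis, with the half-cylinder replaced by a half-strip carrying Lagrangian boundary conditions on $L$ and $f_M$ replaced by $f_L$; the branch cut from the closed-open setup causes no trouble because after shrinking $U$ we have $\rho|_U=\mathrm{Id}$, so the twisted matching condition degenerates to the standard one along the cut.

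The main technical obstacle I anticipate is making the compactness degeneration \emph{uniform} in $\delta_i$ rather than merely yielding a bound for each fixed $\delta_i$. This is routine: one works on the combined parameter space of pairs $(\delta_i,u)$, and if uniformity failed one would extract a sequence $(\delta_{i,n},u_n)$ with $\delta_{i,n}\to 0$ violating the bound, then invoke the compactness of the hybrid Morse/Floer configuration space (using exactness and standard Morse/Floer gluing at the matching point $l(0)=u(\infty)$) to produce the Gromov limit described above and reach the same contradiction. A subsidiary point is that the Morse functions $f_M,f_L$ must be chosen so that their critical points lie in the interior of $U$ and $L\cap U$ respectively with controlled separation from $\partial U$; this is ensured by choosing them as small perturbations of the radial profile already fixed by $g_{\partial M}$ and $g_L$ in Section~\ref{subsec:Gamma} and Proposition~\ref{prop:GammaL}.
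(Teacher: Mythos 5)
Your treatment of $\mathcal M_{i^*}$ coincides with the paper's (the inward-pointing condition on $-\nabla f_M$ and $-\nabla f_L$ along $\partial U$ and $\partial(L\cap U)$ traps the broken gradient pairs), and splitting the analysis for $\mathcal M_\Phi$ and $\mathcal M_{\Phi_L}$ into a maximum-principle argument at the outer boundary and a compactness argument at the inner boundary is a reasonable plan a priori. However, the compactness step at the inner boundary has a genuine gap.

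You assert that as $\delta_{1,n}\to 0$ each Floer cylinder component in the Gromov limit satisfies the unperturbed equation $\partial_s u + J\partial_t u = 0$ and, having finite energy with stationary asymptotics, is therefore constant. This is incorrect on two counts. First, the Hamiltonian in $\mathcal M_\Phi$ is $H_{\delta_1} = H + \delta_1\overline g$, where the autonomous part $H = h(r)$ is a \emph{fixed} function of $r$ that does not tend to zero as $\delta_1\to 0$; the limiting equation is $\partial_s u + J(\partial_t u - a(s)X_H(u)) = 0$, which has a nontrivial Hamiltonian term wherever $h'\neq 0$, in particular near $\{r_0\}\times\partial M$. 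Second, the asymptotic orbit $\gamma_-$ at $s=-\infty$ does not degenerate to a point in this limit: it is a twisted Hamiltonian orbit on $\{r_0\}\times\partial M$ of period $h'(r_0)=\epsilon$, where $\epsilon$ is a fixed (if small) constant chosen in advance, independent of $\delta_1$. So the limit curve has a genuinely periodic asymptotic at one end, and the removal-of-singularities argument for stationary-asymptotic $J$-holomorphic cylinders does not apply.

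The paper closes exactly this gap in the proof of Proposition \ref{prop: locality}: after the limit $\delta\to 0$ (Step 2) one performs the \emph{unwrapping} change of coordinates $\tilde u = \phi_{-H}^t\circ u$ (Step 3), which converts the periodic asymptotics into stationary points on $\{r_0\}\times\partial M$ and preserves the contact type of $J$; Lemma \ref{lem: about energy zero curve} then shows that zero topological energy forces $r\circ\tilde u$ to be constant and $d\tilde u$ to be everywhere parallel to the Reeb field, so the image lies in a single Reeb trajectory, contradicting the assumption that it meets $U\setminus V$. Without the unwrapping step, you cannot reach a zero-energy conclusion and the contradiction does not materialize. Note also that the paper does not need the maximum-principle leg at all: the same $\delta\to0$/unwrapping argument handles the outer boundary of $U$ as well, which is why the proof of the statement simply defers to Proposition \ref{prop: locality} (plus the Morse confinement you already have). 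If you retain your two-pronged structure, the inner-boundary half must be repaired along these lines, at which point it essentially reproduces Steps 2--3 of Proposition \ref{prop: locality}.
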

	\begin{proof}
		Gradient flow lines of $f$ and $f_L$ are confined in $U$ by our choice of functions. The boundedness of perturbed pseudo-holomorphic discs is covered by Proposition \ref{prop: locality} below. 
	\end{proof}	
	In particular, a usual degeneration argument is valid over $U$. 
	Therefore, $\Phi$, $\Phi_L$, and $i^*$ are well-defined cochain maps so that the diagram of cohomologies in Theorem \ref{thm: PSS CO diagram} makes sense. 
	Before going into the proof, we derive some corollaries of the theorem. 
	
	\begin{cor}
		Suppose that $L$ has cylindrical ends. 
		We have $\mathcal{CO}_{L, \mathrm{loc}}^\rho (\Gamma) = \Gamma_L$.
	\end{cor}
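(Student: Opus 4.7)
The plan is to deduce the corollary directly from the commutative diagram established in Theorem \ref{thm: PSS CO diagram}, after identifying both $\Gamma$ and $\Gamma_L$ as images, under the Morse-to-Floer PSS-type maps $\Phi$ and $\Phi_L$, of the identity class in ordinary cohomology. Since the statement $\mathcal{CO}^\rho_{L,\mathrm{loc}}(\Gamma) = \Gamma_L$ is most naturally an equality of classes in $HF^*_\mathrm{loc}(L,\rho(L))$, it suffices to trace $1 \in H^0(U)$ around the square.

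First I would unwind the construction of $\Gamma$ on the chain level. By definition, $\Gamma$ corresponds to $\sum_{p \in \mathrm{Crit}\, g_{\partial M},\, \mathrm{ind}_{g_{\partial M}} p = 0} \gamma_p$, which under the identification $CF^*_\mathrm{loc}(\rho^{-1}; H_{\delta_1}, J) \cong CM^*(g_{\partial M})$ (from the Morse-Bott perturbation used in Section \ref{subsec:Gamma}) is exactly the fundamental cocycle of the Morse complex of $g_{\partial M}$. Since $U \simeq (1-\eta,1+\eta) \times \partial M$ deformation retracts to $\partial M$, this cocycle represents $1 \in H^0(U) \cong H^0(\partial M)$ under the canonical isomorphism $HM^0(g_{\partial M}) \cong H^0(\partial M)$. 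A direct comparison (continuation argument between the data used to build $\Phi$ and the data used to build $\Gamma$, or alternatively: $\Phi$ induces an isomorphism $H^0(U) \xrightarrow{\cong} SH^0_\mathrm{loc}(\rho^{-1})$ which is 1-dimensional) shows $[\Phi(1)] = [\Gamma]$.

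The analogous identification for $\Gamma_L$ follows from Proposition \ref{prop:GammaL}: the Floer datum $(H_{L,\rho(L),\delta}, J_{L,\rho(L)})$ chosen there yields the chain isomorphism $CF^*_\mathrm{loc}(L,\rho(L); H_{L,\rho(L),\delta}, J_{L,\rho(L)}) \cong CM^*(g_L|_{L\cap U})$, and $\Gamma_L$ is by definition the fundamental cocycle on the right-hand side, representing $1 \in H^0(L\cap U)$. So $[\Phi_L(1)] = [\Gamma_L]$. Finally, the Morse-theoretic pull-back $i^*: H^0(U) \to H^0(L\cap U)$ sends $1$ to $1$, either from the definition (the zero-dimensional intersection of stable/unstable manifolds at a minimum of $f_L$ with a generic descending flow from $f_M$ contributes exactly once) or just from naturality of pull-back for the identity class. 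Feeding this into the square of Theorem \ref{thm: PSS CO diagram},
\[
[\mathcal{CO}^\rho_{L,\mathrm{loc}}(\Gamma)] \;=\; [\mathcal{CO}^\rho_{L,\mathrm{loc}}\circ \Phi(1)] \;=\; [\Phi_L\circ i^*(1)] \;=\; [\Phi_L(1)] \;=\; [\Gamma_L],
\]
which is the desired equality.

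The main obstacle I anticipate is not conceptual but bookkeeping: making sure that the several perturbation regimes (the Morse-Bott data used to define $\Gamma$, the cylindrical perturbation used to define $\Gamma_L$, and the domain-dependent $H_S$ governing $\mathcal{CO}^\rho_L$) can be matched compatibly with the choices made for $\Phi$ and $\Phi_L$, and that the monotonicity condition on $H_S$ from Remark \ref{rmk:choice of subclosed one form} is preserved during the comparison. Granted the commutative diagram, however, no further count of holomorphic curves is needed; the statement is a formal consequence of $\Phi$ and $\Phi_L$ being chain-level quasi-isomorphisms that intertwine $\mathcal{CO}^\rho_{L,\mathrm{loc}}$ with $i^*$.
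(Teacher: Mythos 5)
Your argument correctly reproduces the paper's route to the cohomology-level statement: identify $\Gamma$ with the image of the top Morse cocycle for $g_{\partial M}$, hence $[\Gamma] = \Phi([1])$; identify $\Gamma_L$ with the top Morse cocycle for $g_L|_{L\cap U}$, hence $[\Gamma_L] = \Phi_L([1])$; use $i^*(1)=1$ and commutativity of the square in Theorem~\ref{thm: PSS CO diagram} to get $[\mathcal{CO}^\rho_{L,\mathrm{loc}}(\Gamma)] = [\Gamma_L]$. That part is fine.

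However, there is a gap you actively introduce at the very start when you write that the statement ``is most naturally an equality of classes in $HF^*_\mathrm{loc}(L,\rho(L))$, [so] it suffices to trace $1$ around the square.'' The corollary (and the subsequent Corollary~\ref{cor: Gamma and GammaL}, which depends on it) asserts a \emph{cochain-level} identity $\mathcal{CO}^\rho_{L,\mathrm{loc}}(\Gamma) = \Gamma_L$, not merely an equality of cohomology classes. Your argument only produces $[\mathcal{CO}^\rho_{L,\mathrm{loc}}(\Gamma)] = [\Gamma_L]$, leaving open the possibility of an exact coboundary discrepancy. The paper closes this by a short degree argument: both $\mathcal{CO}^\rho_{L,\mathrm{loc}}(\Gamma)$ and $\Gamma_L$ are degree-$0$ cocycles, and $CF^*_\mathrm{loc}(L,\rho(L);\cdot)$ is supported in nonnegative degree (since $\mu_{CZ}(\partial L)=0$ and the local complex is identified with $CM^*(g_L|_{L\cap U})$, which has no generators in degree $-1$), so there is no degree $-1$ cochain whose differential could account for the difference; the equality therefore holds on the nose. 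You should restore that step rather than dismiss the chain-level claim as a matter of convention, since it is subsequently used in a context (comparison with the full $\mathcal{CO}^\rho_L(\Gamma)$) where the chain-level identity matters.
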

	\begin{proof}
	Theorem \ref{thm: PSS CO diagram} immediately implies $[\mathcal{CO}_\mathrm{L,loc}^\rho (\Gamma)] = [\Gamma_L]$. 
	This should hold at the cochain level because both cocycles are of degree zero and each cochain complex vanishes in negative degrees. 
	\end{proof}
	\begin{cor}
	\label{cor: Gamma and GammaL}
		Suppose $L$ has cylindrical ends. 
		We have 
			\[\mathcal{CO}^\rho_L(\Gamma) = \Gamma_L + \sum_{p\in L\cap \rho(L) \cap M^\mathrm{in}} a_p\cdot p\]
		with possibly non-zero coefficient $a_p$. If $L\cap \rho(L) = \emptyset$ except where $\rho=\mathrm{Id}$ near $\partial M$, then $\mathcal{CO}^\rho_L(\Gamma) = \Gamma_L$. 
	\end{cor}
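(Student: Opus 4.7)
The plan is to split the output of the twisted closed-open map $\mathcal{CO}_L^\rho(\Gamma)$ according to where the output chord is located. The generators of $CW^*(L,\rho(L))$ coming from the perturbation scheme of Proposition~\ref{prop:GammaL} fall into two disjoint families: the local chords near $\partial M$ (the elements of $\mathcal P(L,\rho(L);H_{L,\rho(L),\delta},U)$, which correspond to critical points of the auxiliary Morse function $g_L|_{L\cap U}$) and the remaining chords, all of whose time-$1$ endpoints are interior intersection points $p\in L\cap\rho(L)\cap M^{\mathrm{in}}$. We therefore write
\[
\mathcal{CO}_L^\rho(\Gamma) \;=\; \mathcal{CO}_{L,\mathrm{loc}}^\rho(\Gamma) \;+\; \sum_{p\in L\cap\rho(L)\cap M^{\mathrm{in}}} a_p\cdot p,
\]
where the first summand collects contributions from local chords and the second from interior chords. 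By the preceding corollary, $\mathcal{CO}_{L,\mathrm{loc}}^\rho(\Gamma)=\Gamma_L$, giving the first assertion.

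The main point to verify is that the split is \emph{cleanly} realized by the moduli spaces defining $\mathcal{CO}_L^\rho$: the contribution to a local output chord $\xi_-\in\mathcal P(L,\rho(L);H_{L,\rho(L),\delta},U)$ is computed purely by curves whose image stays inside $U$. I would argue this by the locality lemma invoked just before Theorem~\ref{thm: PSS CO diagram}: for sufficiently small perturbation parameters $\delta_1,\delta_2$ (both controlling $\overline g$ and $\overline g_L$), any element of $\mathcal M_{\mathcal{CO}_L^\rho}(\gamma_+,\xi_-;H_S,J_S)$ with $\gamma_+$ a generator of $\Gamma$ and $\xi_-$ a local chord must lie entirely in $U$ by a $C^0$-compactness/monotonicity argument together with the maximum principle arranged in Remark~\ref{rmk:choice of subclosed one form}. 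In particular, any such curve is counted by $\mathcal{CO}_{L,\mathrm{loc}}^\rho$.

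Once this locality is in hand, the second assertion is immediate: under the hypothesis that $L\cap\rho(L)$ is empty off the region where $\rho=\mathrm{Id}$, a small enough Hamiltonian perturbation produces no chords in $M^{\mathrm{in}}$, so the interior intersection set in the sum is empty and the sum collapses. Hence $\mathcal{CO}_L^\rho(\Gamma)=\Gamma_L$ on the nose. The only step that requires care is the locality claim; everything else is a formal consequence of the previous corollary and the construction of $\Gamma_L$ via Morse data on $L\cap U$.
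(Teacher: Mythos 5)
Your decomposition of $CW^*(L,\rho(L))$ into two families is not exhaustive, and this leaves a real gap. The wrapped complex for a non-compact $L$ with cylindrical ends contains, besides the short local chords in $U$ (the index-$k$ critical points of $g_L|_{L\cap U}$ sitting near $\{r_0\}\times\partial M$) and the perturbed constant chords at interior intersection points, a third family: longer wrapped Hamiltonian chords at higher radii in the cylindrical end, where the quadratic Hamiltonian matches Reeb periods $\geq$ the minimal period. These chords have time-$1$ endpoints on the cylindrical end of $\rho(L)$, not at points of $L\cap\rho(L)\cap M^{\mathrm{in}}$, so your claim that ``the remaining chords'' all end at interior intersections is false. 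The corollary is implicitly asserting that those longer wrapped chords do \emph{not} occur in $\mathcal{CO}_L^\rho(\Gamma)$, and this is precisely what needs an argument. The argument is an action-energy one: with the sub-closed one-form chosen so that $d(H_S\cdot\alpha)\le 0$ (Remark~\ref{rmk:choice of subclosed one form}), topological energy dominates geometric energy and any contributing curve has $\mathcal A(\xi_-) \geq \mathcal A(\gamma_+) - O(\delta)$; since the input orbits building $\Gamma$ have action $O(\epsilon)$ while the non-local wrapped chords have action bounded away from zero by (roughly) the minimal Reeb period, those outputs are excluded once $\epsilon,\delta$ are small. This step is absent from your write-up.

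A second, smaller issue concerns your invocation of the locality lemma. Proposition~\ref{prop: locality}(3) concludes ``image in $V$'' \emph{assuming} the image is already contained in $U$; it does not by itself show that a curve from $\Gamma$ to a local output stays inside $U$. What you need is the a priori confinement statement, for which you should appeal to the compactness argument used in the proof of Proposition~\ref{prop: locality} (limiting $\delta\to 0$, the solutions converge to zero-energy curves confined as in Lemma~\ref{lem: about energy zero curve}), or again to the action estimate above. Once both points are repaired, the remainder of your proof — matching the local contribution with $\mathcal{CO}_{L,\mathrm{loc}}^\rho(\Gamma)=\Gamma_L$ and collapsing the interior sum when $L\cap\rho(L)\cap M^{\mathrm{in}}=\emptyset$ — goes through as you say.
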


\subsection{Proof of Theorem \ref{thm: PSS CO diagram}}
	We construct a homotopy between $\mathcal{CO}_\mathrm{loc}\circ \Phi$ and $\Phi_L\circ i^*$ in two steps. 
\subsubsection{First Homotopy}
\label{subsubsec: First homotopy}

	We define the first homotopy denoted by $K_I$.  We equip a positive half cylinder $[0, \infty)\times S^1$ with the following data parametrized by $R \in [1, \infty)$:
		\begin{itemize}
			\item a negative boundary marked point $(0,1) \in [0, \infty)\times S^1$, 
			\item a strip-like end $\epsilon^-: (-\infty, 0]\times [0,1] \to [0, \infty)\times S^1$ at the marked point $(0, 1)$ such that $\mathrm{Im}(\epsilon^-)\subset [0, 1/2]\times S^1$, and
			\item a domain dependent Hamiltonian function $H_{S,R}$ and a closed one form $\beta_R$ on the domain satisfying 
				\begin{enumerate}
					\item $(\epsilon^-)^*X_{H_{S, R}}\otimes\beta_R = X_{H_{L, \rho(L), \delta_2}}\otimes dt$,
					\item $X_{H_{S, R}} \otimes \beta_R \vert_{[1, R) \times S^1} = X_{H_{\delta_1}} \otimes dt$, 
					\item $X_{H_{S, R}} \otimes \beta_R \vert_{[R+1, \infty) \times S^1} =0$, 
					\item $\frac{\partial}{\partial s} H_{S, R} \leq 0$, and
					\item $\beta_R \vert_{\{0\}\times S^1}=0$.
				\end{enumerate}
		\end{itemize}
	The fourth condition (4) is satisfied after we manipulate our Morse perturbation term so that $\overline g \geq \overline g_L >0$.  
	Now, take a smooth family of diffeomorphisms $\Psi_R =\mathrm{exp} (-(\alpha_R(s)+it)) : [0, \infty) \times S^1 \xrightarrow{\sim} D^2\setminus \{0\}$ where $\alpha_R(s): \R \to \R$ a strictly increasing function such that $\alpha_1 = \mathrm{Id}$ and 
	\[\alpha_R'(s) = \left\{ 
	\begin{array}{ll} 
		1 & (0\leq s\leq \frac{1}{2}),\\
		\textrm{decreasing} & (\frac{1}{2} \leq s \leq 1 ), \\
		1/R & (1 \leq s\leq R),\\
		\textrm{increasing} & (R \leq s \leq R+\frac{1}{2}),\\
		1 & (R+\frac{1}{2} \leq s).
	\end{array}
	\right.\]
	In this way, $\Psi_R$ interpolates the standard biholomorphic map $\mathrm{exp}(-(s+it))$ on $\{s\in (0, 1/2) \sqcup (R+1/2, \infty)\}$ and a $1/R$-scale map $\mathrm{exp}(-(s/R+it))$ on $\{ s\in (1, R) \} $ along $\{s \in (1/2, 1)\sqcup (R, R+1/2)\}$. 
	We obtain a family of complex structures $j_R$ on the disc by pushing forward the standard complex structure on $[0, \infty) \times S^1$ along $\Psi_R$. 
	 The family $j_R$ describes a degeneration of the domain. Indeed, it degenerates away from the standard one $j_{R=1}$ as $R\to \infty$, as depicted in Figure \ref{fig:degen1}. 
	\begin{remark}
		The construction of $K_I$ is one half of the one that appeared in \cite[Theorem 1.5]{Alb05}.
		The author extended $\beta_R$ to $R<1$ in a non-monotone manner, which cannot be adapted to our situation. 
	\end{remark}
	
\begin{figure}[h]
\includegraphics[scale=1]{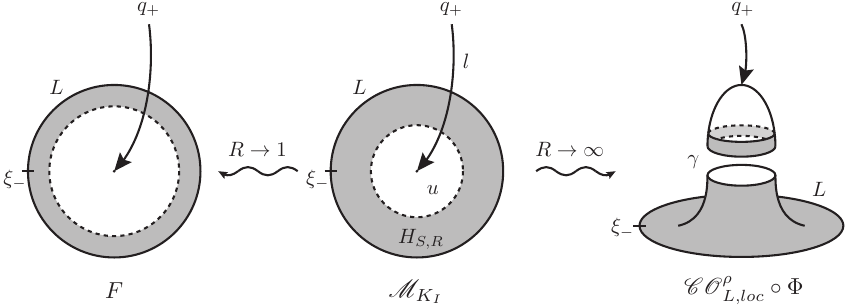}
\centering
\caption{Degeneration of $\mathcal M_{K_{I}}$}
\label{fig:degen1}
\end{figure}

	The moduli space defining $K_I$ consists of triples; 
	\begin{align*}
	\mathcal{M}_{K_I}\left(q_+, \xi_-; H_{S,R}, J_{S,R}, U\right):= \left\{ (R, l, u)  \;\middle|\;
		\begin{array}{ll}
		l: [0, \infty) \to U, & l'(s) + \nabla f \circ l = 0,\\
		l(\infty) = q_+, & l(0) = u(0),\\
		u: D^2 \to U, &( du - X_{H_{S, R}}(u) \otimes \beta_R )^{0,1}_{J_{S, R}}=0,\\
		u(\partial D^2)\subset L,   & \lim_{s\to -\infty}(\epsilon^-)^* u=\xi_-,\hspace{5pt} R \in [1, \infty)
		\end{array}
		\right\}. 
	\end{align*}
	We also define a homomorphism 
	\[F: CM^*(f\vert_U) \to CF^*_\mathrm{loc}\left(L, \rho(L); H_{L, \rho(L), \delta_2}, J_{L, \rho(L)}\right)\] 
	by the rigid counting of the moduli space for $K_I$ but restricted to $R=1$:
	\[\mathcal M_F \left(q_+, \xi_-; H_{S}, J_{S}, U\right):=\mathcal{M}_{K_I}\left(q_+, \xi_-; H_{S, R}, J_{S, R}, U\right) |_{R=1}.\] 
	The usual degeneration argument proves the following. 
	\begin{prop}(See \cite{Alb05} and Figure \ref{fig:degen1})
		$K_1\circ d_\mathrm{Morse}+d_{CF}\circ K_1 = \mathcal{CO}_\mathrm{loc}^\rho \circ \Phi-F$.
	\end{prop}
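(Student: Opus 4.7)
The plan is to identify both sides of the identity as signed counts of codimension-one boundary strata of the compactified one-parameter moduli space $\overline{\mathcal{M}}_{K_I}(q_+, \xi_-; H_{S, R}, J_{S, R}, U)$, $R \in [1, \infty)$, and invoke the familiar Gromov--Floer compactness argument, with the locality bound from Proposition \ref{prop: locality} replacing the usual global compactness.

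First I would arrange the data $(H_{S, R}, \beta_R, J_{S, R})$ to be generic so that, whenever the expected dimension is one, this parametric moduli space is a smooth one-manifold with boundary. The conditions $\partial_s H_{S, R} \leq 0$, $d_S(H_{S, R}\cdot \beta_R) \leq 0$, and $\beta_R|_{\{0\}\times S^1} = 0$ yield an integrated maximum principle; combined with the smallness of $\delta_1, \delta_2$ this confines every curve in the family to $U$. Exactness of $\omega$ and of $L$ rules out sphere and disc bubbling. The codimension-one degenerations are therefore exhausted by four types: \textbf{(a)} breaking of the Morse half-line $l$ at a critical point of $f_M|_U$, contributing $K_I \circ d_\mathrm{Morse}$; \textbf{(b)} breaking of a Floer strip off the negative strip-like end $\epsilon^-$ at an intermediate Hamiltonian chord in $\mathcal{P}(L, \rho(L); H_{L, \rho(L), \delta_2}, U)$, contributing $d_{CF} \circ K_I$; \textbf{(c)} the closed endpoint $R = 1$, which is by definition the operator $F$; and \textbf{(d)} the open endpoint $R \to \infty$.

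For stratum \textbf{(d)}, the sub-cylinder $[1, R] \times S^1$, on which $X_{H_{S, R}} \otimes \beta_R = X_{H_{\delta_1}} \otimes dt$ with the $\rho^{-1}$-twist imposed near the interior puncture, stretches to an infinite cylinder. A standard neck-stretching then yields a broken configuration parametrized by an intermediate twisted Hamiltonian orbit $\gamma \in \mathcal{P}(\rho^{-1}; H_{\delta_1}, U)$: on one side, the disc factor carrying the branch cut, the moving Lagrangian boundary condition $\mathcal{L}_t$, the chord end $\xi_-$, and an interior positive puncture asymptotic to $\gamma$ is precisely an element of $\mathcal{M}_{\mathcal{CO}_L^\rho}(\gamma, \xi_-; H_S, J_S, U)$; on the other side, the stretched twisted Floer cylinder together with the residual half-cylinder on $[R+1, \infty)\times S^1$ (where the perturbation is turned off by $\alpha_R'$ returning to $1$) and the semi-infinite Morse trajectory $l$ asymptotic to $q_+$ assemble into an element of $\mathcal{M}_\Phi(q_+, \gamma; f_M, H_{\delta_1}, J, U)$. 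Summing over $\gamma$ with coherent orientations recovers $\mathcal{CO}_\mathrm{loc}^\rho \circ \Phi$.

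The main obstacle is making the $R \to \infty$ analysis rigorous in the presence of the $\rho$-twisted branch cut: one must show that no energy escapes into a spurious second intermediate orbit, and that gluing is surjective onto a collar of the degenerate stratum. Both are direct adaptations of the standard PSS/closed-open gluing, with the $\rho$-twist handled exactly as in the setup of Section \ref{sec:symplectic cohomology} by passing to the $\phi$-twisted cylinder picture. Once transversality, confinement to $U$, and the identification of stratum \textbf{(d)} are in place, vanishing of the oriented boundary of a compact one-manifold yields the asserted homotopy identity.
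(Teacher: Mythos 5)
Your proposal matches the paper's argument. The paper disposes of this proposition in one sentence, deferring to the standard Floer degeneration argument (cf.\ Albers and Figure~\ref{fig:degen1}), and the four boundary strata you enumerate --- Morse breaking, Floer strip breaking off $\epsilon^-$, the closed endpoint $R=1$ giving $F$, and the $R\to\infty$ neck-stretching on $[1,R]\times S^1$ splitting off a $\mathcal{CO}^\rho_{L,\mathrm{loc}}$ disc and a $\Phi$ cylinder at an intermediate twisted orbit --- are precisely the degenerations depicted there, with locality handled by Proposition~\ref{prop: locality} as you say.
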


\subsubsection{Second homotopy}
	Now we want to construct a second homotopy $K_{II}$ between $F$ and $\Phi_L\circ i^*$. 
	We choose decorations on a half-cylinder parametrized by $r \in[0, \infty)$ as follows:
		\begin{itemize}
			\item a negative boundary marked point $(0,1) \in [0, \infty)\times S^1$, 
			\item a strip-like end $\epsilon^-: (-\infty, 0]\times[0,1] \to [0, \infty) \times S^1$ at $(0, 1)$ such that $\mathrm{Im}(\epsilon^-)\subset [0,1/2]\times S^1$,
			\item a neighborhood $W$ such that $\epsilon^-((-\infty, 0]\times[0,1]) \subset W \subset [0, 1] \times \{-\pi/2<\theta<\pi/2\}\subset [0, \infty)\times S^1$,
			\item a family of smooth bump functions $\theta_{S, r}: [0, \infty) \times S^1\to [0,1]$, $r\in [0,\infty)$ such that 
			\[\theta_{S, r}(s) = \left\{
			\begin{array}{ll}
			1 & (s\in \mathrm{Im}(\epsilon^-)),\\
			\zeta(r) & (s \in W^c)
			\end{array}
			\right.
			, \hspace{5pt}
			\zeta(t) = \left\{
			\begin{array}{ll}
			1 & (t < 1),\\
			0 & (2 \leq t)
			\end{array}
			\right.
			\]
			with $\theta_{S, 0}(s)$ constant function. 
			Here, $\zeta: [0, \infty) \to [0,1]$ is a smooth decreasing function from $1$ to $0$,
			\item a family of Hamiltonian perturbations $ H_{S, r}: = H_{S, R=1}\times \theta_{S,r}(s)$, $r\in [0,\infty)$, and
			\item a closed one-form $\beta = \beta_{R=1}$. 
		\end{itemize}
	These data have been chosen so that the following conditions are satisfied:
	\begin{enumerate}
		\item $(\epsilon^-)^*X_{H_{S, r}}\otimes\beta = X_{H_{L, \rho(L), \delta_2}}\otimes dt$ for $\forall r \in [0, \infty)$,
		\item $(H_{S, r}, \beta)\vert_{r=0}$ coincides with $(H_{S, R}, \beta_R)_{R=1}$, 
		\item $X_{H_{S, r}} \otimes \beta = 0$ for $\forall s\in W^c$ when $r \geq 2$, 
		\item $\frac{\partial}{\partial s} H_{S, r} \leq 0$ for $\forall r\in [0, \infty)$, and
		\item $\beta\vert_{\{0\}\times S^1} = 0$. 
	\end{enumerate}
	We import this data to the standard disc under the conformal diffeomorphism $\mathrm{exp}(-(s+it))$.
	The moduli space $\mathcal M_{K_{II}}$ is defined as the union of two different moduli spaces.
	The construction is motivated by pearl complexes, but since there are no $J$-holomorphic discs bounded by an exact Lagrangian submanifold, we only need to consider the following two moduli spaces.

	The first moduli space consists of triples, 
	\begin{align*}
	\mathcal M_{K_{II}, 1}\left(q_+, \xi_-; f_M, H_{S, r}, J_{S, r}\right):= \left\{ (r, l, u) \;\middle|\;
		\begin{array}{ll}
		 r\in [0, \infty), \\
		l: [0, \infty) \to U, & l'(s) + \nabla f_M \circ l = 0, \\
		l(\infty) = q_+, & l(0) = u(1-e^{-r}),\\
		u: D^2 \to U, &( du - X_{H_{S, R}}(u) \otimes \beta )^{0,1}_{J_{S, r}}=0,\\
		\lim_{s\to -\infty}(\epsilon^-)^* u=\xi_-, &  u(\partial D^2)\subset L\\
		\end{array}
		\right\},
	\end{align*}
	while the second moduli space $\mathcal M_{K_{II}, 2}(q, \xi)$ consists of quintuples,
	\small{$$
	\mathcal M_{K_{II}, 2}\left(q_+, \xi_-; f_M, f_L, H_{S, \infty}, J_{S, \infty}\right) := \left\{ (r, l_1, u_1, l_2, u_2)  \;\middle|\;
		\begin{array}{ll}
		l_1: [0, \infty) \to U, &l_1'(s) + \nabla f_M \circ l_1 = 0,\\
		l_1 (\infty) = q_+, & l_1 (0) = u_1 (1),\\
		u_1: D^2 \to U, & \partial_s u_1+ J\circ \partial_t u_1 =0, \\
		u_1(-1) = l_2(r), & r\in [0, \infty), \\
		l_2: [-r, r] \to U, & l_2'(s) + \nabla f_L \circ l_2 = 0,\\
		l_2(-r) = u_2(0),& l_2([-r,r]) \subset L \cap U,\\ 
		u_2 : D^2 \to U, & (du -X_{H_{S, \infty}}(u_2) \otimes \beta )^{0,1}_{J_{S, \infty}}=0,\\
		u_i(\partial D^2)\subset L, & \lim_{s\to -\infty}(\epsilon^-)^* u_2=\xi_-\\ 
		\end{array}
		\right\}.
	$$}\normalsize
	
	\begin{prop}
	\label{prop:second homotopy}
		$K_2\circ d_\mathrm{Morse}+d_{CF}\circ K_2 = F - \Phi_L\circ i^*$.
	\end{prop}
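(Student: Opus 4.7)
The plan is a standard $1$-parameter cobordism argument. I will glue $\mathcal{M}_{K_{II}, 1}$ (with $r \in [0, \infty]$) and $\mathcal{M}_{K_{II}, 2}$ (with $r \in [0, \infty]$) into a single parametrized moduli space $\mathcal{M}_{K_{II}}$ with global parameter $\tau$, by identifying $\{r = \infty\}$ of the first with $\{r = 0\}$ of the second. The cochain homotopy identity will then be read off from the signed count of the boundary of the compactified virtual-dimension-one stratum $\overline{\mathcal{M}}_{K_{II}}$.

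I identify the boundary strata in turn. At the $r = 0$ end of $\mathcal{M}_{K_{II}, 1}$: by construction $\theta_{S, 0} \equiv 1$ and the marked-point constraint reads $l(0) = u(0)$, so this is precisely the moduli $\mathcal{M}_F$ defining $F$. At the $r = \infty$ end of $\mathcal{M}_{K_{II}, 2}$: the finite Morse trajectory $l_2 : [-r, r] \to L \cap U$ spends infinite time near a critical point $p_- \in \mathrm{crit}(f_L \vert_{L \cap U})$ and breaks into a pair $(l_2^+, l_2^-)$ of semi-infinite flows meeting at $p_-$. The unperturbed $J$-holomorphic disc $u_1$ takes values in the Liouville subdomain $U$ and has boundary on the exact Lagrangian $L$, so a standard action/energy estimate forces $u_1$ to be a constant disc at $p_-$. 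Consequently the limit configuration splits as $(l_1, u_1 \equiv p_-, l_2^+) \in \mathcal{M}_{i^*}$ concatenated with $(l_2^-, u_2) \in \mathcal{M}_{\Phi_L}$, contributing $(\Phi_L \circ i^*)(q_+)$. The remaining boundary strata are the usual ones: Morse breaking of $l$ or $l_1$ at a critical point of $f_M$ contributes $K_{II} \circ d_{\mathrm{Morse}}$, and Floer breaking at the strip-like end contributes $d_{CF} \circ K_{II}$.

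The main technical point is the interior matching between $\{r = \infty\}$ of $\mathcal{M}_{K_{II}, 1}$ and $\{r = 0\}$ of $\mathcal{M}_{K_{II}, 2}$: I must verify this is an \emph{internal} gluing inside $\mathcal{M}_{K_{II}}$ and hence contributes no boundary. As $r \to \infty$ in $\mathcal{M}_{K_{II}, 1}$, the interior marked point $1 - e^{-r}$ collides with $1 \in \partial D^2$, and simultaneously $\theta_{S, r}|_{W^c} \to 0$, so the perturbation $X_{H_{S, r}} \otimes \beta$ is confined to $W$. The standard ``interior marked point hitting the boundary'' bubbling analysis produces in the limit a nodal configuration consisting of an unperturbed disc $u_1$ with boundary nodes at $1$ and $-1$, joined at $-1$ to a perturbed disc $u_2$ carrying the strip-like end and the perturbation supported on $W$; this is precisely the $r = 0$ stratum of $\mathcal{M}_{K_{II}, 2}$, and a standard gluing theorem provides an inverse correspondence. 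Hence this locus is internal. Summing the four contributions with the standard orientation conventions gives $0 = F - \Phi_L \circ i^* - (K_{II} \circ d_{\mathrm{Morse}} + d_{CF} \circ K_{II})$, which rearranges to the claimed identity.

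The hardest step will be the combined marked-point degeneration plus perturbation fade-out described above, together with ensuring every curve in the family stays inside $U$. Confinement relies on the monotonicity $d_S(H_{S, r} \cdot \beta) \leq 0$ guaranteed by the choices set up in Remark \ref{rmk:choice of subclosed one form}, together with the same locality mechanism that underlies Proposition \ref{prop: locality} (which requires $\delta_1, \delta_2$ sufficiently small). Exactness of $L$ plays a double role: it rules out non-constant $J$-holomorphic disc bubbles entirely, and it forces the unperturbed component $u_1$ to be constant in the $r \to \infty$ limit of $\mathcal{M}_{K_{II}, 2}$, which is what makes the limit configuration a genuine element of $\mathcal{M}_{i^*} \times \mathcal{M}_{\Phi_L}$.
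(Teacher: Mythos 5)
Your proof is correct and follows the same strategy as the paper: read off the cochain-homotopy identity from the signed boundary of the glued one-parameter family $\mathcal M_{K_{II},1}\cup\mathcal M_{K_{II},2}$, with $r=0$ of the first piece giving $F$, Morse-breaking of $l_2$ at $r=\infty$ of the second piece giving $\Phi_L\circ i^*$, the intermediate $r=\infty$/$r=0$ match being internal (because exactness forces the bubble $u_1$ to be constant, which also makes your slightly off nodal description — boundary node versus interior marked point on $u_1$ — harmless), and confinement in $U$ ensured by the monotone perturbation choice and Proposition~\ref{prop: locality}. The only addition you make explicit that the paper leaves implicit is listing the usual Morse/Floer breaking strata contributing $K_{II}\circ d_{\mathrm{Morse}}$ and $d_{CF}\circ K_{II}$.
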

	\begin{proof}
		The boundary component of $\mathcal M_{K_{II}, 1}(q, \xi)$ corresponding to $r \to 0$ contributes to $F$.  
		
\begin{figure}[h]
\includegraphics[scale=1]{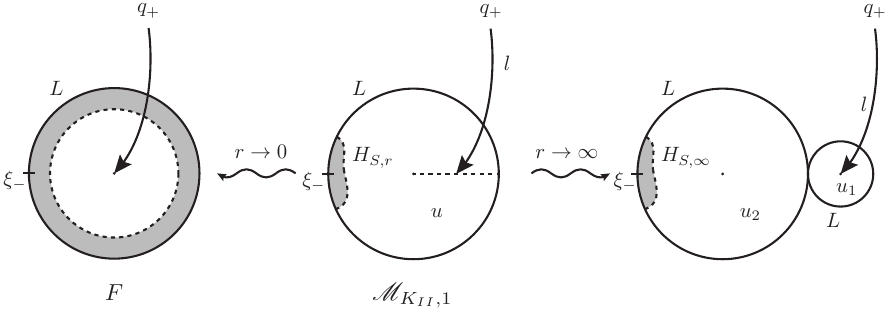}
\centering
\caption{Degeneration of $\mathcal M_{K_{II}, 1}$}
\label{fig:degen2}
\end{figure}

		Observe that $X_{H_{S, r}}\otimes \beta = 0$ around a neighborhood of $1\in D^2$ for all $r\geq 2$ due to the above requirements.
		In particular, Hamiltonian perturbation term around $l(0) = u(1-e^{-r})$ becomes zero. 
		Therefore, the limit of $\mathcal M_{K_{II}, 1}$ as $r \to \infty$ consists of pair $(l, u_1, u_2)$ such that 
		\begin{itemize}
		\item $l : [0, \infty) \to U$ is a Morse flowline of $f$ such that $l(\infty) = q_+$, 
		\item $u_1$ is a $J$-holomorphic disc bounded by $L$ such that $u_1(0) = l(0)$, and
		\item $u_2: D^2 \to U$ is a solution of perturbed Floer equation
			\begin{align*}
			\left( du -X_{H_{S, \infty}}(u_2) \otimes \beta \right)^{0,1}_{J_{S, \infty}}=0, \hspace{5pt}&u_2(\partial D^2) \subset L 
			\end{align*}
			such that $u_2(1) = u_1(-1)$ and $\lim_{s\to -\infty}(\epsilon^-)^* u_2 = \xi_-$.
		\end{itemize}
	Of course, $u_1$ is always constant because $L$ is exact. 
	
	At first, the limit of $\mathcal M_{K_{II}, 2}$ as $r\to 0$ cancels the limit of $\mathcal M_{K_{II}, 1}$ as $r \to \infty$.
	This procedure simply degenerates $l_2$ to a point and the results are the same as we just described.
	
\begin{figure}[h]
\includegraphics[scale=1]{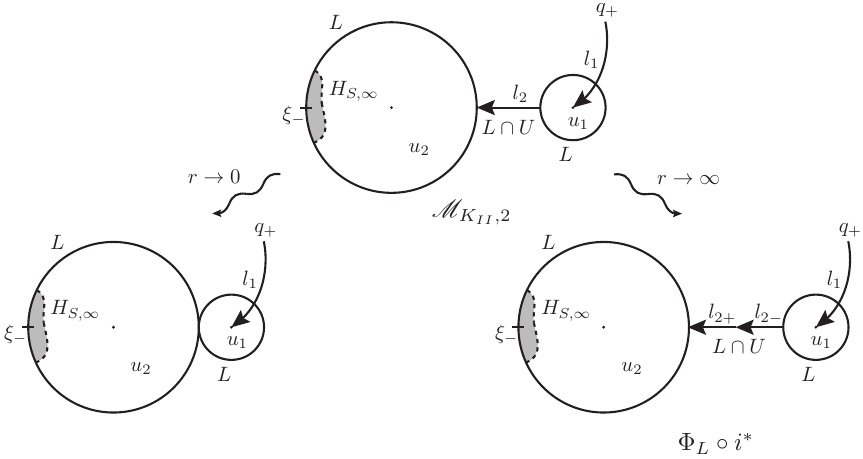}
\centering
\caption{Degeneration of $\mathcal M_{K_{II}, 2}$}
\label{fig:degen3}
\end{figure}

	On the other extreme, the $r \to \infty$ limit of $\mathcal M_{K_{II}, 2}(q, \xi)$ consists of quintuples $(l_1, u_1, l_{2-}, l_{2+}, u_2)$ where
		\begin{itemize}	
		\item $l_1 : [0, \infty) \to U$ is a Morse flowline of $f$ such that $l_1(\infty) = q_+$, 
		\item $u_1$ is a J-holomorphic disc bounded by $L$ such that $u_1(0) = l_1(0)$, 
		\item $l_{2-}: (-\infty, 0] \to L\cap U$ is a Morse flowline of $g_L$ such that $l_{2-}(0) = u_1(-1)$,
		\item $l_{2+}: [0, \infty) \to L\cap U$ is a Morse flowline of $g_L$ such that $l_{2+}(\infty) = l_{2-}(-\infty)$,  and 
		\item $u_2: D^2 \to U$ is a solution of perturbed Floer equation
			\begin{align*}
			\left( du -X_{H_{S, \infty}}(u_2) \otimes \beta \right)^{0,1}_{J_{S, \infty}}=0,\hspace{5pt} &u_2(\partial D^2) \subset L
			\end{align*}
			such that $u_2(1) = l_{2+}(0)$ and $\lim_{s\to -\infty}(\epsilon^-)^* u_2 = \xi_-$.
		\end{itemize}

	Note that $l_{2+}(\infty)=l_{2-}(\infty)$ is a Morse critical point of $f_L$. Again, $u_1$ is always constant because $L$ is an exact Lagrangian submanifold. It is easy to check that the contribution of such quintuples is exactly $\Phi_L\circ i^*$. 
	\end{proof}
	
\subsection{Uniform boundedness of perturbed holomorphic curves inside $U$}

We have repeatedly used the fact that various holomorphic curves in $U$ do not escape $U$. This idea first appeared in \cite{CFHW96} and led the authors to introduce the notion of local Floer cohomology, which has been employed in many works such as \cite{KvK16}, \cite{Mc19}, \cite{McR}.

	Let us recall the setting. 
	We choose a fixed collar neighborhood $U$ of $\partial M$ by the inverse Liouville flow. 
	We use the coordinate system $(r, x) \in (1-\eta,1+\eta) \times \partial M \cong U$. 
	The symplectomorphism $\check \rho$ is equal to a Hamiltonian diffeomorphism of $b$ as in Proposition \ref{prop:perturbation}. 
	We choose a Hamiltonian function $H=h(r)$ so that it only depends on $r$.
	Moreover, 
	\begin{itemize}
		\item $U$ is invariant under the Hamiltonian flow of $H$, which is a multiple of Reeb flow $h'(r) \cdot R$,
		\item if a (twisted) Hamiltonian orbit of $h$ is contained in $U$, then it is contained in $\{r_0\}\times \partial M$, and
		\item if Hamiltonian chords of $H$ between $L$ and $\rho (L)$ is contained in $U$, then it is also contained in $\{r_0\}\times\partial M$.
	\end{itemize}
	Next, we use
	\begin{itemize}
	\item a bounded time-dependent perturbation term $f_M$ to get $H_\delta := H+\delta f_M$,	
	\item a bounded time-dependent perturbation term $f_L$ to get $H_{L, \rho(L), \delta}:= H+\delta f_L$, and
	\item a perturbed pseudo-holomorphic curve $u$ with at most one input and at least one output among $\gamma \in \mathcal P(\rho^{-1}; H_\delta)$ and $\xi \in \mathcal P(L, \rho(L); H_{L, \rho(L), \delta})$. 
	(We abbreviate them as $u\in \mathcal M$.)
	\end{itemize}

	\begin{prop}
	\label{prop: locality}
		For any neighborhood $U^{\prime}$ such that $U \supset U^{\prime} \supset \{ r_{0} \} \times \partial M$, there exist a $\delta_{U^{\prime}}>0$ such that for all $\delta_{U^{\prime}} >\delta>0$, the following are true.
		\begin{enumerate}
		 	\item The image of each element of $\mathcal P (\rho^{-1}; H_{\delta} ,U)$ is already contained in $U^{\prime}$.
			\item The image of each element of $\mathcal P \left (L, \rho(L); H_{L, \rho(L), \delta} , U \right)$ is already contained in $U^{\prime}$.
			\item If the image of an element $u \in \mathcal M$ is contained in $U$, then it is also contained in $U^{\prime}$.  
		\end{enumerate}
	\end{prop}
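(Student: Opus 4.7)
The strategy is standard in the local Floer cohomology literature (cf. Pozniak, McLean), consisting of a continuity argument for generators together with an integrated maximum principle plus Gromov compactness for curves. I would handle (1)--(2) and (3) by different but parallel techniques.

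For parts (1) and (2), I would show the $\delta = 0$ case first. Because $H = h(r)$ depends only on $r$ and $\check{\rho}^{\pm 1}$ is $r$-preserving on $U$ (it is a Reeb flow there), twisted Hamiltonian orbits in $U$ of the unperturbed $H$ are fixed points of an $r$-preserving map whose $r$-slices are rotations of $\partial M$ by a radius-dependent Reeb angle. By construction of $h$, there is a unique radius $r_0$ where the total rotation vanishes, so the unperturbed generator locus in $U$ is exactly the Morse-Bott family $\{r_0\} \times \partial M$ (and for (2), $\{r_0\} \times \partial L$, since $L$ is cylindrical in $U$). The perturbed generators are fixed points of $\check{\rho}^{\pm 1} \circ \phi_{H_\delta}^1$, a time-1 map depending smoothly on $\delta$. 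The Morse-Bott family is cut out transversally in the normal direction by the unperturbed equation, so the implicit function theorem (or an elementary compactness argument using that the fixed point equation is closed) gives that any limit of perturbed generators lies in $\{r_0\}\times \partial M$. Hence for $\delta$ small enough, all perturbed generators in $U$ sit inside $V$.

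For part (3), I would argue by contradiction. Suppose there exist $\delta_n \to 0$ and $u_n \in \mathcal{M}$ with $\mathrm{Im}(u_n) \subset U$ but not contained in $V$. The asymptotic generators of $u_n$ lie in $V$ (by the already-proved (1), (2), applied to $\delta_n$ small), so by the action bound the Floer energy of $u_n$ is uniformly bounded. I would then apply Gromov compactness (in the twisted / Lagrangian boundary setting as in \cite{Se06}, \cite{Mc19}): after passing to a subsequence, $u_n$ converges to a broken configuration $u_\infty$ solving the unperturbed Floer equation with image in $\overline{U}$. The key step is now the maximum principle for $r \circ u_\infty$: on $U$ with $J$ of contact type and $H = h(r)$ radially symmetric, a standard computation (e.g., \cite{A10}) gives $\Delta(r\circ u) \geq h''(r)\,|X_R \circ u|^2 \geq 0$ away from marked points; because $\rho$ preserves $r$ on $U$, the branch-cut boundary condition is compatible with this inequality; and the Lagrangian boundary condition $L \cap U \subset (1-\eta,1+\eta)\times \partial L$ is cylindrical in $r$, giving a Neumann-type boundary condition for $r \circ u$. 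Combined with asymptotic value $r_0$ at all ends, the maximum principle forces $r \circ u_\infty \equiv r_0$, hence $u_\infty$ has image inside $\{r_0\}\times \partial M \subset V$. This contradicts the openness of the complement of $V$ in $U$.

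The main technical obstacle is verifying that a single maximum principle handles all the shapes of domain in $\mathcal{M}$ uniformly, including: Floer strips for (1)/(2), disks with interior cylindrical ends and boundary strip-like ends that arise in $\mathcal{CO}^\rho_L$, and the moduli spaces $\mathcal{M}_{K_I}, \mathcal{M}_{K_{II},1}, \mathcal{M}_{K_{II},2}$ used in the homotopies of Theorem \ref{thm: PSS CO diagram}, which carry varying Hamiltonian weights $\theta_{S,r}$. For each of these, one must check that the sub-closed one-form $\beta$ and the cutoff $\theta_{S,r}$ preserve the sign of $\Delta(r\circ u)$; this is exactly the content of Remark \ref{rmk:choice of subclosed one form}, and the assumption $H \leq H_{L,\rho(L)}$ together with $\frac{\partial}{\partial s} H_{S,R} \leq 0$ is arranged to ensure this uniformly. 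Once these sign conventions are checked once, the compactness-plus-max-principle argument applies verbatim to each moduli space.
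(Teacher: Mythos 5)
Your treatment of parts (1) and (2) via a compactness argument in $\delta \to 0$ is essentially the same as the paper's Step~1 (which argues by contradiction using compact Sobolev embedding to obtain a limiting orbit of the autonomous $H$, necessarily in $\{r_0\}\times\partial M$); citing an implicit-function-theorem alternative is fine but adds nothing.

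For part (3), however, you diverge from the paper, and the divergence introduces a gap. The paper's route is: degenerate to the autonomous Hamiltonian, then apply a time-dependent change of coordinates $\phi_{-H}^{t}$ (the ``unwrapping'' trick, which preserves the level sets $\{r=\mathrm{const}\}$ of $U$) to turn the asymptotic orbits/chords into constant points at $r_{0}$; the resulting curve $\tilde u$ then has $E_{\mathrm{top}}(\tilde u)=\mathcal{A}(\gamma_{\mathrm{out}})-\mathcal{A}(\gamma_{\mathrm{in}})=0$, hence $E_{\mathrm{geo}}(\tilde u)=0$, and the orthogonal decomposition of $d\tilde u$ into $da$, $\pi_{\ker\theta}dv$, and the Reeb component (Lemma~\ref{lem: about energy zero curve}) forces $a \equiv r_{0}$ and $\mathrm{Im}(\tilde u)$ to lie in a single Reeb trajectory. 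This is a \emph{global} energy-identity argument and genuinely gives the two-sided rigidity $r\circ\tilde u\equiv r_{0}$.

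Your plan instead rests on a maximum principle for $r\circ u_\infty$. The inequality $\Delta(r\circ u)\geq 0$ for contact-type $J$ and $H=h(r)$, together with the Neumann-type boundary condition on the cylindrical Lagrangian and asymptotics at $r_{0}$, gives you at best $r\circ u_\infty \leq r_{0}$: it bounds $r$ from \emph{above} by its boundary/asymptotic values. It does not rule out the curve dipping into $(1-\eta,r_{0})\times\partial M$. To conclude $r\circ u_\infty\equiv r_{0}$ you would also need a \emph{minimum} principle for $r\circ u_\infty$, which subharmonicity alone does not provide, and no superharmonic substitute (such as $-r$ or $1/r$) is available in this generality. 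So as written, the step ``the maximum principle forces $r\circ u_\infty\equiv r_{0}$'' does not follow. This is precisely what the paper's unwrapping-plus-zero-energy argument circumvents: it replaces the pointwise PDE estimate (one-sided) by the topological-energy identity (two-sided). If you want to keep a local maximum-principle flavor, you would need to first reduce, as the paper does, to the situation in which the asymptotic \emph{actions} agree, and then invoke the energy identity rather than a convexity estimate. One further, smaller point: after passing to a Gromov limit you should also note that the branch-cut boundary condition survives (using that $\rho$ is the identity on $U$), and that each component of a possibly broken limit inherits asymptotics in $\{r_{0}\}\times\partial M$; the paper sidesteps breaking by directly invoking $C^\infty_{\mathrm{loc}}$ elliptic convergence for the bounded-energy sequence.
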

	\begin{proof} We proceed with our proof in several steps. 
	
	{\em Step 1: prove (1) and (2).}
	We prove (1) by contradiction following \cite{CFHW96}. 
	If it is false, one can find an open set $U^{\prime}$ and a sequence $(\delta_n, \gamma_n)$ such that $\delta_n \to 0$ and $\gamma_n \in \mathcal P(\rho^{-1}; H_\delta, U)$ which are not contained in $U^{\prime}$ for any $n$. 
	By the compact Sobolev embedding, there is a subsequence $\gamma_{i_k}$ that converges to a Hamiltonian orbit $\gamma$ of a time-independent $H$.
	It contradicts to our assumption on $\gamma_n$ because $\gamma$ in completely contained in $\{r_0\} \times \partial M$ and in particular, inside $U^{\prime}$.  The same argument works for $(2)$ as well.
	 
	{\em Step 2: Limiting to an autonomous case.}
	We are going to apply a similar argument to $u\in \mathcal M$ with a little more care. 
	Suppose the claim is not true; then we may choose (after passing to a subsequence) a sequence of quadruples $(\delta_n, u_n, H_{S, n}, J_{S, n})$ such that $\delta_n \to 0$ and each $u_n$ is a regular solution of Floer equation
		\[\left(du_n-X_{H_{S,n}} (u_n)\otimes \alpha\right)^{0,1}_{J_{S, n}}=0\] 
	whose images are contained in $U$ but not in $U^{\prime}$, with $(\epsilon^i)^* H_{S, n} = H_{i, \delta_n}$ for $i=1,2$. 
	Here, $\alpha$ is an appropriate sub-closed one form. 
	Observe that a sequence $H_{S, n}$ is converging to an autonomous Hamiltonian $H$.
	By the usual convergence argument for the solution of an elliptic partial differential equation, we conclude that $u_n$ converges to $u$ which satisfies  
		\[\left(du-X_H (u) \otimes \alpha\right)^{0,1}_{J_S}=0,\]
	whose inputs or outputs, denoted as $\gamma$ and $\xi$, are Hamiltonian orbits or chords of $H$. 
		
	{\em Step3: Unwrapping.} 	
	We cancel out the effect of autonomous Hamiltonian $H$ by the unwrapping procedure. 
	By the assumption on $U$ and $H$, $\gamma$ and $\xi$ are contained in $\{r_0\}\times \partial M$ and share the common period $h'(r_0)$.
	We apply the time-dependent change of coordinate $\phi_{-H}^t: U \to U$. 
	Then $\tilde u = \phi_{-H}^t \circ u$ is a solution of 
	\[\left(d\tilde u-X_{\tilde H} ( \tilde u) \otimes \alpha\right)^{0,1}_{\tilde J_S}=0,\]
	where $\tilde J_S = (\phi_{-H}^t)^* J_S$ and $\tilde H = H\circ \phi_{-H}^t$. 
	Note that $\tilde J_S$ is still of contact type at the boundary. 
	Moreover, possible inputs or outputs, $\phi_{-H}^t \circ \gamma$ or $\phi_{-H}^t \circ \xi$, of $\tilde u$ are points of $\{r_0\}\times \partial M$ since $\phi_{-H}^t$ preserves the level set of $U$. 
	By assumption on $u_n$, the image of $\tilde u$ is not contained in $\phi_{-H}^t(U^{\prime})$. 
	It contradicts the following lemma and hence finishes the proof.
	\end{proof}
	\begin{lemma}
	\label{lem: about energy zero curve}
	Suppose $\tilde u: S \to U$ is an inhomogeneous pseudo-holomorphic curve satisfying 
	\[\left(d\tilde u-X_{\tilde H} ( \tilde u) \otimes \alpha\right)^{0,1}_{\tilde J_S}=0,\]
	where $J_S$ is a domain-dependent almost complex structure contact type at infinity, $H=h(r)$ is an autonomous Hamiltonian function that depends only on the coordinate $r$, and $\alpha$ is a sub-closed one form on $S$.  
	If the action values of possible inputs and outputs are zero, then the image $u$ is contained in an integral curve of the Reeb flow. 
	\end{lemma}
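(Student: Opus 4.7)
The plan is to show that the geometric energy of $\tilde u$ vanishes and then read off the geometric conclusion. The strategy rests on the standard energy identity for solutions to a perturbed Cauchy--Riemann equation with a sub-closed Hamiltonian one-form, combined with the three key hypotheses: the zero-action condition at every end, the sub-closedness $d(\tilde H\cdot\alpha)\le 0$ (cf.\ Remark \ref{rmk:choice of subclosed one form}), and the fact that $\tilde H = h(r)$ depends only on the radial coordinate.

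Starting from the pointwise identity
$$\tfrac{1}{2}\bigl|d\tilde u - X_{\tilde H}(\tilde u)\otimes\alpha\bigr|_{\tilde J_S}^{2}\,\mathrm{vol}_S \;=\; \tilde u^{*}\omega - \tilde u^{*}d\tilde H\wedge\alpha,$$
I would integrate, use $\omega = d\lambda$ and the Leibniz rule on $S$ to rewrite the right-hand side as
$$\int_{S} d\bigl(\tilde u^{*}\lambda - (\tilde u^{*}\tilde H)\,\alpha\bigr) + \int_{S}(\tilde u^{*}\tilde H)\,d\alpha,$$
and apply Stokes' theorem. The boundary $\partial S$ splits into (a) the strip-like or cylindrical ends at each input/output, where the limiting integrals assemble into the signed sum of action values from \eqref{eq:action} and vanish by hypothesis, and (b) the Lagrangian part of $\partial S$ (if present), where $\alpha|_{\partial S}=0$ kills the $\tilde H\alpha$ piece, and exactness of the Lagrangian together with the cylindrical-end description of its intersection with $U$ makes $\int \tilde u^{*}\lambda$ collapse to matching endpoint contributions that are absorbed into the end integrals. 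Consequently $E(\tilde u) = \int_{S}(\tilde u^{*}\tilde H)\,d\alpha \le 0$ by sub-closedness, and combined with $E(\tilde u)\ge 0$ we conclude $E(\tilde u)=0$.

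The geometric conclusion is then immediate: $E(\tilde u)=0$ forces the pointwise identity $d\tilde u = X_{\tilde H}(\tilde u)\otimes\alpha$ on $S$. Because $\tilde H$ depends only on $r$, the Hamiltonian vector field is $X_{\tilde H} = h'(r)R$, so $d\tilde u$ takes values in $\mathrm{span}\{R(\tilde u)\}$ at every point. Therefore $dr\circ d\tilde u = 0$, so $r\circ\tilde u$ is constant, and every tangent vector of $\tilde u$ is tangent to the Reeb flow. Since $S$ is connected, the image of $\tilde u$ lies inside a single integral curve of $R$, as claimed.

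The main obstacle I anticipate is the careful bookkeeping of boundary contributions, especially coordinating the signs of the action integrals at the ends with the conventions in \eqref{eq:action} and handling the Lagrangian boundary rigorously when $L$ has cylindrical ends meeting the strip-like ends at corners. Once these boundary terms are verified to cancel against the zero-action hypothesis, the rest of the argument is routine sub-closedness plus the observation that $X_{\tilde H}$ is everywhere proportional to $R$.
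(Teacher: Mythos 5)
Your proof is correct and follows essentially the same route as the paper: both establish $E_{\mathrm{geo}}(\tilde u)=0$ from the vanishing action difference via the standard energy inequality (you simply spell out the Stokes'-theorem bookkeeping that the paper invokes as a known fact), and both then read off from the resulting pointwise identity $d\tilde u = X_{\tilde H}\otimes\alpha = h'(r)\,R\otimes\alpha$ that the radial coordinate is constant and $d\tilde u$ is tangent to the Reeb flow. The paper phrases this last step as three orthogonal summands $\bigl(da,\ \pi_{\ker\theta}\circ dv,\ \pi_R\circ dv - h'(r)R\otimes\alpha\bigr)$ each vanishing; you phrase it as $dr\circ d\tilde u = 0$ and $d\tilde u \in \operatorname{span}\{R\}$, which is the same conclusion.
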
 
	\begin{proof}
	For a contact type almost complex structure $J$, $\partial_r, R$ and $\ker \theta$ are mutually orthogonal with respect to $\langle -, - \rangle_J = \omega(-, J-)$. 
	Now write $u = (a, v): S \to (1-\eta, 1+\eta)\times \partial M$. 
	We decompose $du$ into three orthogonal summands as follows:
		\[du-X_{\tilde H}\otimes \alpha = \left( da, \pi_{\mathrm{ker}\theta} \circ dv, \pi_R\circ dv-h'(r) R\otimes \alpha  \right).\]
	Recall that energy inequality tells us  $0\leq E_\mathrm{geo}(u) \leq E_\mathrm{top}(u) = \mathcal A_{\tilde H}(\gamma_\mathrm{out})-\mathcal A_{\tilde H}(\gamma_\mathrm{in}) =0$.
	Therefore, the geometric energy of $u$ is zero and we must have 	
		\begin{align*}
		0=E_\mathrm{geo}(u) & = \int_S \left \Vert du-X_{\tilde H}\otimes \alpha \right\Vert^2\\
		&=\int_S \left \Vert da\right\Vert^2 + \left \Vert \pi_{\mathrm{ker}\theta}\circ dv \right\Vert^2 + \left \Vert \theta(dv)-h'(r)\otimes \alpha \right\Vert^2 .
		\end{align*}
		As $u$ is smooth, this implies that $a$ needs to be constant and $dv$ must be parallel to $R$. 
	\end{proof}	
	
	\begin{remark}
	A proof of (3) for Floer cylinders/strips can be found in \cite{CFHW96} \cite{Pozniak}.  The original proof does not apply because it exploited translational symmetries of a cylinder.
	\end{remark}
	
\subsubsection{Inverses of $\Phi$ and $\Phi_L$}
	Next, we see that $\Phi$ and $\Phi_L$ are isomorphisms and establish Theorem \ref{thm: PSS CO diagram}.
	Hamiltonian PSS morphisms are defined by the counting of the moduli space of pairs:
		\begin{align}
		\label{eqn:PSS}
		\mathcal M_\mathrm{PSS} (\gamma_+, q_-; H, J) :=\left\{(l, u)  \;\middle|\; 
			\begin{array}{ll}
				l: (-\infty, 0] \to M, & l'(s)+\nabla f_M \circ l =0,\hspace{5pt}\\
				u: \R\times S^1 \to M, & \partial_s u + J\left( \partial_t u-X_H (u) \cdot \check a(s)\right)=0,\\
				u(\infty, t) = \gamma_+(t), & l(-\infty) = q_-\\
			\end{array}
		\right\}, 
		\end{align}
	where $\check a(s)$ is an increasing cut-off function from $0$ to $1$. 
	The condition breaks the monotonicity of the Floer equation.
	The compactness of the moduli space is therefore not guaranteed because elements of the moduli space space may escape to infinity.
	We explain how to recover the compactness by showing that the solutions we consider do not escape to infinity, following \cite[Lemma 15.5]{Rit13} in the case of symplectic cohomology.

	Note that elements of the local Floer cohomology have arbitrary small periods.
	We can even use $\rho$ rather than a small perturbation $\check \rho$.
	The isomorphism \eqref{eq:bijection} provides a chain isomorphism
	\[CF_\mathrm{loc}^*(\check \rho^{-1}; H_\delta) \simeq CF^*_\mathrm{loc}(\rho^{-1}; \check H) \cong CF^*(U; \check H),\]
	\[H_\delta = H +\delta \overline g, \hspace{5pt} \check H=g=g_{\partial M} \times q(r).\]
	Here, we can choose $H$ and $g$ to satisfy 
		$$\frac{\partial}{\partial r}\check H =
			\begin{cases}
			-\epsilon & (r< 1-\eta),\\
			\epsilon & (1+\eta <r)
			\end{cases}, \hspace{5pt}
		\Vert \nabla \check H\Vert_\infty \ll 1$$	
so that its negative gradient flow points inward along the boundary of $U$, and Hamiltonian orbits of $\check H$ are precisely the Morse critical points. 

	\begin{prop}
	\label{prop:dichotomy}
		For a fixed open subset $W \supset U$, the image of the solution of the Floer equation 
			$u: \R \times S^1 \to M$,
			$$\partial_s u + J\left( \partial_t u -X_{\check H} (u) \otimes \check a(s)\right)=0,$$
			$$\lim_{s\to \infty} u(s, t) = p, \;\; \lim_{s\to -\infty} u(s, t) = q\in \mathrm{crit}(\check H)$$
		with $\check a(s)$ as in \eqref{eqn:PSS}, is either contained in $U$ or escapes $W$ for $\Vert \nabla \check H \Vert_\infty$ sufficiently small. 
	\end{prop}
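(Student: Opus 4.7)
The plan is a proof by contradiction, combining an energy estimate with a standard monotonicity-type lower bound. Suppose the conclusion fails for $W$: there exist a sequence of admissible Hamiltonians $\check H_n$ with $\|\nabla \check H_n\|_\infty \to 0$, and Floer solutions $u_n$ of the corresponding PSS equation whose images lie in $W$ but are not contained in $U$.

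First I would verify that the geometric energies $E(u_n) = \int_{\R \times S^1} |\partial_s u_n|_J^2 \, ds\, dt$ tend to zero. The standard energy identity for the Floer equation with monotone cut-off $\check a(s)$ running from $0$ to $1$ gives
\[
E(u_n) \;=\; \int_{\R \times S^1} \bigl(u_n^*\omega - d(\check H_n \check a(s)\, dt)\bigr) \;+\; \int_{\R \times S^1} \check a'(s)\, \check H_n(u_n)\, ds\, dt,
\]
and each term on the right is $O(\|\check H_n\|_\infty)$. Indeed, by part (1) already established, the Hamiltonian orbits $p_n$ to which $u_n$ converges at $s=+\infty$ cluster near Morse critical points of $\check H_n$ contained in $U$, and hence have action of order $\|\check H_n\|_\infty$; the final integral is bounded by $\|\check H_n\|_\infty \int \check a'(s)\, ds = \|\check H_n\|_\infty$.

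Next I would apply a standard monotonicity lemma for almost $J$-holomorphic curves. Because $\|\nabla \check H_n\|_\infty \to 0$, the restriction of $u_n$ to $u_n^{-1}(\overline W \setminus U)$ deviates from a genuine $J$-holomorphic map by an arbitrarily small amount. The asymptotic conditions and part (1) force $u_n$ to start and end inside $U$, so any excursion into $\overline W \setminus U$ yields a non-trivial almost $J$-holomorphic arc in the compact shell $\overline W \setminus U^\circ$ whose boundary meets both $\partial U$ and the point of exit. The monotonicity lemma then produces a uniform lower bound $E(u_n) \geq \hbar(W,U) > 0$ depending only on the symplectic distance from $\partial U$ to the chosen exit locus, contradicting $E(u_n) \to 0$ for $n$ large.

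The main obstacle is the robustness of the monotonicity constant $\hbar(W,U)$ under $C^0$-small Hamiltonian perturbations, and ruling out that energy is lost to bubbling or to escape at infinity. Exactness of $M$ rules out sphere bubbles; escape at infinity along the $s$-direction of the cylinder is prevented by the asymptotic convergence to $p_n, q_n \in U$ together with the uniform smallness of total energy. The monotonicity lemma in its perturbative form is standard, and applying it here only requires quantifying the $\varepsilon$-closeness of the Floer equation to $\overline{\partial}_J$ against the ambient symplectic geometry of $\overline{W} \setminus U^\circ$, which is uniform since $W$ and $U$ are fixed.
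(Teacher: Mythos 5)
Your approach via energy estimates plus monotonicity is a genuinely different route from the paper's, but it contains a gap that the argument as written cannot close. The monotonicity lower bound is \emph{not} uniform across your contradiction sequence $u_n$: the constant you obtain from monotonicity is controlled by the distance from $\partial U$ to the farthest point of $u_n$ outside $U$, not just by the fixed geometry of the shell $\overline W \setminus U$. If the excursion of $u_n$ outside $U$ has width $\delta_n \to 0$, the monotonicity ball has radius $\sim \delta_n$ and the lower bound you extract is $\sim \delta_n^2 \to 0$, which is perfectly compatible with $E(u_n) \to 0$. Nothing in your argument rules out this small-excursion scenario, and the dichotomy you are proving is precisely the assertion that such small excursions cannot occur for $\| \nabla \check H\|_\infty$ small.

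The paper's argument (following Ritter, Lemma 15.5) is structurally different and avoids this issue entirely. First, for $\|\check H\|_\infty$ sufficiently small, every solution of the perturbed Floer equation that is trapped in the compact set $\overline W$ is forced to be $t$-independent, hence a cut-off (negative) gradient flow line $\partial_s u = -\nabla \check H \cdot \check a(s)$. Second, the Hamiltonian $\check H$ was specifically designed so that $-\nabla \check H$ points \emph{inward} along both components of $\partial U$, so such a flow line, once it enters $U$, can never leave; since both asymptotes lie in $U$ (every critical point of $\check H$ is in $U$), the whole flow line lies in $U$. The boundary gradient behavior of $\check H$ at $\partial U$ is the essential structural input; your argument never invokes it, which is a sign of trouble, because without this property the statement is simply false — a Floer solution could dip briefly outside $U$ and return with arbitrarily small energy. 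Any correct proof must exploit the inward-pointing gradient at $\partial U$, as Ritter's reduction to Morse flows does.
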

	\begin{proof}
		See \cite[Lemma 15.5]{Rit13} for a detailed proof where the author considered $M^\mathrm{in}$ instead of $U$.  
		The key idea is that every solution of the Floer equation becomes $t$-independent, i.e.,  Morse flowlines of $\check H$ whenever $\Vert \check H \Vert_\infty$ is sufficiently small.  
		We can further confine the solution into $U$ because the Morse flowlines of $\check H$ flow into $U$.
	\end{proof}
	With this observation in mind, combined with the fact that $\phi_{H}^{-t}$ preserves level sets, we define $\Phi^{-1}$ by 
	$$\Phi^{-1}(\gamma_+) := \sum_{q_- \in \mathrm{crit}(f) \cap U} \#\mathcal M_{\Phi^{-1}} \left(\gamma_+, q_-; f_M, \check H, J\right) \cdot q_-,$$
	where 
	$$\mathcal M_{\Phi^{-1}} \left(\gamma_+, q_-; f_M, \check H, J\right) :=\left\{(l, u)  \;\middle|\; 
	\begin{array}{ll}
		u: \R\times S^1 \to M,  &\partial_s u + J\left( \partial_t u-X_{\check H}(u) \cdot \check a(s)\right)=0,\\
		u(\infty, t) = \gamma_+(t), & u(\R\times S^1) \subset U,\\
		l: (-\infty, 0] \to M, & l'(s)+\nabla f_M \circ l =0,\\
		l(-\infty) = q_-, & l(0)=u(-\infty, t), \hspace{5pt} \forall t \in S^1\\
	\end{array}
	\right\}.$$
	By Proposition \ref{prop:dichotomy}, we conclude that no elements of $\mathcal M_{\Phi^{-1}}$ are $C^0$-close to a solution of the same Floer equation which escapes $U$. 
	The element of $\mathcal M_{\Phi^{-1}}$ has a uniformly bounded image, and then we recover the compactness of the moduli. 
	\begin{prop}
		$\Phi^{-1}$ is a chain map and  $\Phi^{-1} \circ \Phi$ and $\Phi\circ \Phi^{-1}$ are homotopic to the identities.  	
	\end{prop}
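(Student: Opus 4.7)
The plan is to adapt the standard PSS argument (cf.\ \cite{Rit13}, \cite{A10}) to the local Floer cohomology framework, with the main technical point being uniform confinement of every moduli space inside $U$, already guaranteed by Proposition \ref{prop:dichotomy}.

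First I would verify that $\Phi^{-1}$ is a chain map. The Gromov compactification of a one-dimensional component of $\mathcal M_{\Phi^{-1}}(\gamma_+, q_-; f_M, \check H, J)$ has codimension-one boundary splitting into two strata: a Morse flowline of $f_M$ breaking off at the $-\infty$ end of $l$, contributing $d_{\mathrm{Morse}} \circ \Phi^{-1}$, and a Floer cylinder of $\check H$ breaking off at the $+\infty$ end of $u$, contributing $\Phi^{-1} \circ d_{CF}$. By Proposition \ref{prop:dichotomy}, for $\Vert \nabla \check H\Vert_\infty$ sufficiently small every element of $\mathcal M_{\Phi^{-1}}$ remains in $U$, and the dichotomy persists in Gromov limits; hence all boundary strata lie in the local regime and their signed count vanishes.

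For $\Phi^{-1} \circ \Phi \simeq \mathrm{id}_{CM^*(f_M|_U)}$, I would introduce a one-parameter family indexed by $R \in [0, \infty)$ of pairs $(l, u)$ whose cylinder $u : \R \times S^1 \to U$ carries a concatenated cutoff profile: the $\Phi$-profile on the left, followed by an interval of length $R$ on which the Hamiltonian is turned on (smoothly interpolating from $H_{\delta_1}$ to $\check H$), followed by the $\Phi^{-1}$-profile on the right, with Morse half-flowlines attached to both ends. As $R \to \infty$ the cylinder breaks along an orbit in $\mathcal P(\rho^{-1}; H_{\delta_1}, U)$, producing the broken configuration that counts $\Phi^{-1} \circ \Phi$. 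As $R \to 0$ the Floer region collapses to a single cutoff-zero cylinder, which forces $u$ to be constant and the configuration reduces to a Morse-to-Morse trajectory, counted by the identity. The dual statement $\Phi \circ \Phi^{-1} \simeq \mathrm{id}_{CF^*_\mathrm{loc}}$ uses a cylinder with $\check H$ at $s = -\infty$ and $H_{\delta_1}$ at $s = +\infty$, carrying a central cutoff-zero region of length $R$ into which a Morse trajectory of $f_M$ is inserted; the $R \to \infty$ limit recovers $\Phi \circ \Phi^{-1}$, while the $R \to 0$ limit yields a standard continuation cylinder from $\check H$ to $H_{\delta_1}$, chain-homotopic to the canonical isomorphism identifying the two local Floer complexes via the bijection \eqref{eq:bijection}.

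The main obstacle will be ensuring uniform compactness across the parametrized families in the last two steps: as $R$ varies, the cutoff profile changes and one must rule out families of Floer cylinders that escape from $U$. Sphere bubbling is excluded by exactness of the Liouville form on $\widehat M$, so the only remaining issue is escape to infinity. This is handled by choosing $\delta_1, \delta_2$ and $\Vert \nabla \check H\Vert_\infty$ uniformly small so that the dichotomy of Proposition \ref{prop:dichotomy} applies throughout the parameter range inside a fixed open enlargement $W \supset U$, following the same template as \cite[Lemma 15.5]{Rit13}; the escaping alternative then contributes nothing.
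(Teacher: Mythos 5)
Your proof is correct and takes the same route the paper invokes implicitly by citing Ritter's Theorem 6.6: a PSS gluing argument whose compactness in the local setting is supplied by the dichotomy of Proposition~\ref{prop:dichotomy}. You have simply spelled out the parametrized families (breaking at $R\to\infty$, constant/continuation cylinder at $R\to 0$) and the uniform confinement inside $U$ that the paper leaves to the reader.
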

	\begin{proof}
		The proof works the same as in \cite[Theorem 6.6]{Rit13}.
	\end{proof}
	This implies $\Phi: H^* (U) \to SH^*_\mathrm{loc}(\rho^{-1})$ is an isomorphism. 
	Using a similar argument, one can show that $\Phi^{-1}_L$ also exists and $\Phi_L: H^* (L \cap U) \to HW^*_\mathrm{loc}\left(L, \rho(L)\right)$ is an isomorphism. 
	This finishes the proof of Theorem \ref{thm: PSS CO diagram}.

\subsection{Variation operator $\mathcal V$}
\label{subsec:speculation}
	
	For a Milnor fiber $M$ of $f$, there is a homomorphism called {\em variation} defined as 
	\[ \mathrm{var}: H_{n-1}(M, \partial M) \to H_{n-1}(M), \hspace{5pt} l \mapsto \rho_*(l) - l. \]
	This homomorphism is known to be an isomorphism when $f$ defines an isolated singularity. The variation homomorphism $\mathrm{var}$ motivates the following definition.
	\begin{defn}[Definition \ref{defn:v}]
	\label{defn: variation}
	For each object $L$ of $\mathcal{WF}(M)$, we define its {\em variation} to be the twisted complex
	\[\mathcal V(L):= \left( \begin{tikzcd} L[1] \arrow[r,"\mathcal{CO}_L^\rho(\Gamma)"] & \rho(L)\end{tikzcd} \right).\]
	\end{defn}
	
	It is worth pointing out that the variation $\mathcal{V}$ in Definition \ref{defn: variation} extends to an endofunctor $\mathcal V: \WF(M) \to \WF(M)$. To be more precise, $\mathcal V$ is the mapping cone of the image of $\Gamma$ under a more general $\rho-$twisted closed-open map 
	\begin{equation}
	\label{eq: genCO}
	\mathcal{CO}^\rho: SC^*\left(\rho^{-1}\right) \to \hom^*_{\mathrm{End}(\WF(M))}\left(\mathrm{Id} , \rho_\sharp\right),
	\end{equation}
	where $\rho_\sharp$ denotes the endofunctor on $\WF(M)$ induced by $\rho$ and $\mathrm{End}(\WF(M))$ is the $A_{\infty}$-category of endofunctors on $\WF(M)$. Here, the morphism space $\hom^*_{\mathrm{End}(\WF(M))}(\mathrm{Id},\rho_\sharp)$ is given by 
	\[\hom^*_{\mathrm{End}(\WF(M))}(\mathrm{Id},\rho_\sharp) = CC^*(\WF(M),\hom (\mathrm{Id},\rho_\sharp)),\]
	the Hochschild cochain complex with coefficient in the bimodule  
	$$
	\hom(\mathrm{Id},\rho_\sharp):=(\mathrm{Id} \times \rho_\sharp)^*(\Delta_{\WF(M)}) 
	$$ 
	over $\WF(M)$. Then \eqref{eq: genCO} can be shown to be a chain map by modifying the arguments in \cite[Section 5.4]{Ga} as in the proof of Proposition \ref{prop:co}. Since $\Gamma$ is a cocycle, its image $\mathcal{CO}^{\rho}(\Gamma)$ is a natural transformation; see \cite[Section (1d)]{S08} for the definition of natural transformation.
	 In particular, the image of $\mathcal{CO}^{\rho}(\Gamma)$ under the projection $\hom^*_{\mathrm{End}(\WF(M))}(\mathrm{Id},\rho_\sharp) \to CW^*(L,\rho(L))$ is exactly $\mathcal{CO}^\rho_L(\Gamma)$ for the map  \eqref{eq:co_L}.

	The construction of \eqref{eq: genCO} is a slight modification of the construction of the usual closed-open map \cite[Section 5.4]{Ga}, which involves branch cuts glued by $\rho$.
	\begin{defn}
	\label{defn: variation functor}
	The {\em variation functor} is the endofunctor of $\WF(M)$ defined by
	\[
	\mathcal V := \mathrm{Cone}\left(\mathrm{Id} \xrightarrow{\mathcal {CO}^\rho(\Gamma)} \rho_\sharp \right): \WF(M) \to \WF(M).
	\]
	\end{defn}
	Here, we elaborate on the functor $\mathcal{V} = \mathrm{Cone}\left(\mathrm{Id} \xrightarrow{\mathcal {CO}^\rho(\Gamma)} \rho_\sharp \right)$.
	 First, the functor sends an object $L$ to its variation $\mathcal V(L)$. To describe $\mathcal V$ at the morphism level, consider the decomposition
		\begin{equation*}
			\label{eq:decomposition}
			\begin{split}
				CW^*\left(\mathcal{V}(L_0),\mathcal{V}(L_k)\right) & = CW^*\left(L_0[1], L_k[1]\right) \oplus CW^*\left(L_0[1], \rho(L_k)\right)\\ & \oplus CW^*\left(\rho(L_0),L_k[1]\right) \oplus CW^*\left(\rho(L_0), \rho(L_k)\right),
			\end{split}
		\end{equation*}
and write $\eta \in CW^*\left(\mathcal{V}(L_0),\mathcal{V}(L_k)\right)$ as a matrix 
\[\begin{pmatrix} \eta_{11} & \eta_{12} \\ \eta_{21} & \eta_{22} \end{pmatrix}\]
for some $\eta_{11} \in CW^*(L_0[1],L_k[1])$, $\eta_{12} \in CW^*(L_0[1],\rho(L_k))$, $\eta_{21} \in CW^*(\rho(L_0),L_k[1])$, and $\eta_{22}\in CW^*(\rho(L_0),\rho(L_k))$. Then, the components of the functor $\mathcal{V}$ at the morphism level are given by
\begin{align*}
\mathcal{V}^k &: CW^*(L_0, L_1)\otimes \cdots \otimes CW^*(L_{k-1}, X_k) \to CW^{*+1-k}(\mathcal {V}(L_0), \mathcal{V}(L_k)),\\
&\eta_1\otimes \cdots \otimes \eta_k \mapsto  \begin{pmatrix} \mathrm{Id}^k(\eta_1, \ldots, \eta_k) & \mathcal{CO}^{\rho} (\Gamma)^k(\eta_1, \ldots, \eta_k) \\ 0 & \mathrm{\rho_\sharp}^k (\eta_1, \ldots, \eta_k) \end{pmatrix} = 
\left\{
\begin{array}{cc}
\begin{pmatrix} \eta_1 & \mathcal{CO}^{\rho} (\Gamma)^1 (\eta_1) \\ 0 & \rho^1_\sharp (\eta_1) \end{pmatrix} & (k=1),\\
\begin{pmatrix} 0 & \mathcal{CO}^{\rho} (\Gamma)^k (\eta_1, \ldots, \eta_k) \\ 0 & 0 \end{pmatrix} & (k>1).
\end{array}
\right.
\end{align*}

We explain how the functor $\mathcal {V}$ recovers the classical variation \eqref{eq:var} by applying Grothendieck group $K_0$. One must be careful here because the naive assignment $K_0\left(\WF(M)\right) \xrightarrow{[L]_{K_0} \mapsto [L]} H_{n-1}(M, \partial M)$ is not well-defined. Instead, we apply \cite[Theorem 1.4]{Laz22} which states the following;  for Weinstein $X$, each relative homology class $\alpha \in H_{n-1}(X, \partial X)$ is represented by some $\mathcal L_\alpha \in \WF(X)$. Moreover, this assignment induces a well-defined, injective homomorphism denoted by $\mathcal L : H_{n-1}(X, \partial X) \xrightarrow{\alpha \mapsto \mathcal L_\alpha} K_0(\WF(X))$.

\begin{prop}
The following diagram commutes;
$$
\begin{tikzcd}
H_{n-1}(M, \partial M) \ar[r, "\mathrm{var}"] \ar[d, "\mathcal L"] & H_{n-1}(M) \ar[r, "q"] & H_{n-1}(M, \partial M) \ar[d, "\mathcal L"]\\
K_0\left(\WF(M) \right) \ar[rr, "{[\mathcal V]}"] && K_0\left(\WF(M) \right),
\end{tikzcd}
$$
where $q$ is the canonical map from the absolute homology to the relative one. 
\end{prop}
\begin{proof}
The proof is clear because $K_0(-)$ sends a twisted complex to the sum of its components.
\end{proof}

	The name "variation" will be further justified in the upcoming section.
	In particular, we show in Theorem \ref{thm: HF is finite} below that $HF^*(\mathcal V(L) , L')$ and $HF^*(L', \mathcal V(L))$ are cohomologically finite for all $L'$. 
	Therefore, it is natural to expect a compact representative $V_L$ for $\mathcal V(L)$ in a favorable situation.
	But first, let us write the following simple property of $\rho$.
	
	\begin{lemma}
	\label{lem:equalprimitive}
	Let $f_L$ be a primitive of $\lambda\vert_L$. Then $f_{\rho(L)} = f_L$ near $\partial M$. 
	\end{lemma}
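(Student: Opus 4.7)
The plan is to track how the primitive $f_{L}$ behaves under the exact symplectomorphism $\rho$ and then invoke the crucial property, recalled from \cite{BP}, that the radius-zero monodromy $\rho$ is equal to the identity on an open neighborhood of $\partial M$.

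First I would write down the relation satisfied by primitives under $\rho$. Since $\rho$ is exact, there is a function $F_{\rho} : \widehat{M} \to \R$ with $\rho^{*}\lambda - \lambda = dF_{\rho}$ as in \eqref{eq:F}; moreover $F_{\rho}$ is compactly supported, and because $\rho$ equals the identity near $\partial M$, $dF_{\rho} = 0$ there, so $F_{\rho}$ is locally constant in a neighborhood of $\partial M$. Normalizing, we may assume $F_{\rho} \equiv 0$ in such a neighborhood. For any tangent vector $w \in T_{x}L$ we compute
\[
\lambda\bigl(d\rho_{x}(w)\bigr) \;=\; (\rho^{*}\lambda)(w) \;=\; \lambda|_{L}(w) + dF_{\rho}|_{L}(w),
\]
so $\rho^{*}(\lambda|_{\rho(L)}) = d(f_{L} + F_{\rho}|_{L})$ on $L$. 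Hence a canonical primitive of $\lambda|_{\rho(L)}$ is
\[
f_{\rho(L)} \;=\; (f_{L} + F_{\rho}|_{L}) \circ \rho^{-1},
\]
uniquely determined up to an additive constant on each connected component of $\rho(L)$.

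Next I would restrict to a neighborhood of $\partial M$. There $\rho = \mathrm{Id}$ and $F_{\rho} = 0$, so $\rho(L) = L$ set-theoretically near $\partial M$, and the formula above collapses to
\[
f_{\rho(L)} \;=\; f_{L} \circ \rho^{-1} \;=\; f_{L}
\]
on this neighborhood. Finally I would address the constant ambiguity: fix $f_{\rho(L)}$ on each component of $\rho(L)$ by declaring its value at one boundary point to equal that of $f_{L}$ at the corresponding point. Since connected components of $L$ meeting $\partial M$ map to connected components of $\rho(L)$ meeting $\partial M$ via the identity near the boundary, this choice makes $f_{\rho(L)}$ and $f_{L}$ agree on a full neighborhood of $\partial M$. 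The only mildly subtle point is the normalization of $F_{\rho}$, which is why the statement is phrased as an equality near the boundary rather than globally on $L$.
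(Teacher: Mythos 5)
Your opening is the same as the paper's: write $f_{\rho(L)} = f_L\circ\rho^{-1} + F_{\rho^{-1}}$, note that $F_{\rho^{-1}}$ is locally constant near $\partial M$ because $\rho^{-1}$ is the identity there, and then normalize. But the normalization step, which you dispose of in one clause (``Normalizing, we may assume $F_{\rho}\equiv 0$ in such a neighborhood''), is exactly the delicate part of the lemma, and your treatment of it is not correct. The function $F_{\rho^{-1}}$ is defined on all of $\widehat M$ and is determined up to a \emph{single} additive constant. If $\partial M$ has several connected components, $F_{\rho^{-1}}$ may equal a different constant near each of them, and one global additive shift cannot make all of these vanish simultaneously. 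Your fallback, to fix $f_{\rho(L)}$ ``on each component of $\rho(L)$'' so it matches $f_L$ at one boundary point, does not help either: a connected $L$ with several cylindrical ends admits only one free additive constant in its primitive, so matching at one end does not give agreement at the others unless the boundary constants of $F_{\rho^{-1}}$ were already equal.

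This is precisely why the paper argues by cases on $n$. For $n>2$ the link $\partial M$ of an isolated hypersurface singularity is connected, so there is only one boundary constant and a global shift makes $F_{\rho^{-1}}$ vanish near $\partial M$. For $n=2$, where $\partial M$ can be disconnected, the paper does not normalize abstractly but instead uses that $\rho^{-1}$ is compactly-supported isotopic to a composition of Dehn twists, for each of which one has an explicit compactly supported primitive $F_\tau$ that vanishes near the boundary (as in \cite{S08}); summing these shows $F_{\rho^{-1}}$ can be taken to vanish near all boundary components at once. Your proof is missing this content, so as written it establishes the lemma only when $\partial M$ (equivalently, when $n>2$) is connected.
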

	\begin{proof}
	Recall $f_{\rho(L)} = f_L\circ \rho^{-1}+F_{\rho^{-1}}$ where $F_{\rho^{-1}}$ as in \eqref{eq:F}. 
	$F_{\rho^{-1}}$ is constant near the boundary.
	We can further assume that $F_{\rho^{-1}}$ is constantly zero near the boundary.
	When $n>2$, observe that $M$ has only one connected boundary component so that we can add a suitable constant on $F_{\rho^{-1}}$ to make it zero.
	When $n=2$, one may use the fact that $\rho^{-1}$ is isotopic to the composition of Dehn twists through compactly supported isotopy and use an explicit form of $F_\tau$ for each of Dehn twists $\tau$, e.g., as in \cite{S08}. 
	\end{proof}
	With this property in mind, the construction of $V_L$ is presented in Section \ref{subsec:surgery}. 
	We provide an elementary proof of this fact for two-dimensional surfaces. 
	
	\begin{prop}
	\label{prop: variation dim 2}
	Suppose that $\dim M=2$ and $L\cap \rho(L)$ is empty except near the boundary. Then there is a compact representative $V_L$ for $\mathcal V(L)$. 
	\end{prop}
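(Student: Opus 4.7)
The plan is to construct $V_L$ explicitly as a boundary surgery of $L$ and $\rho(L)$ and to identify it with the cone $\mathcal V(L)$ via the elementary surgery-cone principle available on surfaces.

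For the construction, replace $\rho$ by the perturbation $\check\rho=\phi_K^1\circ\rho$ of Proposition~\ref{prop:perturbation}, which differs from $\rho$ only on the collar $U\simeq(1-\eta,1+\eta)\times\partial M$, where it wraps by a small positive Reeb flow. Since $\rho=\mathrm{Id}$ on $U$, the curves $L$ and $\rho(L)$ coincide there, while $L$ and $\check\rho(L)$ meet transversely in a finite set of points $\{q_p\}_{p\in\partial L}\subset\{r_0\}\times\partial M$, one per boundary puncture of $L$. Lemma~\ref{lem:equalprimitive} ensures that the primitives $f_L$ and $f_{\rho(L)}$ of $\lambda$ agree near $\partial M$, so each $q_p$ admits an exact Lagrangian surgery; in dimension two this is simply the smooth replacement of a transverse crossing by a smooth arc. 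Performing this surgery at every $q_p$ and discarding the non-compact cylindrical components yields the compact exact Lagrangian $V_L$.

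To identify $V_L$ with $\mathcal V(L)$, note that by Corollary~\ref{cor: Gamma and GammaL}, the hypothesis on $L\cap\rho(L)$ gives $\mathcal{CO}_L^\rho(\Gamma)=\Gamma_L$, and by Proposition~\ref{prop:GammaL} this class is represented precisely by the generators $\{q_p\}$ at which the surgery was performed. The classical ``cone equals surgery'' principle in the Fukaya category of a surface then yields the quasi-isomorphism
\[
V_L\;\simeq\;\mathrm{Cone}\bigl(L[1]\xrightarrow{\Gamma_L}\rho(L)\bigr)\;=\;\mathcal V(L)
\]
in $\mathcal{WF}(M)$. In the surface setting the principle admits an elementary verification: for any test arc $L'$, isotoping $V_L$ back toward $L\cup\rho(L)$ matches generators of $CW^*(V_L,L')$ with those of $CW^*(L,L')\oplus CW^{*-1}(\rho(L),L')$, and a direct bigon count identifies the differential with the twisted differential of the mapping cone.

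The main obstacle is that the surgery points $q_p$ lie on the collar rather than in the interior, so the surgery-cone correspondence has to be carried out for Lagrangians with cylindrical ends. This is handled by the locality results of Proposition~\ref{prop: locality} and Lemma~\ref{lem: about energy zero curve}, which confine the relevant perturbed holomorphic curves to a neighborhood of $\{r_0\}\times\partial M$ where $\check\rho$ is literally a time-one Reeb flow, reducing everything to a standard collar model. The residual bookkeeping is the degree shift, which is absorbed into the convention $\mathcal V(L)=\mathrm{Cone}(L[1]\to\rho(L))$ of Definition~\ref{defn: variation}.
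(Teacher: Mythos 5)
Your approach — build $V_L$ by a boundary surgery and then invoke a ``surgery $=$ cone'' principle to identify it with $\mathcal V(L)$ — parallels the paper's general-dimensional Theorem~\ref{thm:compactrepresentative} rather than its own proof of Proposition~\ref{prop: variation dim 2}, which is a direct hands-on computation: the paper constructs $V_L$ from perturbations $L'$, $\rho(L)'$ joined by small arcs, enumerates the four holomorphic polygons, writes down explicit cocycles $[1_L+1_{\rho(L)}]\in HW^0(V_L,\mathcal V(L))$ and $[p^\vee+q^\vee]\in HW^0(\mathcal V(L),V_L)$, and checks that their $m_2$-product is the identity. That is a genuinely shorter and more elementary route in dimension $2$, and it does not rely on any surgery--cone black box.

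There are two concrete gaps in your argument. First, the premise ``$L$ and $\check\rho(L)$ meet transversely in a finite set of points $\{q_p\}\subset\{r_0\}\times\partial M$'' is false: by Proposition~\ref{prop:GammaL} one has $L\cap\check\rho(L)\cap U=\emptyset$; the points $\{r_0\}\times\partial L$ arise only as intersections $\phi_H^1(L)\cap\check\rho(L)$, i.e.\ after Hamiltonian wrapping. The surgery that actually produces $V_L$ (as in Theorem~\ref{thm:compactrepresentative}) is performed on Hamiltonian perturbations $\widetilde L$ and $\widetilde{\rho(L)}$ engineered to meet cleanly, not on $L$ and $\check\rho(L)$ directly. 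Second, your ``elementary verification'' of the surgery--cone principle — matching generators of $CW^*(V_L,L')$ with those of the cone complex for each test arc $L'$ and comparing differentials — only exhibits chain-level isomorphisms of hom-complexes object by object; it does not produce a morphism $V_L\to\mathcal V(L)$ in $\mathcal{WF}(M)$ nor establish naturality with respect to $m_2$, so it does not yield the asserted quasi-isomorphism of objects. The paper sidesteps this entirely by exhibiting the explicit inverse pair of morphisms above. Relatedly, you discard the non-compact component of the surgery without argument; the paper's Theorem~\ref{thm:compactrepresentative} explicitly notes that this component is contained in the collar of $\partial M$ and hence quasi-isomorphic to zero, and in the general case one further needs to verify that the cobordism morphism is a \emph{non-zero} multiple of $\Gamma_L$, which the paper does by an action estimate — a step your proposal asserts but does not supply.
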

	\begin{proof}
	$L$ and $\rho(L)$ are topologically intervals with coordinate $x$. 
	One can draw $V_L$ in the following way; choose $L'$ as a small perturbation of $L$ inside its normal neighborhood given by a graph of $df$ such that $f'(x)$ is increasing from $-1$ to $1$.
	Orient $L'$ in an opposite direction to $L$. 
	Set $\rho(L)'=\rho(L')$ and orient it as $\rho(L)$ does. 
	One can check that  $L'$ meets $L$ only once in the interior and $L'$ and does not meet $\rho(L)$, and the same is true for $\rho(L)'$. In this situation, we choose the exact $V_L$ as a union of $L'$ and $\rho(L)'$ connected along small arcs connecting their respective boundaries.
	Then $V_L$ meets $L$ and $\rho(L)$ once and thrice. 
	We name them as 
	\[V_L \cap L= \{1_L\}, \hspace{10pt} V_L \cap \rho(L) =\{1_{\rho (L)} , p, q\}.\]
	
\begin{figure}[h]
\includegraphics[scale=1]{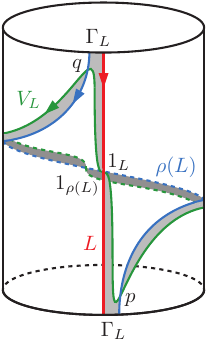}
\centering
\caption{Intersection points in cylinder case}
\label{fig:surg}
\end{figure}

	Here, $1_{\rho (L)}$ corresponds to the unique intersection of $\rho(L)'$ and $\rho(L)$.
	$p$ and $q$ are intersection points coming from the negative and positive boundaries of $\rho(L)'$.
	There are only four holomorphic polygons bounded by $L, \rho(L)$ and $V_L$ whose corners are either their intersections or $\Gamma_L$. See Figure \ref{fig:surg}.
	From this information, we can calculate the Floer complex as follows:
	\begin{align*}
	CW^*(V_L, \mathcal V(L)) & = \C \langle 1_L \rangle \oplus \C \langle 1_{\rho (L)},p,q\rangle,\\
	HW^*(V_L, \mathcal V(L)) & = \C \langle [1_{\rho (L)} + 1_L], [p+q]\rangle, \\
	\deg(1_L) = \deg(1_{\rho (L)}) =0, & \hspace{5pt} \deg(p)=\deg(q)=1,\\
	m_1(1_{\rho (L)}) = p-q, & \hspace{5pt} m_2(\Gamma_L, 1_L) = q-p.
	\end{align*}
	Here, the two strips including $1_{\rho(L)}$ in Figure \ref{fig:surg} contribute to $m_1$, and the other two triangles contribute to $m_2(\Gamma {,} -).$
	Similarly, by taking the dual of the above, we obtain
	\begin{align*}
	CW^*(\mathcal V(L), V_L) & = \C \langle 1_{\rho (L)}^\vee,p^\vee,q^\vee \rangle \oplus \C \langle 1_L^\vee \rangle,\\
	HW^*(\mathcal V(L), V_L) & = \C \langle [1_{\rho (L)}^\vee + 1_L^\vee], [p^\vee+q^\vee] \rangle, \\
	\deg(1_L^\vee) = \deg(1_{\rho (L)}^\vee) =1, & \hspace{5pt} \deg(p^\vee)=\deg(q^\vee)=0.
	\end{align*}
	Finally, the same polygons can be used to show that $[1_{\rho (L)}+1_L]: V_L \to \mathcal V(L)$ and $[p^\vee+q^\vee] : \mathcal V(L) \to V_L$ gives the isomorphism between them, i.e., their $m_2$-products are the identity maps. 
	\end{proof}
	
\subsection{Construction of a compact Lagrangian $V_L$}
\label{subsec:surgery}
Recall that an exact Lagrangian submanifold $L \subset M$ is said to have a cylindrical end if, near its boundary, it coincides with
$ \{\phi_Z^t(\partial L) \mid a \leq t \leq 0\}$ for some $-\infty < a < 0$ where $\partial L = L \pitchfork \partial M$. This implies that, for any primitive $f_L$ of $\lambda \mid_L$, $f_L$ is constant near $\partial L$.

Let $L$ be an exact Lagrangian submanifold of $M$ with a cylindrical end $\partial L  \subset \partial M$.
Consider $\phi_H^\epsilon(L)$, where $\epsilon >0$ is a small constant and $H$ is a Hamiltonian on $M$ satisfying $ H = r+ b$ for some $b\in \R$ near $\partial M$ so that $\phi_H^t$ coincides with the time $t$ flow of the Reeb vector field near $\partial M$.
For any $y \in \partial M$ and a Reeb flow $\gamma : \R \mapsto \partial M$ given by $\gamma(t) = \phi_R^{t\epsilon}(y)$ on $\partial M$, we have $\int_0^t \gamma^*\theta =t \epsilon$.
So it is natural to define a primitive $f_{\phi_H^\epsilon(L)}$ for $\lambda|_{\phi_H^\epsilon(L)}$ in such a way that
$$f_{\phi_H^\epsilon(L)}(r,y) = f_{L}(r, \phi_R^{-\epsilon}(y)) + \epsilon$$
for all $r$ near 1 and all $y \in \phi_H^\epsilon(\partial L)$.

\subsubsection{Flow surgery}
We review the notion of flow surgery \cite{MW18}, which is a Lagrangian surgery along a clean intersection.
Suppose that two Lagrangian $L_1$ and $L_2$ cleanly intersect along a submanifold $D$. 
Weinstein theorem tells us that there is a Weinstein neighborhood $L_1$ along $D$ such that 
\begin{itemize}
	\item $L_1$ is a zero-section of $T^*L_1$ and 
	\item $L_2$ is a conormal bundle $N^*_{D/L_1}$.
\end{itemize}
Next, choose a smooth curve $s\mapsto a(s)+ib(s) \subset \C$ whose image is the red curve in Figure \ref{fig:surgery} and define a function $\nu_\eta :[0,\infty) \to \R$ to be any solution to the equation
	\[\frac{d \nu_\eta}{dr} = a\circ b^{-1}(r).\]
Define the \textit{flow handle} of $D$ with respect to $\nu_\lambda$ by
	\[H_{\nu_\eta}^D = \left\{ \phi^1_{\nu_\eta (\Vert p \Vert)}(p) \in T^*L_1 : p\in N^*_{D/L_1}\setminus D, \Vert p \Vert <\epsilon \right\}.\]
\begin{prop}
\label{defn: flow surgery}
	If $\eta$ is smaller than the injective radius of $D$, then $\partial H_{\nu_\eta}^D$ divides $L_1$ into exactly two components. 
	In particular, one obtains a Lagrangian submanifold
	\[L_1 \sharp^{\nu_\eta}_DL_2 := \left(L_1\setminus (U\cap L_1) \right) \cup \left(L_2\setminus (U\cap L_2)\right) \cup H_{\nu_\eta}^D\]
	obtained by replacing a small Darboux neighborhood $U$ of $D$ with the flow handle. 
	We call $L_1 \sharp^{\nu_\eta}_DL_2$ a flow surgery of $L_1$ and $L_2$ along $D$.
\end{prop}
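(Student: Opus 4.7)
The plan is to verify the three ingredients of the construction following Mak--Wu \cite{MW18}, working throughout in the Weinstein neighborhood of $D$ in which $L_1$ is identified with the zero section of $T^*L_1$ and $L_2$ with the conormal bundle $N^*_{D/L_1}$. Since the Hamiltonian $\nu_\eta(\|p\|)$ depends only on the fiber radius, its Hamiltonian flow preserves each sphere bundle $\{\|p\|=r\}$ and translates the base point along cogeodesics at speed $\nu_\eta'(r) = a\circ b^{-1}(r)$. By the shape of the profile curve, $a\circ b^{-1}$ vanishes at the two endpoints of the arc, so the time-$1$ flow reduces to the identity on the corresponding sphere bundles. Consequently $\partial H^D_{\nu_\eta}$ has exactly two components: one sits inside $L_2=N^*_{D/L_1}$ as a sphere bundle in a fiber of the conormal, and the other sits inside $L_1$ as the image under the normal exponential map $\exp_D\colon N_{D/L_1}\to L_1$ of a sphere bundle of radius bounded by $\lambda$.

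For the separation claim, I would invoke the standard tubular neighborhood theorem. When $\lambda$ is smaller than the injectivity radius of the exponential map of the normal bundle $N_{D/L_1}$, the restriction
\[
\exp_D\colon\{v\in N_{D/L_1}:\|v\|<\lambda\}\to L_1
\]
is a diffeomorphism onto an open tubular neighborhood of $D$ in $L_1$. Thus the $L_1$-portion of $\partial H^D_{\nu_\eta}$ is the embedded boundary sphere bundle of this tube, a closed codimension-$1$ coorientable submanifold of $L_1$, and hence divides $L_1$ into exactly two connected components: the interior tube $U\cap L_1$ and its complement $L_1\setminus(U\cap L_1)$.

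Finally, to conclude that $L_1\sharp^{\nu_\eta}_D L_2$ is a smooth Lagrangian, note that $H^D_{\nu_\eta}$ is Lagrangian because it is the image of the Lagrangian $N^*_{D/L_1}\setminus D$ under a Hamiltonian diffeomorphism. At each boundary component the Hamiltonian vector field vanishes by construction, so the handle meets $L_1\setminus(U\cap L_1)$ and $L_2\setminus(U\cap L_2)$ tangentially; combined with the smoothness and the prescribed vanishing orders of $a(s)+ib(s)$ at its endpoints, this ensures that the gluing is $C^\infty$ along the seam. The only substantive geometric input is the separation step: outside the injectivity radius, distinct normal geodesics from $D$ could meet, which would cause the $L_1$-portion of $\partial H^D_{\nu_\eta}$ to fail to embed and to no longer separate $L_1$ cleanly. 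The rest of the argument is a local computation in the cotangent model and should proceed without difficulty.
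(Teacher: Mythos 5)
The paper does not actually prove this statement: it is a recall of the flow-surgery construction from Mak--Wu \cite{MW18}, and the authors' own justification is the citation. Your proposal attempts to supply the argument, and the overall approach (local cotangent model, radial Hamiltonian flow, tubular neighborhood theorem for the separation, smooth gluing at the seams) is the right one.

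However, your boundary analysis of the handle is internally inconsistent. You claim that $a\circ b^{-1}$ vanishes at both endpoints of the profile arc, so that the time-$1$ flow reduces to the identity on both boundary sphere bundles; if that were so, both boundary components of $H^D_{\nu_\eta}$ would lie inside $L_2 = N^*_{D/L_1}$ and the handle would never meet $L_1$, destroying the construction. In fact the profile arc runs from $(\epsilon_1, 0)$ on the real axis to $(0, \eta)$ on the imaginary axis, so $a\circ b^{-1}(\eta)=0$ gives the $L_2$-boundary (the radius-$\eta$ sphere bundle, fixed by the flow), while $a\circ b^{-1}(r)\to\epsilon_1\neq 0$ as $r\to 0$. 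The $L_1$-boundary therefore arises not because the flow becomes trivial, but because the fiber radius $\|p\|$ degenerates: the radius-$r$ sphere bundles, carried to geodesic distance $\nu_\eta'(r)\to\epsilon_1$ from $D$ by the flow, limit onto the set of zero covectors at distance $\epsilon_1$ from $D$, which is exactly $\partial U\cap L_1$. Your subsequent identification of the $L_1$-boundary with $\exp_D$ of a small sphere bundle is this set, so you landed on the correct geometric picture, but the reasoning you gave contradicts it. Once the endpoint behavior is stated correctly, the tubular-neighborhood separation argument and the $C^\infty$-gluing from the flatness of the profile at the axes go through (modulo the implicit connectedness assumptions the authors use when applying this to $L=L_1$, $D\cong\partial L$).
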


The isotopy class of the flow surgery does not depend on the particular choice of $\nu_\eta$ as long as $\epsilon_1$ is sufficiently small and $\eta$ is smaller than the injective radius of $D$. We will omit $\nu_\eta$ and simply write $L_1 \sharp_DL_2$. 

\subsubsection{Flow surgery along $\partial L$} 
Let $L_1$ and $L_2$ be exact Lagrangians of $M$ with a cylindrical end such that $\partial L_1 =\partial L_2  \subset \partial M$.
Choose coordinates $x_i$ for the Legendrian $\partial L \subset \partial M$, around which the contact form is presented as a standard form $\alpha = \sum_{i=1}^{n-1} y_i dx_i +dz.$
Then $\lambda = r\alpha$ takes the standard form of the Liouville form in a Weinstein neighborhood of $L_1$ near $\partial M$ with Darboux coordinates $x_1,\ldots x_{n-1}, r, ry_1, \ldots ry_{n-1}, z$.

Let $f: [0, 1] \to [0, 2\delta]$ be a strictly increasing smooth function satisfying 
\begin{itemize}
	\item $f(r) = \left \{ 
	\begin{array}{cc}
	0 & (0 \leq r \leq \epsilon)\\
	2\delta & (1-\epsilon \leq r \leq 1),
	\end{array}
	\right .$
	\item $f(1-r) = 2\delta-f(r)$ and $f'(1/2)=1/\sqrt 2$.
\end{itemize}

Take Hamiltonian perturbations $\tilde L_i$ of $L_i$ inside the neighborhood above in the following way.
\[ \widetilde L_1 = \left \{y_i =0, z=f(r)\right\}, \hspace {5pt} \widetilde L_2 = \left\{y_i=0, z= f(1-r)\right\}, \hspace{5pt} \widetilde L_1 \cap \widetilde L_2 = \left\{r=1/2, z= \delta \right\} \simeq \partial L.\] 

Since this takes the form of the local model for a clean intersection (after $\frac{\pi}{4}$-rotation of $r$-$z$ plane), we can apply the flow surgery along $\partial L$. 

We go back to the case we are interested in and prove our assertions under an additional assumption.
The additional algebraic condition below is expected to hold for a broad class of Lagrangians. See \cite{Gan21}.
\begin{thm}
\label{thm:compactrepresentative}
	Assume that 
	\begin{enumerate} 
		\item $\partial L$ is connected,
		\item $L\cap \rho(L)$ is empty away from the boundary, and
		\item $HW^*(L, L)$ or $HW^*\left(L, \rho(L)\right)$ is infinite dimensional. 
	\end{enumerate}
Then the compact component $\widetilde{L}\sharp_{\partial L}\widetilde{\rho(L)}$, denoted by $V_L$, is quasi-isomorphic to $\mathcal V(L)$ inside $\mathcal{WF}(M)$.
\end{thm}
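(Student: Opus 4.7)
The plan is as follows. Since hypothesis (2) gives that $L \cap \rho(L)$ is empty away from the boundary, Corollary \ref{cor: Gamma and GammaL} yields $\mathcal{CO}_L^\rho(\Gamma) = \Gamma_L$, so it suffices to construct a quasi-isomorphism $V_L \simeq \Cone\bigl(L[1] \xrightarrow{\Gamma_L} \rho(L)\bigr)$. I would generalize the explicit computation of Proposition \ref{prop: variation dim 2} to arbitrary dimension $n \neq 2$ via a Morse--Bott analysis of the intersections $V_L \cap L$ and $V_L \cap \rho(L)$, replacing the enumeration of four specific holomorphic polygons in the surface case by a PSS-type local argument.

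First, I would choose a Hamiltonian perturbation so that the intersections $V_L \cap L$ and $V_L \cap \rho(L)$ near the flow handle are clean along copies of $\partial L$, using the local Weinstein model of Proposition \ref{defn: flow surgery} together with the connectedness of $\partial L$ from hypothesis (1). After a Morse perturbation by a function on $\partial L$ as in Proposition \ref{prop:GammaL}, these clean components contribute fundamental cocycles $1_L \in CW^0(V_L, L)$ and $1_{\rho(L)} \in CW^0(V_L, \rho(L))$. These assemble into a candidate morphism
\[
\Phi := (1_L,\, 1_{\rho(L)}) \colon V_L \longrightarrow \mathcal{V}(L).
\]
Verifying that $\Phi$ is a cocycle amounts to checking $d_{CW}(1_{\rho(L)}) = m_2(\Gamma_L, 1_L)$, which I would establish by a Morse--Bott degeneration argument: both sides reduce, via a local Floer cohomology computation in a neighborhood of the surgery region, to Morse-theoretic cycles on $\partial L$, and a uniform boundedness statement analogous to Proposition \ref{prop: locality} ensures that no contributions come from far away. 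The identification is modeled on the diagram of Theorem \ref{thm: PSS CO diagram}.

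To upgrade $\Phi$ to a quasi-isomorphism, I would construct a candidate inverse $\Psi \colon \mathcal{V}(L) \to V_L$ using transposed intersection data, and compute $\Psi \circ \Phi$ and $\Phi \circ \Psi$ in the corresponding Morse--Bott local models. Hypothesis (3) plays a structural role here: by Theorem \ref{thm:1}, $\mathcal{V}(L)$ is proper, so $HW^*(V_L, \mathcal{V}(L))$ is finite-dimensional, while the assumption that $HW^*(L, L)$ or $HW^*(L, \rho(L))$ is infinite-dimensional prevents $V_L$ from splitting off copies of $L$ or $\rho(L)$ in $\mathcal{WF}(M)$, which would otherwise obstruct $\Phi \circ \Psi$ from representing the identity.

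The main obstacle will be the Morse--Bott curve count in the second step. In the surface case it reduces to an explicit enumeration of four polygons, but for $n \geq 3$ one must treat moduli of holomorphic triangles and strips whose boundary lies on three Lagrangians cleanly intersecting along $(n-2)$-dimensional submanifolds. The key technical input is a parametrized moduli space argument, analogous to the two-step homotopy in Subsection \ref{subsubsec: First homotopy} and Proposition \ref{prop:second homotopy}, reducing the computation to a Morse-theoretic comparison on $\partial L$, combined with confinement of the moduli to a neighborhood of the surgery handle via a maximum principle of the type established in Proposition \ref{prop: locality}.
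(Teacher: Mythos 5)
Your route is genuinely different from the paper's and, as proposed, incomplete. The paper does not try to build explicit inverse quasi-isomorphisms by Morse--Bott curve counting. Instead, it constructs a Lagrangian cobordism $K \subset M \times \C$ interpolating $\widetilde{L}$, $\widetilde{\rho(L)}$ and $\widetilde{L}\sharp_{\partial L}\widetilde{\rho(L)}$, and invokes the Biran--Cornea cobordism-to-exact-triangle machinery (\cite{BC,Bo22}) to obtain, \emph{for free}, an exact triangle
$\widetilde{L} \xrightarrow{\mathcal{F}_K} \widetilde{\rho(L)} \to \widetilde{L}\sharp_{\partial L}\widetilde{\rho(L)} \to \widetilde{L}[1]$
in $\mathcal{WF}(M)$. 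The only thing left to compute is the connecting morphism $\mathcal{F}_K$, and the paper pins it down to $c\cdot\Gamma_L$ by an action estimate: non-constant chord contributions have action bounded below by $r_0^2$, whereas the primitive/one-form contributions are bounded above by $\epsilon_1\eta + \epsilon_2 + 3\epsilon_3 + \delta$, a contradiction for small parameters. Grading considerations force the output to be in degree $0$, hence a multiple of $\Gamma_L$. Finally hypothesis (3) is used precisely to show $c\neq 0$: if $c=0$ the cone would be $\widetilde{L}[1]\oplus\widetilde{\rho(L)}$, which is not proper, while $\widetilde{L}\sharp_{\partial L}\widetilde{\rho(L)}$ is proper (its near-boundary component is quasi-zero and $V_L$ is compact). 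The paper also handles the non-compact leftover component of the surgery explicitly, which your proposal does not address.

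The gap in your version is precisely where you wave at ``Morse--Bott degeneration'' and ``PSS-type argument.'' The machinery of Theorem \ref{thm: PSS CO diagram} identifies classes in a \emph{local} Floer complex near the collar; it does not by itself control moduli of polygons with boundary on three Lagrangians $L$, $\rho(L)$, $V_L$ that travel far into the interior, which is what you would need in order to prove $d_{CW}(1_{\rho(L)}) = m_2(\Gamma_L, 1_L)$ and that $\Phi$, $\Psi$ are mutually inverse. In dimension $2$ one can enumerate polygons by hand (Proposition \ref{prop: variation dim 2}), but for $n\geq 3$ you would need a genuine Morse--Bott Fukaya-category framework for triangles with clean intersections along $\partial L$ that the paper does not develop and that is considerably harder than what you sketch. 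Moreover your appeal to hypothesis (3) (``prevents $V_L$ from splitting off copies of $L$ or $\rho(L)$'') is not a precise argument; contrast with the paper, where (3) enters through a properness dichotomy on the cone, not through splitting. The cobordism approach buys exactly this: the exact-triangle structure comes from general theory, so one never needs to verify that an explicitly constructed map is a quasi-isomorphism, and the only computation is the action estimate.
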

\begin{proof}
This is proved by using a simple Lagrangian cobordism $K \subset M\times \C$ associated with $\widetilde L, \widetilde{\rho(L)}$ and $\widetilde L\sharp_{\partial L}\widetilde{\rho(L)}$. 
	We briefly recall its construction and refer to \cite{BC} for a more detailed discussion, whose extension to Liouville domains can be found in \cite{Bo22}. 
	First, Let $K'$ be a flow surgery of $\widetilde L \times \R$ and $\widetilde {\rho(L)} \times i\R$ along $\partial L \times \{0\}$. 
	Then $K'' := K' \cap \pi^{-1}\left( \{x\leq 0, y\geq 0\}\right )$ is a Lagrangian such that $K'' \vert_{x\ll 0, y=0} = \widetilde L$, $ K'' \vert_{x =0, y\gg0} = \widetilde{\rho(L)}$ and $K''\vert _{x=y=0} = \widetilde L\sharp_{\partial L}\widetilde{\rho(L)}$.
	We can complete $K''$ into a Lagrangian with cylindrical ends $K$ by extending $K''\vert _{x=y=0}$ in the radial direction. 
	See Figure \ref{fig:surgery}.
	
	By \cite{BC, Bo22}, the Lagrangian cobordism $K$ provides an exact triangle 
	\[
	\begin{tikzcd}
		\widetilde L \ar[r, "\mathcal F_K"] & \widetilde{\rho(L)} \ar[r] & \widetilde L \sharp_{\partial L} \widetilde{ \rho(L) }\ar[r] & L[1]
	\end{tikzcd}
	\]
	in $\mathcal{WF}(M)$. 
	The closed morphism $\mathcal F_K$ is determined in the following way: consider a product Lagrangian $\widetilde L\times l$ in $ M\times \C$, where $l$ is an exact Lagrangian curve in ($\C, \frac{1}{2}(xdy-ydx))$ depicted as the blue curve in Figure \ref{fig:surgery} below. 
	We obtain a closed morphism between Floer complexes and a distinguished element 
	\[\mathcal F(K) : CW^* (\widetilde L, \widetilde L) \to CW^*(\widetilde L, \widetilde{\rho(L)}), \hspace{5pt} \mathcal F_K :=\mathcal F(K)(1_{\widetilde L})\]
	by counting pseudo-holomorphic sections from $p$ to $q$ over the shaded gray region in Figure \ref{fig:surgery} below. Here, $1_{\widetilde L}$ is a cocycle for the unit of $HW^*(\widetilde L, \widetilde L)$.
	\begin{figure}[h]
	\def\svgwidth{15cm}
	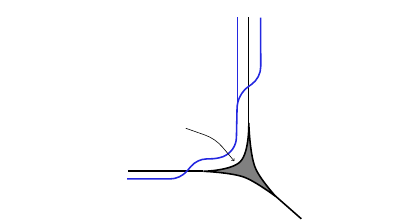
	\caption{(Left) Definition of $\nu_\eta$. (Right) Cobordism $K$ and a computation of $\mathcal F_K$.}
	\label{fig:surgery}
	\end{figure}
	
	The proposition follows from the following additional approximation of action values (see \cite{BCS18} for a related analysis).
	Suppose $\xi$ is a possible contribution of $\mathcal F_K$ which is given by an output of a perturbed pseudo-holomorphic section $u$.
	Denote the topological energy of $u$ by $E_{top}(u)$. It satisfies $0 \leq E_{top}(u) = \mathcal A(\xi) -\mathcal A(1_{\widetilde L})$. Therefore the following inequality holds. 
	\begin{align*}
		0\leq \int_0^1 \left(-\xi^*\lambda + H_{\widetilde L\times l, K}(\xi(t))dt \right) + \left (f_{\widetilde L \times l}(\xi(1))-f_K(\xi(0))\right) -\mathcal A(1_{\widetilde L}).
	\end{align*}
	Suppose that $\xi$ is a non-constant Hamiltonian chord. By the construction of $\widetilde L$ and $\widetilde{\rho(L)}$, it is contained in $\{r > r_0\}$ for some fixed $r_0>1$. We estimate each of the three terms in the inequality as follows: 
	
	\begin{enumerate}
		\item The first integral of the right-hand side is dominated by $-r_0^2$ since $H_{\widetilde L\times l, K} = H_{\widetilde L, \widetilde{\rho(L)}}$ is a $C^2$-small perturbation of $r^2$. 
		\item On $\{r > r_0\}$, the primitives of $\widetilde L$ and $\widetilde{\rho(L)}$ are constants given by $f_{\widetilde L} = f_L +\delta$ and $f_{\widetilde{\rho(L)}} = f_L$ by Lemma \ref{lem:equalprimitive}.
		We may assume that $f_{\widetilde L\times l} = f_{\widetilde L} + g_l$ where $g_l$ is a primitive of $l$ so that $f_K(p) = f_{\widetilde L}(p) = f_{\widetilde L \times l}(p)$. 
		The primitive of $l$ will start changing as $l$ becomes not radial. 
		As in Figure \ref{fig:surgery}, $l$ is drawn to be close to the boundary of the projection image of $K$ so that their $x$-coordinates differ by less than $\epsilon_2$.
		Notice that the boundary of the image of $K$ in the 2nd quadrant is precisely given by the graph of the function $\nu_\lambda(-r)$.
		By Stokes' theorem, we have 
			\[0 \geq g_l(q)-g_l(p) = \frac{1}{2}\int l^*(xdy-ydx) \geq -(\epsilon_1 \eta+\epsilon_2).\]
		Therefore, we have
			\[0\leq f_{\widetilde L \times l}(\xi(1))-f_{K}(\xi(0)) = f_{\widetilde L}(\xi(1)) - f_{\widetilde{\rho(L)}}(\xi(0)) - (g_l(q)-g_l(p)) \leq (\epsilon_1\eta +\epsilon_2 + \delta).\]
		\item The action value of $1_{\widetilde L}$ depends on the specific  Floer data $H_{\widetilde L, \widetilde L}$.
		We choose its $C^2$-norm to be bounded by $\epsilon_3$ inside a compact part of $M$ so that 
			\[0\leq \mathcal {A}(1_{\widetilde L}) \leq 2 \Vert H \Vert_{\infty, M^{cpt}}+ \Vert H \Vert_{1, M^{cpt}} \leq 3\epsilon_3.\]
	\end{enumerate}	
	From these approximations, we conclude that $0 \leq -r_0^2 + \epsilon_1\eta+\epsilon_2+3\epsilon_3+ \delta$ is positive for arbitrary $\epsilon_i$, $\delta$, and $\eta$, which is a contradiction. Therefore, we conclude that $\xi$ is an element from the intersection appearing in the compact part of $M$. 
	
	We can apply the Mose-Bott perturbation as before along the intersection $\widetilde L \cap \widetilde{\rho(L)} \simeq \partial L$ so that $\xi$ corresponds to the Morse critical point of $\partial L$. 
	Recall that $\widetilde L$, $\widetilde{\rho(L)}$ and $ \widetilde L \sharp_{\partial L} \widetilde{ \rho(L) }$ are all graded exact. 
	This condition forces the degree of $\mathcal F_K$ to be zero (\cite{MW18}). 
	Therefore, the only candidate for $\xi$ is the fundamental cycle of $\widetilde L \cap \widetilde{\rho(L)}$, which corresponds exactly to $\Gamma_L$. 
	As a result, we get $\mathcal F_K = c\cdot \Gamma_L$ for some $c$.
	
	The proof is completed if $c$ is not zero because the quasi-isomorphism class of $\mathrm{Cone} (c\cdot\Gamma_L)$ does not depend on $c$ while $c$ is nonzero. 
	For this, observe that $\widetilde L[1]\oplus \widetilde{\rho(L)}$ is not a proper object by assumption (2). 
	Meanwhile, $\widetilde L \sharp_{\partial L} \widetilde{ \rho(L) }$ has two components. 
	One is a compact connected $V_L$, and another is quasi-isomorphic to zero as it is contained in the collar neighborhood of $\partial M$. 
	Therefore, $\widetilde L \sharp_{\partial L} \widetilde{ \rho(L)}$ is a proper object and $c$ must be non-zero. 
\end{proof}

\section{Monodromy Lagrangian Floer cohomology $HF_\rho^*$}
\label{sec:HFrho}

In this section, we define the monodromy Lagrangian Floer cohomology $HF^*_\rho(L_1, L_2)$ between two Lagrangians $L_1$ and $L_2$ and describe its basic properties. 

\subsection{Seifert pairing}

	We recall another classical definition that motivates our constructions.
	\begin{defn}
	\label{defn:Seifert}
	Lef $f:(\C^n,0) \to (\C,0)$ be an isolated hypersurface singularity and let $M$ be the Milnor fiber associated with it. The {\em Seifert paring} of the isolated singularity $f$ is a bilinear pairing 
	\begin{align*}
		S:  H_{n-1}(M) \otimes H_{n-1}(M) \to \Z, \hspace{5pt} v_1\otimes v_2\to l\left(v_1,\rho_\epsilon(v_2)\right)
	\end{align*}
	where $\rho_\epsilon$ is a small parallel transport in the counter-clockwise direction and $l$ is a linking number of two disjoint $(n-1)$ cycles inside $S^{2n-1}$. 
	\end{defn}
	The Seifert pairing is non-degenerate and $\rho$-invariant but not symmetric in general due to $\rho_\epsilon$ in the formula. 
	We collect some basic properties of $S$ that are relevant to this work.
	\begin{thm} [Arnold, Gussein-Zade, Varchenko \cite{AGV2}]
	\label{thm:about S}
	For $v_1, v_2 \in H_{n-1}(M)$,
	\begin{enumerate}
		\item $S(v_1, v_2) = \mathrm{var}^{-1}(v_1) \bullet v_2$,
		\item $\mathrm{var}^{-1}(v_1)\bullet v_2  + v_1 \bullet \rho ( \mathrm{var}^{-1}(v_2))=0$, and
		\item $v_1 \bullet v_2 = - S(v_1, v_2) +(-1)^{n}S(v_2,v_1)$.
	\end{enumerate}
	\end{thm}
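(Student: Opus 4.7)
The plan is to prove (1) geometrically from the definition of the linking number together with the Milnor fibration structure, then derive (3) from (1) using two natural symmetries of the linking number in $S^{2n-1}$, and finally (2) as a purely algebraic consequence of (1) and (3).

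\textbf{Step 1 (proof of (1)).} Given $v_1\in H_{n-1}(M)$, set $w_1:=\mathrm{var}^{-1}(v_1)\in H_{n-1}(M,\partial M)$, so $\rho_*(w_1)-w_1=v_1$. Trivialize the Milnor fibration $f/|f|:S^{2n-1}\setminus K\to S^1$ as the mapping torus of $\rho$, and construct a Seifert chain for $v_1$ by sweeping $w_1$ almost once around the base:
$$W \;:=\; \bigcup_{t\in[-\epsilon/2,\,1-\epsilon/2]}\rho_t(w_1),$$
an $n$-chain in $S^{2n-1}\setminus K$ whose boundary, modulo chains in the link $K$, equals $\rho_{1-\epsilon/2}(w_1)-\rho_{-\epsilon/2}(w_1)=v_1$. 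By definition of the Seifert form,
$$S(v_1,v_2)\;=\;l(v_1,\rho_\epsilon v_2)\;=\;W\bullet\rho_\epsilon(v_2),$$
and since $W$ meets $\rho_\epsilon(v_2)$ only in the slice at parameter $\epsilon$ where $W$ restricts to $\rho_\epsilon(w_1)$, diffeomorphism invariance gives the intersection as $\rho_\epsilon(w_1)\bullet\rho_\epsilon(v_2)=w_1\bullet v_2$, which is (1).

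\textbf{Step 2 (proof of (3)).} Two elementary facts about linking in $S^{2n-1}$ combine to give (3). First, because $\rho$ extends via an ambient isotopy of $S^{2n-1}$, one has $l(v_1,\rho_{-\epsilon}v_2)=l(\rho_\epsilon v_1,v_2)$, and together with the graded symmetry $l(a,b)=(-1)^n\,l(b,a)$ for disjoint middle-dimensional cycles in $S^{2n-1}$ this gives
$$l(v_1,\rho_{-\epsilon}v_2)\;=\;(-1)^n\,S(v_2,v_1).$$
Second, the difference $l(v_1,\rho_\epsilon v_2)-l(v_1,\rho_{-\epsilon}v_2)$ equals the signed intersection of $v_1$ with the cylinder swept by $\rho_t v_2$ as $t$ runs from $-\epsilon$ to $\epsilon$; this cylinder meets $v_1$ transversely in the central fiber $M$, yielding $-v_1\bullet v_2$ in the orientation convention used by the paper. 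Combining these two identities gives (3).

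\textbf{Step 3 (proof of (2)).} Setting $w_2:=\mathrm{var}^{-1}(v_2)$, so that $\rho(w_2)=w_2+v_2$, expand
$$v_1\bullet\rho(w_2)\;=\;v_1\bullet w_2+v_1\bullet v_2.$$
The graded commutativity of the intersection pairing between $H_{n-1}(M)$ and $H_{n-1}(M,\partial M)$ combined with (1) gives $v_1\bullet w_2=(-1)^{n-1}w_2\bullet v_1=(-1)^{n-1}S(v_2,v_1)$. Substituting (3) for $v_1\bullet v_2$ and using $(-1)^{n-1}+(-1)^n=0$ collapses the expression to $-S(v_1,v_2)=-\mathrm{var}^{-1}(v_1)\bullet v_2$, which is exactly (2).

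\textbf{Main obstacle.} The entire argument is geometric, but the chief difficulty is a \emph{consistent bookkeeping of orientation conventions}: the sense of rotation fixing the Milnor monodromy, the push-off direction defining $S$, the graded symmetry of the linking form in $S^{2n-1}$, and the graded symmetry of the $(n-1)$-intersection pairing on $(M,\partial M)$ must all be pinned down compatibly so that the single sign ``$-1$'' appearing in Step 2 matches the statement as written. Once these conventions are fixed, every remaining step is routine.
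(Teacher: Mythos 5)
The paper does not prove Theorem~\ref{thm:about S}; it is stated with a citation to Arnold--Gusein-Zade--Varchenko \cite{AGV2} and no argument is given, so there is no in-paper proof to compare against. Your reconstruction follows the standard geometric route one would extract from that reference, and its architecture is correct: prove~(1) by exhibiting a Seifert chain for $v_1$ as the trace of $\mathrm{var}^{-1}(v_1)$ swept once around the Milnor fibration and evaluating a slice intersection; deduce~(3) from the graded symmetry of the linking form in $S^{2n-1}$ together with the ambient-isotopy/cylinder comparison of $l(v_1,\rho_{\pm\epsilon}v_2)$; and obtain~(2) purely algebraically from (1), (3), and the $(-1)^{n-1}$-symmetry of the Lefschetz pairing on $(M,\partial M)$. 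Step~3 in particular is airtight once (1) and (3) are granted.

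Two points should be tightened. First, in Step~1 the chain $W$ has $\partial W$ equal to $\rho_{-\epsilon/2}(v_1)$, not literally $v_1$; this is innocuous because the two are isotopic in the complement of $\rho_\epsilon(v_2)$, but you should say so. Second, the net sign in~(3) is produced by three independent conventions: the exponent in the linking symmetry $l(a,b)=(-1)^{(p+1)(q+1)}l(b,a)$ for $p=q=n-1$ (which does give $(-1)^{n^2}=(-1)^n$, as you assert), the sign in the cylinder identity $l(v_1,\rho_\epsilon v_2)-l(v_1,\rho_{-\epsilon}v_2)=\pm v_1\bullet v_2$, and the paper's choice of twisting the \emph{second} slot in the definition of $S$. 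You flag this yourself as the chief obstacle, which is the right instinct, but as written the proof still ends with one undetermined sign rather than a verified one. A finished version should declare the three conventions at the outset and include a one-line check that they assemble to produce exactly the signs in the statement, so that the reader is not left to trust ``the orientation convention used by the paper.''
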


	The property (1) of Theorem \ref{thm:about S} provides a realization of $S$ using $H_{n-1}(M, \partial M)$ instead of $H_{n-1}(M)$. Let $l_i = \mathrm{var}^{-1}(v_i)$. Then we have
	\begin{align}
	\label{eqn: S by var}
	S(v_1,v_2) = l_1\bullet v_2 = l_1 \bullet \rho_*(l_2) -  l_1 \bullet l_2. 
	\end{align}
We aim to realize the equation \eqref{eqn: S by var} in the categorification.
	Next, we will find identities corresponding to the formula (2) and (3) of Theorem \ref{thm:about S}. 
	We upgrade each homology cycle $l_i$ to a Lagrangian $L_i$ and the intersection number $- \bullet - $ to the wrapped Floer cohomology $HW^*(-, -)$. 
	The most difficult part is realizing the subtraction in the equation, which we will explain in the next section.

\subsection{Definition of $HF_\rho^*$}
	We describe another action of $SH^*(\rho^{-1})$ on Lagrangian Floer cohomologies.
	
	Let $S$ be a decorated disc with
		\begin{itemize}
			\item one interior marked point $0 \in D^2$, equipped with positive cylindrical end $\eta^+$,
			\item one boundary marked point $1 \in D^2$ equipped with positive strip-like end $\epsilon^+$,
			\item one boundary marked point $-1\in \partial D^2$ equipped with negative strip-like end $\epsilon^-$, and
			\item a branch cut $(0,-i] \subset D^2$.
		\end{itemize}
	We also choose a domain-dependent Hamiltonian function denoted by $H_S$ which satisfies 
	\begin{align*}
		(\epsilon^+)^* H_S &= H_{\rho (L_1), L_2}, \\
		(\epsilon^-)^* H_S &= H_{L_1, L_2}, \\
		(\eta^+)^* H_S &= H.
	\end{align*}
	Consider the following moduli space of pseudo-holomorphic curves 
		\begin{align*}
		 \mathcal M_{\cap} (\gamma_+ , \xi_+, \xi_-; H_S, J_S):=
			\left\{u: S \to M  \;\middle|\;
				\begin{array}{ll}
				\rho\circ u(e^{s+\frac{3 \pi i}{2}}) = u(e^{s-\frac{\pi i}{2}}),& \left( du-X_{H_S}(u)\otimes \alpha \right)^{0,1}_{J_S}=0,\\
				\lim_{s\to \pm\infty}(\epsilon^\pm)^* u = \xi_\pm(t), &\lim_{s\to \infty} (\eta^+)^* u = \gamma_+(t), \\
				u(e^{\pi it}) \in L_2, \hspace{5pt} t \in (0,1), &u(e^{\pi i(1+t)}) \in\mathcal L_{1,t}, \hspace{5pt} t\in (0, 1)\\
				\end{array}
			\right\}.
		\end{align*}
Here, $\mathcal L_{1,t}$ is $L_1$ for $t\in (0, 1/2)$ and $\rho (L_1)$ for $t\in (1/2, 1)$ with branch cut at $t=1/2$; see \eqref{defn:moving bd}.
	
	\begin{defn}
		\label{defn: twisted cap action}
	Let $\gamma_+ \in CF^*(\rho^{-1}; H, J)$ be a cochain. Its twisted quantum cap action 
		\[\cap \gamma_+: CW^*\left(\rho(L_1), L_2; H_{\rho(L_1), L_2}, J_{\rho(L_1), L_2} \right) \to CW^* \left(L_1, L_2; H_{L_1, L_2}, J_{L_1, L_2} \right)\]
	is defined by
	\[\xi_+ \cap \gamma_+: = \sum_{\xi_-\in \mathcal P(L_1, L_2; H_{L_1, L_2})} \# \mathcal M_{\cap} (\gamma_+ , \xi_+, \xi_-; H_S, J_S) \cdot \xi_-.\]
	\end{defn}
	If $\gamma_+$ is a cocycle, then the standard argument shows that $\cap \gamma_+$ is a cochain map of $\deg \gamma_+$.
	We apply this fact to the cocycle $\Gamma$.  
	\begin{defn}
	Define $CF^*_\rho(L_1, L_2)$ as the total complex of
	\[CF^*_\rho(L_1, L_2) := \left( \begin{tikzcd} CW^*\left(\rho(L_1), L_2, J_{\rho(L_1), L_2}, H_{\rho(L_1), L_2}\right)[1] \arrow[r, "\cap \Gamma"] & CW^* \left(L_1, L_2, J_{L_1, L_2}, H_{L_1, L_2}\right) \end{tikzcd}\right).\]
	We denote its cohomology by $HF_\rho^*(L_1, L_2)$ and call it monodromy Lagrangian Floer cohomology. 
	\end{defn}
	
\subsection{Invariance}
	We verify that the cohomology group $HF_\rho^*$ has certain Hamiltonian invariance properties.
	The first one ensures that it is independent of Hamiltonian perturbation data. 
	\begin{prop}
	\label{prop: Hamiltonian invariance}
		 The cohomology group $HF_\rho^*$ is independent of the choice of Floer data.
	\end{prop}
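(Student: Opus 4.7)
The plan is to reduce the statement to the standard Hamiltonian invariance of the constituent Floer theories, together with a homotopy-commutativity of the cap action, via the mapping cone description of $CF^*_\rho(L_1,L_2)$.

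Suppose two Floer data $\mathfrak{D}^0$ and $\mathfrak{D}^1$ are chosen, consisting of a datum for $SC^*(\rho^{-1})$, a datum for $CW^*(L_1,L_2)$, a datum for $CW^*(\rho(L_1),L_2)$, and a domain-dependent perturbation datum for the cap-action moduli $\mathcal{M}_\cap$. The first step is to invoke the standard continuation maps
\[
c_{\rho L_1,L_2}\colon CW^*\bigl(\rho(L_1),L_2;\mathfrak{D}^0\bigr)\to CW^*\bigl(\rho(L_1),L_2;\mathfrak{D}^1\bigr),
\]
\[
c_{L_1,L_2}\colon CW^*\bigl(L_1,L_2;\mathfrak{D}^0\bigr)\to CW^*\bigl(L_1,L_2;\mathfrak{D}^1\bigr),
\]
and the continuation map $c_\rho\colon SC^*(\rho^{-1};\mathfrak{D}^0)\to SC^*(\rho^{-1};\mathfrak{D}^1)$ on the symplectic cohomology side, all of which are quasi-isomorphisms. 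Since $\Gamma$ is defined as a cohomology class in $SH^*(\rho^{-1})$, we may choose chain-level representatives $\Gamma^0$ and $\Gamma^1$ for which $c_\rho(\Gamma^0)=\Gamma^1+d\eta$ for some $\eta\in SC^{*-1}(\rho^{-1};\mathfrak{D}^1)$.

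The second and main step is to produce a chain homotopy
\[
h\colon CW^*\bigl(\rho(L_1),L_2;\mathfrak{D}^0\bigr)\longrightarrow CW^{*-1}\bigl(L_1,L_2;\mathfrak{D}^1\bigr)
\]
fitting into
\[
c_{L_1,L_2}\circ(\cap\Gamma^0)\;-\;(\cap\Gamma^1)\circ c_{\rho L_1,L_2}\;=\;d\circ h+h\circ d.
\]
The construction of $h$ is carried out by considering a one-parameter family of decorated discs with three punctures and a branch cut as in Definition \ref{defn: twisted cap action}, whose Floer data interpolate smoothly from $\mathfrak{D}^0$ to $\mathfrak{D}^1$ along each of the three ends and over the interior of the disc, together with the insertion $\eta$ arising from $c_\rho(\Gamma^0)-\Gamma^1=d\eta$. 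The signed count of the resulting parametrized moduli space of perturbed twisted pseudo-holomorphic discs defines $h$, and the boundary of the one-dimensional components yields the desired homotopy identity by the usual gluing and breaking analysis. Compactness for this parametrized moduli is inherited from the compactness of the individual Floer, closed-open and cap-action moduli spaces already used in Section~\ref{sec:symplectic var}, together with the maximum principle ensured by the sub-closedness condition on $H_S\cdot \alpha$ discussed in Remark~\ref{rmk:choice of subclosed one form}.

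Once this homotopy commutativity is established, we assemble the pair $(c_{\rho L_1,L_2},c_{L_1,L_2},h)$ into a chain map
\[
C\colon CF^*_\rho(L_1,L_2;\mathfrak{D}^0)\longrightarrow CF^*_\rho(L_1,L_2;\mathfrak{D}^1)
\]
between the mapping cones. Since $c_{\rho L_1,L_2}$ and $c_{L_1,L_2}$ are quasi-isomorphisms, the induced map on the long exact sequences of the cones is an isomorphism on two out of three terms at each level, so the five-lemma gives that $C$ is a quasi-isomorphism. Taking cohomology yields the claimed independence of $HF^*_\rho(L_1,L_2)$ from the Floer datum.

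The hard part will be Step 2, the verification of the homotopy commutativity. In particular, one has to keep careful track of the branch cut along the geodesic joining the interior puncture to a boundary puncture, and ensure that the interpolating data remain admissible throughout the one-parameter family so that the maximum principle and energy bounds survive uniformly. All of the remaining arguments are formal consequences of standard Floer-theoretic invariance packages adapted to the twisted setting developed in Sections~\ref{sec:symplectic cohomology} and~\ref{sec:symplectic var}.
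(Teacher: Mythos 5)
Your proposal follows essentially the same strategy as the paper's proof: both reduce the statement to a homotopy commutativity of the square
\[
\begin{tikzcd}
CW^*\bigl(\rho(L_1),L_2;H^1\bigr) \arrow[r, "\cap\Gamma"] \arrow[d,"c"] & CW^*\bigl(L_1,L_2;H^1\bigr)\arrow[d,"c"]\\
CW^*\bigl(\rho(L_1),L_2;H^2\bigr) \arrow[r, "\cap c(\Gamma)"] & CW^*\bigl(L_1,L_2;H^2\bigr)
\end{tikzcd}
\]
via a parametrized moduli space of decorated discs, and then invoke the mapping-cone structure to conclude. Two small differences are worth noting. First, the paper simplifies matters by taking the bottom arrow to be $\cap\, c_{1,2}(\Gamma)$, i.e.\ using the continuation image of the first representative as the second representative, thereby avoiding your correction term $\eta$; your version is also fine, since $\cap\, d\eta$ is nullhomotopic (with primitive essentially $\cap\,\eta$) by the chain-map property of the cap action in its closed-string slot. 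Second, your Step~2 is where the real work is, and your proposal is substantially vaguer than the paper here. The paper constructs an explicit one-parameter family $j_R$ of complex structures on the three-punctured disc whose two limits produce exactly the two composites in the square: at $R\to 1$ only the strip-like output end stretches (breaking off the continuation strip for $c_{L_1,L_2}$), while at $R\to\infty$ the strip-like input end and the interior cylindrical end must stretch \emph{simultaneously} (breaking off both the continuation strip for $c_{\rho(L_1),L_2}$ and the continuation cylinder computing $c(\Gamma)$). Your phrase "Floer data interpolate smoothly from $\mathfrak{D}^0$ to $\mathfrak{D}^1$ along each of the three ends" does not make clear that you are aware this simultaneous double breaking at one end of the family is the crucial geometric input; without arranging it, the $R\to\infty$ boundary would not equal $(\cap c(\Gamma))\circ c_{\rho(L_1),L_2}$. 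So while your outline is correct and matches the paper's, the key technical content of the homotopy — the design of the family of domains so that the two boundary strata reproduce the two sides of the square — needs to be spelled out.
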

	\begin{proof}
	The statement amounts to showing that the following diagram commutes up to homotopy
	\[
	\begin{tikzcd}[column sep=large]
	CW^*\left(\rho(L_1), L_2; H^1_{\rho(L_1), L_2}\right) \arrow[r, "\cap \Gamma"] \arrow[d, "c_{\rho(L_1),L_2}"] & CW^* \left(L_1, L_2; H^1_{L_1, L_2}\right) \arrow[d, "c_{L_1,L_2}"]\\
	CW^*\left(\rho(L_1), L_2; H^2_{\rho(L_1), L_2}\right) \arrow[r, "\cap c_{1,2}(\Gamma)"] & CW^* \left(L_1, L_2; H^2_{L_1, L_2}.\right)
	\end{tikzcd}
	\]
	Here, $c_{1,2}$ denotes the standard continuation map for Hamiltonian Floer complexes.
	$c_{L_1, L_2}$ and $c_{\rho(L_1), L_2}$ denote corresponding continuation maps of Lagrangian Floer complexes which are quasi-isomorphisms.
	
	We first build a family of complex structures on $\R \times [0,1]$ by constructing a diffeomorphism $\Psi_{R\in[1,\infty)}: \R \times [0,1] \to \R \times [0,1]$ as in Subsection \ref{subsubsec: First homotopy} so that its induced complex structure degenerates into the desired configuration depicted in the Figure \ref{fig:degen4} below.  

\begin{figure}[h]
\begin{subfigure}[t]{0.48\textwidth}
\raisebox{5ex}{\includegraphics[scale=0.95]{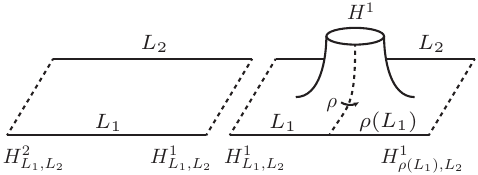}}
\centering
\caption{$R \to 1$ degeneration}
\end{subfigure}
\begin{subfigure}[t]{0.48\textwidth}
\includegraphics[scale=0.95]{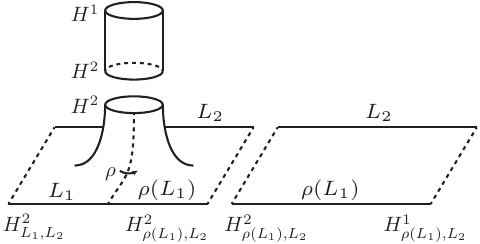}
\centering
\caption{$R \to \infty$ degeneration}
\end{subfigure}
\centering
\caption{Family of complex structures}
\label{fig:degen4}
\end{figure}
	
In detail, choose disjoint open subsets inside $(-\infty, -1)\times[0,1]$;
	\[U_1 = \left(-\infty, -\frac{2R}{R-1}-3\right) \times [0,1], \; V_1=\left(-\frac{2R}{R-1}-2, -\frac{R}{R-1}-2\right)\times[0,1], \; W_1=\left(-2, -1\right)\times [0,1].\]
	Then, $\Psi_R$ is given by 
	\begin{itemize}
		\item $\Psi_R: U_1 \to (-\infty, -5) \times [0,1]$ is a translation; $(s,t) \mapsto \left (s+ \frac{2}{R-1}, t \right)$,
		\item $\Psi_R: V_1 \to (-4, -3)\times[0,1]$ is a $\frac{R}{R-1}$ scale on the first factor $(s, t) \mapsto \left( \frac{R}{R-1}(s+2)-2 , t\right)$,
		\item $\Psi_R: W_1 \to (-2, -1)$ is the identity,
		\item $\Psi_R \vert _{(-\infty, -1)\times[0,1] \setminus (U_1\cup V_1 \cup W_1)}$ is a scale on the first factor interpolating $\Psi_R\vert_{U_1}$, $\Psi_R\vert_{V_1}$, and $\Psi_R\vert_{W_1}$, and
		\item $\Psi_R$ converges to the identity on $(-\infty, -1)\times [0,1]$ as $R \to \infty$. 
	\end{itemize}
	Similary, choose disjoint open subsets $U_2 = (2R+3, \infty) \times [0,1], V_2= (2+R, 2+2R)\times [0,1]$, and $W_2=(1 , 2)\times [0,1]$. On these open sets, 
	\begin{itemize}
		\item $\Psi_R: U_2 \to (5, \infty) \times [0,1]$ is a translation; $(s,t) \mapsto \left (s+2-2R, t \right)$,
		\item $\Psi_R: V_2 \to (3, 4)\times[0,1]$ is an $R^{-1}$-scale on the first factor $(s, t) \mapsto \left(R^{-1}(s-2)+2 , t\right)$,
		\item $\Psi_R: W_2 \to (1, 2) \times [0, 1]$ is the identity,
		\item $\Psi_R \vert _{(1, \infty)\times[0,1] \setminus (U_2\cup V_2 \cup W_2)}$ is a scale on the first factor interpolating $\Psi_R\vert_{U_2}$, $\Psi_R\vert_{V_2}$, and $\Psi_R\vert_{W_2}$, and
		\item $\Psi_R$ converges to the identity on $(1, \infty)\times [0,1]$ as $R \to 1$. 
	\end{itemize}
	Third, we introduce a radial coordinate $e^{-(r+i\theta)}$ for the disc neighborhood $D^*:=\left\{ \left\vert (s,t-\frac{1}{2})\right\vert <\frac{1}{3}\right\} \subset \R\times [0,1]$.
	Choose disjoint open subsets of $D^*$ (with respect to radial coordinate)
	\[U_3 = \left(\log(2R+5), \infty\right) \times [0,2\pi), \; V_3= \left(\log(R+4), \log(2R+4)\right) \times [0, 2\pi), \; W_3 = (\log3, \log4)\times[0,2\pi)\]
	and define $\Psi_R$ with respect to the radial coordinate by
	\begin{itemize}
		\item $\Psi_R: U_3 \to (\log 7, \infty) \times [0,2\pi)$ is a translation; $(r,\theta) \mapsto \left (r-\log(2R+5)+\log 7, \theta \right)$,
		\item $\Psi_R: V_3 \to (\log 5, \log 6)\times[0,2\pi)$ is given by $(r, \theta) \mapsto \left(\log (R^{-1}(e^r -4)+4) , \theta \right)$,
		\item $\Psi_R: W_3 \to (\log3, \log4)\times[0, 2\pi)$ is the identity,
		\item $\Psi_R \vert _{(\log3, \infty)\times[0,2\pi) \setminus (U_3\cup V_3 \cup W_3)}$ interpolates between $\Psi_R\vert_{U_3}$, $\Psi_R\vert_{V_3}$, and $\Psi_R\vert_{W_3}$, and
		\item $\Psi_R$ converges to the identity on $(\log 3, \infty)\times [0,2\pi)$ as $R \to 1$.
	\end{itemize}
	Finally, set $\Psi_R$ to be the identity for $\forall R$ on $[-1, 1]\times[0,1] \setminus D^*$.
	Let  $j_R$ be a complex structure on a punctured strip induced by $\Psi_R$.
	Observe that $j_R$ is designed to degenerate along $\Psi_R(V_1)$ into a union of punctured strip and the other strip when $R\to 1$, while it breaks simultaneously at $\Psi_R(V_2)$ and $\Psi_R(V_3)$ into the union of a punctured strip, a cylinder and the other strip as $R \to \infty$. 
	We put 
	\begin{itemize}
		\item a negative strip-like end $\epsilon^-$ around $s=-\infty$ so that $\epsilon^-( (-\infty, 0) \times [0,1]) \subset U_1$, 
		\item a positive strip-like end $\epsilon^+$ around $s=\infty$ so that $\epsilon^+( (0, \infty) \times [0,1]) \subset U_2$,
		\item a positive cylindrical end $\eta^+$ around $(0,1/2)$ so that $\eta^+( (0, \infty) \times S^1) \subset U_3$, 
	\end{itemize}
and a domain-dependent Hamiltonian $H_{S, R}$ so that 
	\begin{align*}
	H_{S,R} &= \left\{
		\begin{array}{ll}
		H^1_{\rho(L_1), L_2} & (7 < s),\\
		H^2_{\rho(L_1), L_2} &  (3 < s < 6, \hspace{5pt}1 \leq r \ll 2),\\
		H^1_{\rho(L_1), L_2} &  (3 < s < 6,  \hspace{5pt} 2 \ll r),\\
		H^2_{L_1, L_2} & (-6 < s <-3,  \hspace{5pt} 1\leq R \ll 2),\\
		H^1_{L_1, L_2} & (-6 <s< -4,  \hspace{5pt} 2 \ll R),\\
		H^2_{L_1, L_2} & (s < -7), \\
		H^1 & (\log 8 < r), \\
		H^1 & (\log 4< r < \log 7,  \hspace{5pt} 1\leq R \ll 2),\\
		H^2 & (\log 4 < r < \log 7,  \hspace{5pt} 2 \ll R).
		\end{array}
	\right.
	\end{align*}
	As $R \to 1$, $H_{S, R}$ becomes a Hamiltonian perturbation for Floer continuation map from $H^1_{L_1, L_2}$ to $H^2_{L_1, L_2}$ on the strip break off in the course of the degeneration. 
	On the other extreme, it is designed to become a perturbation for the Floer continuation map from $H^1_{\rho(L_1), L_2}$ to $H^2_{\rho(L_1), L_2}$ on the strip component of the $R \to \infty$ degeneration.  
	Also, it becomes a perturbation for the continuation map from $H^1$ to $H^2$ on a cylinder component of the same degeneration.
	
	In conclusion, the moduli space of perturbed holomorphic curves $u: (S, j_R) \to M$ provides a homotopy between  $c_{L_1, L_2} \circ\cap \Gamma$ and $ \left( \cap c (\Gamma) \right) \circ c_{\rho(L_1), L_2}$.
	\end{proof}

	The second invariance property ensures that it is well-defined for the Hamiltonian isotopy class of $\rho$. 
	\begin{prop}
	\label{prop: isotopy invariance}
	Let $\rho_t$ be a compactly supported Hamiltonian isotopy between $\rho=\rho_0$ and $\tau = \rho_1$. 
	Then the following diagram is homotopy commutative;
		\[
	\begin{tikzcd}[column sep=large]
	CW^*\left(\rho(L_1), L_2; H^1_{\rho(L_1), L_2}\right) \arrow[r, "\cap \Gamma"] \arrow[d, "c_{\rho_t(L_1), L_2}"] & CW^* \left(L_1, L_2;  H^1_{L_1, L_2}\right) \arrow[d, "c_{L_1, L_2}"]\\
	CW^*\left(\tau(L_1), L_2; H^2_{\tau L_1, L_2}\right) \arrow[r, "\cap c_{\rho_t}(\Gamma)"] & CW^* \left(L_1, L_2; H^2_{L_1, L_2} \right)
	\end{tikzcd}.
	\]
	Here, the vertical homomorphisms are continuation quasi-isomorphisms of Lagrangian Floer complexes, and $c_{\rho_t}$ is a continuation quasi-isomorphism of Hamiltonian Floer complexes in Proposition \ref{prop:invariance1}. 
	In particular, we have a canonical isomorphism of cohomologies
		\[HF^*_{\rho}(L_1, L_2) \simeq HF^*_{\tau}(L_1, L_2).\]
	\end{prop}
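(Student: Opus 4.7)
The plan is to adapt the parametrized-moduli argument of Proposition \ref{prop: Hamiltonian invariance} so that, in addition to interpolating the Hamiltonian data, the twisting symplectomorphism along the branch cut is deformed from $\rho$ to $\tau$ along the isotopy $\rho_t$. Fix a compactly supported Hamiltonian $K$ generating $\rho_t$, together with continuation Floer data realizing $c_{\rho_t(L_1), L_2}$, $c_{L_1, L_2}$, and $c_{\rho_t}$ (the latter exists by Proposition \ref{prop:invariance1}). On the decorated disc $S$ used to define $\cap \gamma_+$ in Definition \ref{defn: twisted cap action}, I would build a one-parameter family of complex structures $\{j_R\}_{R \in [1,\infty)}$ via diffeomorphisms $\Psi_R$ exactly as in the proof of Proposition \ref{prop: Hamiltonian invariance}: as $R \to 1$ the outgoing strip develops a long neck, while as $R \to \infty$ long necks develop simultaneously on the incoming strip and on the cylindrical end around the interior puncture $0 \in D^2$.

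Next, I would endow $S$ with $R$-dependent Floer data and an $R$-dependent twist along the branch cut $(0,-i] \subset D^2$. Using a smooth monotone cutoff $s(R) \in [0,1]$ supported in a compact subinterval of $(1,\infty)$, the branch-cut condition is interpolated between $\rho \circ u(e^{s+3\pi i/2}) = u(e^{s-\pi i/2})$ and $\tau \circ u(e^{s+3\pi i/2}) = u(e^{s-\pi i/2})$ via $\rho_{s(R)}$; equivalently, one may view this as a path of sections of the family of mapping tori $T_{\rho_{s(R)}}$. On the three ends produced by the degenerations, the Floer data is arranged to realize the respective continuation maps: $c_{L_1, L_2}$ on the outgoing strip appearing as $R \to 1$, and simultaneously $c_{\rho_t(L_1), L_2}$ on the incoming strip and $c_{\rho_t}$ on the cylinder at the interior puncture as $R \to \infty$. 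Counting isolated points of the resulting parametrized moduli space defines a chain homotopy $h$ whose boundary identity
\[
dh + hd \;=\; c_{L_1, L_2} \circ (\cap \Gamma) \;-\; (\cap c_{\rho_t}(\Gamma)) \circ c_{\rho_t(L_1), L_2}
\]
yields the desired homotopy commutativity. Passing to cones gives the canonical isomorphism $HF^*_\rho(L_1, L_2) \cong HF^*_\tau(L_1, L_2)$, since all three continuation maps involved are quasi-isomorphisms.

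The main technical obstacle is uniform $C^0$-boundedness, and hence Gromov compactness, of the parametrized moduli space as both $R$ and the twist parameter vary. Because $K$ has compact support, the twist $\rho_{s(R)}$ coincides with $\rho$, equivalently with $\tau$, outside a fixed compact subset of $M$; consequently the maximum principle and the energy estimates underlying the construction of $SH^*(\phi)$ in Section \ref{sec:symplectic cohomology} and of $CW^*$ in Subsection \ref{subsec: WF} extend uniformly to the parametrized setting, so that curves cannot escape into the cylindrical end of $\widehat{M}$ and no action shifts accumulate in the $R \to \infty$ degeneration. Transversality is standard for generic choice of the parametrized Floer data, exactness of $L_1, L_2$ rules out disc bubbling, and the remaining degeneration analysis is identical in form to the one in Proposition \ref{prop: Hamiltonian invariance}.
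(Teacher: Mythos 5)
Your overall plan is the same as the paper's: reuse the family of complex structures $\{j_R\}_{R\in[1,\infty)}$ from the proof of Proposition \ref{prop: Hamiltonian invariance}, interpolate the twist along the branch cut from $\rho$ to $\tau$, and extract the chain-homotopy identity from the $R\to 1$ and $R\to\infty$ degenerations. However, there is a genuine gap in how you parametrize the twist: you take it to be $\rho_{s(R)}$, constant along the branch cut for each fixed $R$. This cannot work, for two related reasons. First, the interior asymptotic is the \emph{fixed} cocycle $\Gamma\in SC^*(\rho^{-1})$, so the twist in a neighborhood of the interior puncture $0\in D^2$ must remain equal to $\rho$ for every $R$; with your choice, for intermediate $R$ the puncture asymptotic would have to be a $\rho_{s(R)}^{-1}$-twisted orbit and for large $R$ a $\tau^{-1}$-twisted orbit, so the asymptotic condition lives in a Floer complex that changes with the parameter and there is no single chain-level operator $h$ with domain $CW^*(\rho(L_1),L_2;H^1)$ to write down. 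Second, even granting some formal sense to the count, the $R\to\infty$ bubble at the interior puncture would be a cylinder whose twist is $\tau$ along its entire length; that cylinder computes the identity of $SC^*(\tau^{-1})$, not the continuation map $c_{\rho_t}\colon SC^*(\rho^{-1})\to SC^*(\tau^{-1})$, which requires the twist to vary \emph{along} the cylinder.

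The paper resolves this by making the twist depend jointly on the moduli parameter $R$ and the position $t$ along the branch cut: the interpolation parameter is a function $l(t,R)$ with $l\equiv 0$ near the interior puncture for all $R$, and $l(0,R)=p(R)$ at the boundary, where $p(R)=0$ for $R\leq 2$ and $p(R)=1$ for $R\geq 3$. With this two-variable twist the interior asymptotic is always $\Gamma\in SC^*(\rho^{-1})$; for $R<2$ the whole domain carries the $\rho$-twist, so the $R\to 1$ limit gives $c_{L_1,L_2}\circ(\cap\Gamma)$; and as $R\to\infty$ the region $\{1/6<t<1/3\}$ where $l$ interpolates breaks off into the neck of a genuine continuation cylinder realizing $c_{\rho_t}$, while the remaining disc carries the constant $\tau$-twist and computes $\cap c_{\rho_t}(\Gamma)$. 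Once you replace your single-variable $s(R)$ by such a two-variable interpolation, the rest of your outline (uniform $C^0$-bounds from compact support, transversality, exactness ruling out bubbling) goes through as you describe.
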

	\begin{proof}
	The construction is almost the same as in Proposition \ref{prop: Hamiltonian invariance}, except that we further use the identification $\rho_{l(t,R)}$ along the branch cut $\{0\}\times [0, 1/2] \in \R\times [0,1]$ for each $R$:
	\[ p(r):= \left\{ 
		\begin{array}{ll}
		0 & (1\leq r \leq 2),\\
		\textrm{increasing} & (2\leq r \leq 3),\\
		1 & (3\leq r),
		\end{array}
	\right. \hspace{5pt}
	l(t,R) := \left\{ 
		\begin{array}{ll}
		0 & (1/3\ll t \leq1/2),\\
		\textrm{increasing} & (1/6<t<1/3),\\
		p(R) & (0\leq t \ll1/6).
		\end{array}
	\right.
	\]
	In particular, $\rho_{l(t,R)}$ is constantly $\rho_0=\rho$ when $R<2$, while it interpolates $\rho$ and $\tau$ when $R>3$;  see Figure \ref{fig:degen5}. 
	We leave details to the reader. 
\begin{figure}[h]
\begin{subfigure}[t]{0.48\textwidth}
\raisebox{5ex}{\includegraphics[scale=0.95]{degen41}}
\centering
\caption{$R \to 1$ degeneration}
\end{subfigure}
\begin{subfigure}[t]{0.48\textwidth}
\includegraphics[scale=0.95]{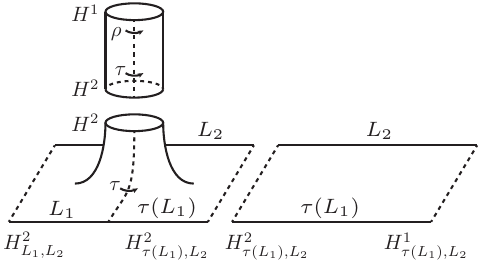}
\centering
\caption{$R \to \infty$ degeneration}
\end{subfigure}
\centering
\caption{ }
\label{fig:degen5}
\end{figure}
	\end{proof}
\begin{remark}
Using the same argument, one can further prove the same commutative diagram when $\rho_t$ is an isotopy given by a good perturbation. 
\end{remark}
\subsection{Proof of the main theorems} \label{subsec:main}
	We provide proofs of Theorem \ref{thm:1}, \ref{thm:2} and \ref{thm:var}.
	Let us start with an easy relation between cohomologies $HF^*_\rho$ and the variation operator $\mathcal V$.
	\begin{prop}\label{prop:HF and V}
 	We have isomorphisms
	\[HF^*_\rho(L_1, L_2) \cong HW^* \left( \mathcal{V} (L_1)[-1],  L_2 \right) \cong HW^* \left(\rho(L_1), \mathcal V(L_2)\right).\]
	\end{prop}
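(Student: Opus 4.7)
The plan is to exploit the fact that $\mathcal V(L_i)$ is, by definition, a mapping cone in the triangulated envelope of $\mathcal{WF}(M)$. For the first isomorphism, I would apply $HW^*(-, L_2)$ to the distinguished triangle
\[L_1 \xrightarrow{\mathcal{CO}^\rho_{L_1}(\Gamma)} \rho(L_1) \longrightarrow \mathcal V(L_1) \xrightarrow{\;+1\;} L_1[1],\]
which, together with the shift by $[-1]$, identifies $HW^*(\mathcal V(L_1)[-1], L_2)$ with the cohomology of the chain-level cone of the right Yoneda multiplication
\[\mu_2\bigl(-, \mathcal{CO}^\rho_{L_1}(\Gamma)\bigr) \,:\, CW^*(\rho(L_1), L_2) \longrightarrow CW^*(L_1, L_2).\]
Since $HF^*_\rho(L_1, L_2)$ is by definition the cohomology of the cone of $\cap \Gamma$, the first isomorphism reduces to the chain-level comparison $\cap \Gamma \simeq \mu_2\bigl(-, \mathcal{CO}^\rho_{L_1}(\Gamma)\bigr)$.

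The key step is to realise this comparison as a standard moduli-space cobordism. Both maps are signed counts over moduli of pseudo-holomorphic discs with a $\rho$-branch cut, one interior puncture carrying a $\Gamma$-insertion, a positive boundary input on $(\rho(L_1), L_2)$, and a negative boundary output on $(L_1, L_2)$; they differ only in the location of the interior marked point. For $\cap \Gamma$, the interior point is constrained to the geodesic joining the two boundary marked points. As this point slides along the geodesic and collides with the input boundary, the disc degenerates into the $\mathcal{CO}^\rho_{L_1}(\Gamma)$-defining disc glued along a Floer strip to a standard $\mu_2$-triangle. The resulting 1-parameter moduli space, whose ends are exactly $\mathcal M_{\cap \Gamma}$ and $\mathcal M_{\mu_2(-,\mathcal{CO}^\rho_{L_1}(\Gamma))}$, defines the required chain homotopy.

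For the second isomorphism I would run the mirror argument: apply $HW^*(\rho(L_1), -)$ to the triangle defining $\mathcal V(L_2)$ to realise $HW^*(\rho(L_1), \mathcal V(L_2))$ as the cone of the left multiplication $\mu_2(\mathcal{CO}^\rho_{L_2}(\Gamma), -)$; use the $\rho$-equivariance of the wrapped Fukaya category to canonically identify $CW^*(\rho(L_1), \rho(L_2)) \cong CW^*(L_1, L_2)$; and then slide the interior $\Gamma$-insertion in the $\cap \Gamma$-disc toward the output boundary marked point (rather than the input) to produce the analogous homotopy between $\cap \Gamma$ and the transported left-action.

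The main technical obstacle is ensuring compactness and transversality for the parametrized moduli spaces used in both sliding arguments, given that the $\rho$-branch cut moves with the interior marked point and must meet the boundary in a controlled way, and given that the Hamiltonian perturbation data for $H$, $H_{L_1, L_2}$, and $H_{\rho(L_1), L_2}$ must be interpolated consistently with the sub-closed one-form on the domain. I expect no unexpected codimension-one strata to arise beyond the expected bubbling into $\mathcal{CO}^\rho_L(\Gamma)$ glued to a $\mu_2$-triangle; this should follow from the locality and energy estimates already established for $\mathcal{CO}^\rho_L$ in Section~\ref{sec:symplectic var}, together with the maximum principle for $\phi$-admissible Hamiltonians, but the book-keeping around the branch cut as the interior marked point approaches the boundary will require the most care.
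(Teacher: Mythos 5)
Your proposal and the paper's proof share the same structure: both reduce the first isomorphism to a chain homotopy between $\cap\Gamma$ and $\mu_2(\mathcal{CO}^\rho_{L_1}(\Gamma),-)$, constructed via a one-parameter family $\{S_r\}$ of $\Gamma$-inserted discs whose two endpoints recover the two operations, and both handle the second isomorphism by the mirror slide. Where your geometric picture goes wrong is the direction of the slide. The interior marked point does \emph{not} stay on the geodesic through the two chord punctures and collide with the input (or output) puncture. In the paper it starts at the centre and then moves \emph{perpendicular} to that geodesic, toward the boundary endpoint of the branch cut --- the point (in their normalisation, $-i$) where the boundary Lagrangian label jumps from $L_1$ to $\rho(L_1)$, lying strictly between the input and output punctures. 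Collision with this jump point is exactly what bubbles off a disc carrying the interior $\Gamma$-insertion together with the two boundary arcs $L_1$ and $\rho(L_1)$, i.e.\ the $\mathcal{CO}^\rho_{L_1}(\Gamma)$ disc, with a $\mu_2$-triangle remaining as the main component. If the interior point collided with the input puncture as you describe, the bubble would instead carry the chord $\xi_+$ and you would get a different operation, not $\mathcal{CO}^\rho_{L_1}(\Gamma)$ followed by $\mu_2$. The same correction applies to the second isomorphism: ``sliding toward the output'' should really be sliding toward the branch-cut endpoint after it has been relocated, via the $\rho$-equivariance of the branch-cut picture that you correctly invoke, to the $L_2$-side of the boundary, so that the bubble produces $\mathcal{CO}^\rho_{L_2}(\Gamma)$.
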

	\begin{proof}
	We construct a homotopy equivalence $K$ between $CF^*_\rho(L_1, L_2)$ and the following twisted complex of cochain complexes
	\[\begin{tikzcd}[column sep=huge] CW^* (\rho(L_1), L_2)[1]\arrow[r, "m_2(\mathcal{CO}_{L_{1}}^\rho(\Gamma){,}-)"] & CW^*(L_1, L_2).\end{tikzcd}\]
	Let $\{ S_r \}_{r\in [0,1)}$ be a 1-parameter family of decorated discs with
	\begin{itemize}
		\item one interior marked point $-ri \in D^2$, equipped with positive cylindrical end $\eta^+$,
		\item one boundary marked point $1 \in \partial D^2$ equipped with positive strip-like end $\epsilon^+$,
		\item one boundary marked point $-1\in \partial D^2$ equipped with negative strip-like end $\epsilon^-$, and
		\item a branch cut $(-ri,-i] \subset D^2$.
	\end{itemize}
	We also choose a domain-dependent almost complex structure $J_{S_r}$,  Hamiltonian function $H_{S_r}$, and sub-closed one form $\alpha$ as before.
	Consider 
	\begin{align*}
	\mathcal M_K (\Gamma , \xi_-, \xi_+; H_{S_r}, J_{S_r}):=
		\left\{u: S_r  \to M  \;\middle|\;		\begin{array}{ll}
			\left( du -X_{H_{S_r}}(u)\otimes \alpha \right)^{0,1}_{J_{S_r}}=0,\\
			\rho\circ u(e^{s+\frac{3\pi i}{2}}) = u(e^{s-\frac{\pi i}{2}}), & \log r <s <0,\\
			\lim_{s\to \pm\infty}(\epsilon^\pm)^* u = \xi_\pm, & \lim_{s\to \infty} (\eta^+)^* u = \Gamma, \\
			u(e^{\pi it}) \in\mathcal L_{1,t}, t\in (0, 1), &
			u(e^{\pi it}) \in L_2, t\in (1, 2)
			\end{array} 
		\right\}
		\end{align*}
	with a Lagrangian boundary condition as before. 
	As $r \to 1$, the domain $S_r$ degenerates into two discs which provide an $m_2\left(\mathcal {CO}_{L_1}^\rho(\Gamma), -\right)$.
	Therefore, the counting of regid elements of this moduli space defines an operator $K$, which satisfies 
	\[d\circ K+ K\circ d = (-\cap \Gamma) - m_2(\mathcal{CO}_{L_1}(\Gamma), -).\]
	This proves the first isomorphism. The second isomorphism can be proven similarly after we move the branch cut to the opposite side of the disc. 
	\end{proof}
	
	The next proposition establishes a relation between $HF^*_\rho$ and the Seifert paring.
	\begin{thm}
	\label{thm: HF is finite}
		$HF_\rho^*(L_1, L_2)$ is a finite-dimensional graded vector space whose Euler characteristic is 
		\[\chi\left( HF^*_\rho(L_1, L_2) \right) = (-1)^nS\left(v_2, v_1\right), \]
		where $S$ is the Seifert pairing (Definition \ref{defn:Seifert}) and $v_i $ are the homology classes of $\mathcal V(L_i)$.
	\end{thm}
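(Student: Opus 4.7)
The plan is to combine three ingredients: (i) the cohomological reformulation of Proposition \ref{prop:HF and V} that passes from the (infinite-dimensional) wrapped complexes to a Floer cohomology involving the proper object $\mathcal V(L_1)$, (ii) the classical identification of Euler characteristic of Lagrangian Floer cohomology with a signed intersection number, and (iii) the formula $S(v_2,v_1) = \mathrm{var}^{-1}(v_2) \bullet v_1$ of Theorem \ref{thm:about S}(1), applied with $\mathrm{var}^{-1}(v_2) = [L_2]$.

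First I would deduce finite-dimensionality: by Proposition \ref{prop:HF and V} there is a canonical isomorphism
\[HF_\rho^*(L_1,L_2) \cong HW^*(\mathcal V(L_1)[-1], L_2),\]
and by Corollary \ref{cor:VLisproper} (Theorem \ref{thm:1}) the object $\mathcal V(L_1)$ is proper, so the right-hand side is finite-dimensional in each degree and vanishes outside a bounded range. Attempting to compute the Euler characteristic directly from the cone definition is hopeless because the two wrapped complexes $CW^*(\rho(L_1),L_2)$ and $CW^*(L_1,L_2)$ are individually infinite-dimensional; the role of Proposition \ref{prop:HF and V} is precisely to repackage the difference into a proper object and make the Euler characteristic well-defined.

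Next I would identify this Euler characteristic with a topological intersection number. Because $\rho$ fixes $\partial M$, the variation $v_1 = \rho_*[L_1]-[L_1]$ lies in $H_{n-1}(M)$ (not merely in $H_{n-1}(M,\partial M)$), and by construction $\mathcal V(L_1)$ represents $v_1$; when a compact geometric representative $V_{L_1}$ exists (Theorem \ref{thm:compactrepresentative}) this is immediate, and in general it follows from the fact that the cone $\mathcal V(L_1)$ is a compact cycle in the relative sense described in Section \ref{sec:symplectic var}. Since $\mathcal V(L_1)$ is proper and $L_2$ has cylindrical ends, a small Hamiltonian perturbation (of the type used to define $HW^*$ for pairs with one compact factor) confines all generators of $CW^*(\mathcal V(L_1),L_2)$ to a compact region of $M$. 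A standard computation then yields
\[\chi\bigl(HW^*(\mathcal V(L_1),L_2)\bigr) = (-1)^{\epsilon_n}\, v_1 \bullet [L_2]\]
for some universal sign $\epsilon_n$ depending only on $n$ and the grading conventions, where $\bullet$ is the intersection pairing $H_{n-1}(M)\otimes H_{n-1}(M,\partial M)\to \Z$. Combining this with the shift $[-1]$ coming from Proposition \ref{prop:HF and V} and invoking Theorem \ref{thm:about S}(1) in the form $S(v_2,v_1) = [L_2]\bullet v_1$ yields the claimed formula after reconciling the signs $[L_2]\bullet v_1 = (-1)^{?}\, v_1\bullet [L_2]$.

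The main obstacle will be the sign bookkeeping in the second step: pinning down $\epsilon_n$ requires care about the grading of twisted complexes, the Maslov-index shift coming from passing from $HW^*$ between graded Lagrangians to signed counts of intersection points, and the symmetry of the intersection pairing between absolute and relative classes. One practical route is to verify the sign in a model case (say $f(z)=z^2$, where every quantity can be written down explicitly and $S$ is classical), which then fixes the universal normalization $(-1)^n$ in all dimensions; the rest of the argument is formal once this normalization is established.
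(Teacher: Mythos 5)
There is a genuine circularity in your finite-dimensionality step. You invoke Corollary~\ref{cor:VLisproper} (equivalently Theorem~\ref{thm:1}) to conclude that $\mathcal V(L_1)$ is proper and hence that $HW^*(\mathcal V(L_1)[-1],L_2)$ is finite-dimensional. But in the paper the logical order is the reverse: Corollary~\ref{cor:VLisproper} is \emph{deduced from} Theorem~\ref{thm: HF is finite} together with Proposition~\ref{prop:HF and V}. There is no independent proof of properness of $\mathcal V(L)$ available at the point where Theorem~\ref{thm: HF is finite} must be proved, so your argument assumes what it is trying to establish.

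The paper's actual mechanism for finiteness is the energy filtration. One decomposes both $CW^*(\check\rho(L_1),L_2)$ and $CW^*(L_1,L_2)$ by action value into a ``compact'' piece $CW^*_{\geq 0}$ (generated by interior intersections) and a ``wrapped at infinity'' piece $CW^*_{<0}$ (generated by boundary Reeb chords). The cap action $\cap\Gamma$ respects this filtration, and the crucial claim --- proved using Theorem~\ref{thm: PSS CO diagram} and the local PSS computation --- is that on $CW^*_{<0}$ the map $\cap\Gamma$ is lower-triangular with identity on the diagonal with respect to the energy filtration, hence a quasi-isomorphism. Therefore the cone of $\cap\Gamma$ is quasi-isomorphic to the cone of its restriction to the finite-dimensional $CW^*_{\geq 0}$ pieces, which gives finite-dimensionality \emph{and} simultaneously identifies the Euler characteristic as $[L_1]\bullet[L_2]-[\rho(L_1)]\bullet[L_2]$. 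This is the key idea missing from your proposal: the closing of the ``infinite part'' must come from the action-filtered structure of the quantum cap action itself, not from properness of $\mathcal V(L_1)$, which is only available a posteriori.

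Your treatment of the Euler characteristic (reducing to a signed intersection count, then invoking Theorem~\ref{thm:about S}(1) and graded-symmetry of $\bullet$) matches the paper's final lines in spirit. But as written it also depends on the unproved compactness, since without the filtration argument you cannot yet compute the Euler characteristic from compact data. Once the energy-filtration step is in place, the remaining identification $[L_1]\bullet[L_2]-[\rho(L_1)]\bullet[L_2]=-v_1\bullet\mathrm{var}^{-1}(v_2)=(-1)^nS(v_2,v_1)$ is exactly what the paper does and your account of the sign bookkeeping is consistent with it.
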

	\begin{proof}
	We use $\check \rho$ in the proof so that the action of $\Gamma$ is clear. 
	Consider the following diagram of short exact sequences
	\[
	\begin{tikzcd}
	 0  \ar[r] & CW^*_{\geq 0}\left(\check \rho(L_1), L_2\right) \ar[d, "\cap \Gamma"] \ar[r] &CW^* \left(\check \rho(L_1), L_2\right) \ar[d, "\cap\Gamma"]\ar[r] & CW^*_{<0}\left(\check \rho(L_1), L_2\right) \ar[d, "\cap \Gamma"] \ar[r] & 0 \\
	 0  \ar[r] & CW^*_{\geq0}\left( L_1, L_2\right)  \ar[r] &CW^* \left( L_1, L_2\right) \ar[r] & CW^*_{<0}\left(L_1, L_2\right) \ar[r] & 0.
	\end{tikzcd}
	\]
	The horizontal short exact sequences are the decompositions of Floer complexes by the energy filtration; $CW^*_{\geq 0}$ denotes a subcomplex generated by generators whose action value is nonnegative, i.e., interior intersection points. 
	The vertical maps are the twisted quantum cap action $\cap \Gamma$ and its induced maps. 
	They make sense because the action value of $\Gamma$ can be made arbitrarily small. 

	{\em Claim: The right vertical map is an isomorphism.} 
	Note that the claim is true for an obvious reason when one of the $L_i$ is closed. 
	We may assume that both $L_i$ have cylindrical ends. 
	The claim follows if one shows that the right vertical map has the identity diagonal because the map is always lower-triangular with respect to the energy filtration.
	To be more specific about what it means, note that $CW^*_{<0}(\check \rho(L_1), L_2)$ is canonically identified with $CW^*_{<0}(\phi_{b}^1 (L_1), L_2)$, where $b$ is the Hamiltonian of slope $-\epsilon$ that appeared in Proposition \ref{prop:perturbation}.
	We see that for each $\xi_+ \in CW^*(\check \rho(L_1), L_2)$ that is not an interior intersection, there is a canonical element $\xi_-= \phi_{-b}^1 \circ \xi \in CW^*(L_1, L_2)$.
	We claim that the following moduli space of curves
	\[ \mathcal M_K(\Gamma, \xi_-, \xi_+; U) := \left \{ u\in \mathcal M_K(\Gamma, \xi_-, \xi_+) | u(S)\subset U \right\}\] 
	contains a single element for sufficiently small $\epsilon$. 
	It means that the induced map $\cap \Gamma$ on the associated graded module of energy filtration is the identity. 
		
	The rest of the claim is similar to \cite[Theorem 6.8]{Rit13} and Theorem \ref{thm: PSS CO diagram}. 
	By Theorem \ref{thm: PSS CO diagram},  the associated graded map of $\cap \Gamma$ is equal to $\cap \Phi([U])$. 
	There is a homotopy between $\mathcal M_K(\Gamma, \xi_-, \xi_+; U)$ and the moduli space of pairs consisting of $(u, l)$, where $u$ is a pseudo-holomorphic strip used to define the continuation map and $l$ is a Morse flow emanating from the fundamental class $[U]$ attached to the center of the strip; see Figure \ref{fig:QC}.
	The second condition is vacuous.
	There is one and only one such flow for generic points because the fundamental class is represented by the minimum of Morse function $f$ on $U$.
	Meanwhile, there is a unique Floer continuation strip from $\xi_+$ to $\xi_-$ for small $\epsilon$, which corresponds to the constant strip for $\epsilon=0$. 
	
\begin{figure}[h]
\includegraphics[scale=1]{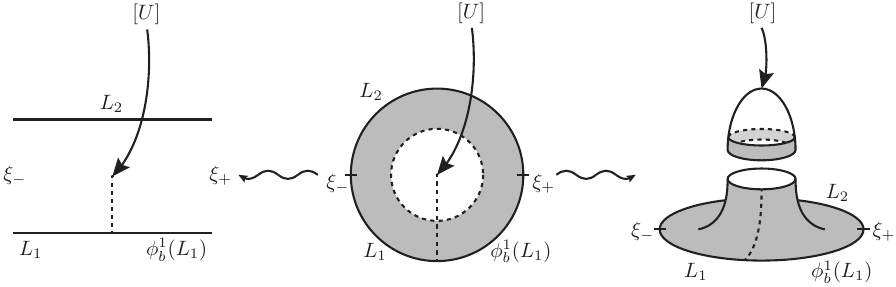}
\centering
\caption{ }
\label{fig:QC}
\end{figure}
	
	The claim implies that the right vertical map is homotopic to a lower-triangular matrix whose diagonal is the identity.
	Therefore, the right vertical map is null-homotopic and $HF^*_\rho(L_1, L_2)$ is isomorphic to the cohomology of the left vertical arrow in the diagram which is of finite dimension.

	Without loss of generality, one may assume $L_1$ and $L_2$ do not intersect along $\partial M$. Then the Euler number of $CW^*_{\geq0}(L, K)$ provides a signed intersection number $[L]\bullet[K]$ between geometric cycles. By \eqref{eqn: S by var}, 
	\begin{align*} 
	\chi\left( HF^*_\rho(L_1, L_2) \right) & = [L_1]\bullet [L_2]  - [\rho(L_1)] \bullet [L_2]  = - \left(v_1\bullet \mathrm{var}^{-1}(v_2) \right)=(-1)^nS\left(v_2, v_1\right).
	\end{align*}
	\end{proof}
	
	\begin{cor}
	\label{cor:VLisproper}
	The variation $\mathcal V(L)$ is a proper object for any Lagrangian $L$.
	\end{cor}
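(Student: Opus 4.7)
The plan is to deduce the corollary directly from the two preceding results, Proposition \ref{prop:HF and V} and Theorem \ref{thm: HF is finite}, without any new geometric input. Recall that an object $X$ of $\mathcal{WF}(M)$ is proper if and only if $HW^*(X,L')$ and $HW^*(L',X)$ are finite-dimensional for every object $L'$, so we must verify these two finiteness statements for $X=\mathcal V(L)$.

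For the first statement, the first isomorphism of Proposition \ref{prop:HF and V}, applied with $L_1=L$ and $L_2=L'$, yields
\[
HW^*\!\left(\mathcal V(L),L'\right)\;\cong\;HF^{*+1}_\rho(L,L'),
\]
after accounting for the degree shift in $\mathcal V(L)[-1]$. The right-hand side is finite-dimensional by Theorem \ref{thm: HF is finite}, which proves the first half.

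For the second statement, I would invoke the second isomorphism of Proposition \ref{prop:HF and V}. Here one has to be slightly careful: that isomorphism reads $HF^*_\rho(L_1,L_2)\cong HW^*(\rho(L_1),\mathcal V(L_2))$, so to get $HW^*(L',\mathcal V(L))$ on the right we substitute $L_1:=\rho^{-1}(L')$ and $L_2:=L$. Since $\rho$ is an exact symplectomorphism equal to the identity near $\partial M$ (by our choice of the Fernandez de Bobabilla–Pe\l ka model), $\rho^{-1}(L')$ is an honest object of $\mathcal{WF}(M)$ with the same Legendrian boundary as $L'$, so the substitution is legal. We then obtain
\[
HW^*\!\left(L',\mathcal V(L)\right)\;\cong\;HF^*_\rho\!\left(\rho^{-1}(L'),L\right),
\]
and the right-hand side is again finite-dimensional by Theorem \ref{thm: HF is finite}.

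There is really no main obstacle here — all of the analytic content has been concentrated in Theorem \ref{thm: HF is finite}, whose proof uses the action filtration together with the local computation of $\cap\Gamma$ on short Hamiltonian chords. The only bookkeeping point to double-check is the degree convention in the first isomorphism (making sure the shift $[-1]$ in $\mathcal V(L)[-1]$ is correctly transported to $HF^{*+1}_\rho$) and the verification that $\rho^{-1}(L')$ lies in $\mathcal{WF}(M)$, both of which are immediate.
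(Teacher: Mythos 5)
Your proof is correct and follows exactly the route the paper itself takes: the paper's one-line proof cites Proposition \ref{prop:HF and V} together with Theorem \ref{thm: HF is finite}, and you simply spell out the substitutions (including the useful observation that $\rho^{-1}(L')$ is a legitimate object of $\mathcal{WF}(M)$). The only tiny issue is a sign in the degree shift — $HW^*(\mathcal V(L)[-1],L') \cong HW^{*+1}(\mathcal V(L),L')$ in the standard convention, so $HW^*(\mathcal V(L),L') \cong HF_\rho^{*-1}(L,L')$ rather than $HF_\rho^{*+1}(L,L')$ — but this is immaterial to finiteness.
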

	\begin{proof}
	The assertion follows from Proposition \ref{prop:HF and V} combined with Theorem \ref{thm: HF is finite}. 
	\end{proof}

	\begin{prop}
	\label{prop:Serre dual}
	The following hold.
	\begin{enumerate}
		\item There is a canonical duality $HF^*_\rho(L_1, L_2) \simeq HF^{n-2-*}_\rho\left(L_2, \rho(L_1)\right)^\vee$.
		\item There is a long exact sequence of cohomologies
		\[
		\begin{tikzcd}[column sep=small]
		\cdots \arrow[r] & HF_\rho^{n-1-*}(L_2,L_1)^\vee \arrow[r] & 
		HW^*\left(\mathcal V(L_1), \mathcal V(L_2)\right) \arrow[r] & HF_\rho^*(L_1,L_2) \arrow[r] &
		HF_\rho^{n-2-*}(L_2,L_1)^\vee \arrow[r] & \cdots.
		\end{tikzcd}
		\]
	\end{enumerate}
	\end{prop}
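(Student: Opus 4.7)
The plan is to deduce both assertions from Proposition \ref{prop:HF and V}, Corollary \ref{cor:VLisproper}, and Poincar\'e (Serre) duality in the wrapped Fukaya category of the Milnor fiber. Recall that $M$ has complex dimension $n-1$, so that for any proper object $K$ and any object $L$ of $\mathcal{WF}(M)$ we have a perfect pairing
\[ HW^*(K,L) \cong HW^{n-1-*}(L,K)^\vee. \]
By Corollary \ref{cor:VLisproper} the variation $\mathcal V(L_i)$ is proper, so this pairing applies whenever $\mathcal V(L_i)$ occupies one of the two slots.

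For (1), I first rewrite the left-hand side using the second isomorphism of Proposition \ref{prop:HF and V}:
\[ HF^j_\rho(L_1,L_2) \cong HW^j(\rho(L_1),\mathcal V(L_2)). \]
Poincar\'e duality gives $HW^j(\rho(L_1),\mathcal V(L_2)) \cong HW^{n-1-j}(\mathcal V(L_2),\rho(L_1))^\vee$. The first isomorphism of Proposition \ref{prop:HF and V}, applied to the pair $(L_2,\rho(L_1))$, identifies $HF^k_\rho(L_2,\rho(L_1))$ with $HW^k(\mathcal V(L_2)[-1],\rho(L_1)) = HW^{k+1}(\mathcal V(L_2),\rho(L_1))$; taking $k=n-2-j$ yields the desired isomorphism $HF^j_\rho(L_1,L_2) \cong HF^{n-2-j}_\rho(L_2,\rho(L_1))^\vee$.

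For (2), I apply the contravariant functor $HW^*(-,\mathcal V(L_2))$ to the distinguished triangle
\[ L_1 \xrightarrow{\mathcal{CO}^\rho_{L_1}(\Gamma)} \rho(L_1) \to \mathcal V(L_1) \to L_1[1] \]
coming from the cone presentation of $\mathcal V(L_1)$. This produces a long exact sequence
\[ \cdots \to HW^{*-1}(L_1,\mathcal V(L_2)) \to HW^*(\mathcal V(L_1),\mathcal V(L_2)) \to HW^*(\rho(L_1),\mathcal V(L_2)) \to HW^*(L_1,\mathcal V(L_2)) \to \cdots. \]
By Proposition \ref{prop:HF and V} the third term equals $HF^*_\rho(L_1,L_2)$. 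For the outer terms, Poincar\'e duality combined with the first isomorphism of Proposition \ref{prop:HF and V} (now used to identify $HF^*_\rho(L_2,L_1)$) yields
\[ HW^j(L_1,\mathcal V(L_2)) \cong HW^{n-1-j}(\mathcal V(L_2),L_1)^\vee \cong HF^{n-2-j}_\rho(L_2,L_1)^\vee. \]
Substituting $j=*-1$ and $j=*$ recovers exactly the long exact sequence in the statement.

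The only conceptual point requiring care is the use of Poincar\'e duality with the twisted complex $\mathcal V(L_2)$ as one of the two arguments. This is legitimate because the duality is a formal consequence of the Calabi-Yau structure on the subcategory of proper modules over $\mathcal{WF}(M)$ and is stable under shifts and cones; alternatively, whenever Theorem \ref{thm:compactrepresentative} applies one may replace $\mathcal V(L_2)$ by its compact geometric representative $V_{L_2}$ and reduce to the classical Poincar\'e duality pairing between a compact and a non-compact Lagrangian. Once this input is in place, both (1) and (2) reduce to bookkeeping of the degree shifts induced by the cone and by the $[-1]$ in Proposition \ref{prop:HF and V}.
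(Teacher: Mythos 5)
Your proof is correct and follows essentially the same line as the paper's: part (1) is the identical chain of isomorphisms combining both statements of Proposition \ref{prop:HF and V} with the Calabi--Yau duality pairing applied to the proper object $\mathcal V(L_2)$, and part (2) uses the same cone triangle / short exact sequence for $\mathcal V(L_1)$ with the same identification of the outer terms.
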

	\begin{proof}
	$\WF(M)$ carries a smooth $(n-1)-$Calabi-Yau structure \cite{Ga}, which induces a proper Calabi-Yau structure on its subcategory generated by proper objects \cite[Theorem 3.1]{BD19}. Since $\mathcal V (L)$ is proper by Corollary \ref{cor:VLisproper}, we get a functorial trace $CF^{n-1}(\mathcal V(L_2), \mathcal V(L_2)) \xrightarrow{\mathrm{tr}} \C$ and a pairing
	\[CF^*(\rho(L_1), \mathcal V(L_2))\otimes CF^{n-1-*}(\mathcal V(L_2), \rho(L_1)) \xrightarrow{\mu_2} CF^{n-1}(\mathcal V(L_2), \mathcal V(L_2)) \xrightarrow{\mathrm{tr}} \C.\]
	The pairing is nondegenerate due to \cite[Equation (2.13)]{BD19} combined with the given smooth Calabi-Yau structure on $\WF(M)$.
	Then the first claim follows from the chain of isomorphisms:
	\begin{align*}
	HF^{*}_\rho(L_1, L_2) &\simeq HW^{*}(\rho(L_1), \mathcal V(L_2))\\
	&\simeq HW^{n-1-*}(\mathcal V(L_2), \rho(L_1) )^\vee\\
	&\simeq HW^{n-2-*}(\mathcal V(L_2)[-1], \rho(L_1) )^\vee\\
	&\simeq HF^{n-2-*}_\rho(L_2, \rho(L_1))^\vee.
	\end{align*}
	The second statement follows from the long exact sequence
	\[
	\begin{tikzcd}[column sep=small]
	\cdots \ar[r] & HW^{*-1}(L_1, \mathcal V(L_2)) \ar[r] & HW^*( \mathcal V(L_1), \mathcal V(L_2)) \ar[r] & HW^* (\rho(L_1), \mathcal V(L_2)) \ar[r] & HW^*(L_1, \mathcal V(L_2)) \ar[r] & \cdots
	\end{tikzcd}
	\]
	induced by the canonical short exact sequence:
	\[
	\begin{tikzcd}
	0 \ar[r] & CW^*(L_1[1], \mathcal V(L_2)) \ar[r, "\mathrm{inc.}"] & CW^*( \mathcal V(L_1), \mathcal V(L_2)) \ar[r, "\mathrm{proj.}"] & CW^* (\rho(L_1), \mathcal V(L_2)) \ar[r] &0.
	\end{tikzcd}
	\]
	Then the second statement follows from the first statement combined with Proposition \ref{prop:HF and V}. 
	\end{proof}

 \section{Adapted family and exceptional collections}
 \label{sec:distinguished}
 
In this section, we deal with the categorification of another property of the Seifert form $S$: it admits a lower-triangular presentation. 

Recall that given a Lefschetz fibration over $\C$, one can pick a base point on $\C$, choose vanishing paths to the critical values in $\C$, and find an ordered family of vanishing cycles, say $(V_1,\cdots, V_\mu)$ in the Milnor fiber.
Such a choice is called a {\em distinguished collection} of vanishing cycles which depends on the choice of vanishing paths.
Then the Seifert form $S$ has the following presentation:
	\begin{align}
	S(V_i, V_j) = \left\{ 
		\begin{array}{ll} 
		0 & (i<j), \\
		1& (i=j).
		\end{array} \right.
	\end{align}
A natural counterpart of this property is {\em directedness}.
Indeed, Fukaya-Seidel category of a Lefschetz fibration is built upon this property. 
It can be described as a directed $\AI$-category whose objects are $V_1, \ldots V_\mu$ and morphism spaces between them are defined by
	\begin{align}
	\hom^*_{FS}(V_i, V_j) = \left\{ 
		\begin{array}{ll} 
		0 & (i>j), \\ 
		\mathbb K \cdot e_{V_i}[0]& (i=j),\\
		CF^*(V_i, V_j) & (i<j).
		\end{array} \right.
	\end{align}
Observe that directedness is imposed as a definition. Namely, the Poincar\'e duality $HF^{*}(V_i,V_j) \cong HF^{n-1-*}(V_j,V_i)^\vee$ has been broken on purpose.
Seidel showed that its derived category does not depend on the choice of distinguished collection.

We will see that the story for $HF^*_\rho$ goes the other way around. 
It is a property of a collection of Lagrangians on which $HF_\rho^*$ vanishes in one direction.

\begin{defn}[Exceptional collection] \label{defn:dist}
An ordered family of Lagrangians $(L_1,\cdots, L_m)$ is said to be an {\em exceptional collection} with respect to $HF^*_\rho$ if
\begin{enumerate}
\item\label{item:dist1} $ HF^*_\rho(L_i, L_j) \cong 0$ for any $ i>j $,  and
\item\label{item:dist2} $ HF^*_\rho(L_i,L_i) \cong \mathbb{K} \cdot e_i [0]$ for any $i$.
\end{enumerate}
\end{defn}

The following calculations justify the name "exceptional".
\begin{prop}
\label{prop:distinguished vanishing}
Suppose that $(L_1,\cdots, L_k)$ is an exceptional collection with respect to $HF^*_\rho$.
For $i<j$, we have
\begin{enumerate}
\item  $HW^*\left(\mathcal V(L_j), L_i\right) = HW^*\left(L_i,\mathcal V(L_j)\right) = 0,$
\item  $HW^*\left(\rho(L_j), \mathcal V(L_i)\right) = HW^*\left(\mathcal V(L_i), \rho(L_j)\right) =0,$ and
\item  $HF_\rho^*(L_i, \rho(L_j)) =0$.
\end{enumerate}
\end{prop}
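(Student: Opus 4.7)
The plan is to derive all three vanishings formally from the two isomorphisms of Proposition \ref{prop:HF and V}, together with Serre duality on the wrapped Fukaya category. Throughout, fix $i<j$.

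First, I would feed the exceptionality hypothesis into Proposition \ref{prop:HF and V} with $(L_1,L_2)=(L_j,L_i)$. Since $j>i$, condition \eqref{item:dist1} of Definition \ref{defn:dist} gives $HF^*_\rho(L_j,L_i)=0$, so the two isomorphisms
\[
HF^*_\rho(L_j,L_i)\;\cong\; HW^*(\mathcal V(L_j)[-1],L_i)\;\cong\; HW^*(\rho(L_j),\mathcal V(L_i))
\]
immediately yield the first equalities in (1) and (2); the cohomological shift by $-1$ is irrelevant for a vanishing statement.

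Next, I would transfer these vanishings to the opposite direction by Serre duality. As recalled in the proof of Proposition \ref{prop:Serre dual}, $\mathcal{WF}(M)$ carries a smooth $(n-1)$-Calabi--Yau structure, and by Corollary \ref{cor:VLisproper} every $\mathcal V(L_a)$ is a proper object. Consequently, there is a natural perfect pairing $HW^*(K_1,K_2)\cong HW^{n-1-*}(K_2,K_1)^\vee$ whenever at least one of $K_1,K_2$ is proper. Applying this with $(K_1,K_2)=(L_i,\mathcal V(L_j))$ and with $(K_1,K_2)=(\mathcal V(L_i),\rho(L_j))$ converts the results of the previous step into the remaining halves of (1) and (2).

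Finally, for (3) I would re-apply Proposition \ref{prop:HF and V}, this time with $(L_1,L_2)=(L_i,\rho(L_j))$: the first isomorphism reads $HF^*_\rho(L_i,\rho(L_j))\cong HW^*(\mathcal V(L_i)[-1],\rho(L_j))$, and the right-hand side has just been shown to vanish in the second equality of (2).

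I do not foresee any serious obstacle; the argument is a pure formal chase through the structural results of Section \ref{sec:HFrho}. The only bookkeeping to watch is the degree shift $[-1]$ appearing in Proposition \ref{prop:HF and V} and the direction of Serre duality, neither of which affects vanishing statements.
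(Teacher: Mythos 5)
Your proof is correct and follows essentially the same route as the paper's: parts (1) and (2) come from the two isomorphisms in Proposition \ref{prop:HF and V} combined with the smooth Calabi--Yau (Serre) duality and properness of $\mathcal V(L_a)$, which is precisely the machinery set up in the proof of Proposition \ref{prop:Serre dual}. The only cosmetic divergence is in (3): the paper applies the duality isomorphism of Proposition \ref{prop:Serre dual}(1) with $(L_1,L_2)=(L_j,L_i)$ to conclude $HF^{n-2-*}_\rho(L_i,\rho(L_j))^\vee=0$ directly from $HF^*_\rho(L_j,L_i)=0$, whereas you re-apply Proposition \ref{prop:HF and V} to the pair $(L_i,\rho(L_j))$ and quote the second half of (2); the two derivations are equivalent one-liners, and your version has the small pedagogical merit of making explicit that the Serre-duality step you already used in (1)--(2) is what feeds (3) as well.
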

\begin{proof}
The first and second follow from Proposition \ref{prop:HF and V}.
The third one follows from Proposition \ref{prop:Serre dual}.
\end{proof}

\begin{cor}
\label{cor:HF and FS}
Suppose that $(L_1,\cdots, L_k)$ is an exceptional collection with respect to $HF^*_\rho$. Then
	\[HW^*(\mathcal{V}(L_i), \mathcal{V}(L_j)) \cong \left\{
	\begin{array}{ll}
	\mathbb K[0] \oplus \mathbb K [n-1] &(i=j),\\
	HF_\rho^* (L_i, L_j) & (i<j),\\
	HF_\rho^{n-1-*} (L_j, L_i)^\vee & (i>j) 
	\end{array}
	\right.
	, \hspace{5pt}
	HF_\rho^*(L_i, L_j) \cong \left\{
	\begin{array}{ll}
	0 & (i>j),\\
	\mathbb K[0] & (i=j),\\
	HW^*(\mathcal{V}(L_i), \mathcal{V}(L_j))& (i<j).\\
	\end{array}
	\right.
	 \]
\end{cor}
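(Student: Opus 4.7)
The plan is to derive both tables of the corollary simultaneously from the long exact sequence of Proposition \ref{prop:Serre dual}(2),
\[
\cdots \to HF_\rho^{n-1-*}(L_j, L_i)^\vee \to HW^*(\mathcal V(L_i), \mathcal V(L_j)) \to HF_\rho^*(L_i, L_j) \to HF_\rho^{n-2-*}(L_j, L_i)^\vee \to \cdots,
\]
feeding in the vanishing axioms of Definition \ref{defn:dist}. Note that both outer terms are just the single functorial quantity $HF_\rho^{\bullet}(L_j, L_i)^\vee$ evaluated at two consecutive degrees, so the sequence really encodes a distinguished triangle.

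First I would dispose of the off-diagonal cases. For $i<j$, condition (\ref{item:dist1}) applied to the reversed pair $(L_j, L_i)$ kills both outer terms identically, and the long exact sequence collapses to the desired isomorphism $HW^*(\mathcal V(L_i), \mathcal V(L_j)) \cong HF_\rho^*(L_i, L_j)$. This simultaneously yields the $i<j$ entries of both tables. For $i>j$, condition (\ref{item:dist1}) instead kills the middle term $HF_\rho^*(L_i, L_j)$, and the connecting homomorphism supplies an isomorphism $HF_\rho^{n-2-*}(L_j, L_i)^\vee \xrightarrow{\sim} HW^{*+1}(\mathcal V(L_i), \mathcal V(L_j))$; reindexing gives $HW^*(\mathcal V(L_i), \mathcal V(L_j)) \cong HF_\rho^{n-1-*}(L_j, L_i)^\vee$, the remaining off-diagonal entry.

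Next, for the diagonal $i=j$ I would substitute $HF_\rho^*(L_i, L_i) = \mathbb K \cdot e_i[0]$ from (\ref{item:dist2}) into the sequence. Exactly three values of $*$ receive nonzero inputs: $*=0$ from the middle term, $*=n-1$ from the left outer dual, and $*=n-2$ from the right outer dual. When $n\ge 3$ these three values are pairwise distinct, all connecting maps in the vicinity land in or emanate from zero for degree reasons, and a direct diagram chase gives $HW^0 \cong \mathbb K$, $HW^{n-1} \cong \mathbb K$, with all other degrees vanishing.

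The main obstacle will be the case $n=2$, $i=j$, where $n-2$ and $0$ collide and the segment
\[
0 \to HW^0(\mathcal V(L_i),\mathcal V(L_i)) \to HF_\rho^0(L_i,L_i) \xrightarrow{\gamma} HF_\rho^0(L_i,L_i)^\vee \to HW^1(\mathcal V(L_i),\mathcal V(L_i)) \to 0
\]
features a potentially nonzero connecting map $\gamma$ that a purely formal argument cannot exclude. To handle it, I would observe that $\mathcal V(L_i)$ is a nonzero object of $\mathcal{WF}(M)$ so that its identity endomorphism produces a nontrivial class in $HW^0(\mathcal V(L_i),\mathcal V(L_i))$; since this kernel is nonzero, $\gamma=0$, and Calabi-Yau duality (applicable by Corollary \ref{cor:VLisproper}) furnishes the matching generator in $HW^{n-1}$. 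Together this rules out $\gamma$ being an isomorphism and delivers the expected $\mathbb K[0]\oplus\mathbb K[n-1]$ on the diagonal. Finally, the three entries of the second table follow directly, since $i>j$ and $i=j$ are given by (\ref{item:dist1}) and (\ref{item:dist2}), while $i<j$ was already obtained above.
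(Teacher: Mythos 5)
Your proof is correct and follows the same basic route as the paper: feed the vanishing and rank-one axioms of Definition~\ref{defn:dist} into the long exact sequence of Proposition~\ref{prop:Serre dual}(2) (the paper routes this through Proposition~\ref{prop:distinguished vanishing}, which is itself just that same combination packaged as intermediate vanishings). What you add, and what the paper's one-line proof leaves entirely implicit, is the treatment of the degenerate case $n=2$ on the diagonal, where the degrees $0$ and $n-2$ collide and the four-term segment
\[
0 \to HW^0(\mathcal V(L_i),\mathcal V(L_i)) \to \mathbb K \xrightarrow{\;\gamma\;} \mathbb K \to HW^1(\mathcal V(L_i),\mathcal V(L_i)) \to 0
\]
could formally have $\gamma$ an isomorphism; your observation that $\mathcal V(L_i)\ne 0$ (which follows since $HF_\rho^*(L_i,L_i)\cong HW^*(\mathcal V(L_i)[-1],L_i)\ne 0$ by Proposition~\ref{prop:HF and V}) forces a nonzero identity class in $HW^0$, hence $\gamma=0$, and the exact sequence then delivers $\mathbb K[0]\oplus\mathbb K[1]$ with no further input. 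Note that once $\gamma=0$ is established, your appeal to Calabi--Yau duality for $HW^{n-1}$ is redundant --- the cokernel of $\gamma$ already gives the second generator --- but this does not affect the validity of the argument; since the paper's main application in Section~\ref{sec:exceptionalcollection} is precisely the plane-curve case $n=2$, spelling out this step is worthwhile.
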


\begin{proof}
	The assertion follows from Proposition \ref{prop:distinguished vanishing} together with the long exact sequence in Proposition \ref{prop:Serre dual}.
\end{proof}

The Corollary \ref{cor:HF and FS} invokes the following question: 
suppose that we have a distinguished collection of vanishing cycles $(V_1,\cdots, V_\mu)$ for a given singularity $f$.
Is it possible to find an exceptional collection $(K_1,\cdots, K_\mu)$ which maps to the corresponding vanishing cycles under the variation operator? 
We solve this problem topologically first.
Let us denote the geometric signed intersection number $V_i \bullet V_j = a_{ij} \in \Z $ for $i\neq j$.

\begin{defn} \label{defn:adapted}
A collection of Lagrangians $(K_1,\cdots, K_\mu)$ is said to be {\em adapted} to the distinguished collection of vanishing cycles $(V_1,\cdots, V_\mu)$ if
it satisfies the following intersection conditions.

\begin{enumerate}
\item For any $j > i$, we require that $K_j \bullet V_i =- ( -1)^\frac{n(n-1)}{2} a_{ji}$.
\item For any $j < i$, we require that $K_j  \cap V_i = \emptyset$.
\item We require that $K_j$ intersects $V_j$ at a single point positively.
\end{enumerate}
The sign in (1) is due to the Picard Lefschetz formula.
\end{defn}
\begin{remark}
We may relax the conditions (2) and (3) by $K_j \bullet V_i = 0$ and $K_j \bullet V_j = 1$ respectively,
if we are only interested in the topological considerations.
But the current conditions (2) and (3) would imply that this gives an exceptional collection in Definition \ref{defn:dist}.
\end{remark}

\begin{example}\label{ex:A}

Let us discuss  the $A_{n}$ singularity given by $f(x,y) = x^{n+1} + y^{2}$.
For simplicity, we will use pictures of $A_{4}$  case but the idea generalizes easily to all $A_n$ cases. 
The $A_4$ Milnor fiber is a genus two Riemann surface with one boundary component. 
There is a distinguished collection of vanishing cycles
$$(V^{-}_{1}, V^{-}_{2}, V^{0}_{1},V^{0}_{2}).$$
(These labelings will be introduced in the next section.)
The Milnor fiber $M_{f}$ and vanishing cycles are given in Figure \ref{fig:A4} (A).
Here we orient the vanishing cycles so that $V^0_i \bullet V^-_{j} = +1$ when they intersect.

\begin{figure}[h]
\begin{subfigure}[t]{0.43\textwidth}
\includegraphics[scale=1]{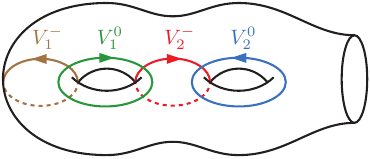}
\centering
\caption{Vanishing cycles}
\end{subfigure}
\begin{subfigure}[t]{0.43\textwidth}
\includegraphics[scale=1]{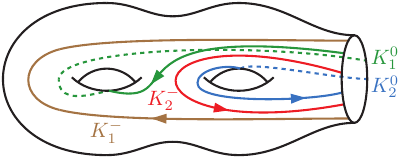}
\centering
\caption{Adapted family}
\end{subfigure}
\centering
\caption{$A_{4}$ Milnor fiber}
\label{fig:A4}
\end{figure}
One can find easily the adapted family as in Figure \ref{fig:A4} (B).
Note that $K^-_i$ only intersects $V^-_i$, but $K^0_i$ intersects  not only $V^0_i$ but also neighboring vanishing cycles
(due to condition (1) above).
\end{example}

Now, let us show that an adapted family solves the topological representability.
\begin{prop}
\label{lem:adpatedvar}
If $(K_1,\cdots, K_\mu)$ is adapted to $(V_1,\cdots, V_\mu)$, then  we have
$$\mathrm{var} ([K_i]) = ( -1)^\frac{n(n-1)}{2} [V_i], \;\;\; \forall i=1 \ldots \mu$$ 
\end{prop}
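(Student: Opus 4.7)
The plan is to apply the Picard--Lefschetz formula iteratively to the factorization of the geometric monodromy into Dehn twists along the distinguished vanishing cycles. Recall that on $H_{n-1}(M,\partial M)$ one has
\[
\rho_* = \tau_{V_1*}\circ \tau_{V_2*}\circ \cdots \circ \tau_{V_\mu *},
\]
and each Dehn twist acts via
\[
\tau_{V*}(l) = l + (-1)^{n(n-1)/2}\,(l\bullet V)\,V,
\]
the sign being the one built into the definition of adapted. Writing $c := (-1)^{n(n-1)/2}$, I will show that $\rho_*([K_i]) = [K_i] + c[V_i]$, which immediately yields $\mathrm{var}([K_i]) = c[V_i]$.

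Applying the twists in the order $\tau_{V_\mu*},\tau_{V_{\mu-1}*},\dots,\tau_{V_1*}$, the computation splits into three stages. In the first stage, condition (2) gives $K_i \cap V_j = \emptyset$ for every $j>i$, so $K_i\bullet V_j = 0$ and the first $\mu-i$ twists fix $[K_i]$. In the second stage, condition (3) gives $K_i\bullet V_i = 1$, so $\tau_{V_i *}([K_i]) = [K_i] + c[V_i]$. In the third stage, I apply the remaining twists $\tau_{V_{i-1}*},\dots,\tau_{V_1 *}$ to $[K_i] + c[V_i]$: for each $j<i$, the correction along $V_j$ receives $c(K_i\bullet V_j) V_j$ from the $[K_i]$ summand and $c^2(V_i\bullet V_j)V_j = (V_i\bullet V_j)V_j$ from the $c[V_i]$ summand. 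Condition (1), applied with the indices swapped, reads $K_i\bullet V_j = -c(V_i\bullet V_j)$ whenever $i>j$, so the first contribution equals $-(V_i\bullet V_j)V_j$ and cancels the second. Hence $[K_i]+c[V_i]$ is preserved by each remaining twist, and the identity follows.

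There is no substantive obstacle beyond sign bookkeeping. The numerical factors and vanishing conditions in the definition of adapted have been engineered so that the Picard--Lefschetz corrections cancel term by term at each stage; the only subtle point is fixing a compatible convention for the ordering of the Dehn twists in the factorization of $\rho$ and for the sign in Picard--Lefschetz, which together pin down the overall factor $(-1)^{n(n-1)/2}$ that appears in the conclusion.
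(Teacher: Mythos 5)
Your proof is correct and follows essentially the same approach as the paper's: factor the monodromy as $\rho = \tau_{V_1}\circ\cdots\circ\tau_{V_\mu}$, use condition (2) to show the twists with index $>i$ fix $[K_i]$, condition (3) to get the single $c[V_i]$ correction from $\tau_{V_i}$, and condition (1) to show the lower-indexed twists then act trivially by cancellation. The only cosmetic difference is that you spell out the cancellation for a general $j < i$ while the paper writes it out for $j = i-1$ and says "proceeding similarly."
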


\begin{proof}
The monodromy map $\rho$ of $f$ is given by the composition of Dehn twists
$$\rho = \tau_1 \tau_2 \cdots  \tau_\mu.$$
We have the following from the intersection condition (2):
$$\tau_{i+1} \cdots \tau_\mu (K_i) = K_i.$$
Thus,
$$\rho(K_i) = \tau_1 \tau_2 \cdots \tau_i (K_i).$$
Here,  $\tau_i(K_i)$ has the homology class of $[K_i]+ ( -1)^\frac{n(n-1)}{2}[V_i]$.
Thus $\tau_{i-1}(\tau_i(K_i))$ is 
$$[K_i] +( -1)^\frac{n(n-1)}{2} [V_i]  + ( -1)^\frac{n(n-1)}{2}( K_i \bullet V_{i-1} +( -1)^\frac{n(n-1)}{2}V_i \bullet V_{i-1}  ) [V_{i-1}] =[K_i] + ( -1)^\frac{n(n-1)}{2}[V_i] $$
where the latter two terms cancel out from our intersection condition (1).
Proceeding similarly, observe that $\rho (K_i)$ represents the class $[K_i] + ( -1)^{n(n-1)/2}[V_i]$.
Therefore we have
$$[\rho (K_i)] - [K_i] = [K_i]+ ( -1)^\frac{n(n-1)}{2}[V_i] -[K_i] = ( -1)^\frac{n(n-1)}{2}[V_i].$$
\end{proof}

\section{Exceptional collections for some curve singularities}
\label{sec:exceptionalcollection}
In this section, we construct exceptional collections of Lagrangians for some curve singularities.
Recall that the elegant construction of A'Campo,  called {\em divide}  \cite{ACampo1975} gives an explicit description of
a distinguished collection of vanishing cycles and the Milnor fiber. 
For simple divides, called divides of depth 0, we will construct an exceptional collection of non-compact connected Lagrangians adapted to this distinguished collection of vanishing cycles. For divides of higher depth, we expect that the corresponding objects are twisted complexes built out of disjoint non-compact Lagrangians. We will illustrate it in the case of depth 1.

\subsection{A'Campo divide}
Let us recall briefly the notion of divide and $A\Gamma$ diagram. First, we introduce a topological definition of divide. See \cite{ACampo1975}, \cite{Areal} for more details.

Let $C$ be the disjoint union of a finite set of intervals $\{ I_{i} \cong [0,1] \}_{i \in I}$ and circles $\{ S_{j} \}_{j \in J}$, For a given immersion $\gamma : C \to B_{\epsilon}(0)\subset \mathbb{R}^{2}$ of $C$ to a disc of radius $\epsilon$,  a {\em region} is defined to be any connected component of $B_{\epsilon}(0) \setminus \gamma(C)$ which does not intersect the boundary $\partial B_{\epsilon}(0)$.
\begin{defn}
We call the image of an immersion $\gamma$ a {\em divide} if it satisfies the following conditions.
\begin{enumerate}
\item $\gamma(C)$ is connected and all intersection points of $\gamma(C)$ are only transversal double points.
\item Each circle $\gamma(S_{j})$ is disjoint from $\partial B_{\epsilon}(0)$ for all $j \in J$.
\item $\gamma(I_{i}) \cap \partial B_{\epsilon}(0) = \gamma(\partial I_{i})$ and $\gamma(I_{i})$ intersects $\partial B_{\epsilon}(0)$ transversely for all $i \in I$.
\item Each region is homeomorphic to an open disc.
\end{enumerate}
\end{defn}

Suppose that a plane curve singularity $f$ is totally real, i.e., admits a factorization into the product of $r$-many real irreducible factors where each factor should be irreducible as a complex function \cite{GZ74}, \cite{ACampo1975}. Let us consider a deformation of each factor whose product gives a real Morsification $\{ f_{t} \}_{0 \leq t \leq t_{0}}$ of $f$ for sufficiently small $t_{0} \in \mathbb{R}_{>0}$.
An {\em A'Campo divide} $\mathbb{D}_{f}$ of $f$ is given by the inclusion
$$\mathbb{D}_{f} := f_{t_{0}}^{-1}(0) \cap B_{\epsilon}(0) \hookrightarrow B_{\epsilon}(0)$$
for positive real number $\epsilon$ such that the Milnor number $\mu$ of $f$, the number $d$ of double points, and the number $r$ of irreducible factors satisfy the equation $\mu = 2d - r + 1$.

The definition of $\mathbb{D}_{f}$ depends on the choice of a Morsification, and we fix one.

\begin{defn}[$A\Gamma$ diagram \cite{GZ1974},  \cite{ACampo1975}] \label{defn:AG}
Let $\mathbb{D}_{f}$ be an A'Campo divide of $f$. Let us call any region of given A'Campo divide $\mathbb{D}_{f}$ a $+$ or $-$ region depending on the value of $f_{t_{0}}$. The A'Campo--Gusein--Zade ($A\Gamma$ for short) diagram $A\Gamma(\mathbb{D}_{f})$ of $\mathbb{D}_{f}$ is the following planar graph:
\begin{enumerate}
\item Vertices for Saddle: each double point in the divide gives a vertex of type 0, called a saddle.
\item Vertices for maximum and minimum: each bounded region of the divide gives a vertex of type $+$ or $-$ (depending on the value of $f$ in this region). There is no vertex for an unbounded region.
\item Edges:  if a $+$ region and a $-$ region meet along their boundaries, we assign an edge connecting the corresponding vertices. Connect a saddle vertex and a $\pm$ vertex if the neighborhood of the double point (for the saddle) intersects the region for the $\pm$ vertex. There is no edge between vertices of the same type.
\end{enumerate}
Let $n_{-}, n_{0}, n_{+} \geq 0$ be the number of $-$, $0$, and $+$ type vertices, respectively. 
We may choose an ordering among vertices to obtain the following ordered set
\begin{equation}\label{eq:vertices}
\{v^{-}_{1}, \dots, v^{-}_{n_{-}}, v^{0}_{1} ,\dots, v^{0}_{n_{0}}, v^{+}_{1}, \dots, v^{+}_{n_{+}} \}.
\end{equation}
The ordering among the vertices of the same type is chosen arbitrarily, and the ordering between different types is given by
their sign: $- \mbox{ vertices} < 0 \mbox{ vertices} < + \mbox{ vertices}$.
\end{defn}
See Figure \ref{fig:E6D} (A) for the $E_6$ example. 

The $A\Gamma$ diagram associated to $f$ is the Coxeter--Dynkin diagram of vanishing cycles of  the singularity $f$.
\begin{thm}[\cite{Areal}]\label{thm:ACampo}
Given a divide $\mathbb{D}_{f}$ of $f$, one can choose a set of vanishing paths so that 
the corresponding vanishing cycles in the Milnor fiber of $f$ satisfy the following properties:
\begin{enumerate}
\item The ordered set of vertices \eqref{eq:vertices} of $A\Gamma$ diagram $A\Gamma (\mathbb{D}_{f})$  corresponds to the 
distinguished collection of vanishing cycles of $f$ for this set of vanishing paths.
\item Two vanishing cycles intersect at a point if and only if the corresponding vertices are connected by an edge.
Moreover, vanishing cycles are oriented so that we have
$$V^+_{i} \bullet V^0_{j} =  V^0_{j} \bullet V^-_{k}= V^+_{k} \bullet V^-_{i}= +1$$
for any $1 \leq i \leq n_{+}$, $1 \leq j \leq n_{0}$, and $1 \leq k \leq n_{-}$ if they intersect.
\end{enumerate}
\end{thm}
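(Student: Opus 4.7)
The plan is to follow A'Campo's original argument in \cite{Areal}, which we only sketch. The starting observation is that a real Morsification $f_{t_0}$ is chosen precisely so that every critical point of $f_{t_0}$ is real, every critical value is distinct, and the critical points biject with the vertices of $A\Gamma(\mathbb{D}_f)$: minima $p_k^-$ lie inside bounded $-$ regions, maxima $p_i^+$ lie inside bounded $+$ regions, and saddles $p_j^0$ sit at the double points of $\mathbb{D}_f$. Because $f_{t_0}$ is negative on $-$ regions, zero on the divide, and positive on $+$ regions, the critical values are naturally ordered
\[ f_{t_0}(p_k^-) < 0 = f_{t_0}(p_j^0) < f_{t_0}(p_i^+), \]
after a further small generic perturbation to separate the values within each block. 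This ordering is exactly the ordering of vertices in \eqref{eq:vertices}.

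For part (1), I would choose a base point $\delta \in \mathbb{C}$ with $|\delta|$ small and $\mathrm{Im}(\delta)<0$, and take a distinguished set of vanishing paths $\gamma_1^-,\dots,\gamma_{n_-}^-,\gamma_1^0,\dots,\gamma_{n_0}^0,\gamma_1^+,\dots,\gamma_{n_+}^+$ going from $\delta$ to each critical value, arranged as a non-crossing fan respecting the above ordering. The associated Picard-Lefschetz vanishing cycles $V_k^-,V_j^0,V_i^+$ in $M = f_{t_0}^{-1}(\delta) \cap \mathbb{B}_\epsilon(0)$ are then, by construction, a distinguished collection in the required order. The geometric content of A'Campo's theorem is the explicit description of these cycles in terms of the divide: each $V_i^+$ (resp.\ $V_k^-$) can be realized as a small circle in the fiber over the corresponding bounded $+$ (resp.\ $-$) region, while each $V_j^0$ is a circle obtained by gluing two local arcs near the saddle, crossing the divide along the two neighboring edges of opposite sign at $p_j^0$. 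The precise identification is most cleanly expressed by A'Campo's branched-double-cover model of $M$ over $B_\epsilon(0)$ with branching along $\mathbb{D}_f$, in which each vanishing cycle lifts to a concrete embedded circle.

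For part (2), the intersection statement becomes a purely local computation in this model. A cycle $V_i^\pm$ associated with a region is supported in a small neighborhood of that region in $M$; a cycle $V_j^0$ is supported in a small neighborhood of the corresponding saddle. Two such cycles can meet only when the corresponding combinatorial objects share a common piece of boundary on the divide, which is precisely the condition defining the edges of $A\Gamma(\mathbb{D}_f)$. When they do meet, the meeting occurs at a single transverse point in the branched cover, and the orientations of the circles $V_k^-, V_j^0, V_i^+$ can be chosen once and for all so that the three cyclic signs $V_i^+ \bullet V_j^0 = V_j^0 \bullet V_k^- = V_i^+ \bullet V_k^- = +1$ hold. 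The hard part is the branched-cover identification and the consistent orientation choice; both are established in \cite{Areal}, \cite{ACampo1975}, so I would cite these and only verify the orientation convention in a representative local picture.
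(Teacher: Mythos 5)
The paper does not prove Theorem~\ref{thm:ACampo}; it only cites it from \cite{Areal}, and uses it as an input to the constructions in Sections~\ref{sec:distinguished} and~\ref{sec:exceptionalcollection}. So there is no ``paper's own proof'' to compare against, and the right question is whether your sketch is a faithful account of A'Campo's argument. It is. You correctly identify the three-tier structure of the critical set of $f_{t_0}$ (minima in $-$ regions, saddles at double points, maxima in $+$ regions), the ordering of critical values that induces the ordering in \eqref{eq:vertices}, the fan of vanishing paths from a base point off the real axis, and the branched double cover model in which each vanishing cycle is a small explicit circle near the associated region or saddle. The intersection rule and the sign convention $V^+\bullet V^0 = V^0\bullet V^- = V^+\bullet V^- = +1$ do indeed reduce to a single local picture once this model is in hand; you are right that verifying the sign in one representative picture (with the orientations propagated consistently) is the only concrete check. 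One small point worth flagging if you were to write this up fully: before choosing the fan of paths you must break the simultaneity of critical values within each block (all saddles share the value $0$, and the same can happen for the maxima/minima), but since cycles of the same type are pairwise disjoint, the associated Dehn twists commute and the resulting distinguished collection is independent of how the tie is broken, which is why the paper says the ordering within each type may be chosen arbitrarily. Your parenthetical ``after a further small generic perturbation'' handles this, though it deserves a sentence explaining why the result is then well defined.
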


We define the notion of depth.
\begin{defn} \label{defn:depth}
The {\em depth} of vertices of the $A\Gamma$ diagram $A\Gamma(\mathbb{D}_{f})$ is defined as follows.
\begin{enumerate}
\item  If a vertex $v$ meets a non-compact region in $\mathbb{R}^2 \setminus \mathbb{D}_{f}$, $v$ has depth 0.
\item Remove all vertices of depth 0 and all edges that are connected to them from $A\Gamma(\mathbb{D}_{f})$ and obtain a new diagram $A\Gamma_{1}(\mathbb{D}_{f})$. A vertex of $A\Gamma(\mathbb{D}_{f})$ has depth 1 if
$v$ is contained in $A\Gamma_{1}(\mathbb{D}_{f})$ as a depth $0$ vertex.
\item Inductively, remove all vertices of depth $<k$ and adjacent edges from $A\Gamma(\mathbb{D}_{f})$, and obtain a new diagram $A\Gamma_{k}(\mathbb{D}_{f})$. A vertex of $A\Gamma(\mathbb{D}_{f})$ has depth $k$ if
$v$ is contained in $A\Gamma_{k}(\mathbb{D}_{f})$ as a depth $0$ vertex.
\end{enumerate}
 The depth of the $A\Gamma$ diagram (and its A'Campo divide) is given by the maximum among depths of all its vertices.
\end{defn}
The depth of vanishing cycle $V^{\bullet}_{i}$ is defined to be the depth of corresponding vertex $v^{\bullet}_{i}$ in $A\Gamma(\mathbb{D}_{f})$.

\begin{figure}[h]
\includegraphics[scale=1]{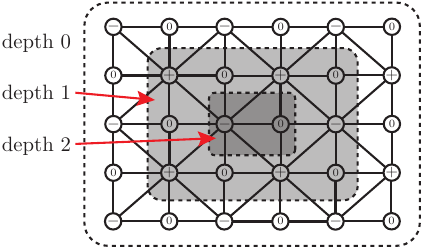}
\centering
\caption{Depth of vertices}
\end{figure}

\subsection{Exceptional collection of Lagrangians for depth 0: construction} \label{subsec:depth0}
In this section, we prove Theorem \ref{thm:ec}. First, note that Proposition \ref{prop: isotopy invariance} ensures that $HF^*_\rho$ does not depend on the choice of the representative $\rho$ in the compactly supported Hamiltonian isotopy class.
We would like to use a representative, denoted as $\tau$, described as a composition of explicit Dehn twists.

By the general construction of Fernandez de Bobabilla and  Pe{\l}ka (see Section 5 of  \cite{BP}) for the family of singularities with the constant Milnor number, there exists a symplectic isotopy between the Milnor fibers and the monodromies of $f$ and $f_\epsilon$. So we may identify them using this isotopy from now on, and regard the vanishing cycles from the divide as Lagrangians in the Milnor fiber of $f$.

Assume that  $A\Gamma$ diagram $A\Gamma(\mathbb{D}_{f})$ has depth $0$.
We denote the corresponding distinguished collection of vanishing cycles as
\begin{equation} \label{eqn:vanishing}
\overrightarrow{V}_{f} = (V^{-}_{1}, \dots, V^{-}_{n_{-}}, V^{0}_{1}, \dots, V^{0}_{n_{0}}, \dots, V^{+}_{1}, \dots, V^{+}_{n_{+}})
\end{equation}
At first, we may assume that the vanishing cycles constructed in Theorem \ref{thm:ACampo} are already exact Lagrangian. 
A priori, the construction is only topological and their exact Lagrangian representatives may violate the intersection condition.
However, one can recover minimal intersection conditions as in Theorem \ref{thm:ACampo} using compactly supported Hamiltonian isotopies (cf. \cite[Proposition 2.35]{Keating}). 

Now, the monodromy $\tau$ in the Milnor fiber of $f$ is defined by
$$\tau = \tau_{V^{-}_{1}} \circ \dots \circ \tau_{V^{-}_{n_{-}}} \circ \tau_{V^{0}_{1}} \circ \dots \circ \tau_{V^{0}_{n_{0}}} \circ \tau_{V^{+}_{1}} \circ \dots \circ \tau_{V^{+}_{n_{+}}}$$
where $\tau_{V^{\bullet}_{i}}$ is the right Dehn twist along the vanishing cycle $V^{\bullet}_{i}$.
Since two vanishing cycles $V^{\bullet}_{i}$, $V^{\bullet}_{j}$ with the same sign are disjoint, the corresponding Dehn twists $\tau_{V^{\bullet}_{i}}$, $\tau_{V^{\bullet}_{j}}$ commute. (This is related to the fact that the order in Definition \ref{defn:AG} among the vertices of the same type can be chosen arbitrarily.) Here, we assume that the supports of the Dehn twists $\tau_{V^{\bullet}_{i}}$, $\mathrm{Supp}(\tau_{V^{\bullet}_{i}})$, satisfy that, up to compactly supported Hamiltonian isotopies of vanishing cycles,
\begin{itemize}
\item $\overline{\mathrm{Supp}(\tau_{V^{\bullet}_{i}})} \cap \overline{\mathrm{Supp}(\tau_{V^{\bullet}_{j}})} = \emptyset$ if $V^{\bullet}_{i} \cap V^{\bullet}_{j} = \emptyset$ and
\item $M_{f} \setminus \cup \mathrm{Supp}(\tau_{V^{\bullet}_{i}})$ retracts to $\partial M_f$.
\end{itemize}

Next, we perturb $\tau$ to remove the codimension $0$ fixed point set near $\partial M_{f}$.
Intuitively, we will add a small rotation in the Reeb vector field direction.
A more precise procedure is as follows. 
For each Weinstein neighborhood $N_{V_i} \supset V_i$, $\tau_{V_i} \vert _{N_{V_i}\setminus V_i}$ is equal to the flow of Hamiltonian $H_{V_i}$ which only depends on the fiber direction and vanishes near $\partial N_{V_i}$. 
Therefore, by taking a small thickening of $M_{f} \setminus \cup \mathrm{Supp}(\tau_{V^{\bullet}_{i}})$,  one can find a $\tau$-invariant open set $N$ such that 
\begin{itemize}
	\item $\bigcup V_i^\bullet \subset N \subset \bigcup \mathrm{Supp}(\tau_{V^{\bullet}_{i}})$, 
	\item $N$ is an open submanifold of $M_f$ with  $\partial N$ a (union of) circle(s), 
	\item $U := M_f \setminus \overline N$ is a (union of) cylinder(s) and retracts to the $\partial M_f$,
	\item there is a $C^\infty$-trivialization of $U \cong S^1_t \times [0,1]$ so that the vector field $\partial_t $ is equal to the Reeb flow at $\partial M_f$ while it is equal to $v$, the $\pi/2$-rotation of the outward normal vector, at $\partial N$, and
	\item $\tau\vert_U$ equals the time-$1$ flow of a vector field $X$ which satisfies $\langle X, \partial_t\rangle \geq 0$.
\end{itemize}	
Then consider a Hamiltonian function $H$ such that $H\vert_{N}=0$ and $H\vert_U$ is an increasing function on the radial coordinate with respect to the $C^\infty$-trivialization. 
For a small enough $\epsilon$, the composition $\phi^{1}_{\epsilon H} \circ \tau$ rotates $U$ a little along $S^1$-direction. 
In particular, if $L\cap U$ becomes the graph of some function on the radial coordinate, then $\phi^{1}_{\epsilon H} \circ \tau$ will displace it from itself inside $U$. We continue to denote this perturbation by $\tau$ by abuse of notation.

Note that A'Campo \cite{Areal} found a way to recover the Milnor fiber of $f$ from the divide, using a building block (see Figure \ref{fig:K-}) for a neighborhood of a double point, 
and connecting them with half twists following the diagram (see Figure \ref{fig:E6}). Also, vanishing cycles in Theorem \ref{thm:ACampo} can be drawn in the building block and Milnor fiber. We will construct an adapted Lagrangian family consisting of connected Lagrangians which will form a desired exceptional collection.

\begin{lemma} \label{lem:depth0}
Let $A\Gamma (\mathbb{D}_{f})$ be the depth $0$ $A\Gamma$ diagram and $\overrightarrow{V}_{f}$ as in \eqref{eqn:vanishing} be a distinguished collection of vanishing cycles associated with $A\Gamma (\mathbb{D}_{f})$. Then there exists an adapted family
$$\overrightarrow{K}_{f} = (K^{-}_{1}, \dots, K^{-}_{n_{-}}, K^{0}_{1}, \dots, K^{0}_{n_{0}}, \dots, K^{+}_{1}, \dots, K^{+}_{n_{+}})$$
for $\overrightarrow{V}_{f}$ such that any $K^{\bullet}_{i}$ is represented by a non-compact connected Lagrangian.
\end{lemma}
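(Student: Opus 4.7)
The argument will be explicit and topological, building each $K_i^\bullet$ as a concrete non-compact arc in the A'Campo building-block model of $M_f$. Recall that in this model $M_f$ is obtained by gluing a disc around each bounded $\pm$ region of $\mathbb{D}_f$ (carrying $V_i^\pm$ as its core circle), a ``cross'' piece around each double point $v_j^0$ (carrying $V_j^0$), and half-twisted bands along the edges of $\mathbb{D}_f$; the boundary components of $M_f$ correspond bijectively to the non-compact regions of $\mathbb{R}^2 \setminus \mathbb{D}_f$. The depth-$0$ hypothesis is used to guarantee, for every vertex $v_i^\bullet$, a non-compact region $R_i^\bullet$ adjacent to it and hence a boundary interval $\partial_i^\bullet \subset \partial M_f$ from which an arc can be launched directly into the building block at $v_i^\bullet$ without first crossing any other vanishing cycle.

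The construction proceeds in three batches in the prescribed order. First, for each $i$, take $K_i^-$ to be a short embedded arc that starts at $\partial_i^-$, enters the disc of $v_i^-$, crosses $V_i^-$ transversely once, and exits through the same non-compact region (or an adjacent one). Oriented appropriately, $K_i^- \bullet V_i^- = +1$; by staying in a small neighborhood of $V_i^-$, the arc is disjoint from every other vanishing cycle, so conditions (2)--(3) of Definition \ref{defn:adapted} hold and condition (1) is vacuous for $K_i^-$ (since $a_{i^-, k^-}=0$ for all $k\neq i$).

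Second, for each $j$, construct $K_j^0$ as follows. The vertex $v_j^0$ is a saddle of $\mathbb{D}_f$ whose $A\Gamma$-neighbors are exactly the (at most four) $\pm$ regions meeting this double point. Using depth $0$ at $v_j^0$, start $K_j^0$ at $\partial_j^0$ and push it into the cross-block crossing $V_j^0$ once positively. Then route it through the half-twisted bands to each adjacent $V_i^-$ (which by hypothesis has $i \leq n_-$, hence comes before $V_j^0$ in the ordering), crossing each such $V_i^-$ with sign $a_{j^0, i^-}$, and finally exit through the non-compact region adjacent to one of those $v_i^-$'s (again provided by depth $0$). Because same-type vertices have no edges in $A\Gamma$, the arc automatically avoids every $V_k^0$ with $k \neq j$; by keeping $K_j^0$ inside the union of the $v_j^0$-cross, the adjacent $-$-discs, and the strips between them, it also avoids every $V_k^+$. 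The $K_k^+$'s are built symmetrically: start at $\partial_k^+$, cross $V_k^+$ once, then thread through the cross-blocks of each adjacent $v_j^0$ and the discs of each adjacent $v_i^-$, crossing each with the prescribed sign. Each resulting arc is embedded, non-compact, and connected; exactness on a surface is automatic.

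The main obstacle is the sign bookkeeping. Theorem \ref{thm:ACampo} pins down a consistent orientation convention $V^+ \bullet V^0 = V^0 \bullet V^- = V^+ \bullet V^- = +1$ at each adjacency, and the relation $K_j^\bullet \bullet V_i^{\bullet'} = -(-1)^{n(n-1)/2} a_{ji} = a_{ji}$ (since $n=2$) dictates the side on which $K_j^\bullet$ must pierce each cycle. One has to check at each of the finitely many crossings that the direction of travel agrees with these conventions; this is a local verification inside each building block and can be arranged by choosing the orientation of the arc appropriately from the start. A related point is that depth $0$ is used essentially: if some $v_j^\bullet$ (or one of its neighbors) had depth $>0$, then every path from $\partial M_f$ into the corresponding building block would be forced to pierce a ``wall'' of later vanishing cycles, violating condition (2) and forcing the substitute constructions with twisted complexes hinted at in Subsection~\ref{subsec:depth 1}.
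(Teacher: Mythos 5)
Your construction for the $K_i^-$ matches the paper's: a short arc confined to the building block of the (unique) neighbouring double point, entering from the unbounded region, crossing $V_i^-$ once. So far so good.

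For $K_j^0$ and especially $K_k^+$ your route diverges from the paper's and has a genuine gap. You propose to thread $K_j^0$ through the half-twisted bands into the discs of each adjacent $v_i^-$ and to thread $K_k^+$ even further, through every neighbouring $0$-cross and $-$-disc, crossing each cycle with the prescribed sign. You acknowledge the ``sign bookkeeping'' and wave it away as a local verification, but the real issue is global: you never explain why a single \emph{embedded, connected} arc can be routed through all the required building blocks so as to hit each $V_j^0$ and $V_i^-$ exactly once with the correct sign while avoiding every $V_l^+$ with $l \neq k$. Those blocks can sit on distant parts of the divide, the bands between them carry half-twists (so a naive straight path re-crosses cycles), and the Picard--Lefschetz signs are not independently adjustable by a single choice of arc orientation. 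This is precisely where the paper takes a different and cleaner path. The paper's $K_j^0$ stays entirely inside the single building block of $v_j^0$: that block already contains the arcs of $V_j^0$ and of the adjacent bounded $V_i^-$'s (the ``not unnecessary'' arcs), so the required intersections are achieved locally with no routing. For $K_k^+$ the paper does not thread at all; it first builds a local arc $\widetilde K_k^+$ exactly as in the $-$ case (meeting only $V_k^+$ once, through the unnecessary arc supplied by depth $0$) and then \emph{defines} $K_k^+ := \tau_{V_k^+}^{-1}(\widetilde K_k^+)$. Because $\tau_{V_k^+}^{-1}$ is a diffeomorphism, $K_k^+$ is automatically embedded and connected, and by Picard--Lefschetz one gets $K_k^+ \bullet V_j^\bullet = V_k^+ \bullet V_j^\bullet$ for every other cycle and $K_k^+ \bullet V_k^+ = +1$, which is exactly the adapted condition --- no threading or sign chase needed. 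You should replace your manual threading for the $0$ and $+$ cases with these two ideas: keep $K_j^0$ inside one building block, and obtain $K_k^+$ as an inverse Dehn twist image of a local arc.
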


Before giving a proof, we briefly explain the construction. First, we find non-compact Lagrangians in the union of closed supports, $\cup \overline{\mathrm{Supp}(\tau_{V^{\bullet}_{i}})}$. Then, at the boundary of non-compact Lagrangian, we extend it along the gradient flow of a chosen Hamiltonian $H$ (and smooth it if needed). These extensions do not change any intersection patterns between Lagrangians. The reason why we divide the construction into two steps is that later, it is convenient to see intersections between $K^{\bullet}_{i}$ and its monodromy image $\tau(K^{\bullet}_{i})$. 

\begin{proof}
We will find an adapted family satisfying the intersection conditions in Definition \ref{defn:adapted}. 
 For a double point of the divide, we can choose an open set of the Milnor fiber, diffeomorphic to the 
 building block (see Figure \ref{fig:K-}). The double point meets 4 regions, and the building block has 4 corresponding arcs
 which may become parts of the vanishing cycles. Also, there exists a circle in the cylindrical part of the building block which
 corresponds to the vanishing cycle of the double point.

The main idea of the construction is the following.
From the depth zero condition, each double point meets at least one unbounded region
and thus the arc corresponding to the unbounded region does not become a part of the vanishing cycle. Let us call this an unnecessary arc. If a non-compact Lagrangian intersects an unnecessary arc in the building block, it does not create a new intersection number with the vanishing cycles.

Let us explain each case in more detail.
For  the vanishing cycles $V^{-}_{i}$ corresponding to the bounded negative region, we need to find a non-compact Lagrangian $K^{-}_{i}$ satisfying 
$$K^{-}_{i} \bullet V^{\bullet}_{j} = \begin{cases} 1 & (\bullet = -, \; i=j), \\ 0 &(\mbox{otherwise}). \end{cases}$$
If all the positive neighboring regions of the given negative region are bounded, then the depth of the negative region cannot be zero.
Thus, there exists at least one neighboring unbounded positive region. 
From the explanation above, we have an unnecessary arc in the building block of one of the neighboring double points.
In Figure \ref{fig:K-}, the arc on the left should have a counterpart on the right, but we did not draw it as it is an unnecessary arc.
Now, we choose $K_i^-$ as in Figure \ref{fig:K-}. As explained above, it is obtained by an extension of a small piece in $\mathrm{Supp}(\tau_{V^{-}_{i}})$. Note that $K_i^-$ is contained in this building block and intersects $V_i^-$ only 
(it intersects the unnecessary arc, but this is not relevant to the adaptedness condition). Moreover, we can say that $K^{-}_{i}$ does not intersect the support of any Dehn twists except $\mathrm{Supp}(\tau_{V^{-}_{i}})$.
The same construction works for any $1 \leq i \leq n_{-}$.

\begin{figure}[h]
\includegraphics[scale=0.7]{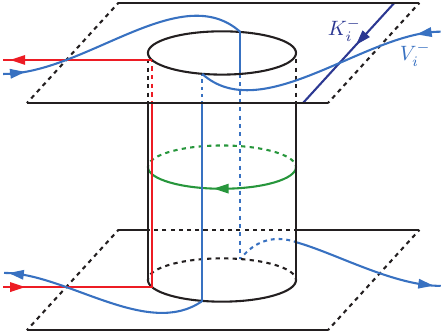}
\centering
\caption{$K^{-}_{i}$ in the building block}
\label{fig:K-}
\end{figure}

There are two cases for each vanishing cycle $V^{0}_{i}$ associated with a double point.  A non-compact Lagrangian $K^{0}_{i}$ should satisfy
$$K^{0}_{i} \bullet V^{\bullet}_{j} = \begin{cases} 1 & (\bullet = 0, \; i=j), \\  1 & (\bullet=-, V^{-}_{j}\mbox{ and }V^{0}_{i}\mbox{ are connected in }A\Gamma \mbox{ diagram}),  \\ 0 &(\mbox{otherwise}). \end{cases}$$
A priori, every double point has two positive neighboring regions and two negative neighboring regions. Then, there is at least one unbounded positive or negative neighboring region. If there is an unbounded negative region, in the building block of chosen double point, the arc on the front becomes an unnecessary arc. In this case, a non-compact Lagrangian $K^{0}_{i}$ is given as in Figure \ref{fig:K0} (A). Here, we should choose $K^{0}_{i}$ to satisfy
\begin{equation} \label{eqn:supp}
\tau_{V^{0}_{i}}(K^{0}_{i}) \cap \mathrm{Supp}(\tau_{V^{-}_{j}}) = \emptyset.
\end{equation}
It is always possible by shrinking the support of $\tau_{V^{-}_{j}}$. $K^{0}_{i}$ intersects $V^{0}_{i}$ and $V^{-}_{j}$ which is not the unnecessary arc (it does not intersect the unnecessary arc in this case). If there is an unbounded positive region, the arc on the right is an unnecessary arc. Hence, $K^{0}_{i}$ intersects $\mathrm{Supp}(\tau_{V^{0}_{i}})$ and $\mathrm{Supp}(\tau_{V^{-}_{j}})$ only. Also, one can choose a non-compact Lagrangian $K^{0}_{i}$ similarly as in Figure \ref{fig:K0} (B).

\begin{figure}[h]
\begin{subfigure}[t]{0.43\textwidth}
\includegraphics[scale=0.7]{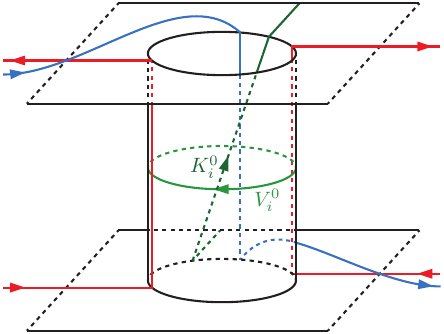}
\centering
\caption{ }
\label{fig:K01}
\end{subfigure}
\begin{subfigure}[t]{0.43\textwidth}
\includegraphics[scale=0.7]{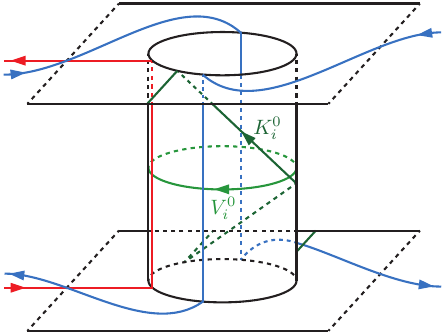}
\centering
\caption{ }
\label{fig:K02}
\end{subfigure}
\centering
\caption{$K^{0}_{i}$ in the building block}
\label{fig:K0}
\end{figure}

Finally, for the vanishing cycles $V^{+}_{i}$ corresponding to the bounded positive region, a non-compact Lagrangian $K^{+}_{i}$ satisfies
$$K^{+}_{i} \bullet V^{\bullet}_{j} = \begin{cases} 1 & (\bullet = +, \; i=j), \\ 
1 & (\bullet=0, V^{0}_{j}\mbox{ and }V^{+}_{i}\mbox{ are connected in }A\Gamma \mbox{ diagram}), \\
1 & (\bullet=-, V^{-}_{j}\mbox{ and }V^{+}_{i}\mbox{ are connected in }A\Gamma \mbox{ diagram}),  \\
0 &(\mbox{otherwise}). \end{cases}$$
Note that this intersection condition is the same as $V^{+}_{i} \bullet V^{\bullet}_{j}$ except when $\bullet = +$, $i=j$. Since the depth is $0$, we can find a part where $V^{+}_{i}$ lives alone as in the $-$ case. Let $\widetilde{K}^{+}_{i}$ be the non-compact Lagrangian in Figure \ref{fig:K+} which intersects $\mathrm{Supp}(\tau_{V^{+}_{i}})$ only. Then, we define $K^{+}_{i} := \tau^{-1}_{V^{+}_{i}}(\widetilde{K}^{+}_{i})$. Topologically, $K^{+}_{i}$ is described as follows.
We cut $V_i^+$ near the intersection of $V_i^+$ and the unnecessary arc, and attach two new ends to the boundary of the Milnor fiber 
as drawn in Figure \ref{fig:K+}. This $K^{+}_{i}$ and $V^{+}_{i}$ have almost the same intersection conditions with other vanishing cycles except $V^{+}_{i}$ itself, i.e., we have $K^{+}_{i} \bullet V^{+}_{i} = 1$ and
$$K^{+}_{i} \bullet V^{\bullet}_{i} = V^{+}_{i} \bullet V^{\bullet}_{i}$$
for any $V^{\bullet}_{i}$ except $V^{+}_{i}$. 
Thus, this $K^{+}_{i}$ satisfies the desired conditions.

\begin{figure}[h]
\includegraphics[scale=0.7]{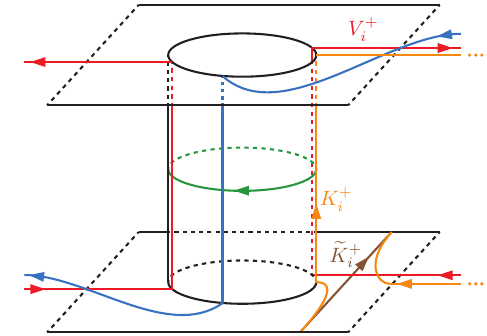}
\centering
\caption{$K^{+}_{i}$ in the building block}
\label{fig:K+}
\end{figure}

These constructions give the adapted family for $\overrightarrow{V}_{f}$ in any depth $0$ case. Obviously, each $K^{\bullet}_{i}$ is connected by construction.
\end{proof}

\subsection{Exceptional collection of Lagrangians for depth 0: computations} \label{subsec:depth0}
We first make the following observations on intersections between constructed Lagrangians.
\begin{lemma} \label{lem:disjoint}
The following hold for the adapted family that is constructed in Lemma \ref{lem:depth0}.
\begin{enumerate}
\item Any pair of Lagrangians in the family are disjoint from each other. 
\item $K^{\bullet}_{i} \cap \tau(K^{\bullet}_{j}) = \emptyset$ for all $\bullet = -,0,+$ and $1 \leq i, j \leq n_{\bullet}$.
\item $K^{-}_{i} \cap \tau(K^{0}_{j}) = \emptyset$, $K^{-}_{i} \cap \tau(K^{+}_{j}) = \emptyset$, and $K^{0}_{i} \cap \tau(K^{+}_{j}) = \emptyset$ for any $i,j$.
\end{enumerate}
\begin{proof}
The first statement follows directly from the construction.

Recall that the monodromy $\tau$ is defined by
$$\tau = \phi^{1}_{\epsilon H} \circ \tau_{V^{-}_{1}} \circ \dots \circ \tau_{V^{-}_{n_{-}}} \circ \tau_{V^{0}_{1}} \circ \dots \circ \tau_{V^{0}_{n_{0}}} \circ \tau_{V^{+}_{1}} \circ \dots \circ \tau_{V^{+}_{n_{+}}}.$$
We claim that it is enough to consider only one Dehn twist for each $K^{\bullet}_{j}$ because there is only the Dehn twist $\tau_{V^{\bullet}_{j}}$ acts nontrivially on $K^{\bullet}_{j}$.
For the non-compact Lagrangian $K^{-}_{j}$ given in Lemma \ref{lem:depth0}, $K^{-}_{j}$ intersects $\mathrm{Supp}(\tau_{V^{-}_{j}})$ only. Hence,
$$\tau_{V^{-}_{j+1}} \circ \dots \circ \tau_{V^{-}_{n_{-}}} \circ \tau_{V^{0}_{1}} \circ \dots \circ \tau_{V^{0}_{n_{0}}} \circ \tau_{V^{+}_{1}} \circ \dots \circ \tau_{V^{+}_{n_{+}}}(K^{-}_{j}) = K^{-}_{j}.$$
Let us consider the case $i=j$ first. After taking $\tau_{V^{-}_{j}}$, we have
$$K^{-}_{j} \cap \tau_{V^{-}_{j}} \circ \tau_{V^{-}_{j+1}} \circ \dots \circ \tau_{V^{-}_{n_{-}}} \circ \tau_{V^{0}_{1}} \circ \dots \circ \tau_{V^{0}_{n_{0}}} \circ \tau_{V^{+}_{1}} \circ \dots \circ \tau_{V^{+}_{n_{+}}}(K^{-}_{j}) \cong \partial K^{-}_{j} \times I$$
for some interval $I$. A non-compact Lagrangian
$$\tau_{V^{-}_{j}} \circ \tau_{V^{-}_{j+1}} \circ \dots \circ \tau_{V^{-}_{n_{-}}} \circ \tau_{V^{0}_{1}} \circ \dots \circ \tau_{V^{0}_{n_{0}}} \circ \tau_{V^{+}_{1}} \circ \dots \circ \tau_{V^{+}_{n_{+}}}(K^{-}_{j})$$
does not intersect any $\mathrm{Supp}(\tau_{V^{-}_{k}})$ for $1 \leq k \leq n_{-}$ since $V^{-}_{k} \cap V^{-}_{j} = \emptyset$ for all $1 \leq k \leq n_{-}$ by the construction. Thus, 
$$K^{-}_{j} \cap \tau_{V^{-}_{1}} \circ \dots \circ \tau_{V^{+}_{n_{+}}}(K^{-}_{j}) \cong \partial K^{-}_{j} \times I.$$
Then using the perturbation $\phi^{1}_{\epsilon H}$, we can conclude for any $1 \leq j \leq n_{-}$,
$$K^{-}_{j} \cap \tau(K^{-}_{j}) = \emptyset.$$
If $i \neq j$, the difference is that $K^{-}_{i} \cap K^{-}_{j} = \emptyset$ by construction. This implies they are already disjoint before the Hamiltonian deformation:
$$K^{-}_{i} \cap \tau_{V^{-}_{1}} \circ \dots \circ \tau_{V^{+}_{n_{+}}}(K^{-}_{j}) \cong \emptyset.$$
Hence, we have $K^{-}_{i} \cap \tau(K^{-}_{j}) = \emptyset$ and this proves (2) for $\bullet = -$ case.
 
 Next, let $K^{0}_{j}$ be the non-compact Lagrangian given in Lemma \ref{lem:depth0}. Similarly, we have
\begin{enumerate}[(i)]
\item $\tau_{V^{0}_{j+1}} \circ \dots \circ \tau_{V^{0}_{n_{0}}} \circ \tau_{V^{+}_{1}} \circ \dots \circ \tau_{V^{+}_{n_{+}}}(K^{0}_{j}) = K^{0}_{j}$,
\item $K^{0}_{i} \cap \tau_{V^{0}_{j}} \circ \tau_{V^{0}_{j+1}} \circ \dots \circ \tau_{V^{0}_{n_{0}}} \circ \tau_{V^{+}_{1}} \circ \dots \circ \tau_{V^{+}_{n_{+}}}(K^{0}_{j}) \cong \begin{cases} \partial K^{0}_{i} \times I &(i=j), \\ \emptyset &(i \neq j) \end{cases}$ for some interval $I$, and
\item $\tau_{V^{0}_{j}} \circ \tau_{V^{0}_{j+1}} \circ \dots \circ \tau_{V^{0}_{n_{0}}} \circ \tau_{V^{+}_{1}} \circ \dots \circ \tau_{V^{+}_{n_{+}}}(K^{0}_{j})$ does not intersect any $\mathrm{Supp}(\tau_{V^{-}_{k}})$ for $1 \leq k \leq n_{-}$ and $\mathrm{Supp}(\tau_{V^{0}_{l}})$ for $1 \leq l \leq n_{0}$.
\end{enumerate}
The third one follows from the property \eqref{eqn:supp} of $K^{0}_{i}$. Hence, we get $K^{0}_{i} \cap \tau(K^{0}_{j}) = \emptyset$ for all $1 \leq i,j \leq n_{0}$.

Lastly, recall that a non-compact Lagrangian $K^{+}_{j}$ is defined by $\tau^{-1}_{V^{+}_{j}}(\widetilde{K}^{+}_{j})$ for a non-compact Lagrangian $\widetilde{K}^{+}_{j}$ in Lemma \ref{lem:depth0}. Since $\mathrm{Supp}(\tau_{V^{+}_{j}}) \cap \mathrm{Supp}(\tau_{V^{+}_{k}}) = \emptyset$ for any $k \neq j$,
$$\tau_{V^{+}_{j}} \circ \dots \circ \tau_{V^{+}_{n_{+}}}(K^{+}_{j}) = \widetilde{K}^{+}_{j}.$$
Then, by the intersection conditions of $\widetilde{K}^{+}_{j}$ with the supports of Dehn twists as above, it is easy to see that $K^{+}_{i} \cap \tau(K^{+}_{j}) = \emptyset$ holds for all $1 \leq i,j \leq n_{+}$.

The third statement (3) of the lemma easily follows from the facts that any distinct two Lagrangians in the adapted family $\overrightarrow{K}_{f}$ are disjoint and the intersection conditions
$$K^{-}_{i} \cap \mathrm{Supp}(\tau_{V^{0}_{j}}) = \emptyset, K^{-}_{i} \cap \mathrm{Supp}(\tau_{V^{+}_{j}}) = \emptyset, \mbox{ and } K^{0}_{i} \cap \mathrm{Supp}(\tau_{V^{+}_{j}}) = \emptyset.$$
\end{proof}
\end{lemma}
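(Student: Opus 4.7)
The plan rests on two structural features of the construction in Lemma~\ref{lem:depth0}. First, within each sign class the vanishing cycles $V^{\bullet}_{i}$ are pairwise disjoint, so the Dehn twists $\tau_{V^{\bullet}_{i}}$ commute and have disjoint supports; this lets one analyze
\[ \tau = \phi^{1}_{\epsilon H}\circ \tau_{V^{-}_{1}}\circ\cdots\circ\tau_{V^{+}_{n_{+}}} \]
block by block. Second, by design each $K^{\bullet}_{j}$ meets only a short list of specific Dehn-twist supports, which drastically limits how $\tau$ can move it.

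Part~(1) is immediate from the construction: each $K^{\bullet}_{j}$ is built inside one chosen building block near $V^{\bullet}_{j}$, and the non-compact extensions can be arranged as mutually disjoint arcs inside the cylindrical collar $U \simeq S^{1}\times[0,1]$ of $\partial M_{f}$.

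For part~(2) I would treat each sign in turn and reduce $\tau(K^{\bullet}_{j})$ to the action of a single Dehn twist followed by the rotation $\phi^{1}_{\epsilon H}$. For $\bullet = -$, every twist other than $\tau_{V^{-}_{j}}$ fixes $K^{-}_{j}$, since by construction $K^{-}_{j}$ meets only $\mathrm{Supp}(\tau_{V^{-}_{j}})$ and, using $V^{-}_{k}\cap V^{-}_{j}=\emptyset$, the image $\tau_{V^{-}_{j}}(K^{-}_{j})$ stays clear of the remaining negative supports; thus $\tau(K^{-}_{j}) = \phi^{1}_{\epsilon H}\circ\tau_{V^{-}_{j}}(K^{-}_{j})$, which differs from $K^{-}_{j}$ only along a thin cylindrical sliver near $\partial M_{f}$. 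The case $i\neq j$ is easier because $K^{-}_{i}$ is already disjoint from the image. The zero case runs in parallel, with property~\eqref{eqn:supp} playing the role of keeping $\tau_{V^{0}_{j}}(K^{0}_{j})$ out of $\mathrm{Supp}(\tau_{V^{-}_{k}})$. For $\bullet = +$, the identity $K^{+}_{j} = \tau^{-1}_{V^{+}_{j}}(\widetilde K^{+}_{j})$ cancels $\tau_{V^{+}_{j}}$ against its inverse, reducing to the analogous analysis of $\widetilde K^{+}_{j}$.

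Part~(3) then follows from the support inclusions: $K^{-}_{i}$ avoids both $\mathrm{Supp}(\tau_{V^{0}_{j}})$ and $\mathrm{Supp}(\tau_{V^{+}_{j}})$, and $K^{0}_{i}$ avoids $\mathrm{Supp}(\tau_{V^{+}_{j}})$, so the image $\tau(K^{\bullet'}_{j})$ for $\bullet' > \bullet$ can only wander in regions that $K^{\bullet}_{i}$ never enters; combined with (1) and the same boundary-displacement argument, disjointness follows. The main obstacle throughout is the cylindrical region near $\partial M_{f}$: a Dehn twist drags the non-compact ends of $K$ along the corresponding vanishing cycle, and the resulting image can still intersect the original in a small sliver near the boundary. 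The role of $\phi^{1}_{\epsilon H}$ is precisely to rotate this sliver along the Reeb direction and displace it, and verifying this rotation is the substantive geometric step upon which the whole lemma ultimately hinges.
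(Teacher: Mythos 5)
Your proposal follows essentially the same route as the paper: reduce the action of $\tau$ on each $K^{\bullet}_{j}$ to a single Dehn twist $\tau_{V^{\bullet}_{j}}$ by tracking support intersections, handle the $+$-case via the cancellation $K^{+}_{j}=\tau^{-1}_{V^{+}_{j}}(\widetilde K^{+}_{j})$, use \eqref{eqn:supp} for the $0$-case, and invoke the perturbation $\phi^{1}_{\epsilon H}$ to displace the residual sliver $\partial K^{\bullet}_{j}\times I$ near the boundary. The arguments match in all substantive steps, including the correct identification of the boundary rotation as the key geometric input.
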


Recall from Subsection \ref{subsec:main} the quasi-isomorphism of monodromy Floer cohomology:
\begin{equation} \label{eqn:quasi}
HF_{\tau}^{*}(K^{\bullet}_{i},K^{\bullet}_{j}) \cong HW^{*}\big((K^{\bullet}_{i}[1] \xrightarrow{\mathcal{CO}^{\tau}_{K^{\bullet}_{i}}(\Gamma)} \tau(K^{\bullet}_{i})),K^{\bullet}_{j} \big).
\end{equation}
By Lemma \ref{lem:disjoint}, $\mathcal{CO}^{\tau}_{K^{\bullet}_{i}}(\Gamma) \in HW^*(K^{\bullet}_{i}, \tau(K^{\bullet}_{i}))$ is represented by the sum of two shortest Reeb chord from $\partial K^{\bullet}_{i}$ to $\partial \tau(K^{\bullet}_{i})$ since there is no interior intersection.  The following will be helpful for computations.
\begin{lemma} \label{lem:i}
Let $K$ and $K^{\prime}$ be non-compact connected Lagrangians in the Milnor fiber $M_{f}$ such that
\begin{enumerate}[(i)]
\item $K \cap K^{\prime} = \emptyset$, $K \cap \tau(K) = \emptyset$, and $K^{\prime} \cap \tau(K) = \emptyset$.
\item $K$ and $\tau(K)$ are not isomorphic in the wrapped Fukaya category of $M_{f}$.
\end{enumerate}
Then, the monodromy Floer cohomology can be computed as follows.
\begin{enumerate}
\item $HF^{*}_{\tau}(K,K) \cong \mathbb{K} \cdot e[0]$ where $e$ is the identity morphism and
\item $HF^{*}_{\tau}(K,K^{\prime}) = 0$.
\end{enumerate}
\begin{proof}
We can assume that the differentials vanish on $CW^{*}(\tau(K),K)$ and $CW^{*}(K,K)$ by choosing an appropriate Hamiltonian since $K$ and $\tau(K)$ are disjoint (e.g., see the proof of Proposition \ref{prop: variation dim 2}). Then, there is one interior intersection point $e$, which represents the identity morphism in $CW^{*}(K,K)$. Any other Hamiltonian chord representing a morphism in $CW^{*}(K,K)$ has a negative action value and hence it defines a morphism belonging to $CW^{*}_{<0}(K,K)$. 

By the assumption of the differentials, we have
\begin{equation} \label{eqn:cone}
HF^{*}_{\tau}(K, K) \cong HW^{*}(\tau(K)),K)[1] \xrightarrow{m_2(\mathcal{CO}_{K}^\tau(c(\Gamma)){,}-)} HW^{*}(K,K).
\end{equation}
where $c$ is the continuation map.
Remark that Theorem \ref{thm: PSS CO diagram} is still valid for $\tau$. 
Then by Corollary \ref{cor: Gamma and GammaL}, we can conclude that the term of $\mathcal{CO}_{K}^\tau(c(\Gamma))$ with the lowest action value must be $\Gamma_K$, the cochain of shortest Reeb chord from $K$ to $\tau(K)$ for any $K$.
Therefore, $\mathcal{CO}_{K}^\tau(c(\Gamma))$ is $\Gamma_K$ under the assumption (i).

Given a nontrivial morphism $q \in HW^{*}_{<0}(K,K)$, one can find a triangle which contributes to the $m_{2}$-product $m_2(\Gamma_K, p) = q$ for some $p \in HW^*(\tau(K), K)$. This triangle can be described as follows. Let $Q$ be a Hamiltonian which is $C^{2}$-small in some compact subset $K \subset M_{f}$ and quadratic in the complement of $K$, and $\phi = \phi^{1}_{Q}$ be a time-$1$ Hamiltonian flow of $Q$. With the labeling of morphisms in Figure \ref{fig:tri}, there is precisely one triangle for the equation $m_2(\Gamma_K, p_{k}) = q_{k}$ for each $k \geq 1$.
\begin{figure}[h]
\includegraphics[scale=0.9]{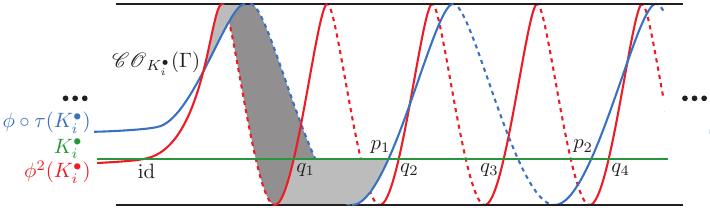}
\centering
\caption{Holomorphic triangle for the quantum cap action}
\label{fig:tri}
\end{figure}

If the identity morphism becomes an output of some product, then this implies that $K$ and $\tau(K)$ are isomorphic objects in the wrapped Fukaya category. It is a contradiction. Thus, we have the isomorphism
$$HF^{*}_{\tau}(K, K) \cong \mathbb{K} \cdot e[0].$$
This proves the first result (1).

For the second statement (2), note that there is no morphism corresponding to the interior intersection in $HW^{*}(\tau(K),K^{\prime})$ and $HW^{*}(K,K^{\prime})$. Then, a modification of Figure \ref{fig:tri} explains all the triangles which give the isomorphism between two cohomologies:
$$HW^{*}(\tau(K),K^{\prime}) \xrightarrow[\simeq]{m_2(\Gamma_K{,}-)} HW^{*}(K,K^{\prime}).$$
Hence, the mapping cone of the twisted quantum cap action vanishes and this gives the second result.
\end{proof}
\end{lemma}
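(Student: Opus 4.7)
My approach is to use the mapping cone definition $HF^*_\tau(K, L) \cong H^*(\mathrm{Cone}(\cap \Gamma : CW^*(\tau(K), L)[1] \to CW^*(K, L)))$ together with the identification of $\mathcal{CO}^\tau_K(\Gamma)$ with $\Gamma_K$ on the nose. The latter is immediate from Corollary \ref{cor: Gamma and GammaL} once we use the disjointness hypothesis $K \cap \tau(K) = \emptyset$, so the twisted cap action reduces to the $A_\infty$-product $m_2(\Gamma_K, -)$ at the cochain level, modulo choosing the Floer data compatibly.

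Under the disjointness assumptions (i), I would arrange Floer data so that $\tau(K)$ is effectively a small pushoff of $K$ along the Reeb direction near $\partial M_f$, so that Hamiltonian chords between $\tau(K)$ and $K$ (or to $K'$) arise only from wrapping. As in the proof of Proposition \ref{prop: variation dim 2}, this lets us choose Floer data for which the wrapped Floer differentials on all four relevant complexes $CW^*(K,K)$, $CW^*(\tau(K),K)$, $CW^*(K,K')$, $CW^*(\tau(K),K')$ vanish at the chain level. Then each cohomology equals its chain complex as a graded vector space, and the whole calculation collapses into understanding a single linear map $m_2(\Gamma_K, -)$.

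Next, I would split each complex by action filtration into a nonnegative-action piece coming from interior self-intersections and a negative-action piece generated by Reeb wrapping. The disjointness forces the nonnegative piece to vanish except in $CW^*(K,K)$, where it reduces to the single class $e_K$. A direct holomorphic triangle count of the type sketched in Figure \ref{fig:tri} shows that $m_2(\Gamma_K, -)$ sends each wrapping generator $p_k \in CW^*(\tau(K), L)$ to the next wrapping generator $q_k \in CW^*(K, L)$, establishing a bijection between the wrapping subspaces. For $L = K'$ this exhausts both complexes, so the cone is acyclic and $HF^*_\tau(K, K') = 0$, giving (2). For $L = K$, the cokernel of the cap action is spanned by $e_K$, which sits in degree zero, so $HF^*_\tau(K,K) \cong \mathbb{K} \cdot e[0]$ as claimed.

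The main obstacle is verifying that $e_K$ is not already in the image of $m_2(\Gamma_K, -)$; this is exactly where assumption (ii) becomes essential. If some chord $p \in CW^*(\tau(K), K)$ satisfied $m_2(\Gamma_K, p) = e_K$, then combining $p$ and the inverse triangle-counting relation with $\Gamma_K \in CW^*(K, \tau(K))$ would produce mutually inverse closed morphisms between $K$ and $\tau(K)$ under the $A_\infty$-product, making them quasi-isomorphic in $\mathcal{WF}(M_f)$ and contradicting hypothesis (ii). Carrying this out rigorously requires careful bookkeeping of which perturbed holomorphic triangles can contribute to products landing on the interior generator $e_K$, as opposed to wrapping generators.
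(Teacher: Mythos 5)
Your proposal is correct and follows essentially the same line of argument as the paper's proof: identify $\mathcal{CO}^\tau_K(\Gamma)$ with $\Gamma_K$ using Corollary \ref{cor: Gamma and GammaL} and the disjointness in (i), choose Floer data so the wrapped differentials vanish (as in Proposition \ref{prop: variation dim 2}), split by action filtration so only $e_K$ sits in the nonnegative part, establish a triangle-count bijection $p_k \mapsto q_k$ on the wrapping pieces, and invoke hypothesis (ii) to rule out $e_K$ landing in the image of $m_2(\Gamma_K, -)$. The only stylistic difference is that you make the action-filtration splitting and the iso-exclusion step slightly more explicit, while the paper compresses them, but the underlying mechanism is identical.
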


\begin{prop} \label{prop:depth0dist}
The adapted family in Lemma \ref{lem:depth0} forms an exceptional collection.
\end{prop}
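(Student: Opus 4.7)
By Proposition \ref{prop: isotopy invariance} together with the good-perturbation extension remarked upon after it, we have $HF^*_\rho(-,-) \cong HF^*_\tau(-,-)$ for the explicit representative $\tau$ built as a composition of Dehn twists followed by a small Reeb rotation. The plan is to verify the two defining conditions of Definition \ref{defn:dist} for the adapted family $\overrightarrow{K}_f$ by repeated application of Lemma \ref{lem:i}, with the geometric disjointness hypotheses delivered by Lemma \ref{lem:disjoint}.

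For the vanishing condition $HF^*_\tau(K^\bullet_i, K^\circ_j) = 0$ whenever $K^\bullet_i > K^\circ_j$ in the chosen ordering, I apply Lemma \ref{lem:i}(2) with $K := K^\bullet_i$ and $K' := K^\circ_j$. The three disjointness requirements in hypothesis (i) are supplied entirely by Lemma \ref{lem:disjoint}: part (1) gives $K \cap K' = \emptyset$; part (2) yields $K \cap \tau(K) = \emptyset$ in all cases as well as $K' \cap \tau(K) = \emptyset$ in the same-sign cases with $i > j$; and the three clauses of part (3) cover the cross-sign cases $(\bullet,\circ) \in \{(+,-),(+,0),(0,-)\}$ of $K' \cap \tau(K) = \emptyset$. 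Crucially, the conclusion of Lemma \ref{lem:i}(2) does not invoke hypothesis (ii), so the vanishing is immediate.

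For the identity condition $HF^*_\tau(K^\bullet_i, K^\bullet_i) \cong \mathbb{K}\cdot e[0]$, I apply Lemma \ref{lem:i}(1) with $K = K^\bullet_i$. Hypothesis (i) is again immediate from Lemma \ref{lem:disjoint}(2). The principal obstacle, and the only nontrivial part of the argument, is hypothesis (ii): that $K^\bullet_i$ and $\tau(K^\bullet_i)$ are non-isomorphic as objects of $\mathcal{WF}(M)$.

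To settle (ii), I argue via a homological invariant. A Fukaya-categorical isomorphism $K^\bullet_i \simeq \tau(K^\bullet_i)$ in $\mathcal{WF}(M)$ would descend to $[K^\bullet_i] = [\tau(K^\bullet_i)]$ in $H_{n-1}(M,\partial M)$ through the natural Chern-character-type map $K_0(\mathcal{WF}(M)) \to H_{n-1}(M,\partial M)$. Because $\tau$ differs from the boundary-fixing monodromy $\rho$ only by a good perturbation, one then obtains $[K^\bullet_i] = [\rho(K^\bullet_i)]$ in $H_{n-1}(M,\partial M)$, which by the long exact sequence of the pair $(M,\partial M)$ forces the absolute cycle $\rho(K^\bullet_i) - K^\bullet_i$ to lie in the image of $H_{n-1}(\partial M) \to H_{n-1}(M)$. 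By Proposition \ref{lem:adpatedvar}, this absolute cycle represents $\mathrm{var}[K^\bullet_i] = (-1)^{n(n-1)/2}[V^\bullet_i]$, and $[V^\bullet_i]$ is not boundary-supported: it has nontrivial intersection with an adjacent vanishing cycle in $A\Gamma(\mathbb{D}_f)$, while boundary-supported classes pair trivially with every absolute middle-dimensional cycle. This contradiction establishes (ii), and together with the vanishing from the previous step shows that $\overrightarrow{K}_f$ is an exceptional collection.
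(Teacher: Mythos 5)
Your application of Lemma \ref{lem:i}(2) for the vanishing condition matches the paper's argument, and your observation that the conclusion there does not require hypothesis (ii) is a correct reading of its proof. The substantive divergence is in how you verify hypothesis (ii) for the identity condition: the paper exhibits an explicit test object --- a vanishing cycle $V^\circ_j$ adjacent to $v^\bullet_i$ in $A\Gamma(\mathbb{D}_f)$ --- and shows $HF^*(K^\bullet_i, V^\circ_j) = 0$ while $HF^*(\tau(K^\bullet_i), V^\circ_j) \neq 0$ (a single transverse intersection point with no bigon), which immediately rules out a quasi-isomorphism. Your replacement of this with a $K_0$-level obstruction via the purported map $K_0(\mathcal{WF}(M)) \to H_{n-1}(M,\partial M)$ is not self-evidently available: the paper never constructs such a map, and while one can manufacture it (e.g., by pairing against compact exact Lagrangians and using that $\chi(HW^*(L,K))$ for $L$ compact computes the Poincar\'e--Lefschetz intersection number, together with the fact that the vanishing cycles generate $H_{n-1}(M)$), that route is essentially a repackaging of the paper's direct Floer test and should be spelled out, not invoked as a "natural Chern-character-type map."

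More seriously, your proof silently fails in the Morse case $\mu=1$ (the $A_1$ divide, a single double point with four unbounded regions), which is squarely within the scope of the proposition since it has depth $0$. There the $A\Gamma$ diagram has a single vertex, so your appeal to "an adjacent vanishing cycle" has no content, and the conclusion "$[V^\bullet_i]$ is not boundary-supported" is actually false: the Milnor fiber is a cylinder and the core circle $[V_1]$ is in the image of $H_1(\partial M) \to H_1(M)$. Indeed hypothesis (ii) of Lemma \ref{lem:i} genuinely fails here --- $K$ and $\tau(K)$ are isomorphic non-compact arcs in the wrapped category of a cylinder --- so no valid argument could establish it, and the paper deals with this case by a separate, more elementary observation that the identity cannot appear as an output of the twisted cap action in the cylinder. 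You need to either flag and handle this case separately, or restrict the claim to $\mu \geq 2$.
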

\begin{proof}
When there is only one Lagrangian in the adapted family $\overrightarrow{K}_{f}$ (Morse case), the condition (ii) in Lemma \ref{lem:i} does not hold. But, the Milnor fiber is just a cylinder in this case and it is easy to see that the identity morphism cannot be an output of quantum cap action. This proves 
$$HF^{*}_{\tau}(K, K) \cong \mathbb{K} \cdot e[0],$$
i.e., $\overrightarrow{K}_{f}$ is an exceptional collection.

Now, we assume that $\overrightarrow{K}_{f}$ consists of more than one Lagrangian. It is enough to show that the conditions in Lemma \ref{lem:i} hold for the adapted family $\overrightarrow{K}_{f}$ according to order.

Let us choose any pair of non-compact Lagrangians $K^{\bullet}_{i}$ and $K^{\circ}_{j}$ from $\overrightarrow{K}_{f}$ with $K^{\bullet}_{i} > K^{\circ}_{j}$. Then, the condition (1) for this pair directly follows from Lemma \ref{lem:disjoint}.

For the condition (2), consider a Lagrangian $K^{-}_{i}$ first. It intersects $V^{-}_{i}$ only among vanishing cycles. However, its monodromy image $\tau(K^{-}_{i})$ meets once all the other vanishing cycles which have an intersection with $V^{-}_{i}$. Since there is always a vertex with a different type by the assumption, this implies that there is a vanishing cycle $V^{\bullet}_{j}$ such that $K^{-}_{i} \cap V^{\bullet}_{j} = \emptyset$ and $\tau(K^{-}_{i}) \cap V^{\bullet}_{j} = p$, i.e.,
$$HF^{*}(K^{-}_{i},V^{\bullet}_{j}) = 0 \mbox{ and } HF^{*}(\tau(K^{-}_{i}),V^{\bullet}_{j}) \neq 0.$$
It guarantees that $K^{-}_{i}$ and $\tau(K^{-}_{i})$ are not isomorphic in $\mathcal{WF}(M_{f})$.

One can show the condition (2) for other cases similarly. Note that there are a vanishing cycle $V^{-}_{j}$ that intersects with $K^{0}_{i}$, but not with $\tau(K^{0}_{i})$ (see Figure \ref{fig:K0}) for $K^{0}_{i}$ and a vanishing cycle $V^{0}_{k}$ that intersects with $K^{+}_{i}$, but not with $\tau(K^{+}_{i})$ (see Figure \ref{fig:K+}) for $K^{+}_{i}$. Then, the statement follows from the Lemma \ref{lem:i}.
\end{proof} 

In the next lemma, we prove that $\mathcal V (K_i)$ is quasi-isomorphic to $V_i$ up to an odd shift.

\begin{lemma}
\label{lem:varandVforadaptedK}
In $\mathcal{WF}(M)$, $\mathcal V(K_i^\bullet)$ is quasi-isomorphic to $V_i^\bullet$ up to an odd shift. 
Moreover, we can choose gradings on $\{K_i^\bullet\}$ which makes $\mathcal V(K_i^\bullet) \simeq V_i^\bullet[1]$.
 
\end{lemma}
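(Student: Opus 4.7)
The plan is to produce a compact Lagrangian representative of $\mathcal V(K_i^\bullet)$ by flow surgery, identify it geometrically with $V_i^\bullet$, and then match gradings. Since $n = 2$ here, Proposition \ref{prop: variation dim 2} is directly applicable; its hypothesis that $K_i^\bullet \cap \rho(K_i^\bullet) = \emptyset$ away from $\partial M$ is provided by Lemma \ref{lem:disjoint}(2), and the infinite-dimensionality hypothesis of Theorem \ref{thm:compactrepresentative} holds automatically because $K_i^\bullet$ is non-compact with Legendrian boundary in $\partial M$, so the Reeb flow contributes infinitely many wrapped generators to $HW^*(K_i^\bullet, K_i^\bullet)$. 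This produces the compact component $V_{K_i^\bullet}$ of $\widetilde K_i^\bullet \sharp_{\partial K_i^\bullet} \widetilde{\rho(K_i^\bullet)}$, quasi-isomorphic to $\mathcal V(K_i^\bullet)$ in $\mathcal{WF}(M)$.

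The next step is to show that $V_{K_i^\bullet}$ is smoothly isotopic to $V_i^\bullet$ inside $M$; since on a surface any two smoothly isotopic exact Lagrangian curves are Hamiltonian isotopic, this yields the desired quasi-isomorphism. By construction in Lemma \ref{lem:depth0} together with the intersection conditions in Lemma \ref{lem:disjoint}, each $K_i^\bullet$ meets only those Dehn-twist supports appearing in $\rho$ whose vanishing cycle it actually intersects; thus $\rho(K_i^\bullet)$ is Hamiltonian isotopic to a composition of those individual Dehn twists applied to $K_i^\bullet$. A local application of the Picard--Lefschetz formula then expresses $\rho(K_i^\bullet)$ as $K_i^\bullet$ band-summed with appropriate vanishing cycles inside the relevant building block of Figures \ref{fig:K-}, \ref{fig:K0}, \ref{fig:K+}. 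Performing flow surgery at $\partial K_i^\bullet$ cancels the two parallel copies of $K_i^\bullet$ and splits off a boundary-parallel disc-bounding circle, leaving a compact component smoothly isotopic to $V_i^\bullet$. The peripheral component is quasi-isomorphic to zero in $\mathcal{WF}(M)$, so the non-trivial component of $V_{K_i^\bullet}$ must be $V_i^\bullet$.

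For the grading, since $\mathcal V(K_i^\bullet)$ is by definition the mapping cone $\mathrm{Cone}(K_i^\bullet[1] \xrightarrow{\mathcal{CO}_{K_i^\bullet}^\rho(\Gamma)} \rho(K_i^\bullet))$, its $\Z$-grading is determined by the gradings chosen on $K_i^\bullet$ and $\rho(K_i^\bullet)$ (the latter being induced canonically from the former via the monodromy-compatible volume form constructed in Subsection \ref{subsubsec: induced grading}) together with the degree of the surgery cobordism. Tracking this through the explicit surface computation in the proof of Proposition \ref{prop: variation dim 2} shows $V_{K_i^\bullet}$ appears shifted by an odd integer relative to $V_i^\bullet$, and since we are free to choose the absolute grading of each $K_i^\bullet$ (its graded phase function is unobstructed on a contractible arc), we can normalise so that $\mathcal V(K_i^\bullet) \simeq V_i^\bullet[1]$ on the nose.

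The main obstacle I anticipate is the case $\bullet = +$, where Lemma \ref{lem:depth0} defines $K_i^+$ indirectly as $\tau_{V_i^+}^{-1}(\widetilde K_i^+)$: both sides of the surgery then involve a non-trivial Dehn twist along $V_i^+$, and the cancellation leading to $V_i^+$ is less transparent than in the $\bullet \in \{-,0\}$ cases where $\rho(K_i^\bullet)$ differs from $K_i^\bullet$ by a single Dehn twist. Nevertheless, because the entire identification is local in a building-block neighbourhood of the divide, the argument reduces to a bounded case analysis on the explicit pictures of Lemma \ref{lem:depth0}.
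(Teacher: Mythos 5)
Your overall architecture matches the paper's: invoke Proposition~\ref{prop: variation dim 2} to obtain a compact surgery representative $V_{K_i^\bullet}$, identify it geometrically with $V_i^\bullet$, and then normalise gradings by an even shift. However, there is a genuine gap in the middle step, and you are carrying some unnecessary baggage.

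The gap is in the identification $V_{K_i^\bullet}\simeq V_i^\bullet$. You write that ``$\rho(K_i^\bullet)$ is Hamiltonian isotopic to a composition of those individual Dehn twists applied to $K_i^\bullet$'' and then describe $\rho(K_i^\bullet)$ as ``$K_i^\bullet$ band-summed with appropriate vanishing cycles'' (plural). This is not how the construction works, and if it were, the surgery would not output $V_i^\bullet$ alone. The paper's Lemma~\ref{lem:disjoint} actually records the much stronger fact that, after composing the Dehn twists in the order defining $\tau$, the only twist that acts nontrivially on $K_i^\bullet$ is $\tau_{V_i^\bullet}$ --- even for $\bullet\in\{0,+\}$, where $K_i^\bullet$ does intersect the supports of other twists. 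For $\bullet=0$ this is forced by the design condition \eqref{eqn:supp}: once $\tau_{V_i^0}$ has been applied, the image has already left $\mathrm{Supp}(\tau_{V_j^-})$, so the remaining negative twists do nothing. For $\bullet=+$ the identity $\tau(K_i^+)=\widetilde K_i^+=\tau_{V_i^+}(K_i^+)$ is checked directly in Lemma~\ref{lem:disjoint}. Consequently $\tau(K_i^\bullet)\simeq\tau_{V_i^\bullet}(K_i^\bullet)$, and since $K_i^\bullet$ meets $V_i^\bullet$ in a single transverse point, the flow surgery is exactly the standard single-Dehn-twist local model producing one copy of $V_i^\bullet$ plus a boundary-parallel (hence null) component. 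Without isolating that single twist, your appeal to Picard--Lefschetz does not control the surgery output, and your anticipated ``obstacle'' in the $\bullet=+$ case is really a symptom of missing this observation --- the case is already dealt with inside Lemma~\ref{lem:disjoint}, not by a separate building-block case analysis.

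A smaller point: you mix the hypotheses of Proposition~\ref{prop: variation dim 2} with those of Theorem~\ref{thm:compactrepresentative}. In dimension two only the former is needed, and it requires just that $K_i^\bullet\cap\rho(K_i^\bullet)=\emptyset$ away from $\partial M$ (which Lemma~\ref{lem:disjoint} gives); the infinite-dimensionality condition you invoke plays no role here. The grading discussion is fine and agrees with the paper: the surgery costs an odd shift by the orientation convention of Proposition~\ref{prop: variation dim 2}, and absolute even shifts of $K_i^\bullet$ are free without affecting any $HF^*_\rho$ computation.
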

\begin{proof}
Recall that our $K_i^\bullet$ intersects $V_i^\bullet$ only once.
In the course of the proof of Lemma \ref{lem:disjoint}, we also observed that the only Dehn twists act non-trivially on $K_i^\bullet$ is $\tau_{V_i^\bullet}$. 
Therefore $\tau(K^\bullet_i)$ also intersects $V^\bullet_i$ at single point.  
Then we can conclude that $V_i^\bullet$ is quasi-isomorphic to $\mathcal V(K_i^\bullet)$ up to a shift using Proposition \ref{prop: variation dim 2}, 
The shift is odd because the choice of an orientation of $K_i^\bullet$ is opposite to that of Proposition \ref{prop: variation dim 2}. 

Note that we have only decided the orientations of $\{K_i^\bullet\}$ and said nothing about gradings until now.
We have the freedom of shifting the grading of $K_i^\bullet$ by an arbitrary even number. 
Since the calculation involves $V_i$ only, any even shift does not affect the gradings of other $K_j^\circ$ or previous computations of $HF^*_\tau$. 
\end{proof}
By combining Proposition \ref{prop:depth0dist} and Lemma \ref{lem:varandVforadaptedK}, we obtain; 

\begin{thm}
\label{thm:ecadapted}
Let $f$ be an isolated plane curve singularity having an A'Campo divide of depth 0.
Denote by $(V_1, \ldots, V_\mu)$ the distinguished collection of Lagrangian vanishing cycles in $M$ from the given divide.
Then, there exists an exceptional collection (with respect to $HF^*_\rho$) of non-compact connected Lagrangians $(K_1,\cdots, K_\mu)$  adapted to $(V_1,\cdots, V_\mu)$ satisfying
$$\mathcal{V}(K_i)\simeq V_i[1] \in \mathcal{WF}(M).$$ 
\end{thm}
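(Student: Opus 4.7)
The plan is to combine a topological construction of the adapted family with a Floer-theoretic verification of the exceptional collection property, ultimately reducing the final quasi-isomorphism $\mathcal V(K_i) \simeq V_i[1]$ to the surface case handled by Proposition \ref{prop: variation dim 2}.

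The first step is to build the Lagrangians $K_i$ explicitly. A'Campo's reconstruction presents $M$ as a union of standard building blocks (one per double point of the divide) glued by half-twists along the edges of the $A\Gamma$ diagram. The depth-$0$ hypothesis is the crucial input: every vertex of $A\Gamma(\mathbb{D}_f)$ touches at least one unbounded region, which translates into an ``unnecessary arc'' in some building block along which no vanishing cycle passes. I would use this unnecessary arc as the ``escape route'' for each $K_i^\bullet$ out to $\partial M$, choosing the three archetypes as in Figures \ref{fig:K-}, \ref{fig:K0}, \ref{fig:K+}: for $V_i^-$ the path exits directly through a neighboring unbounded positive region; for $V_i^0$ the path traverses one negative arc before exiting; and for $V_i^+$ we set $K_i^+ := \tau_{V_i^+}^{-1}(\widetilde K_i^+)$ so as to replicate the intersection pattern of $V_i^+$ itself. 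A direct inspection of each building block verifies the three adaptedness conditions of Definition \ref{defn:adapted}; by Proposition \ref{lem:adpatedvar} this already gives $\mathrm{var}([K_i]) = (-1)^{n(n-1)/2}[V_i]$ at the homology level.

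Next, I would establish the disjointness properties needed for the Floer computations. Because each $K_i^\bullet$ meets the support of only the single Dehn twist $\tau_{V_i^\bullet}$ (after suitably shrinking supports), the factorization $\tau = \phi^1_{\epsilon H} \circ \prod \tau_{V_j^\bullet}$ shows that $K_i^\bullet$ and $\tau(K_j^\circ)$ are disjoint whenever $(\bullet,i)\neq(\circ,j)$, and $K_i^\bullet \cap \tau(K_i^\bullet)$ is pushed entirely into the collar by the extra rotation $\phi^1_{\epsilon H}$ (Lemma \ref{lem:disjoint}). Combined with the pairwise disjointness $K_i^\bullet \cap K_j^\circ = \emptyset$ from the construction, this sets up the hypotheses of Lemma \ref{lem:i}.

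The main obstacle is verifying the exceptional collection property. Once disjointness is in place, both $CW^*(K_i^\bullet, K_j^\circ)$ and $CW^*(\tau(K_i^\bullet), K_j^\circ)$ are supported in non-positive action, so via Proposition \ref{prop:HF and V} the cohomology $HF^*_\rho(K_i^\bullet, K_j^\circ)$ is computed by a mapping cone of $\cap \Gamma_{K_i^\bullet}$. Corollary \ref{cor: Gamma and GammaL} identifies the lowest-action term of $\mathcal{CO}^\tau_{K_i^\bullet}(\Gamma)$ with $\Gamma_{K_i^\bullet}$ exactly, and a triangle count as in the proof of Lemma \ref{lem:i} reduces the vanishing for $i>j$ (and the $\mathbb K \cdot e_i[0]$ identification for $i=j$) to the statement that $K_i^\bullet$ and $\tau(K_i^\bullet)$ are not quasi-isomorphic in $\mathcal{WF}(M)$. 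The delicate point is witnessing this non-isomorphism; I would do so by pairing with a carefully chosen vanishing cycle: by the Picard–Lefschetz formula, $\tau(K_i^\bullet)$ acquires extra intersections with each vanishing cycle adjacent to $V_i^\bullet$ in the $A\Gamma$ diagram, and the depth-$0$ assumption guarantees the existence of such a neighbor of different sign, producing a test Lagrangian $V_j^\circ$ with $HF^*(K_i^\bullet, V_j^\circ) = 0$ but $HF^*(\tau(K_i^\bullet), V_j^\circ) \neq 0$.

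Finally, to upgrade from homology to the Floer-theoretic identification $\mathcal V(K_i) \simeq V_i[1]$, I would invoke Proposition \ref{prop: variation dim 2}: since $K_i^\bullet \cap \tau(K_i^\bullet)$ is empty away from $\partial M$ and (by the previous step applied in one more guise) $HW^*(K_i^\bullet, \tau(K_i^\bullet))$ is infinite dimensional, the compact component of the flow-surgery $\widetilde{K_i^\bullet} \sharp_{\partial} \widetilde{\tau(K_i^\bullet)}$ is a compact Lagrangian representative of $\mathcal V(K_i^\bullet)$, topologically obtained by surgering $K_i^\bullet$ with $\tau(K_i^\bullet)$ at their common boundary. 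Since $K_i^\bullet$ meets $V_i^\bullet$ once transversely and $\tau$ acts trivially on the remaining complement, this compact representative is Hamiltonian isotopic to $V_i^\bullet$; the parity of the shift is fixed by comparing orientations, and any remaining even-degree ambiguity is absorbed by rescaling the grading of each $K_i^\bullet$, which is invisible to all the $HF^*_\rho$ computations made above.
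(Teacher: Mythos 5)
Your proposal follows essentially the same route as the paper: construct each $K_i^\bullet$ in the A'Campo building block via an ``unnecessary arc'' given by depth zero (Lemma~\ref{lem:depth0}), establish the mutual and monodromy-twisted disjointness (Lemma~\ref{lem:disjoint}), feed this into the non-isomorphism criterion of Lemma~\ref{lem:i} using adjacent vanishing cycles as test objects to verify the exceptional collection property (Proposition~\ref{prop:depth0dist}), and then identify $\mathcal V(K_i^\bullet)$ with $V_i^\bullet[1]$ via the flow-surgery compact representative of Proposition~\ref{prop: variation dim 2} (Lemma~\ref{lem:varandVforadaptedK}). The only slight imprecision is that you invoke the infinite-dimensionality hypothesis of Theorem~\ref{thm:compactrepresentative}, which is superfluous in the surface case where Proposition~\ref{prop: variation dim 2} already suffices, and you bundle the $i>j$ vanishing together with the $i=j$ case as both resting on the non-isomorphism of $K_i^\bullet$ and $\tau(K_i^\bullet)$, whereas the $i>j$ vanishing requires only the bijection from the triangle count; neither point affects correctness.
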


\subsection{Examples} 
In this subsection, we consider $E_{6}$ and $A_{n}$ singularities as examples. 
We present explicit computations that hopefully give the readers some flavor of $HF^*_\tau$.
We also describe an interesting observation regarding the relationship between $A_{n}$ singularities and a partially wrapped Fukaya category.
\subsubsection{$E_6$-singularity} \label{subsubsec:E6}

We investigate the $E_{6}$ example given in the introduction. There is the adapted family $(K^{-}_{1}, K^{-}_{2}, K^{0}_{1}, K^{0}_{2}, K^{0}_{3}, K^{+}_{1})$ in Figure \ref{fig:E6} (B) associated with the vanishing cycles in Figure \ref{fig:E6} (A). By Proposition \ref{prop:depth0dist}, we know that the family $(K^{-}_{1}, K^{-}_{2}, K^{0}_{1}, K^{0}_{2}, K^{0}_{3}, K^{+}_{1})$ is exceptional. 
Any two distinct Lagrangians in the family are disjoint from each other. 
Moreover,  by Lemma \ref{lem:i},  their monodromy Floer cohomology $HF^{*}_{\tau}(K^{\bullet}_{i}$, $K^{\circ}_{j})$ vanishes when $\tau(K^{\bullet}_{i}) \cap K^{\circ}_{j} = \emptyset$ holds. Hence, it is enough to find pairs $(K^{\bullet}_{i}, K^{\circ}_{j})$ such that $K^{\bullet}_{i} < K^{\circ}_{j}$ and $\tau(K^{\bullet}_{i}) \cap K^{\circ}_{j} \neq \emptyset$ in Figure \ref{fig:E6ex} (A).

\begin{figure}[h]
\begin{subfigure}[t]{0.43\textwidth}
\includegraphics[scale=0.8]{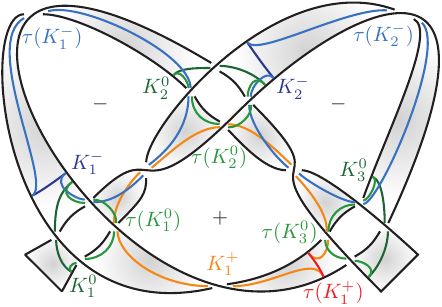}
\centering
\caption{Adapted family and monodromy images}
\end{subfigure}
\begin{subfigure}[t]{0.43\textwidth}
\includegraphics[scale=0.8]{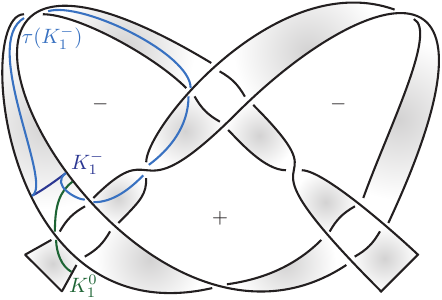}
\centering
\caption{Three Lagrangians $K^{-}_{1}$, $\tau(K^{-}_{1})$, and $K^{0}_{1}$}
\end{subfigure}
\centering
\caption{ }
\label{fig:E6ex}
\end{figure}

One of such pairs is $(K^{-}_{1}, K^{0}_{1})$ and we compute the monodromy Floer cohomology $HF^{*}_{\tau}(K^{-}_{1}, K^{0}_{1})$. Recall that we have the following isomorphism from the proof of Proposition \ref{prop:HF and V};
$$HF^{*}_{\tau}(K^{-}_{1},K^{0}_{1}) \cong H^*\left(CW^{*}(\tau(K^{-}_{1}),K^{0}_{1})[1] \xrightarrow{m_2(\mathcal{CO}^\tau_{K^{-}_{1}}(\Gamma){,}-)} CW^{*}(K^{-}_{1},K^{0}_{1})\right).$$
We claim that $m_2(\mathcal{CO}^\tau_{K^{-}_{1}}(\Gamma){,}-)$ is surjective with $1$-dimensional kernel.
The first component $CW^{*}(\tau(K^{-}_{1}),K^{0}_{1})[1]$ consists of wrapped generators (which appear near boundary, or equivalently, have negative action values) and one interior intersection point $\tau(K^{-}_{1}) \cap K^{0}_{1} = p_{1}$. The second component $CW^{*}(K^{-}_{1},K^{0}_{1})$ contains only wrapped generators. 
Then, $m_2(\mathcal{CO}^\tau_{K^{-}_{1}}(\Gamma){,}-)$ gives a bijection between all of the wrapped generators in the first component and the second component (see Lemma \ref{lem:i}).
Also, one can directly check that there are no bigons or triangles corresponding to $m_1(p_1)$ and $m_2(\mathcal{CO}^\tau_{K^{-}_{1}}(\Gamma),p_{1})$ in Figure \ref{fig:E6ex} (B). 
This implies that $$[p_{1}] \in HF^{*}_{\tau}(K^{-}_{1},K^{0}_{1})$$ 
and it is the unique generator of this cohomology.

There are $9$ pairs of Lagrangians that have nontrivial monodromy Floer cohomology. In each case, they have one-dimensional monodromy Floer cohomology generated by the following intersection point:
\begin{align*}
\tau(K^{-}_{1}) \cap K^{0}_{1}& = p_{1},\; \tau(K^{-}_{1}) \cap K^{0}_{2} = p_{2},\; \tau(K^{-}_{1}) \cap K^{+}_{1} = p_{3}, \;\tau(K^{-}_{2}) \cap K^{0}_{2} = p_{4},\; \tau(K^{-}_{2}) \cap K^{0}_{3} = p_{5}, \\
&\tau(K^{-}_{2}) \cap K^{+}_{1} = p_{6},\; \tau(K^{0}_{1}) \cap K^{+}_{1} = p_{7}, \;\tau(K^{0}_{2}) \cap K^{+}_{1} = p_{8}, \; \tau(K^{0}_{3}) \cap K^{+}_{1} = p_{9}.
\end{align*}
For any pair, this can be shown as the same as the above pair $(K^{-}_{1}, K^{0}_{1})$.

This computation can be represented by the following quiver (as a vector space, not an algebra):
\begin{equation}\label{diagram:E6}
\begin{tikzcd}[column sep = {5em,between origins}, row sep = {5em,between origins}]
K^{-}_{1} \arrow[r] \arrow[dr] \arrow[d] & K^{0}_{2} \arrow[d] & K^{-}_{2} \arrow[l] \arrow[dl] \arrow[d] \\
K^{0}_{1} \arrow[r] & K^{+}_{1} & K^{0}_{3} \arrow[l]
\end{tikzcd}.
\end{equation}
This quiver is the same as the $A\Gamma$ diagram with orientation (see Figure \ref{fig:E6D} (B)). Note that we need to consider the degree of generators. However, we know about vanishing cycles that their morphism spaces are supported in degree $0$. Then, by Proposition \ref{prop:Serre dual} (2), the monodromy Floer cohomologies also live in degree $0$ part.

\subsubsection{$A_n$-singularity}
\label{subsec:An}

We revisit $A_n$-singularity $f(x,y) = x^{n+1} + y^{2}$.
It has a well-known divide of depth zero whose distinguished collection and adapted family coincide with the one discussed in the previous Example \ref{ex:A}.

We continue from the previous $A_4$ example (see Figure \ref{fig:A4} (A)), but again the whole story generalizes to arbitrary $A_n$. 
There, we obtained distinguished collection consisting of four vanishing cycles $\overrightarrow{V}_{f} = (V^{-}_{1}, V^{-}_{2}, V^{0}_{1}, V^{0}_{2})$ and its adapted family $\overrightarrow{K}_{f} = (K^{-}_{1}, K^{-}_{2}, K^{0}_{1}, K^{0}_{2})$. For notational simplicity, let us denote $(V^{-}_{1}, V^{-}_{2}, V^{0}_{1}, V^{0}_{2})$ by $(V_1, V_3, V_2, V_4)$, and the adapted family $(K^{-}_{1}, K^{-}_{2}, K^{0}_{1}, K^{0}_{2})$ by $(L_{1}, L_{3}, L_{2}, L_{4})$.
Then endomorphism algebra of $\overrightarrow{V}_{f}=(V_1, V_3, V_2, V_4)$ is described using the following quiver $Q$:
$$\begin{tikzcd}
	\bullet_{1} \arrow[r] & \bullet_{2} & \bullet_{3} \arrow[l] \arrow[r] & \bullet_{4}.
\end{tikzcd}$$
We can choose gradings on vanishing cycles so that the morphisms between vanishing cycles in the Fukaya--Seidel category are concentrated at degree zero. 
After that, we have
\begin{equation}
\label{eqn:Ancomputation}
	HF^{*}_{\tau} \left( \bigoplus_{i=1}^{4} L_{i}, \bigoplus_{i=1}^{4} L_{i} \right) \cong
	\mathrm{Hom}^{*}_{FS} \left( \bigoplus_{i=1}^4 V_{i}, \bigoplus_{i=1}^4 V_{i} \right) \cong 
	\mathbb{K}Q.
\end{equation}
where $\mathbb K Q$ denote the path algebra associated with $Q$. 

Let us compute it more explicitly.
From \eqref{eqn:Ancomputation} or by Proposition \ref{prop:depth0dist}, we know that $HF^*_\tau$ between two distinct Lagrangians vanishes except:
$$HF^{*}_{\tau}(L_{1}, L_{2}), HF^{*}_{\tau}(L_{3},L_{2}), \mbox{ and }HF^{*}_{\tau}(L_{3},L_{4}).$$
These are all one-dimensional and we would like to specify their generators. 
To do this, we give a more convenient description of the Milnor fiber and $\overrightarrow{K}_{f}$.
A fundamental domain of $M_{f}$ given in Figure \ref{fig:A4} (A) is a decagon with a gluing rule as in Figure \ref{fig:A4fund} (A). 
Each circle on the chosen five vertices represents a part of the unique boundary component of $M_{f}$. 
In this fundamental domain, the adapted family $\overrightarrow{K}_{f}$ \ref{fig:A4} (B) can be drawn as in Figure \ref{fig:A4fund} (B).

\begin{figure}[h]
\begin{subfigure}[t]{0.33\textwidth}
\includegraphics[scale=0.85]{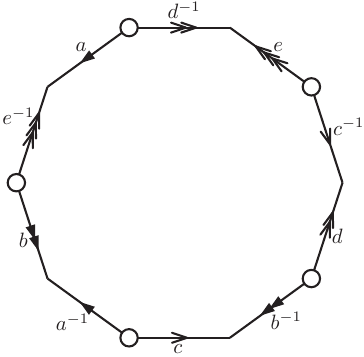}
\centering
\caption{Glueing rule}
\end{subfigure}
\begin{subfigure}[t]{0.33\textwidth}
\includegraphics[scale=0.85]{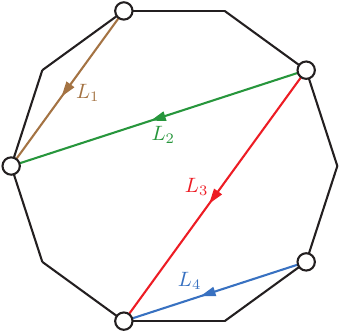}
\centering
\caption{$\overrightarrow{K}_{f} = (L_{1}, L_{3}, L_{2}, L_{4})$}
\end{subfigure}
\begin{subfigure}[t]{0.33\textwidth}
\includegraphics[scale=0.85]{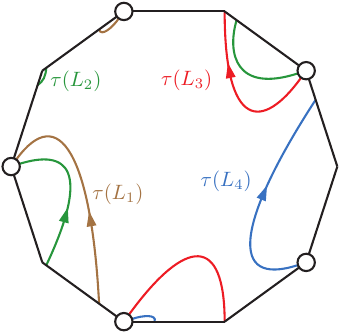}
\centering
\caption{Image of $\overrightarrow{K}_{f}$ under $\tau$}
\end{subfigure}
\centering
\caption{Fundamental domain of $A_{4}$ Milnor fiber}
\label{fig:A4fund}
\end{figure}
From Figures \ref{fig:A4} (B) and (C), one can check the following intersection conditions directly:
\begin{itemize}
\item Two distinct $L_{i}$ and $L_{j}$ are disjoint.
\item There are three intersecting pairs: $\tau(L_{1}) \cap L_{2} = \{ p_{1} \}$, $\tau(L_{2}) \cap L_{3} = \{ p_{2} \}$, and $\tau(L_{2}) \cap L_{4} = \{ p_{3} \}$.
\end{itemize}
One can perform a similar computation as $E_{6}$ case in \ref{subsubsec:E6} to show that 
	\[ [p_{1}] \in HF^{*}_{\tau}(L_{1},L_{2}), [p_2]\in HF^{*}_{\tau}(L_{3},L_{2}), \mbox{ and } [p_3]\in HF^{*}_{\tau}(L_{3},L_{4})\] 
are the unique generators for each group. 

We describe other natural representatives for a later purpose. 
Let $\eta \in CW^*(L_{1}, L_{2})$ be the shortest Reeb chord, depicted in the Figure \ref{fig:partially} (A). 
We claim that $\eta$ and $p_1$ represent the same class. 
Indeed, there is an element $\gamma \in CW^*(\tau(L_{1}), L_{2})$, a short Reeb chord as in Figure \ref{fig:partially} (A). 
One can check directly that $m_1(\gamma) = p_1$ and $m_2(\mathcal{CO}^\tau_{L_1}(\Gamma), \gamma) = \eta$, which implies that $\eta$ and $p_1$ differ by a coboundary as desired.

\begin{figure}[h]
\begin{subfigure}[t]{0.43\textwidth}
\raisebox{5ex}{\includegraphics[scale=1]{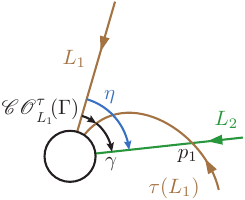}}
\centering
\caption{Another representative $\eta$}
\label{fig:partially1}
\end{subfigure}
\begin{subfigure}[t]{0.43\textwidth}
\includegraphics[scale=1]{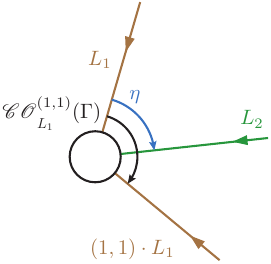}
\centering
\caption{$(1,1)$ monodromy case}
\label{fig:partially2}
\end{subfigure}
\centering
\caption{ }
\label{fig:partially}
\end{figure}

\subsection{Relation to partially wrapped Fukaya category}
We continue the computation in Example \ref{subsec:An} and exhibit an interesting connection with certain partially wrapped Fukaya categories.

Let $(D^2, \Lambda_{n+1})$ be a disc with $(n+1)$ stops on $\partial D^2$.
Its partially wrapped Fukaya category $\mathcal{WF}(D^2, \Lambda_{n+1})$ is derived equivalent to $D^b(\mathrm{Rep}Q_n)$, the derived category of representations of $A_n$ quiver $Q_n$. 
Moreover, a choice of isotopy class of full arc system on $(D^2,\Lambda_{n+1})$ determines a set of generators of $\mathcal{WF}(D^2,\Lambda_{n+1})$. 
We refer to \cite{HKK17} for more detailed computation. 

First, observe that $M_f$ has a two-fold covering given by $x: M_f \to \mathbb C$ ramified at $x = e^\frac{2k \pi i}{n+1}, \hspace{5pt} k=0, \ldots ,n$. 
We realize it as a Riemann surface of $\sqrt{1-x^{n+1}}$, where the branch cut at $e^\frac{2k \pi i}{n+1}$ is chosen as $\left\{e^{r+\frac{2k\pi i}{n+1}}\;\middle|\; r\geq 0\right\}$.
By choosing two different sheets of the cover, we can split $M_f$ into two pieces 
	$$ M_f = M_1\cup M_2, \hspace{5pt} M_1\cap M_2 = \partial M_1 = \partial M_2.$$
Moreover, $(D^2, \Lambda_{n+1})$ sits inside $M_1$ as the unit disc inside the $x$-plane. 
Then $M_1$ is equal to the closure of $x$-plane minus the union of branch cuts, which is again equal to the Liouville sector given by a completion of $D^2\setminus\Lambda_{n+1}$. 
(In fact the line field $\eta$ on $(D^2, \Lambda_{n+1})$, an extra data needed to put a grading on $\mathcal{WF}(D^2, \Lambda_{n+1})$, is determined by this covering.)

Second, observe that $M_f$ has an algebraic $\Z/(n+1) \times \Z/2$ action.  
An element $(k,l) \in \Z/(n+1) \times \Z/2$ acts as $(x, y) \mapsto (e^\frac{2k \pi i}{n+1} x, (-1)^ly)$. 
Since we choose $M_i$ as different sheets of the covering,  $(k,1) \in \Z/(n+1) \times \Z/2$ switches $M_1$ and $M_2$. 
We remark that the element $(1,1)\in \Z/(n+1) \times \Z/2$ is a representative of the monodromy up to positive boundary twist. 
Therefore, we can say that $M_2$ is the image of $M_1$ under the action of the monodromy. 

Using this observation, any full arc system $\{\alpha_i\}$ for $(D^2, \Lambda_{n+1})$ can be viewed as a collection of objects $\{L_i\}$ of $\mathcal{WF}(M_f)$ by conical extension. 
It is clearer from the pictorial description of $M_f$. 
It is described as a boundary gluing of  $2(n+1)$-gon with $(n+1)$ puncture at each vertex for every two.  
$M_1$ is the sub $(n+1)$-gon whose vertices consist of punctures. 
The Figure \ref{fig:partially3} below describes what is happening in $A_4$-case.

\begin{figure}[h]
\includegraphics[scale=0.8]{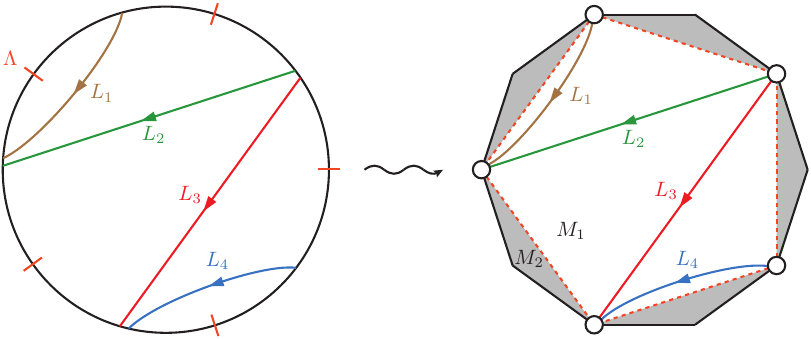}
\centering
\caption{The partially wrapped Fukaya category of $(D^2, \Lambda_{5})$ and the $A_4$ Milnor fiber}
\label{fig:partially3}
\end{figure}

\begin{prop} \label{prop:partially}
The canonical map $\hom^*_{\mathcal{WF}(D^2,\Lambda_{n+1})}(\alpha_i, \alpha_j) \hookrightarrow CW^*(L_i, L_j) \hookrightarrow CF_\rho^*(L_i, L_j)$ induces an isomorphism on cohomologies
\[ \Hom^*_{\mathcal{WF}(D^2,\Lambda_{n+1})}(\alpha_i, \alpha_j) \cong HF_\rho^*(L_i,L_j).\]

\end{prop}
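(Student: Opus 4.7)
The plan is to combine the action-filtration argument from the proof of Theorem \ref{thm: HF is finite} with the geometric observation that $\rho$ can be arranged to swap the two sheets $M_1$ and $M_2$ of the double cover $x : M_f \to \mathbb{C}$. First I would apply isotopy invariance (Proposition \ref{prop: isotopy invariance}) to replace $\rho$ by the sheet-swap symplectomorphism $\sigma$ coming from the $(1,1) \in \Z/(n+1) \times \Z/2$ action; the two differ by a compactly supported Hamiltonian isotopy (a boundary twist). Since $L_j \subset M_1$ while $\sigma(L_i) \subset M_2$, the only possible points of $\sigma(L_i) \cap L_j$ lie in $M_1 \cap M_2 \subset \partial M_f$. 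After a standard small Hamiltonian perturbation every chord from $\sigma(L_i)$ to $L_j$ becomes a short Reeb-type chord near $\partial M_f$, and in particular $CW^*_{\geq 0}(\sigma(L_i), L_j) = 0$.

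The next step is the action-filtration argument from the proof of Theorem \ref{thm: HF is finite}, which in our setting yields that
\[\cap \Gamma \;:\; CW^*(\sigma(L_i), L_j) \;=\; CW^*_{<0}(\sigma(L_i), L_j) \longrightarrow CW^*_{<0}(L_i, L_j)\]
is a quasi-isomorphism onto the wrapped subquotient. Combining this with the short exact sequence $0 \to CW^*_{\geq 0}(L_i, L_j) \to CW^*(L_i, L_j) \to CW^*_{<0}(L_i, L_j) \to 0$ and the definition of the cone $CF^*_\rho(L_i, L_j) = \mathrm{Cone}(\cap \Gamma)$, a routine diagram chase shows that the composition $CW^*_{\geq 0}(L_i, L_j) \hookrightarrow CW^*(L_i, L_j) \hookrightarrow CF^*_\rho(L_i, L_j)$ is a quasi-isomorphism:
\[HF^*_\rho(L_i, L_j) \;\simeq\; H^*\bigl(CW^*_{\geq 0}(L_i, L_j)\bigr).\]

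The remaining step is to identify the subcomplex $CW^*_{\geq 0}(L_i, L_j)$ with the partially wrapped morphism space $\hom^*_{\mathcal{WF}(D^2, \Lambda_{n+1})}(\alpha_i, \alpha_j)$, compatibly with the canonical inclusion stated in the proposition. At the level of generators this is geometric: a positive-action chord is a small perturbation of an interior intersection of $L_i$ and $L_j$ in the compact part of $M_1$, and corresponds bijectively to a positive-action chord of $\alpha_i$ and $\alpha_j$ in $(D^2, \Lambda_{n+1})$. For the differentials I would choose the wrapping Hamiltonian on $M_f$ to be pulled back (outside a compact set) from the base $\mathbb{C}$ via $x$, and the almost complex structure to be $x$-holomorphic near the branch cuts. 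A maximum principle in the base would then prevent any Floer strip between two positive-action chords from venturing out of $M_1$, and show that such strips descend via $x$ to Floer strips in the partially wrapped model for $(\alpha_i, \alpha_j)$.

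The hard part is this last step: producing a clean maximum principle that confines interior Floer strips to $M_1$ on the branched cover, and thereby establishing a bijection of moduli spaces with those of $(D^2, \Lambda_{n+1})$. This is in the same spirit as the locality analysis of Proposition \ref{prop: locality}, but must respect the sheet decomposition along the branch cuts. It may be cleaner to deduce it from general Viterbo restriction/localization results for partially wrapped Fukaya categories (for instance those of Ganatra--Pardon--Shende \cite{GPS2023}) applied to the inclusion of the Liouville sector $(D^2, \Lambda_{n+1})$ as the "lower sheet" $M_1$ of $M_f$.
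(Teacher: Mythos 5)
Your plan does not work, and the gap is in the action-filtration step (steps 3--4 of your outline), which then poisons step 5.

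You correctly observe that with $\sigma = (1,1)$, one has $\sigma(L_i)\cap L_j = \emptyset$ and hence $CW^*_{\geq 0}(\sigma(L_i),L_j)=0$. But if the action-filtration argument from Theorem~\ref{thm: HF is finite} really applied here, the same mechanism would also compute $HF^*_\rho(L_i,L_j) \simeq H^*\bigl(CW^*_{\geq 0}(L_i,L_j)\bigr)$ --- and since the arcs $\alpha_i$ are pairwise disjoint (they form a full arc system), $L_i\cap L_j=\emptyset$ for $i\neq j$, so $CW^*_{\geq 0}(L_i,L_j)=0$ as well. Your argument would therefore give $HF^*_\rho(L_i,L_j)=0$ whenever $i\neq j$, contradicting the proposition: the right-hand side is one-dimensional whenever $\alpha_i$ and $\alpha_j$ share a puncture. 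The source of the error is that the claim ``$\cap\Gamma$ is a quasi-isomorphism on $CW^*_{<0}$'' in the proof of Theorem~\ref{thm: HF is finite} is established specifically for the boundary perturbation $\check\rho$, where wrapped chords from $\check\rho(L_1)$ to $L_2$ are in canonical bijection with those from $L_1$ to $L_2$ via $\phi^1_{-b}$, and $\cap\Gamma$ is upper/lower triangular with identity diagonal. For the sheet-swap $\sigma$ this bijection fails entirely: there is precisely one extra Reeb chord $\eta_{ij}$ from $L_i$ to $L_j$ trapped inside $M_1$ that is \emph{not} of the form $\Gamma_{L_i}$ concatenated with a chord from $\sigma(L_i)$ to $L_j$ (since $\Gamma_{L_i}$ necessarily crosses into $M_2$, while $\eta_{ij}$ never does). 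The map $\cap\Gamma$ on $CW^*_{<0}$ therefore has a one-dimensional cokernel spanned by $[\eta_{ij}]$, and it is this cokernel --- not anything in $CW^*_{\geq 0}$ --- that supplies the nontrivial class in $HF^*_\rho(L_i,L_j)$.

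Step 5 then fails for the same reason: $\hom^*_{\mathcal{WF}(D^2,\Lambda_{n+1})}(\alpha_i,\alpha_j)$ is not generated by interior intersection points (there are none; the arcs are disjoint), but by a boundary Reeb chord near the shared stop. That chord has negative action and lives in $CW^*_{<0}(L_i,L_j)$, not $CW^*_{\geq 0}(L_i,L_j)$. Your identification of $CW^*_{\geq 0}(L_i,L_j)$ with the partially wrapped morphism space is therefore false at the level of generators, independently of any maximum-principle or moduli analysis. The paper's actual proof works with $\sigma=(1,1)$ as you propose, but avoids the action-filtration shortcut entirely: it shows directly (following Lemma~\ref{lem:i}) that $m_2(\Gamma_{L_i},-)$ hits every Reeb chord from $L_i$ to $L_j$ that passes through $M_2$, and that the unique chord $\eta_{ij}$ confined to $M_1$ is the sole survivor. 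This is the content you would need to prove; a Viterbo restriction / sectorial argument à la Ganatra--Pardon--Shende is a plausible alternative route, but you would still have to match $\cap\Gamma$ with the counit (or the comparison map) of that adjunction --- which is the actual work, and not something the general theory hands you for free.
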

\begin{proof}
We use the algebraic action of $(1,1)$ as a representative of the monodromy. 
Since $\alpha_i$ does not intersect to each other and $(1,1)$ shifts $M_1$ and $M_2$, we conclude that $\{L_i\}\cup\{(1,1)(L_i)\}$  are mutually disjoint inside $M_f$. 
Also, remark that $(1,1)$ has no fixed points by definition. 
Therefore, if $L$ and $(1,1)(L)$ does not intersect each other, then $\mathcal{CO}^{(1,1)}_L(\Gamma) \in CW^*(L, (1,1)(L))$ is equal to the shortest Reeb chord $\Gamma_L$ from $L$ to $(1,1)(L)$. 

Suppose the left-hand side is zero, which means that the ends of $\alpha_i$ and $\alpha_j$ do not share the same puncture. 
Then any Reeb chord from $L_i$ to $L_j$ is decomposed as a concatenation of $\mathcal{CO}^{(1,1)}_{L_1}(\Gamma)$ with other shorter Reeb chord from $(1,1)(L_i)$ to $L_j$. 
Using the argument similar to (2) of Lemma \ref{lem:i}, we conclude
\[HF^{*}_{(1,1)}(L_i, L_j) \cong H^*\left(CW^{*}((1,1)(L_i), L_j)[1] \xrightarrow{m_2(\mathcal{CO}^{(1,1)}_{L_1}(\Gamma){,}-)} CW^{*}(L_i, L_j)\right)\cong 0.\]

Suppose the left-hand side is non-zero. 
It means that the ends of $\alpha_i$ and $\alpha_j$ share a puncture, which is at most one because $\{\alpha_i\}$ forms a full arc system on a disc. 
When it is the case, $\hom^*_{\mathcal{W}(D^2,\Lambda_{n+1})}(\alpha_i, \alpha_j)$ is generated by a single Reeb chord $\eta_{i, j}$ from $\alpha_i$ to $\alpha_j$. 
This chord is the only Reeb chord from $L_i$ to $L_j$ trapped inside $M_1$.
If not, the chord $\eta'$ must pass through $M_2$ more than once, which implies such chords are written as a concatenation $\mathcal{CO}^{(1,1)}_{L_1}(\Gamma)$ with other shorter Reeb chord from $(1,1)(L_i)$ to $L_j$ as before: see Figure \ref{fig:partially} (B).
Using the argument similar to (1) of Lemma \ref{lem:i} and the discussion in Example \ref{subsec:An}, we conclude $HF^{*}_{(1,1)}(K_i, K_j)$ is also generated by a single class $[\eta_{ij}]$. 
\end{proof}

\subsection{Divides of higher depth}
\label{subsec:depth 1}
So far, we have discussed the cases of depth 0. We expect our construction  of Subsection \ref{subsec:depth0}
to be generalized to higher depth cases.  We will illustrate an idea of representing vanishing cycles of higher depth using twisted complexes built out of non-compact Lagrangians. But unfortunately, we would need the conjectural
$A_{\infty}$-category $\mathcal{C}_{\tau}$ and the twisted complexes there of. Hence, we will
consider a specific depth 1 example in detail, and leave the general construction for the future.

First, observe that for the classical variation operator \eqref{eq:var}, the inverse image of a vanishing cycle could be
given by a disjoint union of several non-compact Lagrangians. But if we take the direct sum of these Lagrangians, then its endomorphism algebra would not be local, but that of vanishing cycles in Fukaya-Seidel category is local.

Instead, we expect that we can form a twisted complex built from1214 these non-compact Lagrangians.
Let us illustrate this in the following example.

\begin{example}
Let $f:\C^2 \to \C$ be a polynomial
$$f(x,y) = x^{5} - x^{3}y^{2} + x^{2}y^{2} + y^{4} = (x^{3}+y^{2})(x+y)(x-y).$$ Its divide and $A\Gamma$ diagram are given  in Figure \ref{fig:depth1div}. There are 10 vanishing cycles which are ordered as in  Figure \ref{fig:depth1div} (B).
Its Milnor fiber is a Riemann surface with 4 genus and 3 boundary components and a monodromy $\tau$ is given by the composition of 10 Dehn twists.

\begin{figure}[h]
\begin{subfigure}[t]{0.43\textwidth}
\includegraphics[scale=1]{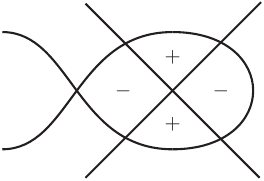}
\centering
\caption{Divide of $f$}
\end{subfigure}
\begin{subfigure}[t]{0.43\textwidth}
\includegraphics[scale=1]{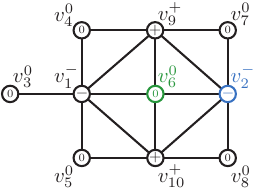}
\centering
\caption{$A\Gamma$ diagram of $(A)$}
\end{subfigure}
\centering
\caption{ }
\label{fig:depth1div}
\end{figure}
The middle vertex $v^{0}_{6}$
  is the unique depth $1$ vertex. Let $V^{0}_{6}$ be the corresponding vanishing cycle. Topologically, we want to find $[K_6^0]$ with $\mathrm{var}([K_6^0]) = - [V^{0}_6]$.  We start by choosing any depth $0$ vertex $w$ adjacent to $v^0_6$.
  In our case, we choose $w =  v^{-}_{2}$ in Figure \ref{fig:depth1div} (B).
  
   A neighborhood of $V^{0}_{6}$ can be identified with  that of vanishing cycle $V^0_i$ in Figure \ref{fig:K0} (A).
 In the case of depth 0, we had a  neighboring unbounded region, and the corresponding unnecessary arc (which was omitted in Figure \ref{fig:K0} (A)). We suppose that this unnecessary arc is now a part of the vanishing cycle for $v^{-}_{2}$ in the depth 1 case. We define $K^{\prime}_{6}$ to be $K^0_i$ in Figure \ref{fig:K0} (A)) under this identification. Then, we have $$\mathrm{var}([K^{\prime}_{6}]) = -[V^{0}_{6}] + [V^{-}_{2}].$$

Since the vertex $v^{-}_{2}$ has depth $0$, we can choose 
$K^{-}_{2}$ as in Lemma \ref{lem:depth0} and we still have $\mathrm{var}([K^-_2]) = -[V^{-}_{2}]$.
Therefore, we found two disjoint non-compact Lagrangians $K^{\prime}_{6}$ and $K^-_2$ such that
   $$\mathrm{var}([K^{\prime}_{6}]+[K^-_2] ) = -[V^{0}_{6}].$$
This together with depth 0 construction defines an adapted family.

Now, we explain how to define a twisted complex in this case.
By our construction, we  have an intersection point $p$ between $\tau(K^{\prime}_{6})$ and $K^{-}_{2}$:
this is because $\tau_{V^\prime_6}(K^{\prime}_{6})$  intersects the vanishing cycle $V^{-}_2$, and
thus $\tau_{V^-_2}(\tau_{V^\prime_6}(K^{\prime}_{6}))$ now intersects $K^-_2$. No other vanishing cycles are involved in this computation. The intersection point $p = \tau(K^{\prime}_{6})  \cap K^{-}_{2}$ defines a closed morphism in
$HF_\rho^*(K_6^\prime, K_2^-).$

 We define $K^{0}_{6}$ as a twisted complex of this morphism:
$$K^{\prime}_{6} \overset{p \epsilon}{\longrightarrow} K^{-}_{2}.$$

\begin{figure}[h]
\includegraphics[scale=1]{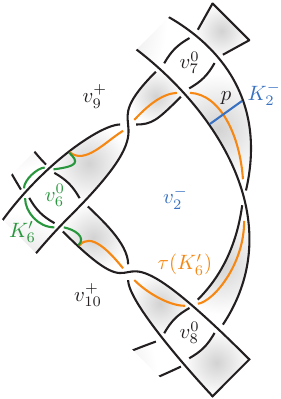}
\centering
\caption{Non-compact Lagrangians and the intersection point in the Milnor fiber}
\label{fig:depth1ex}
\end{figure}

We expect the following to hold in $\mathcal C_\rho$.
\begin{itemize}
\item $\{ K^{\bullet}_{i} \}_{i=1}^{10}$ forms an exceptional collection, and
\item the endomorphism algebra of them is isomorphic to that of corresponding vanishing cycles.
\end{itemize}

\end{example}

At least the first topological part of the construction in this example can be carried out for any depth,
which will appear elsewhere. 
\begin{thm}
For any A'Campo divide $\mathbb{D}_{f}$ and a distinguished collection of vanishing cycles  $\overrightarrow{V}_{f}$ associated with it, there exists an adapted family of non-compact Lagrangians
$$\overrightarrow{K}_{f} = (K^{-}_{1}, \dots, K^{-}_{n_{-}}, K^{0}_{1}, \dots, K^{0}_{n_{0}}, \dots, K^{+}_{1}, \dots, K^{+}_{n_{+}}).$$
Here for a depth $l$ vertex, the corresponding non-compact Lagrangian consists of $l+1$ disjoint connected
components.
\end{thm}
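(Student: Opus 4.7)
The proof proceeds by induction on the depth $l$ of the vertex $v \in A\Gamma(\mathbb{D}_f)$ whose corresponding vanishing cycle $V$ we wish to realize as an image under the variation operator. The base case $l=0$ is handled by Lemma~\ref{lem:depth0}. Throughout, we order the vertices so that lower-depth vertices come first within each type block $\{-,0,+\}$; this ensures that by the time we treat a depth-$l$ vertex we have already fixed adapted Lagrangians for all strictly lower depth.

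For the inductive step, suppose $v$ has depth $l \geq 1$. By the definition of depth, $v$ becomes a depth-$0$ vertex of the reduced diagram $A\Gamma_l(\mathbb{D}_f)$, which means there is an edge in $A\Gamma(\mathbb{D}_f)$ from $v$ to some vertex $w$ of depth exactly $l-1$. In the building block around the double point for $v$, the arc that in the depth-$0$ situation would have been the ``unnecessary arc'' (exposed to an unbounded region) is now instead a piece of the vanishing cycle $V_w$. I would first construct a single connected non-compact Lagrangian $K'_v$ by the \emph{same} local recipe as in Figure~\ref{fig:K-}, \ref{fig:K0}, or \ref{fig:K+} (selected by the sign of $v$), whose only intersections with the distinguished collection are one positive point with $V$ itself and the intersection pattern forced on the formerly unnecessary arc with $V_w$. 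A Picard-Lefschetz computation in the spirit of Proposition~\ref{lem:adpatedvar} then shows
\[
\mathrm{var}([K'_v]) \;=\; (-1)^{n(n-1)/2}\bigl([V] - \varepsilon[V_w]\bigr)
\]
for an appropriate sign $\varepsilon \in \{\pm 1\}$ determined by orientations. By induction, $w$ already carries an adapted family $\{K_w^{(1)},\dots,K_w^{(l)}\}$ of $l$ disjoint connected non-compact Lagrangians with $\sum_i \mathrm{var}([K_w^{(i)}]) = (-1)^{n(n-1)/2}[V_w]$. Taking the disjoint union
\[
K_v \;=\; K'_v \,\sqcup\, \varepsilon\!\cdot\!\bigl(K_w^{(1)} \sqcup \cdots \sqcup K_w^{(l)}\bigr)
\]
(with appropriate choice of orientations) yields $l+1$ disjoint connected components, and the variation computation telescopes to $(-1)^{n(n-1)/2}[V]$, as required.

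The adapted intersection conditions with the other vanishing cycles $V_j$ for $j \neq v, w$ must then be verified. Here the key points are: (i) $K'_v$ is supported inside the closed supports of $\tau_V$ and $\tau_{V_w}$ together with a thin collar to the boundary, so its intersections with any $V_j$ not adjacent to $v$ or $w$ in $A\Gamma$ are empty; (ii) the intersections of $K'_v$ with $V_w$ are precisely those coming from the piece of $V_w$ playing the role of the unnecessary arc, and these are arranged by the local model so that $K'_v \bullet V_j = -(-1)^{n(n-1)/2} V \bullet V_j$ for all $j$ with $v<j$ other than $w$; (iii) for the vertex $w$ itself and all vertices used recursively, the extra intersections contributed by the components $K_w^{(i)}$ must be shown to exactly compensate the spurious intersection of $K'_v$ with $V_w$, again by induction and the identity $\mathrm{var}([K_w^{(i)}]) = (-1)^{n(n-1)/2}[V_w]$ combined with the Picard-Lefschetz formula applied pairwise. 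Because each $K_w^{(i)}$ was constructed to have empty intersection with $V_j$ for $j < w$, one only needs to control the intersections with higher-indexed vanishing cycles; these are read off locally from the building blocks at the double points where the recursive constructions are anchored.

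The main obstacle is combinatorial/geometric bookkeeping: one must choose the adjacent vertex $w$ of depth $l-1$, the arcs playing the role of unnecessary arcs, and the positions of the $l+1$ components so that (a) they remain mutually disjoint, (b) all ends actually reach $\partial M$ without creating unintended intersections with previously constructed $K$-Lagrangians of lower index, and (c) the intersection counts with higher-indexed vanishing cycles match the signed values dictated by the adaptedness definition. This is delicate because as the depth grows, the recursive component $K_w^{(i)}$ for $i$ itself of positive depth drags in further pieces attached to vertices of depth $l-2, l-3, \dots, 0$, and one must verify that the cumulative modifications to the intersection pattern remain consistent. The cleanest way to organize this is to prove simultaneously, by induction on depth, that (I) the recursive construction produces $l+1$ disjoint connected Lagrangians, (II) the variation sum equals $(-1)^{n(n-1)/2}[V]$, and (III) all adapted-family intersection conditions with $V_j$ for $j \neq v$ are satisfied. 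Statements (II) and (III) together follow from (I) plus a single application of the Picard-Lefschetz formula once the local intersection patterns are pinned down, so the technical heart of the proof is the geometric verification of (I) together with the local intersection counts in the building blocks of each double point visited by the recursion.
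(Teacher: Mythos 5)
First, note a structural point about the source: the paper does not actually give a proof of this theorem. After working out the depth-$1$ example (the singularity $f=(x^3+y^2)(x+y)(x-y)$), the authors write that ``the first topological part of the construction in this example can be carried out for any depth, \emph{which will appear elsewhere}'' and then state the theorem bare. So there is no in-paper argument to compare against; the most one can do is test your proposal against the depth-$1$ example they illustrate. Your overall inductive strategy --- find an adjacent depth-$(l-1)$ vertex $w$, build a connected $K'_v$ whose variation class equals $(-1)^{n(n-1)/2}([V_v]\pm[V_w])$ by reusing the depth-$0$ local recipe in the building block, then cancel the $[V_w]$ error term by attaching the $l$ components of the previously-built $K_w$ --- is exactly in the spirit of the example, and the telescoping count $1+l=l+1$ is correct.

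There is, however, a real gap in the proposal which you do not quite surface, beyond the generic ``bookkeeping is delicate.'' Definition \ref{defn:adapted} is indexed by the global order of the distinguished collection, which is \emph{by type first} ($-<0<+$) and only arbitrarily within a type, not by depth. In the paper's example the depth-$1$ vertex is a saddle absorbing a \emph{lower-indexed} $-$-vertex, so condition (1) (``$K_j\bullet V_i=-(-1)^{n(n-1)/2}a_{ji}$ for $j>i$'') is the relevant constraint and can be arranged. But if a $-$-vertex $v$ ever has depth $\geq 1$, any adjacent vertex $w$ is of type $0$ or $+$, hence $w$ has a \emph{higher} index than $v$. Condition (2) then requires $K_v\cap V_w=\emptyset$; yet $K_v$ is taken to contain the components of $K_w$, and those components necessarily meet $V_w$: indeed $S([V_w],[V_w])=1$ forces $\mathrm{var}^{-1}([V_w])\bullet [V_w]\neq 0$, so any Lagrangian whose variation class is $(-1)^{n(n-1)/2}[V_w]$ must intersect $V_w$. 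So the literal recursion you describe violates adaptedness whenever a vertex must absorb a later-typed neighbor. To salvage the argument one must either prove a combinatorial lemma ruling out deep $-$-vertices (or more generally ensuring the absorbed vertex can always be chosen with lower index), or reorganize the recursion so it absorbs in the direction of decreasing type rather than decreasing depth, which in turn breaks the clean induction on depth that your proof relies on. This is the concrete obstruction hiding inside the ``cumulative modification'' issue you flag at the end, and it is what needs to be pinned down to turn the proposal into a proof.
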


\bibliographystyle{amsalpha}
\bibliography{FukayaSing}

\end{document}